\documentclass[11pt]{amsart}

\usepackage{amsmath,amscd,amssymb}
\usepackage[mathscr]{eucal}

\usepackage[hyperindex,breaklinks]{hyperref}

\newtheorem{theorem}{Theorem}[section]
\newtheorem{lemma}[theorem]{Lemma}
\newtheorem{proposition}[theorem]{Proposition}
\newtheorem{corollary}[theorem]{Corollary}

\theoremstyle{definition}

\newtheorem{example}[theorem]{Example}

\theoremstyle{remark}
\newtheorem{remark}[theorem]{Remark}

\numberwithin{equation}{section}

\def\C{{\mathbb C}}
\def\R{{\mathbb R}}
\def\N{{\mathbb N}}

\def\supp{{\rm supp}\,}

\setcounter{tocdepth}{1}

\begin{document}

\title[On the theory of generalized Stieltjes transforms]
{On the theory of generalized Stieltjes transforms}

\author{Alexander Gomilko}
\address{Faculty of Mathematics and Computer Science\\
Nicolas Copernicus University\\
ul. Chopina 12/18\\
87-100 Toru\'n, Poland
}

\address{
Institute of Telecommunications and Global Information Space, \\
National Academy of Sciences of Ukraine, \\
Chokolivsky Blvd. 3, 03186 Kyiv, Ukraine}

\email{gomilko@mat.umk.pl}

\author{Yuri Tomilov}
\address{Institute of Mathematics\\
Polish Academy of Sciences\\
\'Sniadeckich Str. 8\\
00-656 Warsaw, Poland
}

\address{
Faculty of Mathematics and Computer Science\\
Nicolas Copernicus University\\
ul. Chopina 12/18\\
87-100 Toru\'n, Poland
}

\email{ytomilov@impan.pl}

\thanks{The authors were  partially
supported by the NCN grant 2017/27/B/ST1/00078.}

\subjclass[2020]{Primary 44A15, 26A33, Secondary 26A51, 60E10}

\keywords{generalized Stieltjes transforms, generalized Cauchy transforms, representations, products, measures, norm estimates, Bernstein functions}

\date{\today}

\begin{abstract}
We identify
measures arising
in the representations of products of generalized Stieltjes
transforms as generalized Stieltjes transforms, provide optimal estimates for the size of those measures, and address a similar issue
for generalized Cauchy transforms. In the latter case, in two particular settings, we give criteria
ensuring that the measures are positive.
On this way, we also obtain new, applicable conditions for representability
of functions as generalized Stieltjes transforms,
thus providing a partial answer to a problem posed by Sokal
and shedding a light at spectral multipliers emerged recently in probabilistic studies.
As a byproduct of our approach, we improve  several known results on Stieltjes
and Hilbert transforms.

\end{abstract}

\maketitle

\tableofcontents
\section{Introduction}

This paper is concerned with the theory of generalized Stieltjes functions
and corresponding Stieltjes transforms,
and it offers several insights on their structure and algebraic properties.
To set the scene, for $\alpha \ge 0$ let
$\mathcal M^+_\alpha (\R_+)$ be the set of positive Borel measures on
$\R_+=[0,\infty)$ such that $(1+s)^{-\alpha} \in L^1(\R_+, \mu).$
The generalized Stieltjes transform of order $\alpha>0$ is defined on $\mathcal M^+_\alpha (\R_+)$
by
\begin{equation}\label{SSS0}
S_\alpha[\mu](z)=\int_{0}^\infty \frac{\mu(dt)}{(z+t)^\alpha}, \qquad \mu \in \mathcal M^+_\alpha(\R_+), \qquad z>0.
\end{equation}
The set of $S_\alpha[\mu],$ where $\mu$ varies through $\mathcal M^+_\alpha(\R_+),$
coincides modulo constants with the
so-called class of generalized Stieltjes functions, arising in various areas of analysis.
Recall that a function
$f:(0,\infty)\mapsto [0,\infty)$ is said to be {\it generalized Stieltjes} of order $\alpha>0$ if
\begin{equation}\label{SSS00}
f(z)=a+S_\alpha[\mu](z), \qquad z>0,
\end{equation}
where  $a \ge 0$ and $\mu \in \mathcal M^+_\alpha(\R_+).$
In this case we will write $f \sim(a,\mu)_{\alpha},$ call $(a,\mu)_\alpha$ the Stieltjes representation of $f,$ and denote the class of such $f$
by $\mathcal S_\alpha.$

The studies on generalized Stieltjes functions  go back to 1930's and to well-known papers
by Hirschman and Widder. Apart from classical analysis, these functions  arise in many areas of mathematics
including probability theory \cite{Bond}, \cite{Cia},  \cite{Sh}, \cite{Asch}, mathematical physics \cite{Demni1}, \cite{Demni}, \cite{Faraut}, \cite{Tica},  \cite{Vershik}, complex analysis \cite{MCbook}, \cite{Jerb},   geometry
\cite{Mene},
and operator theory \cite{BGTMZ}, \cite{GHT}, \cite{Heymann}, \cite{HirschInt}, \cite{HovelW}, \cite{Mart}, \cite{Mart1}, \cite{Schwartz},
even though they were outside mainstream research
 for a long while.
(Here the division into separate topics is rather conditional, and the references are just samples.)
Recently, the interest in generalized Stieltjes functions renewed,
and several pertinent papers appeared on this subject, see e.g.  \cite{Ask}, \cite{Banuelos}, \cite{Berg}, \cite{KPAs},
\cite{KPGen}, and \cite{KP23}.
Generalized Stieltjes functions also arise in a somewhat more general form of generalized
Cauchy transforms, mostly in the setting of the unit circle, see \cite{MCbook}, and numerous references given there.
For some applications of generalized Cauchy transforms in the framework of the real line one may consult \cite{Jerb}.
Though apparently these two flows of research had very few points in common so far.

Most of studies on generalized Stieltjes functions centered around their subclass  $\mathcal S_1,$
given by
\begin{equation}\label{stilt_def}
\mathcal S_1:=\{a+ S_1[\mu]: \mu \in \mathcal M_1^+(\R_+),
\,\, a \ge 0\},
\end{equation}
and usually called Stieltjes functions in the literature.
This class along with the closely related class $\mathcal{CBF}$ of so-called complete Bernstein functions
is fundamental in probability theory and spectral theory and their interactions.
Stieltjes and complete Bernstein functions are thoroughly discussed, for instance, in \cite[Chapters 2-8]{Sh}.
For their very recent and interesting generalizations see e.g. \cite{Bonde}, \cite{Kwas}.

The following geometric characterisation of Stieltjes functions is  one of the most useful results of the theory.
It is behind a number of important statements concerning Stieltjes functions and their properties.
\begin{theorem}\label{geometric}
 A function $f : (0,\infty) \to  [0,\infty)$ is Stieltjes
if and only if it is the
restriction to $(0,\infty)$ of an analytic function $F$ on  $\C\setminus (-\infty, 0]$ satisfying
 ${\rm Im}\, F (z)\le  0$ for $z\in \C$ with  ${\rm Im}\, z > 0,$ or equivalently,
 ${\rm Im} \, z \, {\rm Im}\, F (z)\le  0$ for $z \in \C \setminus (-\infty, 0].$
\end{theorem}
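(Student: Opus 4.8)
The plan is to prove the two implications separately, the forward one by a direct computation and the converse via the Herglotz--Nevanlinna representation. For the ``only if'' direction, suppose $f\sim(a,\mu)_1$ and set
\[
F(z):=a+\int_0^\infty\frac{\mu(dt)}{z+t},\qquad z\in\C\setminus(-\infty,0].
\]
First I would check that on every compact $K\subset\C\setminus(-\infty,0]$ one has $|z+t|\ge c_K(1+t)$ for all $t\ge 0$ (from $|z+t|\ge\dist(K,(-\infty,0])$ together with $|z+t|\ge t-\max_K|z|$), so the integral converges uniformly on $K$ and $F$ is analytic on $\C\setminus(-\infty,0]$ with $F|_{(0,\infty)}=f$. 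Then $\operatorname{Im}F(z)=-\operatorname{Im}z\int_0^\infty|z+t|^{-2}\,\mu(dt)$, which gives both $\operatorname{Im}F\le0$ on $\C^+:=\{\operatorname{Im}z>0\}$ and $\operatorname{Im}z\,\operatorname{Im}F(z)\le0$ on all of $\C\setminus(-\infty,0]$. The equivalence of the two formulations of the sign condition is a short remark: since $F$ is real on $(0,\infty)$, the Schwarz reflection principle yields $F(\bar z)=\overline{F(z)}$, so $\operatorname{Im}F\le 0$ on $\C^+$ forces $\operatorname{Im}F\ge0$ on the lower half-plane, and conversely.

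For the ``if'' direction, let $F$ be as in the statement. The function $G:=-F$ is analytic on $\C\setminus(-\infty,0]$, in particular on $\C^+$, where $\operatorname{Im}G\ge0$; hence $G|_{\C^+}$ is a Herglotz--Nevanlinna function and admits the representation
\[
G(z)=az+b+\int_{\R}\Bigl(\frac{1}{t-z}-\frac{t}{1+t^2}\Bigr)d\nu(t),\qquad z\in\C^+,
\]
with $a\ge0$, $b\in\R$, and $\nu$ a positive measure satisfying $\int_{\R}(1+t^2)^{-1}\,d\nu(t)<\infty$. Since $F$ is analytic on a neighbourhood of $(0,\infty)$ and real there (because $f$ maps into $[0,\infty)$), the Stieltjes inversion formula shows that $\nu$ charges no subset of $(0,\infty)$, i.e.\ $\supp\nu\subseteq(-\infty,0]$; and as both sides are analytic on $\C\setminus(-\infty,0]$, the identity persists there by analytic continuation.

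It remains to show $a=0$ and to put the representation in genuine Stieltjes form, and this is the step I expect to require the most care, because a priori $\nu$ is only $(1+t^2)^{-1}$-integrable rather than $(1+|t|)^{-1}$-integrable. For $t\le0$ and $x>0$ the integrand $\tfrac{1}{t-x}-\tfrac{t}{1+t^2}$ increases with $x$ and tends to $\tfrac{|t|}{1+t^2}\ge0$ as $x\to\infty$, so by monotone convergence the integral increases to $L:=\int_{(-\infty,0]}\tfrac{|t|}{1+t^2}\,d\nu(t)\in[0,\infty]$. Since $-F(x)=-f(x)\le 0$ for every $x>0$, letting $x\to\infty$ forces first $a=0$ and then $L<\infty$ with $b\le -L$. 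Knowing $L<\infty$ I can split the integral, and combining $L<\infty$ with $\int(1+t^2)^{-1}\,d\nu<\infty$ I get $\int_{(-\infty,0]}(1+|t|)^{-1}\,d\nu(t)<\infty$. Substituting $t=-s$, letting $\tilde\nu$ be the image of $\nu$ on $[0,\infty)$ and $c:=-(b+L)\ge0$, the representation collapses to $F(z)=c+\int_{[0,\infty)}(z+s)^{-1}\,d\tilde\nu(s)$ with $c\ge0$ and $\tilde\nu\in\mathcal M_1^+(\R_+)$; restricting to $z\in(0,\infty)$ gives $f\sim(c,\tilde\nu)_1$, so $f\in\mathcal S_1$. (Alternatively one could derive this from the known geometric description of complete Bernstein functions via the correspondence $f\in\mathcal S_1\iff z\mapsto zf(z)\in\mathcal{CBF}$, but the route through the Herglotz representation is self-contained.)
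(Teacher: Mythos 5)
The paper does not actually prove Theorem \ref{geometric}: it is quoted as a classical fact, with the reader referred to \cite{Sh} and \cite{Berg2}, so there is no internal proof to compare against. Your argument is correct and is essentially the standard proof found in those references: the forward direction by the explicit computation ${\rm Im}\,F(z)=-{\rm Im}\,z\int_0^\infty|z+t|^{-2}\,\mu(dt)$ together with the reflection identity $F(\bar z)=\overline{F(z)}$, and the converse via the Herglotz--Nevanlinna representation of $-F$ on $\C^+$, Stieltjes inversion to localize $\nu$ on $(-\infty,0]$, and the limit $x\to\infty$ along $(0,\infty)$ to kill the linear term and upgrade the integrability of $\nu$ from $(1+t^2)^{-1}$ to $(1+|t|)^{-1}$. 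You correctly identified and handled the only delicate step (the monotone-convergence argument giving $a=0$ and $L<\infty$ from $-f\le 0$); the only cosmetic omission is the degenerate case where $-F$ is a real constant on $\C^+$, which the representation covers with $a=0$, $\nu=0$.
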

See e.g. \cite{Sh} or \cite{Berg2} for a thorough analysis of this result and its relevance
for the theory of Stieltjes functions.

However, in contrast to the case $\alpha=1$, the classes $\mathcal S_\alpha$ of the generalized Stieltjes functions of arbitrary order $\alpha>0,\alpha \neq 1,$ seem to be far from being fully understood,
and essential difficulties  arise in putting a structure into these classes, at least when following intuition
developed for the study of $\mathcal S_1.$
A number of important properties of $\mathcal S_1$ fail for $\mathcal S_\alpha$ with $\alpha \neq 1,$
even after natural reformulations.
In particular, this concerns Theorem \ref{geometric}, where plausible variants
for $\alpha \neq 1$ appear to be false, and the problem of finding an analogue
of Theorem \ref{geometric} for arbitrary $\alpha >0$ was posed  by Sokal in  \cite[p. 184-185]{Sokal}.
So far the research on $\mathcal S_\alpha$  existed  in the form of isolated results scattered over the literature.
Several important properties of $\mathcal S_\alpha$
 can be found in the survey paper  \cite{Karp}.
It is instructive to note that, by e.g. \cite[Theorem 3]{Karp},
one has
$\mathcal{S}_\alpha\,\subset\,\mathcal{S}_\beta, 0<\alpha<\beta,$
so that a function $f \in \mathcal S_\alpha$ admits different Stieltjes representations in $\mathcal S_\beta$ for $\beta >\alpha$
with sometimes rather different properties.
For instance if $f(z)=z^{-1}, z >0,$ then $f=S_1[\delta_0]$ and $f=S_2[\nu],$
where $\nu$ stands for the Lebesgue measure on $\R_+.$

The lack of geometric or any other easy applicable characterisations of $S_\alpha$
makes it, in particular,  nontrivial to decide whether the product  $f_1 f_2$ of  $f_1 \in S_{\alpha_1}$ and $f_2 \in S_{\alpha_2}$
belongs to $S_\gamma$ for some $\gamma >0$ and to determine the optimal $\gamma$ whenever such a $\gamma$ exists.
Nevertheless, Hirschman and Widder proved in \cite[Chapter VII.7.4]{widder}
(see also \cite{HirWi51}) that if $\mu_1 \in \mathcal M^+_{\alpha_1}(\R_+)$
and $\mu_2 \in \mathcal M^+_{\alpha_2}(\R_+),\alpha_1,\alpha_2 >0,$ then there exists $\mu=\mu[\mu_1,\mu_2] \in \mathcal M^+_{\alpha_1+\alpha_2}(\R_+)$
such that
\begin{equation}\label{HWp}
S_{\alpha_1}[\mu_1](z)S_{\alpha_2}[\mu_2](z)=S_{\alpha_1+\alpha_2}[\mu](z), \qquad z >0,
\end{equation}
i.e.
$\mathcal S_{\alpha_1} \mathcal S_{\alpha_2}\subset \mathcal S_{\alpha_1+\alpha_2}$
for all $\alpha_1,\alpha_2>0.$
Observe that $\mu$ in \eqref{HWp} is determined uniquely.
 Moreover, simple examples
 show (see e.g. \eqref{A2} below) that the order $\alpha_1+\alpha_2$ in \eqref{HWp} cannot in general be improved.

The arguments of Hirschman and Widder, addressing in fact a more general situation, were somewhat involved. They relied on
rescaling $S_\alpha[\mu]$ as $F(t)=e^{\alpha t/2}S_\alpha[\mu](e^{t/2}), t \in \R,$  representing $F$ in the form of a convolution transform
\[
F(t)=\int_{\R} G(t-s)\, \nu(ds), \qquad t \in \R,
\]
with $\nu(ds)=e^{-\alpha s/2}\mu(e^{s}ds)$ and $G(t)=2^{-\alpha} \cosh^\alpha(t/2),$
and making use of the representation
\[
G(t)=\frac{1}{2\pi i}
\int_{-i\infty}^{i\infty} e^{t s} \,
\frac{\Gamma(-s+\alpha/2)\Gamma(s+\alpha/2)}{\Gamma(\alpha)}\,ds,
\qquad t\in \R,
\]
where $\Gamma$ is the Gamma function.

Despite the fact that \eqref{HWp} is one of the basic results in the theory of Stieltjes transforms, quite surprisingly,
the measure $\mu$ in \eqref{HWp} and its fine properties remained implicit until now,
and, apart from very specific cases, the form of $\mu$ seemed to be unknown.
To remove this gap, in the general setting of complex Radon measures on $\R_+,$
we prove \eqref{HWp} by providing an explicit formula for $\mu.$ The argument is elementary.
Thus derived $\mu$ is called the \emph{Stieltjes convolution} of $\mu_1$ and $\mu_2$ to emphasize the  similarity of the map $(\mu_1, \mu_2) \to \mu[\mu_1,\mu_2]$ to the usual convolution of measures.
Our formula reveals several interesting properties of $\mu$ showing in particular that $\mu$ does not have  singular continuous component.
Moreover, representing $\mu$ via  $\mu_1$ and $\mu_2$ we obtain a number of submultiplicative inequalities
bounding $\|\mu\|_{\beta_1+\beta_2}$ in terms of the products  $\|\mu_1\|_{\beta_1}  \|\mu\|_{\beta_2}$ for $\beta_j \in [0,\alpha_j], j=1,2.$
These estimates for the size of $\|\mu\|_{\beta_1+\beta_2}$ seem to be completely new.

Next, identifying absolutely continuous measures with their densities, we test our product formula in the classical framework of Stieltjes transforms of functions from
$L^1(\R_+; (1+t)^{-1}),$
and provide  natural conditions on $g_1, g_2 \in L^1(\R_+; (1+t)^{-1})$
to ensure that
the product $S_1[g_1]S_1[g_2]$
is again of the form $S_1[g]$ for some $g \in L^1(\R_+; (1+t)^{-1}).$ This generalizes and extends similar results
in the literature.
Curiously, in the case of Hilbert transforms of $L^p$-functions,
it appeared that our formula is essentially equivalent to the well-known Tricomi identity for Hilbert transforms in its most general formulation.

Although an easy applicable, e.g. geometrical,  characterisation of  $f \in \mathcal S_\beta$ is still out reach, we provide a sufficient condition,
which looks simple and easy to use. Namely, we show that if  $f \in \mathcal S_\alpha,$ and if for a given $\beta \in (0,1]$ one defines $f_{(\beta)}(z)=f(z^\beta), z>0,$
then $f_{(\beta)} \in \mathcal S_{\alpha\beta}.$
So,  in particular,  $ f_{(\beta)} \in \mathcal S_\beta$ if $f \in \mathcal S_1.$ This implication can be considered as a partial answer to Sokal's question mentioned above.
Moreover, employing a different argument, we prove that  if $f \sim (0,\nu)_1,$ then $f_{(\beta)} \sim (0,\mu)_\beta ,$ where $\mu$ is an absolutely continuous
measure given explicitly in terms of $\nu.$ The case $\beta=1/2$ is especially explicit and is a nice illustration of our considerations.
These results
stem from a general theorem
ensuring that for an appropriate $f: (0,\infty) \to (0,\infty)$ and $\gamma=\gamma(f)>0$ one has
$f^\alpha \in \mathcal S_{\gamma \alpha}$ for all $\alpha >0.$
Our version of the product formula  \eqref{HWp}
shows that measures in the Stieltjes representation for $f^\alpha$
do not have singular component and helps to refute several plausible conjectures appearing along the way.
Moreover, we show that the theorem is close to be optimal.

Note that the set $S_{1/2}(\mathcal M^+_0(\R_+))$
was crucial  in the recent study  \cite{Banuelos} of spectral multipliers $\Phi$ on $L^p$-spaces.
Without going into details of \cite{Banuelos}, which rely on a probabilistic background, note that the main statements in \cite{Banuelos} required  $\Phi$  to  have a specific form $S_{1/2}[\mu]$ with $\mu$ being a (bounded) complex measure on $\R_+.$
Our results allow one to produce such multipliers
from a well-understood set of $S_1[\mu], \mu \in \mathcal M^+_0(\R_+),$ and moreover they help to identify
$\Phi,$ dealing with functions themselves rather than with their  Stieltjes representations.
Recall that by Theorem \ref{geometric}
it is comparatively easy to check whether a given function $f$ belong to $\mathcal S_1.$ Then
$f_{(1/2)} \in \mathcal S_{1/2}$ yields a multiplier $\Phi$ needed in \cite[Theorems 1.1-1.3]{Banuelos}.
In addition, we are able to identify the Stieltjes representation of $f_{(1/2)},$
and thus to write $f_{(1/2)}$ in the form used in \cite{Banuelos}.

Finally we study counterparts of \eqref{HWp} in the framework
of generalized Cauchy transforms of complex Radon measures on $\R.$
To put our results into a proper context, we first recall a similar research done for generalized (or, in other terminology, fractional) Cauchy transforms of measures on the unit circle $\mathbb T.$
Let $\mathcal {T}_\alpha$, $\alpha>0,$
be the space of the generalized Cauchy transforms ${\sf C}_\alpha[\mu]$ of order $\alpha$
given by
\begin{equation}\label{MacGG}
{\sf C}_\alpha[\mu](z):=\int_{\mathbb T}\frac{\mu(d\zeta)}{(1-\zeta z)^\alpha},\qquad |z| < 1,
\end{equation}
where $\mu$ runs over the set of probability measures $\mathcal P^+(\mathbb T)$ on $\mathbb T.$
By  one of the fundamental results in the theory of generalized  Cauchy transforms, see e.g. \cite[Theorem 1]{MacGr73} and \cite[p. 22-24]{MCbook},
 if $\mu_j \in \mathcal P^+(\mathbb T)$  and $\alpha_j >0, j=1,2,$
then there exists $\mu \in \mathcal P^+(\mathbb T)$ such that
\begin{equation}\label{prod_cauchy_in}
{\sf C}_{\alpha_1}[\mu_1](z){\sf C}_{\alpha_2}[\mu_2](z)={\sf C}_{\alpha_1+\alpha_2}[\mu](z), \qquad |z|<1.
\end{equation}
In other words, we have
$
\mathcal{T}_{\alpha_1} \mathcal{T}_{\alpha_2}\subset
\mathcal{T}_{\alpha_1+\alpha_2},
$
which is an analogue of the corresponding inclusion for the generalized Stieltjes transforms discussed above.
See \cite{MCbook} for many instances where this result played a role.

In view of \eqref{HWp} and \eqref{prod_cauchy_in}, it is natural to ask whether there exists a relation similar \eqref{prod_cauchy_in}
for generalized Cauchy transforms of measures on $\R.$ Let $\mathcal M^+_\alpha(\R), \alpha \ge 0,$ stand for the set of positive Borel measures $\mu$ on $\R$
satisfying  $(1+|s|)^{-\alpha}\in L^1(\R, \mu).$
For $\mu \in \mathcal M^+_\alpha(\R), \alpha >0,$  define its generalized Cauchy transform $C_\alpha[\mu]$ of order $\alpha$ by
\[
C_\alpha [\mu](z)
:=\int_{\R}\frac{\mu(dt)}{(z+t)^\alpha},
\]
for  $z$ from the upper half-plane $\mathbb C^+.$
Using our technique developed for the generalized Steltjes transforms, we
write down explicitly a ``product'' measure $\mu$ satisfying
\begin{equation}\label{ccauchy}
C_{\alpha_1}[\mu_1](z)C_{\alpha_2}[\mu_2](z)=C_{\alpha_1+\alpha_2}[\mu](z), \qquad z \in \C^+,
\end{equation}
 for  $\mu_1 \in \mathcal M^+_{\alpha_1}(\R)$ and  $\mu_2 \in \mathcal M^+_{\alpha_2}(\R),\alpha_1,\alpha_2>0,$
and even for $\mu_1$ and $\mu_2$ from more general classes of complex Radon measures on $\R.$
 In this case, $\mu$ is not unique,
and we provide a
 $\mu$ preserving good properties of $\mu_1$ and $\mu_2$ such as positivity, integrability or compact support,
and given by a comparatively simple formula.
As a byproduct, as in the case of Stieltjes transforms, the resulting $\mu$ does not have  singular continuous component.
 The task of expressing $\mu$ in terms of $\mu_1$ and $\mu_2$ appears to be rather involved,
so we impose additional assumptions on the size of $\mu_1$ and $\mu_2,$
which hold in several natural situations.
If the size restrictions are dropped, then we also give a formula for $\mu,$
but, in such generality, the formula  becomes rather implicit.
Moreover, it does not necessarily yield a positive $\mu.$
Thus, given $\mu_1\in \mathcal M^+_{\alpha_1}(\R)$ and $\mu_2 \in \mathcal M^+_{\alpha_2}(\R),$
we address the important problem of existence of
$\mu \in \mathcal M^+_{\alpha_1+\alpha_2}(\R)$  such that \eqref{ccauchy} holds.
We prove the existence criteria 
when $\alpha_1=\alpha_2=1$ or $\alpha_1+\alpha_2=1,\alpha_1,\alpha_2>0,$
though the general case remains open.
We also study a related problem of liftings of the generalized Cauchy transforms
from $C_\alpha(\mathcal M^+_\alpha(\mathbb R))$ to $C_\beta(\mathcal M^+_\beta(\mathbb R)), 0<\alpha <\beta,$
and describe several situations when such a lifting is possible.
Despite the generalized Cauchy transforms on $\mathcal P^+ (\mathbb T)$ were studied intensively for some time,
the product formulas and related matters for the generalized Cauchy transforms on $\mathcal M^+_\alpha(\R)$
(and more general classes of measures) appear
probably for the first time.

\section{Notations, preliminaries and conventions}

In this section we collect various notations used throughout the paper,
and clarify parts of our terminology.

To be able to handle "unbounded" complex measures properly, we use the notion of complex Radon measures.
Recall that a bounded linear functional $\mu$ on the Frech\'et space $C_c(\R)$ of continuous functions on $\R$ with compact support is said to be
a complex Radon measure on $\R.$ In other words, a complex Radon measure $\mu$ on $\R$ is a linear functional
 on $C_c(\R)$ such that for every compact $K \subset \R$
the inequality
$|\mu(f)|\le C_K \max_K |f|$ holds for some $C_K>0$  and for all $f \in C_c(\R)$ with support in $K.$
Usually, one uses a more intuitive notation for $\mu$ writing it as an integral: $\mu(f)=\int f d\mu.$
If the context is clear, then the adjective "complex" is usually omitted, and we will follow this rule in the sequel.
Recall that a Radon measure $\mu$ is said to be real if $\mu(f) \in \R$ for all real-valued $f \in C_c(\R),$
and $\mu$ is positive if $\mu(f)\ge 0$ for all $f \in C_c(\R),$ $f \ge 0.$
Note that the space of real Radon measures on $\R$ is a Riesz space with the natural order:
$\mu_1\ge \mu_2$ if $\mu_1-\mu_2$ is a positive Radon measure.
If $\mu$ is a  Radon measure, then there exists the smallest positive Radon measure $|\mu|$ called the variation
(or the absolute  value) of
$\mu$ satisfying $|\mu(f)| \le |\mu|(f)$ for all $f \in C_c(\R),$ $f \ge 0.$
There is an analogue of Jordan's decomposition for
$\mu:$
$$\mu= {\rm Re}\, \mu + i {\rm Im}\, \mu=\mu_1-\mu_2+ i(\mu_3-\mu_4),$$ where $\mu_i, 1 \le i \le 4,$ are positive Radon measures,
such that $|\mu_1-\mu_2|=\mu_1 +\mu_2$ and $|\mu_3-\mu_4|=\mu_3+\mu_4$
(i.e. $\mu_1$ and $\mu_2,$ as well as $\mu_3$ and $\mu_4,$ are mutually singular).
The Radon measures ${\rm Re}\, \mu$ and ${\rm Im}\, \mu$ satisfy $|{\rm Re}\, \mu|\le |\mu|,$ $|{\rm Im}\, \mu|\le |\mu|,$ and $|\mu|\le |{\rm Re}\, \mu|+|{\rm Im}\, \mu|.$
The space $L^1(\R, |\mu|)$ arises as a completion of $C_c(\R)$ in an appropriate semi-norm constructed by means of $\mu,$ and $\mu$ extends to this space accordingly.
For such an extension of  $\mu,$ denoted by the same symbol, and a locally integrable function $g$ (with respect to $\mu$)
 the mapping  $C_c(\R)\ni f \to (g  \mu) (f)=\mu(gf)$  is a Radon measure again, and moreover
$|g  \mu |(dt)=|g(t)| |\mu|(dt).$

Any Radon measure  $\mu$ on $\R$ admits the Lebesgue decomposition $\mu=\mu_a+\mu_d+\mu_s,$
where  $\mu_a,$ $\mu_d,$ and $\mu_s$ stand for locally absolutely continuous
(with respect the Lebesgue measure), purely discrete and
singular continuous part of $\mu,$ respectively. Moreover, the Radon measures $\mu_a,$ $\mu_d$ and $\mu_s$ are defined uniquely, mutually singular, and $|\mu|=|\mu_a|+|\mu_d|+|\mu_s|.$

Observe that positive Radon measures on $\R$ can be identified with locally finite Borel measures on $\R$ by the Riesz representation theorem.
It is also instructive to recall that the space of Radon measures on $\R$ can be considered as the space of functions on $\R$
with locally bounded variation, thus allowing one to use the Lebesgue-Stieltjes integration theory.

In general, using the measure-theoretic terminology, one may consider
 a  Radon measure $\mu$ on $\R$
as a complex Borel measure $\mu$ defined "locally" on $\R$. In other words, if $\mathcal B(\R)$ is the Borel $\sigma$-algebra of $\R,$ then $\mu$ is defined on $\mathcal B(\R)\cap K$ for all compact $K \subset \R,$ takes complex values, and is such that
for any compact $K\subset \R$ the function $B \to \mu(B \cap K), B \in \mathcal B(\R),$ is a complex measure. The need in such objects is clearly justified by a standard example of the Radon measure
${\rm sign}(t) dt,$ which cannot be considered in the framework of complex (or signed) Borel measures on $\mathcal B(\R).$

Finally, note that all of the above considerations are valid with $\R$ replaced by $\R_+,$ with obvious adjustments.

For the theory of Radon measures and their properties one may consult e.g. \cite{Bourb}, \cite[Chapter 7]{Cohn},
\cite[Chapter 4]{Edw}, and \cite[Chapter 4]{Schw}, which sometimes complement each other.
A compact treatment (though without using the Radon measures terminology) in \cite[Chapter 4.16]{HP} is also relevant. Being unable to go into finer details of the theory of Radon measures, we just remark that with certain care, all of the basic facts from the integration theory of positive (Borel) measures can be adapted to this quite a general setting.

For $\alpha \ge 0,$ let ${\mathcal M}_\alpha(\mathbb R_+)$
denote the set of complex Radon measures $\mu$ on $\R_+$
such that
\begin{equation}\label{SSS11}
\|\mu\|_\alpha:=\int_{\R_+}\frac{|\mu|(dt)}{(1+t)^\alpha}<\infty.
\end{equation}
Note that $(\mathcal M(\R_+), \|\cdot\|_\alpha)$ is a linear normed space.
 If  $\mu \in \mathcal M_\alpha(\R_+),$
then $\mu= \mu_{1}-\mu_{2} + i(\mu_{3}-\mu_{4}),$
with $\{ \mu_{i} : 1 \le i \le 4\} \subset \mathcal M^+_{\alpha}(\R_+).$
The normed space $(\mathcal M_\alpha (\R_+),\|\cdot\|_\alpha)$
 can be identified with the dual to
$C_0(\R_+; (1+t)^\alpha):=\{f \in C(\R_+): (1+t)^\alpha f(t)\to 0, \, t \to \infty\}$
equipped with a natural sup-norm,
so  $(\mathcal M_\alpha (\R_+),\|\cdot\|_\alpha)$ is
a Banach space.
The positive part of ${\mathcal M}_\alpha(\mathbb R_+)$
will be denoted by
${\mathcal M}^+_\alpha(\mathbb R_+).$

Similarly, the set of complex Radon measures $\mu$
on $\R$ satisfying
\begin{equation}\label{SSS12}
\|\mu\|_\alpha:=\int_{\R}\frac{|\mu|(dt)}{(1+|t|)^\alpha}<\infty
 \end{equation}
for some $\alpha \ge 0$
will be denoted by $\mathcal M_\alpha (\R),$ and $\mathcal M^+_\alpha(\R)$ will stand for positive Radon measures
in $\mathcal M_\alpha (\R).$
Note that $(\mathcal M_\alpha (\R), \|\cdot\|_\alpha)$ is a Banach space dual to $C_0(\R; (1+|t|)^\alpha).$ Moreover,
${\mathcal M}_\alpha (\mathbb R_+) \subset {\mathcal M}_\alpha(\mathbb R)$
and
${\mathcal M}^+_\alpha (\mathbb R_+) \subset {\mathcal M}^+_\alpha(\mathbb R)$
isometrically, so the norms in \eqref{SSS11} and \eqref{SSS12} agree in a natural sense.

For $\alpha \ge 0$ and $p\ge 1$ let $L^p_{\alpha}(\R_+)$  stand for $L^p(\R_+; (1+t)^{-\alpha}).$ We identify  $f \in L^1_\alpha(\R_+)$
with  $\mu=\mu_f \in \mathcal M_\alpha(\R_+)$ so that  $\mu(dt)=f(t) dt,$
and such an identification yields an isometric embedding of $L^1_\alpha(\R_+)$ into
 $\mathcal M_\alpha(\R_+).$  Similarly,   denoting $L^p(\R; (1+|t|)^{-\alpha}), p \ge 1,$
by $L^p_{\alpha}(\R),$ observe that $L^1_{\alpha}(\R)$ embeds isometrically into $\mathcal M_\alpha(\R).$
For the sequel,  extending $g \in L^p_\alpha(\R_+)$ by zero to $(-\infty,0),$ it will be convenient to identify $g$ with its extension to $\R,$ thus embedding $L^p_\alpha(\R_+)$ into $L^p_\alpha(\R)$ isometrically.

If $\mu \in \mathcal M^+_\alpha(\R_+),$ then its generalized Stieltjes transform of order $\alpha>0$ will be denoted by $S_\alpha[\mu]$, and for $\mu \in \mathcal M_\alpha(\mathbb R)$ we denote its generalized Cauchy transform of order $\alpha>0$ by $C_\alpha[\mu].$
If, moreover, $\mu(dt)=f(t)\,dt$,
and $F$ is one the two transformations above, then we will write
$F[f]$ instead $F[\mu]$.

Furthermore,  we will let $\mathcal S_\alpha$ stand for $\{a+ S_\alpha[\mu]: a\ge 0, \mu \in \mathcal M^+_\alpha(\mathbb R_+)\},$
and if $f \in \mathcal S_\alpha,$   $f=a+ S_\alpha[\mu],$ then we will write $f \sim (a,\mu)_\alpha,$
and call $(a,\mu)_\alpha$ the Stieltjes representation of $f.$
It is essential to note that
$f$ admits an analytic extension to
$\C\setminus (-\infty,0],$
and the Stieltjes representation of $f$ is determined uniquely.
The uniqueness is a direct consequence of a complex inversion theorem for the generalized Stieltjes transforms,
see \cite{Sumner} or \cite{Byrne}, and the property $S_{\alpha}[\mu](z) \to 0$ as $z \to \infty.$
Alternatively, one may use a Laplace representation formula for $f$ (\cite[Theorem 8]{Karp}) and a uniqueness theorem for Laplace
transforms. Though  \cite{Byrne}, \cite{Karp}, and \cite{Sumner} treat only positive measures,
the corresponding reformulations for complex measures are straightforward.

 As usual, $\delta_t, t \in \mathbb R,$  denotes the Dirac delta measure at $t \in \mathbb R,$
 $\chi_{E}$ stands for the characteristic function of a set $E \subset \R,$
 and ${\rm supp} \, \mu$ denotes the support of a measure $\mu.$ 
In the sequel, for  $-\infty \le a<b \le \infty,$ the notation
$
\int_a^b$  is used for $\int_{[a,b]}$ if $a, b \in \mathbb R$,  and for $\int_{(-\infty,b]}$ or
$\int_{[a,\infty)}$ if $a=-\infty$ or $b=\infty,$ respectively.

Throughout the paper, we will use the principal branch of the power function $z \to z ^\alpha, \alpha \in \R,$
given by
\[
z^\alpha=|z|^\alpha e^{i\alpha\arg z},\qquad z\in\C\setminus (-\infty,0],\qquad
\arg(z)\in (-\pi,\pi).
\]

Finally, we let  $\R_+:=[0,\infty)$, $\C^{+}:=\{z\in \C:\,{\rm Im}\,z>0\},$ $\C_+:=\{z \in \C:\, {\rm Re}\, z >0 \},$  $\mathbb T:=\{z\in \C: |z|=1\},$ and $\mathbb D:=\{z \in \mathbb C: |z|<1\}.$

\section{Measures in Hirschman-Widder's representation}\label{st_conv}

In this section, we study the structure and properties of measures $\mu\in \mathcal M_{\alpha_1+\alpha_2}(\R_+)$
arising in Hirschman-Widder's product formula \eqref{HWp}.

The next simple but quite useful result
will be essential for the sequel.
For its proof,
recall that  for all $\alpha_1,\alpha_2>0,$
$-\infty <s<t<\infty,$ and $z\in \C\setminus (-\infty,-s]$  one has
\begin{equation}\label{A2}
\frac{B(\alpha_1,\alpha_2)}{(z+s)^{\alpha_1} (z+t)^{\alpha_2}}=
\frac{1}{(t-s)^{\alpha_1+\alpha_2-1}}
\int_s^t \frac{(\tau-s)^{\alpha_2-1}(t-\tau)^{\alpha_1-1}\,d\tau }{(z+\tau)^{\alpha_1+\alpha_2}},
\end{equation}
where  $B$ is the beta function, defined by
\[
B(\alpha_1,\alpha_2):=\int_{0}^{1} t^{\alpha_1-1}(1-t)^{\alpha_2-1}\, dt, \qquad \alpha_1>0, \, \alpha_2 >0.
\]
See e.g. \cite[p. 301, 2.2.6(2)]{Prudnikov1}.
\begin{theorem}\label{elprod}
Let $\mu_j\in  {\mathcal M}_{\alpha_j}(\R_+)$, $\alpha_j>0,$ $j=1,2.$
Then
\begin{equation}\label{prod_s}
S_{\alpha_1}[\mu_2](z) S_{\alpha_2}[\mu_2](z)=S_{\alpha_1+\alpha_2}[\mu](z),
\qquad z \in \C \setminus (-\infty, 0],
\end{equation}
with the unique $\mu \in  {\mathcal M}_{\alpha_1+\alpha_2}(\R_+)$
defined  by
\begin{equation}\label{m1}
\mu(d\tau)=u(\tau)d\tau+\mu_1(\{\tau\})\mu_2(d\tau),
\end{equation}
and
\begin{align}\label{m0}
B(\alpha_1, \alpha_2) u(\tau)=&
\int_{(\tau,\infty)}\int_{[0,\tau)}
 \frac{(\tau-s)^{\alpha_2-1}(t-\tau)^{\alpha_1-1}}{(t-s)^{\alpha_1+\alpha_2-1}}
\mu_1(ds)\,\mu_2(dt)\\
+&
\int_{(\tau,\infty)} \int_{[0,\tau)}
\frac{(\tau-s)^{\alpha_1-1}(t-\tau)^{\alpha_2-1}}{(t-s)^{\alpha_1+\alpha_2-1}}
\mu_2(ds)\,\mu_1(dt)
\notag
\end{align}
for all almost all $\tau \ge 0.$ Moreover,
 \begin{equation}\label{ineq}
\|\mu \|_{\alpha_1+\alpha_2} \le \|\mu_1\|_{\alpha_1}\|\mu_2\|_{\alpha_2},
\end{equation}
with equality sign if $\mu_j \in \mathcal M^+_{\alpha_j}(\R_+), j=1,2.$
\end{theorem}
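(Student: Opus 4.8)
The plan is to compute the product $S_{\alpha_1}[\mu_1](z)S_{\alpha_2}[\mu_2](z)$ directly as a double integral against the product measure $\mu_1\otimes\mu_2$ and to reorganize it, by means of the beta-function identity \eqref{A2}, into a single generalized Stieltjes transform. First I would reduce to positive measures: each of the three objects appearing in \eqref{prod_s} — the product $S_{\alpha_1}[\mu_1]S_{\alpha_2}[\mu_2]$, the measure $\mu$ defined by \eqref{m1}--\eqref{m0}, and its transform $S_{\alpha_1+\alpha_2}[\mu]$ — is separately linear in $\mu_1$ and in $\mu_2$. Writing $\mu_j=\mu_{j1}-\mu_{j2}+i(\mu_{j3}-\mu_{j4})$ with $\mu_{ji}\in\mathcal M^+_{\alpha_j}(\R_+)$ and $\|\mu_{ji}\|_{\alpha_j}\le\|\mu_j\|_{\alpha_j}$, it therefore suffices to prove \eqref{prod_s} together with the equality case of \eqref{ineq} under the assumption $\mu_1,\mu_2\ge 0$.

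So assume $\mu_1,\mu_2\ge 0$; these are locally finite, hence $\sigma$-finite, and so is $\mu_1\otimes\mu_2$. I would split $\R_+\times\R_+$ into the three Borel sets $\{s<t\}$, $\{s>t\}$ and the diagonal $\Delta=\{s=t\}$, so that
\[
S_{\alpha_1}[\mu_1](z)S_{\alpha_2}[\mu_2](z)=\int_{\R_+\times\R_+}\frac{(\mu_1\otimes\mu_2)(ds,dt)}{(z+s)^{\alpha_1}(z+t)^{\alpha_2}}=I_<+I_>+I_\Delta .
\]
On $\Delta$, Fubini gives $I_\Delta=\int_{\R_+}(z+\tau)^{-(\alpha_1+\alpha_2)}\,\mu_1(\{\tau\})\,\mu_2(d\tau)$, which is precisely $S_{\alpha_1+\alpha_2}$ of the atomic term in \eqref{m1}. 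On $\{s<t\}$ I would substitute \eqref{A2} for $(z+s)^{-\alpha_1}(z+t)^{-\alpha_2}$ and interchange the $d\tau$-integration with $\mu_1(ds)\,\mu_2(dt)$; for fixed $\tau$ the constraint $s<\tau<t$ describes $s\in[0,\tau)$, $t\in(\tau,\infty)$ (the sets $\{s=\tau\}$, $\{t=\tau\}$ being $d\tau$-null), and this yields exactly $S_{\alpha_1+\alpha_2}$ of the density in the first line of \eqref{m0}. On $\{s>t\}$ the identity \eqref{A2}, applied with the roles of $(s,\alpha_1)$ and $(t,\alpha_2)$ exchanged and the integration variables then relabelled, produces in the same manner the transform of the second line of \eqref{m0}. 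Adding the three pieces gives \eqref{prod_s} with $\mu$ as in \eqref{m1}--\eqref{m0}.

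To legitimize these interchanges and, simultaneously, to obtain \eqref{ineq}, I would first run the whole computation with $z=1$, where all integrands are nonnegative and Tonelli applies unconditionally. Using \eqref{A2} at $z=1$ in the form $\int_s^t\frac{(\tau-s)^{\alpha_2-1}(t-\tau)^{\alpha_1-1}}{(t-s)^{\alpha_1+\alpha_2-1}}\frac{d\tau}{(1+\tau)^{\alpha_1+\alpha_2}}=\frac{B(\alpha_1,\alpha_2)}{(1+s)^{\alpha_1}(1+t)^{\alpha_2}}$ (and its $\alpha_1\leftrightarrow\alpha_2$ variant), the contributions of the two lines of \eqref{m0} and of the atomic term to $\int_{\R_+}(1+\tau)^{-(\alpha_1+\alpha_2)}\mu(d\tau)$ are seen to equal the integrals of $(1+s)^{-\alpha_1}(1+t)^{-\alpha_2}$ over $\{s<t\}$, $\{s>t\}$ and $\Delta$, respectively. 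Summing,
\[
\|\mu\|_{\alpha_1+\alpha_2}=\int_{\R_+\times\R_+}\frac{(\mu_1\otimes\mu_2)(ds,dt)}{(1+s)^{\alpha_1}(1+t)^{\alpha_2}}=\|\mu_1\|_{\alpha_1}\,\|\mu_2\|_{\alpha_2}<\infty .
\]
This at once shows $\mu\in\mathcal M_{\alpha_1+\alpha_2}(\R_+)$, that $u$ in \eqref{m0} is finite for a.e.\ $\tau$, gives the equality case of \eqref{ineq}, and — since $|z+\tau|\ge c(z)(1+\tau)$ on $\R_+$ for every $z\in\C\setminus(-\infty,0]$ — supplies the absolute convergence needed to justify the Fubini steps of the previous paragraph for general $z$. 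For complex $\mu_1,\mu_2$, identity \eqref{prod_s} then follows by bilinearity, and \eqref{ineq} follows by domination: $|\mu|(d\tau)\le|u(\tau)|\,d\tau+|\mu_1(\{\tau\})|\,|\mu_2|(d\tau)$, while by the triangle inequality $|u|$ is bounded by the nonnegative density obtained from \eqref{m0} with $|\mu_1|,|\mu_2|$ in place of $\mu_1,\mu_2$; hence $\|\mu\|_{\alpha_1+\alpha_2}\le\bigl\||\mu_1|\bigr\|_{\alpha_1}\bigl\||\mu_2|\bigr\|_{\alpha_2}=\|\mu_1\|_{\alpha_1}\|\mu_2\|_{\alpha_2}$. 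Uniqueness of $\mu$ in $\mathcal M_{\alpha_1+\alpha_2}(\R_+)$ is immediate from injectivity of $\nu\mapsto S_{\alpha_1+\alpha_2}[\nu]$ (the complex inversion theorem, or the Laplace representation, in their versions for complex measures). The step I expect to require the most care is the triple-integral bookkeeping in the off-diagonal parts — tracking which of $\mu_1,\mu_2$ carries which exponent after the two applications of \eqref{A2}, and checking that the excluded boundary sets are genuinely $d\tau$-negligible.
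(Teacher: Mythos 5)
Your proposal is correct and follows essentially the same route as the paper: split the product integral over $\{s<t\}$, $\{s>t\}$ and the diagonal, apply \eqref{A2} with Fubini/Tonelli (justified by positivity, with the $z=1$ computation giving both the norm identity and the needed integrability), and handle complex measures by decomposition and domination via $|\mu_1|,|\mu_2|$. The only cosmetic difference is that you extend to general $z\in\C\setminus(-\infty,0]$ by absolute convergence using $|z+\tau|\ge c(z)(1+\tau)$, whereas the paper simply invokes analytic continuation from $z>0$.
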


\begin{proof}
First assume, in addition, that $\mu_j \in \mathcal M^+_{\alpha_j}(\R_+), j=1,2,$ and let $z>0$ be fixed.
Then using the integrability assumptions on $\mu_1$ and $\mu_2$, write
\[
S[\mu_1](z) S[\mu_2](z)=\int_0^\infty\int_0^\infty
\frac{\mu_1(ds)\mu_2(dt)}{(z+s)^{\alpha_1}
(z+t)^{\alpha_2}}=g_1(z)+g_2(z)+g_0(z),
\]
where
\begin{align*}
g_1(z)=&\int_{(0,\infty)}\int_{[0,t)}
\frac{\mu_1(ds)\mu_2(dt)}{(z+s)^{\alpha_1}
(z+t)^{\alpha_2}},\\
g_2(z)=&\int_0^\infty\int_{(t,\infty)}
\frac{\mu_1(ds)\mu_2(dt)}{(z+s)^{\alpha_1}
(z+t)^{\alpha_2}}=
\int_{(0,\infty)}\int_{[0,t)}
\frac{\mu_2(ds)\mu_1(dt)}{(z+s)^{\alpha_2}
(z+t)^{\alpha_1}},\\
g_0(z)=&\int_0^\infty
\frac{\mu_1(\{t\})\mu_2(dt)}{(z+t)^{\alpha_1+\alpha_2}}=
\int_0^\infty\frac{\mu_2(\{t\})\mu_1(dt)}{(z+t)^{\alpha_1+\alpha_2}}.
\end{align*}
Clearly, all of the integrals above are finite, and
 the function $[0, \infty)\ni t \to \mu_2(\{t\})$ is Borel since its support is countable.
 Next, by (\ref{A2}), positivity of integrands,
and Fubini's theorem,
\begin{align*}
&B(\alpha_1,\alpha_2)g_1(z)\\
=&
\int_{(0,\infty)}\int_{[0,t)}
\int_{(s,t)}\frac{(\tau-s)^{\alpha_2-1}(t-\tau)^{\alpha_1-1}\,d\tau }{(z+\tau)^{\alpha_1+\alpha_2}}\,
\frac{\mu_1(ds)\mu_2(dt)}{(t-s)^{\alpha_1+\alpha_2-1}}\\
=&
\int_0^\infty\int_{(0,t)}
\int_{[0,\tau)}\frac{(\tau-s)^{\alpha_2-1}(t-\tau)^{\alpha_1-1} }
{(t-s)^{\alpha_1+\alpha_2-1}(z+\tau)^{\alpha_1+\alpha_2}}\,
\mu_1(ds)\,d\tau \,\mu_2(dt)\\
=&
\int_0^\infty\int_{(\tau,\infty)}
\int_{[0,\tau)}\frac{(\tau-s)^{\alpha_2-1}(t-\tau)^{\alpha_1-1} }{(t-s)^{\alpha_1+\alpha_2-1}}\,
\mu_1(ds)\,\mu_2(dt)\frac{d\tau}{(z+\tau)^{\alpha_1+\alpha_2}}.
\end{align*}
Similarly,
\begin{align*}
&B(\alpha_1,\alpha_2)g_2(z)\\
=&
\int_{0}^{\infty}\int_{(\tau,\infty)} \int_{[0,\tau)}
\frac{(\tau-s)^{\alpha_1-1}(t-\tau)^{\alpha_2-1}}{(t-s)^{\alpha_1+\alpha_2-1}}
\mu_2(ds)\,\mu_1(dt)\frac{d\tau}{(z+\tau)^{\alpha_1+\alpha_2}}.
\end{align*}

So,  if $z>0,$ then
\begin{equation}\label{pr11}
S_{\alpha_1}[\mu_1](z)S_{\alpha_2}[\mu_2](z)=\int_{0}^\infty \frac{\mu(d\tau)}{(z+\tau)^{\alpha_1+\alpha_2}},
\end{equation}
where $\mu=\mu[\mu_1,\mu_2] \in \mathcal M^+_{\alpha_1+\alpha_2}(\R_+)$ is given by (\ref{m1}) and (\ref{m0}).
(Note that the measurability of $u$ is also guaranteed by Fubini's theorem.)
By analytic continuation, this formula extends to all $z \in \C \setminus (-\infty,0].$

To prove \eqref{ineq} under our additional assumption note that
\begin{equation}\label{equal}
\int_0^\infty \frac{\mu[\mu_1,\mu_2](dt)}{(z+t)^{\alpha_1+\alpha_2}}
=\int_0^\infty \frac{\mu_1(dt)}{(z+t)^{\alpha_1}}\,
\int_0^\infty \frac{\mu_2(dt)}{(z+t)^{\alpha_2}},\qquad z>0,
\end{equation}
so letting $z=1$ in \eqref{equal} we obtain \eqref{ineq}.

In the general case, we apply the first part of the proof to positive measures $|\mu_1|$ and $|\mu_2|$
and, using Fubini's theorem again, arrive at \eqref{m1} and \eqref{m0}.

Furthermore, if $\mu=\mu[\mu_1,\mu_2]$ is defined by \eqref{m1} and \eqref{m0}, then
\begin{equation}\label{positiv}
|\mu[\mu_1, \mu_2]| \le \mu [|\mu_1|, |\mu_2|].
\end{equation}
Hence, in view of \eqref{equal},
\begin{align}\label{positivv}
\int_0^\infty \frac{|\mu[\mu_1,\mu_2]|(dt)}{(1+t)^{\alpha_1+\alpha_2}}\le&
\int_0^\infty \frac{\mu[|\mu_1|,|\mu_2|](dt)}{(1+t)^{\alpha_1+\alpha_2}}\\
=&
\int_0^\infty \frac{|\mu_1|(dt)}{(1+t)^{\alpha_1}}\,
\int_0^\infty \frac{|\mu_2|(dt)}{(1+t)^{\alpha_2}},\notag
\end{align}
which is equivalent to \eqref{ineq}.
\end{proof}
\begin{remark}\label{rema}
Observe that by \eqref{positivv} for almost every $\tau>0,$
the double integrals in \eqref{m0} converge absolutely 
and thus the integration limits in these integrals can be interchanged.
\end{remark}

Given $\mu_1 \in \mathcal M_{\alpha_1}(\R_+)$ and $\mu_2 \in \mathcal M_{\alpha_2}(\R_+),\alpha_1, \alpha_2>0,$ the measure $\mu \in \mathcal M_{\alpha_1+\alpha_2}(\R_+)$  defined by \eqref{m1} and \eqref{m0}
will be called the \emph{Stieltjes convolution} of $\mu_1$  and $\mu_2$ and denoted by $\mu_1\otimes_{\alpha_1,\alpha_2}\mu_2.$
Theorem \ref{elprod} says that
\begin{equation}\label{prodd}
S_{\alpha_1}[\mu_1] S_{\alpha_2}[\mu_2]=S_{\alpha_1+\alpha_2}[\mu_1 \otimes_{\alpha_1,\alpha_2} \mu_2],
\end{equation}
so that with an appropriate choice of orders
the Stieltjes transform maps the Stieltjes convolution of measures into the product of their Stieltjes transforms.
This fact partially explains our terminology, see also Remark \ref{gener} below. There are other, rather simple algebraic properties of the Stieltjes convolution
resembling the usual convolution of measures on $\R_+$, see e.g. \cite{Heymann},  \cite{Miana}, \cite{Schwartz},  \cite{SrivT}, and \cite{Yak} for some of their variants in particular situations and a similar terminology (see also Remark \ref{St_term} below).
However, we decided to avoid a discussion of these properties, since they are not relevant for this paper.

At the same time, the Stieltjes convolution of measures has some unexpected
features.
The next direct corollary reveals a specific structure of representing measures
$\mu_1\otimes_{\alpha_1, \alpha_2}\mu_2$ arising
from the product of the generalized Stieltjes transforms. It emphasizes, in particular,
the fact that the Stieltjes convolution of measures does not have singular continuous part.

\begin{corollary}\label{AbsC}
For $\alpha_j >0, j=1,2$ let $\mu_j\in {\mathcal M}_{\alpha_j}(\R_+)$, and  $\mu=\mu_1\otimes_{\alpha_1,\alpha_2}\mu_2.$
Then the following hold.
\begin{itemize}
\item [(i)] If $\mu_1$ and $\mu_2$ are positive, then $\mu$ is positive too.
\item [(ii)] If
$
\supp \mu_j\subset [a,b], j=1,2,
$
then
$
\supp \mu \subset [a,b]
$
as well.
\item [(iii)]  $\mu$ has no singular continuous part.
\item [(iv)]  $\mu$ is locally absolutely continuous if and only if
$\mu_1$ and $\mu_2$ have disjoint sets of atoms.
\item [(v)] the discrete part $\mu_{d}$ of $\mu$ is of the form
\[
\mu_{d}(d\tau)=\mu_1(\{\tau\})\mu_2(d\tau)=\mu_2(\{\tau\})\mu_1(d\tau).
\]
As consequence, $\mu$ is purely discrete if and only if $\mu_1=c_1 \delta_t$ and $ \mu_2=c_2 \delta_t$
for some $c_1,c_2 \in \mathbb C$ and $t \in \R_+.$
\end{itemize}
\end{corollary}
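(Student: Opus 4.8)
The plan is to read all five assertions off the explicit description \eqref{m1}--\eqref{m0} of $\mu=\mu_1\otimes_{\alpha_1,\alpha_2}\mu_2$. By Theorem~\ref{elprod} we have $\mu=u\,d\tau+\mu_1(\{\tau\})\mu_2(d\tau)$ with $u\in L^1_{\mathrm{loc}}(\R_+)$, so the first thing to observe is that the second summand here is $\mu_1(\{\tau\})\mu_2(d\tau)=\sum_{t}\mu_1(\{t\})\mu_2(\{t\})\,\delta_t$, the sum running over the countable atom set of $\mu_1$; this is a \emph{purely discrete} Radon measure (on any compact $K$ its total variation is at most $|\mu_1|(K)\,|\mu_2|(K)$, hence finite), and it is visibly symmetric under $\mu_1\leftrightarrow\mu_2$. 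On the other hand $u\,d\tau$ is absolutely continuous, and an absolutely continuous measure and a purely discrete one are automatically mutually singular, so by uniqueness of the Lebesgue decomposition $\mu_a=u\,d\tau$, $\mu_d=\mu_1(\{\tau\})\mu_2(d\tau)$ and $\mu_s=0$. This already gives (iii) and the first assertion of (v). Assertion (i) is then immediate (and is in any case part of Theorem~\ref{elprod}): if $\mu_1,\mu_2\ge0$ the integrand of \eqref{m0} and the scalar $\mu_1(\{\tau\})$ are nonnegative, so $\mu\ge0$.

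Next, (iv) follows at once from $\mu_d=\sum_t\mu_1(\{t\})\mu_2(\{t\})\delta_t$: $\mu$ is locally absolutely continuous iff $\mu_d=0$ iff $\mu_1(\{t\})\mu_2(\{t\})=0$ for all $t$, i.e.\ iff $\mu_1$ and $\mu_2$ have no atom in common. The ``if'' half of the last assertion of (v) is a direct substitution: with $\mu_1=c_1\delta_{t_0}$ and $\mu_2=c_2\delta_{t_0}$ each double integral in \eqref{m0} would require a point with $s<\tau<t$ and $s=t=t_0$, which is impossible, so $u\equiv0$ and $\mu=\mu_1(\{\tau\})\mu_2(d\tau)=c_1c_2\delta_{t_0}$ is purely discrete. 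For (ii), let $\supp\mu_j\subset[a,b]$ for $j=1,2$. If $\tau<a$ then $[0,\tau)$, and if $\tau>b$ then $(\tau,\infty)$, is disjoint from $\supp\mu_1\cup\supp\mu_2$; in either case both double integrals in \eqref{m0} are integrals over null sets and hence vanish, so $u(\tau)=0$ for $\tau\notin[a,b]$, while $\mu_d$ is carried by the common atoms of $\mu_1,\mu_2$, which lie in $[a,b]$. Hence $\supp\mu\subset[a,b]$.

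The one genuinely nontrivial point --- and the step I expect to be the main obstacle --- is the ``only if'' half of the last assertion of (v): here $\mu$ purely discrete means $\mu_a=u\,d\tau=0$, i.e.\ $u=0$ a.e., and one must deduce that $\mu_1,\mu_2$ are point masses at a common point. I would do this \emph{assuming $\mu_1,\mu_2$ positive}. The kernels in \eqref{m0} are strictly positive and finite on $\{\,0\le s<\tau<t<\infty\,\}$, so for positive $\mu_j$ the first double integral in \eqref{m0} is $>0$ exactly when $\mu_1([0,\tau))>0$ and $\mu_2((\tau,\infty))>0$, and the second exactly when $\mu_2([0,\tau))>0$ and $\mu_1((\tau,\infty))>0$. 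Writing $A_j:=\inf\supp\mu_j$, $B_j:=\sup\supp\mu_j$ (with $\mu_j\ne0$), one has $\mu_1([0,\tau))>0\iff\tau>A_1$ and $\mu_2((\tau,\infty))>0\iff\tau<B_2$, and symmetrically, so $u>0$ precisely on the open set $(A_1,B_2)\cup(A_2,B_1)$. Nullity of this set forces $A_1\ge B_2$ and $A_2\ge B_1$, which with $A_j\le B_j$ gives $A_1=B_1=A_2=B_2=:t_0$; hence $\supp\mu_j\subset\{t_0\}$ and $\mu_j=\mu_j(\{t_0\})\,\delta_{t_0}$. Positivity is essential at this step: for signed or complex $\mu_1,\mu_2$ cancellation in \eqref{m0} can make $u\equiv0$ without the $\mu_j$ being point masses --- e.g.\ for $\alpha_1=\alpha_2=1$, $\mu_1=\delta_0-\delta_1$ and $\mu_2=\delta_0+\delta_1$ one gets $\mu=\delta_0-\delta_1$ --- so the ``only if'' part of (v) should be understood for positive measures, whereas (i)--(iv) and the first assertion of (v) hold for arbitrary complex Radon measures.
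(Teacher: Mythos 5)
Your proof is correct, and it is precisely the ``straightforward'' verification the paper has in mind when it omits the argument: every assertion is read off the explicit decomposition \eqref{m1}--\eqref{m0}, with the one observation doing real work being that $\mu_1(\{\tau\})\mu_2(d\tau)=\sum_t\mu_1(\{t\})\mu_2(\{t\})\delta_t$ is purely discrete and symmetric in $\mu_1,\mu_2$, while $u\,d\tau$ is absolutely continuous, so uniqueness of the Lebesgue decomposition identifies $\mu_a$, $\mu_d$ and forces $\mu_s=0$. Your treatment of the ``only if'' half of (v) via $A_j=\inf\supp\mu_j$, $B_j=\sup\supp\mu_j$ and the exact description of $\{u>0\}$ as $(A_1,B_2)\cup(A_2,B_1)$ is also correct for positive measures.

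One point deserves emphasis: your counterexample $\mu_1=\delta_0-\delta_1$, $\mu_2=\delta_0+\delta_1$ is valid (indeed $S_1[\mu_1](z)S_1[\mu_2](z)=z^{-2}-(z+1)^{-2}=S_2[\delta_0-\delta_1](z)$, so by uniqueness $\mu_1\otimes_{1,1}\mu_2=\delta_0-\delta_1$, which is purely discrete), and it shows that the ``only if'' direction in the last sentence of (v) is genuinely false for complex measures as the corollary is literally stated. So this is not a gap in your proof but a needed restriction on the statement: parts (i)--(iv) and the formula for $\mu_d$ hold for arbitrary $\mu_j\in\mathcal M_{\alpha_j}(\R_+)$, whereas the characterization of purely discrete $\mu$ requires $\mu_1,\mu_2$ positive (or at least some hypothesis excluding the cancellation you exhibit). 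Flagging this explicitly, as you do, is the right call.
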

The proof of Corollary \ref{AbsC} is straightforward, and is therefore omitted.
\begin{remark}\label{commut}
Curiously, a similar effect of absence of singular continuous component appears
for the so-called free multiplicative convolution
of Borel probability measures on $\R_+$ and for the additive convolution of such measures on $\R$ (when neither of measures is a point mass),
see \cite{Ji} and \cite{Belin} respectively. Though the reasons for this phenomena lie probably much deeper.
\end{remark}

To gain an intuition behind the Stieltjes convolution, consider the following illustrative example.

\begin{example}\label{deltames}
Let $\alpha_j >0, j=1,2.$ If $\mu_1 \in \mathcal M_{\alpha_1}(\R_+)$ and $\mu_2=\delta_a$ for some $a \ge 0,$
then using \eqref{m1} and (\ref{m0}),
we have
\[
\mu_1 \otimes_{\alpha_1,\alpha_2} \mu_2 = B(\alpha_1, \alpha_2)u + \mu(\{a\})\delta_a,
\]
where
\[
u(\tau)=\begin{cases} (a-\tau)^{\alpha_1-1}
 \int_{[0,\tau)}
\frac{(\tau-s)^{\alpha_2-1}}{(a-s)^{{\alpha_1+\alpha_2}-1}}
\mu_1(ds),&  \qquad \tau \in (0,a),\\
(\tau-a)^{\alpha_1-1}\int_{(\tau,\infty)}
 \frac{(s-\tau)^{\alpha_2-1}}{(s-a)^{{\alpha_1+\alpha_2}-1}}\,\mu_1(ds),&  \qquad \tau >a,
\end{cases}
\]
if $a >0,$ and
\[u(\tau)=\tau^{\alpha_1-1}\int_{(\tau,\infty)}
{(s-\tau)^{\alpha_2-1}}{s^{1-\alpha_1-\alpha_2}}\,\mu_1(ds),\qquad  \tau >0,\]
if $a=0.$
In particular, setting $\alpha_1=\alpha_2=1$, we have
\[
\mu_1 \otimes_{1,1} \mu_2 =u+ \mu_1(\{a\})\delta_a
\]
with
\[
u(\tau)=\chi(a-\tau)
 \int_{[0,\tau)}
\frac{\mu(ds)}{a-s}
+\chi(\tau-a)\int_\tau^\infty
 \frac{\mu(ds)}{s-a}.
\]
If $0< a<b <\infty,$
$\mu_1=\delta_ a$ and $\mu_2=\delta_b,$ then $\mu_1,\mu_2 \in \mathcal M_\alpha(\R_+)$ for all $\alpha >0,$
and we have
\[
[\delta_b \otimes_{1,1} \delta_a] (dt)=\frac{1}{b-a}\chi(t-a)\chi(b-t)\, dt= \frac{1}{b-a}\chi_{[a,b]}(t)\, dt, \qquad t >0.
\]

\end{example}

\section{Inequalities for the Stieltjes convolution of measures}\label{st_conv_in}

In this section, we shed a light on the Stieltjes convolution and  generalize  the estimate \eqref{ineq}
by measuring the right-hand side of \eqref{ineq} in stronger norms.

Observe that
if
$
\mu_j \in \mathcal M_0(\R_+), j=1,2,
$
and $\mu=\mu_1\otimes_{\alpha_1,\alpha_2}\mu_2$,
then
 by the dominated convergence  theorem,
\begin{align}
\|\mu\|_0
\le& \lim_{s\to\infty}\,
\int_0^\infty \frac{s^{\alpha_1}|\mu_1|(dt)}{(s+t)^{\alpha_1}}\,
\int_0^\infty \frac{s^{\alpha_2}|\mu_2|(dt)}{(s+t)^{\alpha_2}}\label {ineq1}\\
=&
\int_0^\infty |\mu_1|(dt)\,
\int_0^\infty |\mu_2|(dt)=\|\mu_1\|_0 \|\mu_2\|_0.\notag
\end{align}
The inequalities \eqref{ineq} and \eqref{ineq1} suggest that
 the submultiplicative property $\|\mu\|_{\beta_1+\beta_2} \le \|\mu_1\|_{\beta_1} \|\mu_2\|_{\beta_2}$
extrapolates to the whole of scale $\mathcal M_{\beta_1}(\R_+)\times \mathcal M_{\beta_2}(\R_+),$  $(\beta_1,\beta_2) \in [0,\alpha_1] \times [0,\alpha_2].$
Below we prove that it is so up to certain constants,
apart from the boundary case when one of the numbers $\beta_1$ and $\beta_2$ equals zero.
In the latter case, we show that a (necessarily) weaker inequality holds.

We will need the next simple lemma.
\begin{lemma}\label{elem}
Let  $\alpha> 0$ and let
\begin{equation}\label{element}
F_{\alpha}(t): =
\int_1^\infty\frac{ds}{s (s+t)^\alpha},
\qquad t>0.
\end{equation}
Then there exist $\tilde c_{\alpha}, c_{\alpha} >0$ such that
\begin{equation}\label{log11}
c_\alpha \frac{\log(t+e)}{(1+t)^{\alpha}}   \le F_{\alpha}(t)  \le \tilde c_\alpha \frac{\log(t+e)}{(1+t)^{\alpha}},   \qquad
t>0.
\end{equation}
\end{lemma}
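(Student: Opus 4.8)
The plan is to estimate the integral $F_\alpha(t) = \int_1^\infty \frac{ds}{s(s+t)^\alpha}$ by splitting the range of integration at $s = t$ (or $s = 1+t$) and comparing each piece with the claimed bound $\frac{\log(t+e)}{(1+t)^\alpha}$, treating the regimes $t \le 1$ and $t \ge 1$ separately since the behavior of $\log(t+e)$ and $(1+t)^\alpha$ is only interesting for large $t$.

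First I would dispose of the bounded range $0 < t \le 1$: there $F_\alpha$ is continuous and strictly positive (the integral converges at $s=\infty$ since $\alpha>0$ forces $s(s+t)^\alpha \sim s^{1+\alpha}$), $F_\alpha(0^+)$ is finite and positive, and both $\log(t+e)/(1+t)^\alpha$ and its reciprocal are bounded above and below by positive constants on $[0,1]$; so \eqref{log11} holds on this range for suitable constants by compactness. The real content is the regime $t \ge 1$. Here I would write $F_\alpha(t) = \int_1^t + \int_t^\infty$. On $[1,t]$ we have $s + t \asymp t$, so $\int_1^t \frac{ds}{s(s+t)^\alpha} \asymp \frac{1}{t^\alpha}\int_1^t \frac{ds}{s} = \frac{\log t}{t^\alpha}$, which is comparable to $\frac{\log(t+e)}{(1+t)^\alpha}$ for $t \ge 1$. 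On $[t,\infty)$ we have $s \asymp s+t$ (indeed $s \le s+t \le 2s$), so $\int_t^\infty \frac{ds}{s(s+t)^\alpha} \asymp \int_t^\infty \frac{ds}{s^{1+\alpha}} = \frac{1}{\alpha t^\alpha}$, which is of order $\frac{1}{(1+t)^\alpha}$, hence dominated by the main term $\frac{\log(t+e)}{(1+t)^\alpha}$. Adding the two pieces gives $F_\alpha(t) \asymp \frac{\log t + 1}{t^\alpha} \asymp \frac{\log(t+e)}{(1+t)^\alpha}$ for $t \ge 1$, with explicit constants depending only on $\alpha$.

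Combining the two ranges and taking the smaller (resp.\ larger) of the constants obtained on $[0,1]$ and on $[1,\infty)$ produces the desired $c_\alpha, \tilde c_\alpha > 0$. Alternatively, for a cleaner self-contained argument valid for all $t>0$ at once, one can substitute $s = tr$ to get $F_\alpha(t) = t^{-\alpha}\int_{1/t}^\infty \frac{dr}{r(1+r)^\alpha}$ and analyze $G(x) := \int_x^\infty \frac{dr}{r(1+r)^\alpha}$ as $x \to 0^+$: since $\frac{1}{r(1+r)^\alpha} \sim \frac1r$ near $0$, one has $G(x) = \log(1/x) + O(1)$ as $x \to 0$, while $G$ stays bounded and positive for $x$ bounded away from $0$; substituting $x = 1/t$ recovers $F_\alpha(t) = t^{-\alpha}(\log t + O(1))$ for large $t$ and $F_\alpha(t) \asymp 1$ for small $t$, which is exactly \eqref{log11}.

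There is no serious obstacle here; the lemma is genuinely elementary. The only points requiring a little care are (a) making sure the comparison constants are uniform — i.e.\ that the $O(1)$ term in $\log(1/x) + O(1)$ is bounded below by something that keeps $G(x)$ positive, which is automatic since $G(x) \ge \int_x^1 \frac{dr}{r \cdot 2^\alpha} = 2^{-\alpha}\log(1/x)$ for $x \le 1$ — and (b) gluing the small-$t$ and large-$t$ estimates at $t=1$ without losing positivity of $c_\alpha$, which follows from continuity and strict positivity of $F_\alpha$ on $(0,\infty)$ together with $\lim_{t\to 0^+}F_\alpha(t) \in (0,\infty)$.
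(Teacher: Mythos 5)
Your proof is correct. Your primary route (split $\int_1^\infty=\int_1^t+\int_t^\infty$ for $t\ge 1$, using $s+t\asymp t$ on the first piece and $s+t\asymp s$ on the second, then handle $t\le 1$ by monotonicity/continuity of $F_\alpha$ and positivity of $F_\alpha(0^+)=1/\alpha$) is a slightly different computation from the paper's: the paper instead rescales $s\mapsto s/t$ to get $t^\alpha F_\alpha(t)=\int_{1/t}^\infty\frac{ds}{s(s+1)^\alpha}$ and then integrates by parts, which isolates the main term $\frac{\log t}{(1/t+1)^\alpha}$ plus the bounded remainder $\alpha\int_{1/t}^\infty\frac{\log s\,ds}{(s+1)^{\alpha+1}}$ in a single identity valid for all $t>0$. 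Your alternative route via $G(x)=\int_x^\infty\frac{dr}{r(1+r)^\alpha}=\log(1/x)+\mathrm{O}(1)$ is the same substitution as the paper's, just concluding by a direct comparison near $r=0$ instead of integration by parts. All three arguments are equally elementary; the splitting argument is perhaps the most transparent about where the $\log$ comes from, while the paper's identity packages the uniformity in $t$ with no case distinction. The two points of care you flag (uniformity of the $\mathrm{O}(1)$ term and gluing at $t=1$ without losing positivity of $c_\alpha$) are exactly the right ones, and your justifications for them are sound.
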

\begin{proof}
Fix $t >0.$ Integrating by parts, we obtain
\[
t^\alpha F_{\alpha}(t)=\int_{1/t}^\infty\frac{ds}{s(s+1)^\alpha}=\frac{\log t}{(1/t+1)^\alpha}+
\alpha\int_{1/t}^\infty\frac{\log s\,ds}{(s+1)^{\alpha+1}},
\]
and the estimate \eqref{log11} follows.
\end{proof}

\subsection{The numbers $\beta_1$ and $\beta_2$ are separated from zero}

First we provide bounds for  $\|\mu\|_{\beta_1+\beta_2}$ in terms of
$\|\mu_1\|_{\beta_1}$ and $\|\mu_2\|_{\beta_2},$ when $\beta_j\in (0,\alpha_j],$
$\alpha_j >0,$ $j=1,2.$

Our arguments will rely on the following  two identities.
If $\alpha_1,\alpha_2>0$ and $0<s<t$,  then setting $z=1$ in \eqref{A2},
we obtain
\begin{equation}\label{CD1}
\frac{1}{(t-s)^{\alpha_1+\alpha_2-1}}\int_s^t
\frac{(\tau-s)^{\alpha_2-1}(t-\tau)^{\alpha_1-1}d\tau}
{(1+\tau)^{\alpha_1+\alpha_2}}
=\frac{B(\alpha_1,\alpha_2)}{(1+s)^{\alpha_1}(1+t)^{\alpha_2}}.
\end{equation}
Moreover,
multiplying both sides of  \eqref{A2} by
$(z+s)^{\alpha_1}(z+t)^{\alpha_2}$ and passing to the limit as $z \to \infty$ we obtain
\begin{equation}\label{CD2}
\frac{1}{(t-s)^{\alpha_1+\alpha_2-1}}
\int_s^t
(\tau-s)^{\alpha_2-1}(t-\tau)^{\alpha_1-1}d\tau
=B(\alpha_1,\alpha_2),
\end{equation}
cf. \cite[p. 298, 2.2.5(1)]{Prudnikov1}.

Fix $\alpha_j>0$ and $\beta_j\in (0,\alpha_j], j=1,2,$ and define
\begin{equation}
J (s,t):=\int_s^t
\frac{(\tau-s)^{\alpha_1-1}(t-\tau)^{\alpha_2-1}d\tau}
{(1+\tau)^{\beta_1+\beta_2}},\qquad 0<s<t.
\end{equation}

\begin{lemma}\label{NewL}
Let
$\beta_j\in (0,\alpha_j)$ and  $\gamma_j=\alpha_j-\beta_j,  j=1,2.$
Then for all $0<s<t,$
\begin{equation}\label{nu0}
J(s,t)\le \frac{\beta_1^{\beta_1} \beta_2^{\beta_2} B(\gamma_1,\gamma_2)}{(\beta_1+\beta_2)^{\beta_1+\beta_2}}\cdot \frac{(t-s)^{\alpha_1+\alpha_2-1}}{(1+s)^{\beta_2}(1+t)^{\beta_1}},
\end{equation}
and
\begin{equation}\label{nu01}
J(s,t)\le \frac{\gamma_1^{\gamma_1}\gamma_2^{\gamma_2} B(\beta_1,\beta_2)}{(\gamma_1+\gamma_2)^{\gamma_1+\gamma_2}}\cdot \frac{(t-s)^{\alpha_1+\alpha_2-1}}{(1+s)^{\beta_2}(1+t)^{\beta_1}}.
\end{equation}
\end{lemma}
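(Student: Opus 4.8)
The plan is to bound the integral $J(s,t)$ by splitting off two factors from the integrand in different ways and then using the elementary inequality that for $a,b,p,q>0$ one has
\[
\frac{a^p b^q}{(a+b)^{p+q}}\le \frac{p^p q^q}{(p+q)^{p+q}},
\]
which is the standard maximum of $x^p(1-x)^q$ on $[0,1]$ (attained at $x=p/(p+q)$). To get \eqref{nu0}, I would write, for $\tau\in(s,t)$,
\[
(\tau-s)^{\alpha_1-1}(t-\tau)^{\alpha_2-1}=(\tau-s)^{\gamma_1-1}(t-\tau)^{\gamma_2-1}\cdot(\tau-s)^{\beta_1}(t-\tau)^{\beta_2},
\]
and then estimate the second group against powers of $1+\tau$ relative to $1+s$ and $1+t$. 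Concretely, since $s<\tau<t$, we have $\tau-s\le (1+\tau)$ and, more to the point, $\tau-s\le (t-s)\frac{1+\tau}{1+t}$ is false in general, so instead I would use $(\tau-s)\le (1+\tau)$ together with the bound $\frac{(\tau-s)(t-\tau)}{1+\tau}\le$ something homogeneous; the cleanest route is to note $\frac{1}{(1+\tau)^{\beta_1+\beta_2}}(\tau-s)^{\beta_1}(t-\tau)^{\beta_2}\le \frac{\beta_1^{\beta_1}\beta_2^{\beta_2}}{(\beta_1+\beta_2)^{\beta_1+\beta_2}}\cdot\frac{(t-s)^{\beta_1+\beta_2}}{(1+s)^{\beta_2}(1+t)^{\beta_1}}$, obtained by applying the elementary inequality above with $a=\frac{\tau-s}{1+t}\le\frac{\tau-s}{1+\tau}\cdot\frac{1+\tau}{1+t}$... — the precise massaging is routine but needs care, and this is where I would be most cautious.

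Having pulled that constant out, what remains inside the integral is $\int_s^t(\tau-s)^{\gamma_1-1}(t-\tau)^{\gamma_2-1}\,d\tau$, which by the beta integral \eqref{CD2} (with $\alpha_1,\alpha_2$ replaced by $\gamma_1,\gamma_2$) equals $B(\gamma_1,\gamma_2)(t-s)^{\gamma_1+\gamma_2-1}$. Multiplying the two pieces together and using $\beta_1+\beta_2+\gamma_1+\gamma_2-1=\alpha_1+\alpha_2-1$ gives exactly \eqref{nu0}. For \eqref{nu01} I would do the symmetric split, peeling off $(\tau-s)^{\gamma_1}(t-\tau)^{\gamma_2}$ against $(1+\tau)^{-(\beta_1+\beta_2)}$ — wait, that doesn't balance the exponents — so instead peel off $(\tau-s)^{\beta_1-1}(t-\tau)^{\beta_2-1}$ as the beta-integrable part (giving $B(\beta_1,\beta_2)(t-s)^{\beta_1+\beta_2-1}$ by \eqref{CD2}) and bound the remaining $(\tau-s)^{\gamma_1}(t-\tau)^{\gamma_2}(1+\tau)^{-(\beta_1+\beta_2)}$ — but here the power on $1+\tau$ is $\beta_1+\beta_2$, not $\gamma_1+\gamma_2$, so I must instead bound $(\tau-s)^{\gamma_1}(t-\tau)^{\gamma_2}\le \frac{\gamma_1^{\gamma_1}\gamma_2^{\gamma_2}}{(\gamma_1+\gamma_2)^{\gamma_1+\gamma_2}}(t-s)^{\gamma_1+\gamma_2}$ crudely and then compensate with $(1+\tau)^{-(\beta_1+\beta_2)}\le (1+s)^{-\beta_2}(1+t)^{-\beta_1}$, which holds because $(1+\tau)^{\beta_1+\beta_2}\ge(1+s)^{\beta_2}(1+t)^{\beta_1}$ for $s\le\tau\le t$ by splitting $(1+\tau)^{\beta_1}\ge(1+s)^{\beta_1}$... no, I want $(1+\tau)^{\beta_1}\ge$ — this needs $1+\tau\ge 1+s$ for the $\beta_2$-power and $1+\tau\le 1+t$ going the wrong way.

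So the correct and uniform strategy for both bounds is: write $(1+\tau)^{\beta_1+\beta_2}\ge(1+s)^{\beta_2}(1+t)^{\beta_1}$? This is \emph{false} since $1+\tau\le 1+t$. The genuinely available estimate is $(1+s)\le(1+\tau)\le(1+t)$, hence $(1+\tau)^{-(\beta_1+\beta_2)}\le(1+s)^{-(\beta_1+\beta_2)}$, which is too lossy. The resolution — and the actual crux — is to combine the $\tau$-dependence in the \emph{numerator} with that in the denominator before bounding: set $x=\frac{\tau-s}{t-s}\in(0,1)$, so $\tau-s=x(t-s)$, $t-\tau=(1-x)(t-s)$, and $1+\tau=(1-x)(1+s)+x(1+t)$. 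Then in \eqref{nu0} the quantity to maximize over $x$ is $\frac{x^{\beta_1}(1-x)^{\beta_2}}{\big((1-x)(1+s)+x(1+t)\big)^{\beta_1+\beta_2}}$, and by homogeneity (dividing numerator and denominator appropriately, or applying the weighted AM-GM $(1-x)(1+s)+x(1+t)\ge(1+s)^{1-x}(1+t)^{x}\ge(1+s)^{\beta_2/(\beta_1+\beta_2)\cdot(\beta_1+\beta_2)}$ — careful) one checks this is $\le\frac{\beta_1^{\beta_1}\beta_2^{\beta_2}}{(\beta_1+\beta_2)^{\beta_1+\beta_2}(1+s)^{\beta_2}(1+t)^{\beta_1}}$ by the single-variable calculus of $g(x)=\beta_1\log x+\beta_2\log(1-x)-(\beta_1+\beta_2)\log\big((1-x)(1+s)+x(1+t)\big)$, whose critical point is computed explicitly and whose value there gives precisely the claimed constant. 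The hard part, then, is exactly this one-variable optimization — establishing that the maximum is attained at $x_\ast$ with $\frac{x_\ast}{1-x_\ast}=\frac{\beta_1}{\beta_2}\cdot\frac{1+s}{1+t}$ (or its analogue), which I would verify by differentiating $g$; everything else (the beta-integral identities \eqref{CD1}, \eqref{CD2}, the exponent bookkeeping, and the two symmetric roles of $(\beta_1,\beta_2)$ versus $(\gamma_1,\gamma_2)$) is mechanical. Inequality \eqref{nu01} then follows by the identical argument with the roles of the $\beta$'s and $\gamma$'s in the numerator interchanged, using \eqref{CD2} with parameters $\beta_1,\beta_2$ for the integrated factor and the optimization applied to the $\gamma$-powers.
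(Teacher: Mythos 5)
Your argument for \eqref{nu0} is correct, and it is genuinely different from (and more elementary than) the paper's proof: you take the pointwise supremum of $(\tau-s)^{\beta_1}(t-\tau)^{\beta_2}(1+\tau)^{-(\beta_1+\beta_2)}$ via the substitution $1+\tau=(1-x)(1+s)+x(1+t)$ and weighted AM--GM, then integrate the remaining $(\tau-s)^{\gamma_1-1}(t-\tau)^{\gamma_2-1}$ by \eqref{CD2}. The paper instead applies H\"older's inequality with a free parameter $\nu\in(0,1)$, evaluates \emph{both} resulting integrals exactly by \eqref{CD1} and \eqref{CD2}, and recovers the two constants by letting $\nu\to 0$ and $\nu\to 1$ with Stirling's formula for the beta function; your argument for \eqref{nu0} is precisely the $L^\infty\times L^1$ endpoint of that H\"older scheme and avoids the Stirling limit entirely.

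However, your route to \eqref{nu01} has a genuine gap. As stated, you integrate the unweighted factor $(\tau-s)^{\beta_1-1}(t-\tau)^{\beta_2-1}$ by \eqref{CD2} and bound the remaining factor $(\tau-s)^{\gamma_1}(t-\tau)^{\gamma_2}(1+\tau)^{-(\beta_1+\beta_2)}$ pointwise by $\gamma_1^{\gamma_1}\gamma_2^{\gamma_2}(\gamma_1+\gamma_2)^{-(\gamma_1+\gamma_2)}(t-s)^{\gamma_1+\gamma_2}(1+s)^{-\beta_2}(1+t)^{-\beta_1}$. That pointwise bound is false in general: the homogeneity of the weight matches $\beta_1+\beta_2$, not $\gamma_1+\gamma_2$. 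Concretely, take $s=0$, $t$ large, and $\tau-s=(t-s)/(1+t)$; then the left side is of order $(t-s)^{\gamma_1+\gamma_2}(1+t)^{-\gamma_1}$ while the claimed right side is of order $(t-s)^{\gamma_1+\gamma_2}(1+t)^{-\beta_1}$, and the inequality fails whenever $\beta_1>\gamma_1$. The repair is to put the weight with the \emph{integrated} factor rather than the sup factor: write
\begin{equation*}
J(s,t)\le \sup_{s<\tau<t}\Bigl[(\tau-s)^{\gamma_1}(t-\tau)^{\gamma_2}\Bigr]\cdot
\int_s^t\frac{(\tau-s)^{\beta_1-1}(t-\tau)^{\beta_2-1}\,d\tau}{(1+\tau)^{\beta_1+\beta_2}},
\end{equation*}
bound the supremum by $\gamma_1^{\gamma_1}\gamma_2^{\gamma_2}(\gamma_1+\gamma_2)^{-(\gamma_1+\gamma_2)}(t-s)^{\gamma_1+\gamma_2}$ (ordinary, unweighted optimization), and evaluate the integral exactly by the \emph{weighted} identity \eqref{CD1} with parameters $\beta_1,\beta_2$, which produces $B(\beta_1,\beta_2)(t-s)^{\beta_1+\beta_2-1}(1+s)^{-\beta_2}(1+t)^{-\beta_1}$. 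This yields \eqref{nu01} and makes the two bounds genuinely symmetric: in \eqref{nu0} the weight travels with the sup factor, in \eqref{nu01} with the integrated factor.
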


\begin{proof}
Let $\nu\in (0,1)$ and $0<s<t$ be fixed. Then by H\"older's inequality,
\begin{align}
J(s,t)
=&\int_s^t
\frac{(\tau-s)^{\beta_1-\nu}(t-\tau)^{\beta_2-\nu}}
{(1+\tau)^{\beta_1+\beta_2}}(\tau-s)^{\gamma_1-(1-\nu)}(t-\tau)^{\gamma_2-(1-\nu)}\,d\tau\label{jst} \\
\le& \left(\int_s^t
\frac{(\tau-s)^{\beta_1/\nu-1}(t-\tau)^{\beta_2/\nu-1}}
{(1+\tau)^{\beta_1/\nu+\beta_2/\nu}}\, d\tau\right)^\nu 
\notag \\
\times& \left(\int_s^t
(\tau-s)^{\gamma_1/(1-\nu)-1}(t-\tau)^{\gamma_2/(1-\nu)-1}\,d\tau\right)^{1-\nu}.
\notag
\end{align}
Using (\ref{CD1}) and  (\ref{CD2}) to transform the last two terms in \eqref{jst},
we obtain
\begin{equation}\label{RR1}
J(s,t)\le
B_\nu\frac{(t-s)^{\alpha_1+\alpha_2-1}}{(1+s)^{\beta_2}(1+t)^{\beta_1}},
\end{equation}
where
\[
B_\nu=\left(B\left(\frac{\beta_1}{\nu},\frac{\beta_2}{\nu}\right)\right)^\nu \left(B\left(\frac{\gamma_1}{1-\nu},\frac{\gamma_2}{1-\nu}\right) \right)^{1-\nu}.
\]

Next, employing the relation $B(s,t)=\Gamma(t)\Gamma(s)/\Gamma(t+s), t, s>0,$ and Stirling's formula, note that
\begin{equation}\label{asbeta}
\lim_{s,t \to \infty} \frac {(s+t)^{s+t-\frac{1}{2}} B(s,t)}{\sqrt {2 \pi} s^{s-\frac{1}{2}} t^{t-\frac{1}{2}}}=1.
\end{equation}
Thus, in view of \eqref{asbeta},
\begin{equation}\label{betanu}
\lim_{\nu\to 0}\,B_\nu=\frac{\beta_1^{\beta_1} \beta_2^{\beta_2}}{(\beta_1+\beta_2)^{\beta_1+\beta_2}} B(\gamma_1,\gamma_2).
\end{equation}
So letting $\nu \to 0$ in \eqref{RR1},  the estimate (\ref{nu0})
follows.

On the other hand, since similarly to \eqref{betanu},
\[
\lim_{\nu\to 1}\,B_\nu=\frac{\gamma_1^{\gamma_1}\gamma_2^{\gamma_2}}{(\gamma_1+\gamma_2)^{\gamma_1+\gamma_2}}B(\beta_1,\beta_2),
\]
we let $\nu \to 1$ in  (\ref{RR1}) and obtain
(\ref{nu01}).
\end{proof}
Next, for $\alpha_1,\alpha_2 >0$ and $\beta_1 \in [0,\alpha_1],$ $ \beta_2 \in [0,\alpha_2],$
we  estimate the size of $\|\mu_1\otimes_{\alpha_1,\alpha_2}\mu_2\|_{\beta_1+\beta_2}.$
To simplify our presentation, we separate the cases when $\beta_j \in (0,\alpha_j), j=1,2,$
and when either $\beta_1=\alpha_1$ or $\beta_2=\alpha_2.$ 
The first case is covered by the next result.
\begin{theorem}\label{11S}
For $\alpha_j >0$ let
$
\beta_j\in (0,\alpha_j)$ and $\gamma_j:=\alpha_j-\beta_j, j=1,2.$
If $\mu_j \in \mathcal M_{\beta_j}(\R_+),  j=1,2,$
and $\mu=\mu_1\otimes_{\alpha_1,\alpha_2}\mu_2,$
then $ \mu \in \mathcal M_{\beta_1+\beta_2}(\R_+)$ and
\begin{equation}\label{BB12}
\|\mu\|_{\beta_1+\beta_2}\le \min(A_1,A_2)\|\mu\|_{\beta_1} \|\mu_2\|_{\beta_2},
\end{equation}
where
\begin{equation}\label{a1a2}
A_1:=\frac{\beta_1^{\beta_1} \beta_2^{\beta_2}}{(\beta_1+\beta_2)^{\beta_1+\beta_2}} \cdot \frac{B(\gamma_1,\gamma_2)}{B(\alpha_1,\alpha_2)}\quad \text{and} \quad
A_2:=\frac{\gamma_1^{\gamma_1}\gamma_2^{\gamma_2}}{(\gamma_1+\gamma_2)^{\gamma_1+\gamma_2}}\cdot \frac{B(\beta_1,\beta_2)}{B(\alpha_1,\alpha_2)}.
\end{equation}
\end{theorem}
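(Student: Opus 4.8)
The plan is to bound $\|\mu\|_{\beta_1+\beta_2}$ directly from the explicit formula \eqref{m0} for the density $u$ and the discrete part in \eqref{m1}, reducing everything to the pointwise estimates on $J(s,t)$ supplied by Lemma \ref{NewL}. First I would reduce to the case of positive measures: since $|\mu|=|\mu_1\otimes_{\alpha_1,\alpha_2}\mu_2|\le |\mu_1|\otimes_{\alpha_1,\alpha_2}|\mu_2|$ by \eqref{positiv}, it suffices to prove \eqref{BB12} for $\mu_j\in\mathcal M^+_{\beta_j}(\R_+)$, $j=1,2$, and the general case follows at once. So assume $\mu_1,\mu_2\ge 0$.

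Next I would integrate \eqref{m0} against $(1+\tau)^{-(\beta_1+\beta_2)}\,d\tau$ and apply Fubini (legitimate by positivity). The two terms in \eqref{m0} are symmetric under swapping $(\mu_1,\alpha_1)\leftrightarrow(\mu_2,\alpha_2)$, so consider the first one: after interchanging the $\tau$-integral to the inside, the inner integral over $\tau\in(s,t)$ becomes exactly
\[
\int_s^t\frac{(\tau-s)^{\alpha_2-1}(t-\tau)^{\alpha_1-1}}{(1+\tau)^{\beta_1+\beta_2}}\,d\tau,
\]
which is $J(s,t)$ with the roles of $\alpha_1,\alpha_2$ interchanged. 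Applying \eqref{nu0} (with the corresponding relabeling, so that the exponents match: the factor $(t-s)^{\alpha_1+\alpha_2-1}$ cancels the $(t-s)^{-(\alpha_1+\alpha_2-1)}$ already present in \eqref{m0}, and the weight becomes $(1+s)^{-\beta_1}(1+t)^{-\beta_2}$ in one term and $(1+s)^{-\beta_2}(1+t)^{-\beta_1}$ in the other) yields
\[
B(\alpha_1,\alpha_2)\int_0^\infty\frac{u(\tau)\,d\tau}{(1+\tau)^{\beta_1+\beta_2}}
\le \frac{\beta_1^{\beta_1}\beta_2^{\beta_2}B(\gamma_1,\gamma_2)}{(\beta_1+\beta_2)^{\beta_1+\beta_2}}\int_{[0,\infty)}\int_{[0,\infty)}\frac{\mu_1(ds)\,\mu_2(dt)}{(1+s)^{\beta_1}(1+t)^{\beta_2}},
\]
where the double integral over $\{s<t\}$ plus its symmetric counterpart over $\{s>t\}$ reassembles into the full product $\|\mu_1\|_{\beta_1}\|\mu_2\|_{\beta_2}$. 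One must also handle the discrete part $\mu_1(\{\tau\})\mu_2(d\tau)$ in \eqref{m1}: the diagonal contribution $\int (1+t)^{-(\beta_1+\beta_2)}\mu_1(\{t\})\mu_2(dt)$ is bounded by $\|\mu_1\|_{\beta_1}\|\mu_2\|_{\beta_2}$ trivially (indeed $\mu_1(\{t\})\le(1+t)^{\beta_1}\|\mu_1\|_{\beta_1}$), but one should check it is absorbed into the same constant — in fact the cleanest route is to observe that the full identity \eqref{m0}–\eqref{m1} was obtained by splitting the product integral over $\{s<t\}$, $\{s>t\}$, $\{s=t\}$, so the bookkeeping of the diagonal is automatic if one instead estimates $\int(1+\tau)^{-(\beta_1+\beta_2)}\mu(d\tau)$ directly from the decomposition $g_1+g_2+g_0$ of the proof of Theorem \ref{elprod} with $z$ replaced by the appropriate substitution. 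This gives the bound with constant $A_1$; running the same argument with \eqref{nu01} in place of \eqref{nu0} gives the bound with constant $A_2$, and taking the minimum yields \eqref{BB12}.

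The membership $\mu\in\mathcal M_{\beta_1+\beta_2}(\R_+)$ is a consequence of the finiteness of the right-hand side of \eqref{BB12} once we know the integrals defining $u$ converge a.e., which is guaranteed by Fubini as in Remark \ref{rema}. The main obstacle I anticipate is purely notational: keeping the relabeling of $\alpha_1\leftrightarrow\alpha_2$ and $\beta_1\leftrightarrow\beta_2$ consistent between the two symmetric terms of \eqref{m0} and the statement of Lemma \ref{NewL}, so that both terms genuinely produce the \emph{same} constant $A_1$ (resp. $A_2$) rather than two different ones — this works because $A_1$ and $A_2$ as defined in \eqref{a1a2} are symmetric in the pairs $(\beta_1,\gamma_1)$–$(\beta_2,\gamma_2)$ jointly with the corresponding swap, so the two contributions combine cleanly. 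A secondary point requiring minor care is that Lemma \ref{NewL} is stated for $\beta_j\in(0,\alpha_j)$ strictly (hence $\gamma_j>0$, so $B(\gamma_1,\gamma_2)$ is finite), which is exactly the hypothesis of Theorem \ref{11S}; the boundary cases $\beta_j=\alpha_j$ are deliberately excluded here and treated separately afterwards.
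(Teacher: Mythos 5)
Your strategy coincides with the paper's: reduce to positive measures via \eqref{positiv}, integrate the density \eqref{m0} against $(1+\tau)^{-\beta_1-\beta_2}\,d\tau$, interchange the order of integration by positivity, and apply the two bounds of Lemma \ref{NewL} to the inner integral, which (after the $\alpha_1\leftrightarrow\alpha_2$ relabeling you correctly flag) cancels the factor $(t-s)^{-(\alpha_1+\alpha_2-1)}$ and produces the weights $(1+s)^{-\beta_1}(1+t)^{-\beta_2}$, hence the constants $A_1$ and $A_2$ of \eqref{a1a2}. Up to that point the argument is sound and is exactly the paper's.

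The genuine gap is the absorption of the discrete part $\mu_1(\{\tau\})\mu_2(d\tau)$ of \eqref{m1}. Its contribution to $\|\mu\|_{\beta_1+\beta_2}$ is precisely the diagonal piece of the double integral representing $\|\mu_1\|_{\beta_1}\|\mu_2\|_{\beta_2}$, and it enters with constant $1$, while the off-diagonal pieces enter with constant $A_j$. To conclude \eqref{BB12} with the constant $\min(A_1,A_2)$ one therefore needs $A_j\ge 1$, and this is not automatic: in \eqref{a1a2} the factor $\beta_1^{\beta_1}\beta_2^{\beta_2}(\beta_1+\beta_2)^{-\beta_1-\beta_2}$ is $\le 1$ while the beta-function ratio is $\ge 1$, so the product could a priori be less than $1$. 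Your two suggested fixes do not close this: the pointwise bound $\mu_1(\{t\})\le(1+t)^{\beta_1}\|\mu_1\|_{\beta_1}$ again yields only a constant-$1$ estimate for the diagonal, and re-running the $g_1+g_2+g_0$ decomposition leaves the $g_0$ term with constant $1$ as well. The paper settles this with a short separate argument (its inequality \eqref{aj}): setting $t=s+1$ in \eqref{RR1} and comparing with the lower bound $J(s,s+1)\ge B(\alpha_1,\alpha_2)(s+2)^{-\beta_1-\beta_2}$ coming from \eqref{CD2}, then letting $s\to\infty$, gives $B_\nu\ge B(\alpha_1,\alpha_2)$ and hence $A_j\ge 1$ in the limits $\nu\to 0$ and $\nu\to 1$. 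With that supplement your proof is complete; without it, your argument only yields \eqref{BB12} with $\min(A_1,A_2)$ replaced by $\max(\min(A_1,A_2),1)$.
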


\begin{proof}
By Theorem  \ref{elprod}, the Stieltjes convolution $\mu$ is given by \eqref{m1} and \eqref{m0}.
Assume first that $\mu_1$ and $\mu_2$ are positive, so that $\mu$ and $u$ in \eqref{m1} are positive.
Then, by (\ref{m0}), Fubini's theorem and Lemma \ref{NewL},
we have
\begin{align*}
\int_0^\infty \frac{u(\tau)\,d\tau}{(1+\tau)^{\beta_1+\beta_2}}
\le& \frac{1}{B(\alpha_1,\alpha_2)}\int_{(0,\infty)} \int_{[0,t)}
J(s,t)\frac{\mu_1(ds)\, \mu_2(dt)}{(t-s)^{\alpha_1+\alpha_2-1}}\\
+&\frac{1}{B(\alpha_1,\alpha_2)}\int_{(0,\infty)} \int_{[0,t)}
J(s,t)\frac{\mu_2(ds)\,\mu_1(dt)}{(t-s)^{\alpha_1+\alpha_2-1}}\\
\le& A_j\int_{(0,\infty)} \int_{[0,t)}
\frac{\mu_1(ds)\, \mu_2(dt)}{(1+s)^{\beta_1}(1+t)^{\beta_2}}\\
+&A_j\int_{(0,\infty)} \int_{[0,t)}
\frac{\mu_2(ds)\, \mu_1(dt)}{(1+s)^{\beta_2}(1+t)^{\beta_1}},
\end{align*}
where $A_j$ stands for either $A_1$ or $A_2.$
Furthermore, note that
\begin{equation}\label{aj}
A_j \ge 1, \qquad j=1,2.
\end{equation}
Indeed, setting $t=s+1$ in \eqref{RR1} we infer that
\[
\frac{B(\alpha_1,\alpha_2)}{(s+2)^{\beta_1+\beta_2}}\le J(s,s+1)\le \frac{B_\nu}{(s+1)^{\beta_1+\beta_2}},
\]
so
\[
B_\nu \ge B(\alpha_1,\alpha_2) \frac{(s+1)^{\beta_1+\beta_2}}{(s+2)^{\beta_1+\beta_2}}, \qquad s >0.
\]
Passing to the limit in the above inequality as $s \to \infty$ and recalling the definition of $A_j,$
we deduce \eqref{aj}.

Therefore, taking into account \eqref{aj}
and (\ref{m1}),
we conclude that
\begin{align*}
&\|\mu\|_{\beta_1+\beta_2}
\le A_j \int_0^\infty \int_{[0,t)}
\frac{\mu_1(ds)\, \mu_2(dt)}{(1+s)^{\beta_1}(1+t)^{\beta_2}}\\
+&A_j\int_0^\infty \int_{(t,\infty)}
\frac{\mu_1(ds)\, \mu_2(dt)}{(1+s)^{\beta_1}(1+t)^{\beta_2}}
+\int_0^\infty\frac{\mu_1 \{\tau\} \, \mu_2(d\tau)}{(1+\tau)^{\beta_1+\beta_2}}\\
\le& A_j\|\mu_1\|_{\beta_1} \|\mu_2\|_{\beta_2},
\end{align*}
and the statement follows under our positivity assumption.
In the general case, it suffices to use \eqref{positiv} and to apply the first part of the proof.
\end{proof}
Now we consider the second case where it is clearly enough to
assume that $\beta_2=\alpha_2$ and $\beta_1 \in (0,\alpha_1).$
\begin{corollary}\label{11SS}
For $\alpha_1,\alpha_2 >0$ let $\beta_2=\alpha_2$ and
$
\beta_1\in (0,\alpha_1).$
If $\mu_j \in \mathcal M_{\beta_j}(\R_+),  j=1,2,$
and $\mu=\mu_1\otimes_{\alpha_1,\alpha_2}\mu_2,$
then $ \mu \in \mathcal M_{\beta_1+\beta_2}(\R_+)$ and
\begin{equation}\label{BB12B}
\|\mu\|_{\beta_1+\beta_2}\le\frac{B(\beta_1,\beta_2)}{B(\alpha_1,\alpha_2)} \|\mu\|_{\beta_1} \|\mu_2\|_{\beta_2}.
\end{equation}
\end{corollary}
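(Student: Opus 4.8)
The plan is to follow the proof of Theorem~\ref{11S} essentially verbatim, the only new ingredient being a replacement for Lemma~\ref{NewL}, whose H\"older estimate is unavailable here since it requires $\gamma_2:=\alpha_2-\beta_2>0$. Set $\gamma_1:=\alpha_1-\beta_1\in(0,\alpha_1)$. First I note that $\mu:=\mu_1\otimes_{\alpha_1,\alpha_2}\mu_2$ is well defined because $\mathcal M_{\beta_j}(\R_+)\subset\mathcal M_{\alpha_j}(\R_+)$, and that, exactly as in the proofs of Theorems~\ref{elprod} and \ref{11S}, it is enough to establish \eqref{BB12B} for positive $\mu_1,\mu_2$ and then to pass to general $\mu_1,\mu_2$ via \eqref{positiv}. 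So assume $\mu_1,\mu_2\ge0$; then the density $u$ in \eqref{m1} is nonnegative, and after Fubini the first and the second term of \eqref{m0} give rise, for $0<s<t$, to the inner integrals
\[
K_1(s,t):=\int_s^t\frac{(\tau-s)^{\alpha_2-1}(t-\tau)^{\alpha_1-1}\,d\tau}{(1+\tau)^{\beta_1+\beta_2}},
\qquad
K_2(s,t):=\int_s^t\frac{(\tau-s)^{\alpha_1-1}(t-\tau)^{\alpha_2-1}\,d\tau}{(1+\tau)^{\beta_1+\beta_2}}
\]
paired, respectively, with $\mu_1(ds)\,\mu_2(dt)$ and $\mu_2(ds)\,\mu_1(dt)$.

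The key step will be the pair of estimates
\[
K_1(s,t)\le B(\beta_1,\beta_2)\,\frac{(t-s)^{\alpha_1+\alpha_2-1}}{(1+s)^{\beta_1}(1+t)^{\beta_2}},\qquad
K_2(s,t)\le B(\beta_1,\beta_2)\,\frac{(t-s)^{\alpha_1+\alpha_2-1}}{(1+s)^{\beta_2}(1+t)^{\beta_1}},\qquad 0<s<t.
\]
To prove them I use that $\alpha_2=\beta_2$ and peel off the single extra factor: in $K_1$ I write $(t-\tau)^{\alpha_1-1}=(t-\tau)^{\beta_1-1}(t-\tau)^{\gamma_1}$ and bound $(t-\tau)^{\gamma_1}\le(t-s)^{\gamma_1}$, while in $K_2$ I write $(\tau-s)^{\alpha_1-1}=(\tau-s)^{\beta_1-1}(\tau-s)^{\gamma_1}\le(\tau-s)^{\beta_1-1}(t-s)^{\gamma_1}$. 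Pulling $(t-s)^{\gamma_1}$ outside the integral and applying the exact identity \eqref{CD1} with parameters $(\beta_1,\beta_2)$ (respectively $(\beta_2,\beta_1)$) to what remains yields the two bounds, since $\gamma_1+\beta_1+\beta_2-1=\alpha_1+\beta_2-1=\alpha_1+\alpha_2-1$. This elementary substitute for Lemma~\ref{NewL} is the main point, and it is rather mild; everything else is the bookkeeping already carried out in the proof of Theorem~\ref{11S}.

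It then remains to insert these estimates into \eqref{m0}, divide by $B(\alpha_1,\alpha_2)$, and integrate against $(1+\tau)^{-(\beta_1+\beta_2)}\,d\tau$; by Fubini,
\[
\int_0^\infty\frac{u(\tau)\,d\tau}{(1+\tau)^{\beta_1+\beta_2}}
\le\frac{B(\beta_1,\beta_2)}{B(\alpha_1,\alpha_2)}\Bigl[\int_0^\infty\!\!\int_{[0,t)}\frac{\mu_1(ds)\,\mu_2(dt)}{(1+s)^{\beta_1}(1+t)^{\beta_2}}+\int_0^\infty\!\!\int_{[0,t)}\frac{\mu_2(ds)\,\mu_1(dt)}{(1+s)^{\beta_2}(1+t)^{\beta_1}}\Bigr].
\]
Relabelling the second integral rewrites the bracket as $\int_0^\infty\!\!\int_{\{s\neq t\}}(1+s)^{-\beta_1}(1+t)^{-\beta_2}\,\mu_1(ds)\,\mu_2(dt)$, while the atomic part of $\mu$ in \eqref{m1} contributes $\int_0^\infty(1+\tau)^{-(\beta_1+\beta_2)}\mu_1(\{\tau\})\,\mu_2(d\tau)=\int_0^\infty\!\!\int_{\{s=t\}}(1+s)^{-\beta_1}(1+t)^{-\beta_2}\,\mu_1(ds)\,\mu_2(dt)$. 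Summing the two and using $B(\beta_1,\beta_2)/B(\alpha_1,\alpha_2)\ge1$ — which holds because $x\mapsto B(x,\beta_2)$ is decreasing and $\beta_1<\alpha_1$, $\beta_2=\alpha_2$, or alternatively by the $t=s+1$ trick used to obtain \eqref{aj} — gives \eqref{BB12B} and, in particular, $\mu\in\mathcal M_{\beta_1+\beta_2}(\R_+)$. Applying this to $|\mu_1|$ and $|\mu_2|$ and invoking $|\mu_1\otimes_{\alpha_1,\alpha_2}\mu_2|\le|\mu_1|\otimes_{\alpha_1,\alpha_2}|\mu_2|$ (see \eqref{positiv}) then settles the general case.
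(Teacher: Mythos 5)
Your proof is correct, but it takes a genuinely different route from the paper. The paper obtains Corollary \ref{11SS} by passing to the limit $\beta_2\to\alpha_2$ in the already-proved inequality \eqref{BB12}, using that $A_2\to B(\beta_1,\alpha_2)/B(\alpha_1,\alpha_2)$ while $A_1\to\infty$, together with monotone convergence of the norms. You instead give a self-contained direct proof, replacing the H\"older interpolation of Lemma \ref{NewL} (which indeed degenerates when $\gamma_2=\alpha_2-\beta_2=0$) by the elementary peeling bound $(t-\tau)^{\gamma_1}\le(t-s)^{\gamma_1}$ (resp.\ $(\tau-s)^{\gamma_1}\le(t-s)^{\gamma_1}$) followed by the exact identity \eqref{CD1} with parameters $(\beta_1,\beta_2)$; the exponent count $\gamma_1+\beta_1+\beta_2-1=\alpha_1+\alpha_2-1$ and the symmetry $B(\beta_2,\beta_1)=B(\beta_1,\beta_2)$ both check out, as does the diagonal/off-diagonal bookkeeping, which needs $B(\beta_1,\beta_2)/B(\alpha_1,\alpha_2)\ge 1$ exactly as \eqref{aj} is needed in Theorem \ref{11S}. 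Your approach buys something real: the limiting argument implicitly requires $\|\mu_2\|_{\beta_2}<\infty$ for $\beta_2$ slightly below $\alpha_2$ (otherwise \eqref{BB12} is vacuous and the limit passage says nothing), whereas your direct estimate works under the stated hypothesis $\mu_2\in\mathcal M_{\alpha_2}(\R_+)$ alone. One small caveat: your parenthetical alternative justification of $B(\beta_1,\beta_2)\ge B(\alpha_1,\alpha_2)$ via the ``$t=s+1$ trick'' would here require letting $s\to 0$ rather than $s\to\infty$ as in \eqref{aj} (the relevant ratio is $(1+s)^{\gamma_1}$, which tends to $1$ only at $s=0$); but your primary argument, the monotonicity of $x\mapsto B(x,\beta_2)$, is correct, so nothing is lost.
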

\begin{proof}
Note that if $(\beta_1,\beta_2) \in (0,\alpha_1)\times(0,\alpha_2)$ and $A_1$ and $A_2$ are defined in \eqref{a1a2}, then
\[
\lim_{\beta_2 \to \alpha_2}\,A_1=\infty \quad \text{and} \quad
\lim_{\beta_2\to \alpha_2}\,A_2= \frac{B(\beta_1,\alpha_2)}{B(\alpha_1,\alpha_2)}.
\]
Moreover, $\|\mu\|_{\beta_2} \to \|\mu\|_{\alpha_2}$ and $\|\mu\|_{\beta_1+\beta_2} \to \|\mu\|_{\beta_1+\alpha_2}$
as $\beta_2 \to \alpha_2.$
Thus, passing to the limit in \eqref{BB12}, we obtain \eqref{BB12B}.
\end{proof}
\begin{remark}\label{RemAB}
Observe that
\[
\lim_{(\beta_1,\beta_2)\to (0,0)}\,A_1=1 \quad \text{and}
\lim_{(\beta_1, \beta_2)\to (\alpha_1, \alpha_2)}\,A_2=1,
\]
so  passing to the limit in \eqref{BB12} as $(\beta_1,\beta_2)\to (0,0)$ or as $(\beta_1,\beta_2)\to (\alpha_1,\alpha_2)$ we
obtain either  \eqref{ineq1}  or \eqref{ineq}, respectively.
\end{remark}

It is instructive to note that the constant $\min(A_1, A_2)$ in \eqref{BB12} cannot in general be replaced by $1,$ and in fact the optimal constant in \eqref{BB12} can be as large as one pleases. To see this consider the following example.
\begin{example}\label{constt}
Let $a>0,$  and let
$\alpha_1=\alpha_2=1$ and $\mu_1=\delta_0, \mu_2=\delta_a.$
Then, in view of Example \ref{deltames}, one has
\[
\mu=\mu_1\otimes_{1,1}\mu_2=a^{-1}{\chi_{[0,a]}}.
\]
Moreover,
\[
\|\mu\|_{1}=\frac{1}{a}\int_0^a\frac{dt}{1+t}=\frac{\log(1+a)}{a},
\]
and for any $\alpha>0,$
\[
\|\mu_1\|_\alpha=1\qquad \text{and} \qquad \|\mu_2\|_\alpha=\frac{1}{(1+a)^\alpha}.
\]
Thus, if $\delta \in (0,1),$ then there is $C(\delta)>0$ such that
\[
\|\mu\|_1\le C(\delta)\|\mu_1\|_{1-\delta} \|\mu_2\|_\delta,
\]
hence
\[
\frac{\log(1+a)}{a}\le \frac{C(\delta)}{(1+a)^\delta}.
\]
Letting now $\delta\to 1,$ we infer that
\[
\lim_{\delta\to 1}\,C(\delta)=\infty.
\]
\end{example}

The constants given in Theorem \ref{11S} are not optimal and can be improved
in several situations of interest. In particular, using \eqref{RR1}, one can prove that if
$\alpha_1=\alpha_2=1, $ $\beta_1=\beta_2=\beta \in (0,1),$ and $\mu_1, \mu_2$ and $\mu$ are as in Theorem \ref{11S}, then
$\|\mu\|_{2\beta} \le \|\mu_1\|_{\beta}\|\mu_2\|_{\beta}$ for every $\beta \in (0,1).$  Since, in this paper, we are not interested in the best constants,
we omit a discussion of further details.

\subsection{The boundary case: one of the numbers $\beta_1$ and $\beta_2$ equals zero}
In this subsection we show that in the boundary case when  $\beta_1 \in (0, \alpha_1]$ and $\beta_2=0$
the inequality \eqref{BB12} holds up to small perturbations of $\alpha_1.$
(The case when $\beta_2 \in (0,\alpha_2]$ and $\beta_1=0$ is clearly analogous.)
On the other hand, we prove that  our bounds are optimal with respect to
$\beta_1$ and $\beta_2$,
and in this case \eqref{BB12}  does not, in general, hold. For $\mu \in \mathcal M_{\alpha}(\R_+), \alpha >0,$ 
define
\begin{equation}\label{Con1}
\|\mu\|_{\alpha,{\rm log}}:=\int_0^\infty \frac{\log(t+e)\, |\mu|(dt)}{(1+t)^{\alpha}},
\end{equation}
where the integral can be infinite.
\begin{theorem}\label{EL1L}
For $\alpha_1,\alpha_2>0$ let $\mu_1 \in \mathcal M_{\alpha_1}(\R_+),$ $\mu_2 \in \mathcal M_0(\R_+),$
and $\mu=\mu_1 \otimes_{\alpha_1,\alpha_2}\mu_2.$
\begin{itemize}
\item [(i)] If
$\psi: [0,\infty)\to [1,\infty)$ is
an increasing function satisfying
\begin{equation}\label{Bpsi}
B_\psi:=\int_1^\infty\frac{dt}{t\psi(t)}<\infty,
\end{equation}
then there is $C=C(\alpha_1,\alpha_2,\psi)>0$ such that
\begin{equation}\label{LLQ1DopA}
\int_0^\infty \frac{|\mu|(dt)}{\psi(t)(1+t)^{\alpha_1}}
\le C
\|\mu_1\|_{\alpha_1} \|\mu_2\|_0.
\end{equation}
\item [(ii)]
If $\|\mu_1\|_{\alpha_1,{\rm log}}<\infty,$
then
\begin{equation}\label{LLQ1}
\|\mu\|_{\alpha_1}\le \frac{\alpha_2^{-1}+\tilde c_{\alpha_1}}{B(\alpha_1,\alpha_2)}
\|\mu_1\|_{\alpha_1{\rm ,log}} \|\mu_2\|_0,
\end{equation}
with  $\tilde c_{\alpha_1}$ as in Lemma \ref{elem}.
\end{itemize}
\end{theorem}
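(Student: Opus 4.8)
The plan is to reduce to positive measures and then treat the two parts by different, but related, devices: part~(ii) is done most efficiently through the product formula, while part~(i) requires working directly with the explicit density \eqref{m0}.

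\emph{Reduction and part (ii).} By \eqref{positiv} we have $|\mu|\le |\mu_1|\otimes_{\alpha_1,\alpha_2}|\mu_2|$, and $\||\mu_j|\|_\beta=\|\mu_j\|_\beta$ for all $\beta$, so since the right-hand sides of \eqref{LLQ1DopA} and \eqref{LLQ1} depend on $\mu_1$ and $\mu_2$ only through $\|\mu_1\|_{\alpha_1}$ (resp. $\|\mu_1\|_{\alpha_1{\rm,log}}$) and $\|\mu_2\|_0$, it suffices to assume $\mu_1,\mu_2\ge 0$; then $\mu$ is positive and given by \eqref{m1}--\eqref{m0}. For part~(ii) I would avoid \eqref{m0} altogether and start from the elementary identity $(1+\tau)^{-\alpha_1}=B(\alpha_1,\alpha_2)^{-1}\int_1^\infty (z-1)^{\alpha_2-1}(z+\tau)^{-\alpha_1-\alpha_2}\,dz$ (a limiting case of \eqref{A2}; it also follows at once from the substitution $z-1=(1+\tau)v$ together with $\int_0^\infty v^{\alpha_2-1}(1+v)^{-\alpha_1-\alpha_2}\,dv=B(\alpha_1,\alpha_2)$). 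Plugging this into $\|\mu\|_{\alpha_1}=\int_0^\infty(1+\tau)^{-\alpha_1}\mu(d\tau)$, using Tonelli, and then the product formula \eqref{equal}, gives
\[
\|\mu\|_{\alpha_1}=\frac{1}{B(\alpha_1,\alpha_2)}\int_1^\infty (z-1)^{\alpha_2-1}\,S_{\alpha_1}[\mu_1](z)\,S_{\alpha_2}[\mu_2](z)\,dz.
\]
Since $S_{\alpha_2}[\mu_2](z)\le \|\mu_2\|_0\,z^{-\alpha_2}$ for $z>0$, one more application of Tonelli bounds the right-hand side by $\frac{\|\mu_2\|_0}{B(\alpha_1,\alpha_2)}\int_0^\infty H(t)\,\mu_1(dt)$, where $H(t):=\int_1^\infty (z-1)^{\alpha_2-1}z^{-\alpha_2}(z+t)^{-\alpha_1}\,dz$.

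\emph{Estimating $H$.} It remains to show $H(t)\le (\alpha_2^{-1}+\tilde c_{\alpha_1})\log(t+e)(1+t)^{-\alpha_1}$, which together with \eqref{Con1} yields \eqref{LLQ1}. I would split the integral at $z=2$. On $(1,2)$ one has $z^{-\alpha_2}\le 1$ and $(z+t)^{-\alpha_1}\le(1+t)^{-\alpha_1}$, so that part is at most $(1+t)^{-\alpha_1}\int_1^2 (z-1)^{\alpha_2-1}\,dz=\alpha_2^{-1}(1+t)^{-\alpha_1}$. On $(2,\infty)$, substituting $z=1+w$ and using the pointwise bound $w^{\alpha_2-1}(1+w)^{-\alpha_2}\le w^{-1}$ (which is just $w\le 1+w$), that part is at most $\int_1^\infty w^{-1}(w+1+t)^{-\alpha_1}\,dw=F_{\alpha_1}(1+t)$ in the notation \eqref{element}; since $F_{\alpha_1}$ is decreasing, $F_{\alpha_1}(1+t)\le F_{\alpha_1}(t)\le \tilde c_{\alpha_1}\log(t+e)(1+t)^{-\alpha_1}$ by Lemma~\ref{elem}. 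Adding the two parts and using $\log(t+e)\ge 1$ gives exactly the claimed constant; in particular $\mu\in\mathcal M_{\alpha_1}(\R_+)$.

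\emph{Part (i).} Here $\psi$ does not pass through the product formula, so I would work with \eqref{m1}--\eqref{m0} directly. The atomic part of \eqref{m1} contributes at most $\int_0^\infty \mu_1(\{\tau\})\,\mu_2(d\tau)\,(1+\tau)^{-\alpha_1}\le \|\mu_1\|_{\alpha_1}\|\mu_2\|_0$. Integrating \eqref{m0} against $\psi(\tau)^{-1}(1+\tau)^{-\alpha_1}\,d\tau$ and applying Tonelli, the first summand of \eqref{m0} (where $\mu_1$ sits at the smaller point $s$) is harmless: since $\psi(\tau)\ge1$ and $(1+\tau)^{-\alpha_1}\le(1+s)^{-\alpha_1}$ for $\tau\ge s$, \eqref{CD2} bounds its inner $\tau$-integral by $B(\alpha_1,\alpha_2)(1+s)^{-\alpha_1}$, hence this summand contributes at most $B(\alpha_1,\alpha_2)\|\mu_1\|_{\alpha_1}\|\mu_2\|_0$. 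The work is in the second summand of \eqref{m0}, where $\mu_1$ sits at the larger point $t$: it contributes $\int_0^\infty\int_{[0,t)}\Phi(s,t)\,\mu_2(ds)\,\mu_1(dt)$ with $\Phi(s,t):=(t-s)^{1-\alpha_1-\alpha_2}\int_s^t (\tau-s)^{\alpha_1-1}(t-\tau)^{\alpha_2-1}\psi(\tau)^{-1}(1+\tau)^{-\alpha_1}\,d\tau$, and it suffices to prove $\Phi(s,t)\le C(\alpha_1,\alpha_2,\psi)(1+t)^{-\alpha_1}$ for all $0\le s<t$. I would split the $\tau$-integral at the midpoint $m=(s+t)/2$. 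On $(m,t)$: $(\tau-s)^{\alpha_1-1}\le\max(1,2^{1-\alpha_1})(t-s)^{\alpha_1-1}$, $1+\tau\ge(1+t)/2$, and $\int_m^t(t-\tau)^{\alpha_2-1}\,d\tau=\alpha_2^{-1}((t-s)/2)^{\alpha_2}$, so this part is $O((1+t)^{-\alpha_1})$. On $(s,m)$: $(t-\tau)^{\alpha_2-1}\le\max(1,2^{1-\alpha_2})(t-s)^{\alpha_2-1}$, which reduces matters to bounding $(t-s)^{-\alpha_1}\int_s^m (\tau-s)^{\alpha_1-1}\psi(\tau)^{-1}(1+\tau)^{-\alpha_1}\,d\tau$. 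After the substitution $\tau-s=(1+s)y$ — which turns $(\tau-s)^{\alpha_1-1}(1+\tau)^{-\alpha_1}\,d\tau$ into $y^{\alpha_1-1}(1+y)^{-\alpha_1}\,dy$ and keeps $\psi(\tau)\ge\psi(y)$ since $s\ge0$ — this integral is bounded, on the one hand, by $C_\psi:=\int_0^\infty y^{\alpha_1-1}\psi(y)^{-1}(1+y)^{-\alpha_1}\,dy$, which is finite precisely because $\int_0^1 y^{\alpha_1-1}\,dy<\infty$ and $\int_1^\infty y^{-1}\psi(y)^{-1}\,dy=B_\psi<\infty$ by \eqref{Bpsi}, and, on the other hand, by $\alpha_1^{-1}\bigl((t-s)/(2(1+s))\bigr)^{\alpha_1}$. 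Feeding both estimates back and splitting into the cases $s\le t/2$ (use $C_\psi$ together with $t-s\ge t/2$) and $s>t/2$ (use the second estimate together with $1+s\ge(1+t)/2$) gives the desired $O((1+t)^{-\alpha_1})$ bound with constant depending only on $\alpha_1,\alpha_2,B_\psi$; collecting terms yields \eqref{LLQ1DopA}.

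\emph{Main obstacle.} The one genuinely non-routine point is the bound on $\Phi(s,t)$ in part~(i): the trivial estimate $\Phi(s,t)\le B(\alpha_1,\alpha_2)\sup_t\psi(t)^{-1}$ carries no decay in $t$, so one must simultaneously use the integrability of $y\mapsto y^{\alpha_1-1}\psi(y)^{-1}(1+y)^{-\alpha_1}$ near infinity (encoded by $B_\psi$) and the above case distinction on the relative sizes of $s$ and $t$ near $0$. The corresponding difficulty in part~(ii) is, by contrast, entirely absorbed into Lemma~\ref{elem} via the function $F_{\alpha_1}$.
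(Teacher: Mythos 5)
Your argument is correct in substance, and the two parts sit differently relative to the paper. Part~(ii) is essentially the paper's own proof: your identity $(1+\tau)^{-\alpha_1}=B(\alpha_1,\alpha_2)^{-1}\int_1^\infty (z-1)^{\alpha_2-1}(z+\tau)^{-\alpha_1-\alpha_2}\,dz$ is \eqref{BB} after the shift $z=1+s$, your split of the $z$-integral at $z=2$ mirrors the paper's split of the $s$-integral at $s=1$, and both proofs finish with $F_{\alpha_1}(1+t)\le F_{\alpha_1}(t)$ and Lemma~\ref{elem}, arriving at the same constant $\alpha_2^{-1}+\tilde c_{\alpha_1}$. Part~(i), however, is genuinely different. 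The paper never touches the density \eqref{m0}: it integrates the product identity $S_{\alpha_1+\alpha_2}[\mu](s)=S_{\alpha_1}[\mu_1](s)S_{\alpha_2}[\mu_2](s)$ against $s^{\alpha_2-1}/\psi(s)$ over $s\in(1,\infty)$, uses the lower bound $\int_1^\infty s^{\alpha_2-1}\,ds/(\psi(s)(s+t)^{\alpha_1+\alpha_2})\ge D_{\alpha_2,\alpha_1+\alpha_2}\,\psi(t)^{-1}(1+t)^{-\alpha_1}$ on the left and $s^{\alpha_2}S_{\alpha_2}[\mu_2](s)\le\|\mu_2\|_0$, $S_{\alpha_1}[\mu_1](s)\le\|\mu_1\|_{\alpha_1}$ on the right, and is done in four lines with $C=B_\psi/D_{\alpha_2,\alpha_1+\alpha_2}$. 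Your route — a pointwise bound $\Phi(s,t)\lesssim(1+t)^{-\alpha_1}$ on the kernel coming from the second summand of \eqref{m0}, via the midpoint splitting and the substitution $\tau-s=(1+s)y$ — is longer but does go through, and it has the side benefit of making visible that only the summand in which $\mu_1$ sits at the larger point is responsible for the logarithmic loss (the other summand and the atomic part need no $\psi$ at all), which is consonant with the converse statement in Theorem~\ref{L0}.

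One small repair is needed in your case analysis for $\Phi$. In the case $s\le t/2$ you invoke $C_\psi$ together with $t-s\ge t/2$, which yields $(t-s)^{-\alpha_1}C_\psi\le 2^{\alpha_1}C_\psi\,t^{-\alpha_1}$; for $t\le 1$ this is not $O((1+t)^{-\alpha_1})$. The fix is immediate: when $t\le 1$ (so $1+s\ge 1\ge(1+t)/2$) use your second estimate, which gives $\alpha_1^{-1}2^{-\alpha_1}(1+s)^{-\alpha_1}\le\alpha_1^{-1}(1+t)^{-\alpha_1}$ — equivalently, split on whether $1+s\ge(1+t)/2$ rather than on whether $s\ge t/2$; or simply note that for $t\le1$ one has $\Phi(s,t)\le B(\alpha_1,\alpha_2)\le 2^{\alpha_1}B(\alpha_1,\alpha_2)(1+t)^{-\alpha_1}$ directly from \eqref{CD2}. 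With that adjustment the proof is complete.
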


\begin{proof}
Taking into account \eqref{positiv}, without loss of generality, we may suppose that $\mu_1$ and $\mu_2$ are positive, so that $\mu$ is positive as well.

To prove (i), note that if  $\alpha>\nu>0$ and $t\ge 2,$ then
\[
 \int_1^\infty \frac{s^{\nu-1}\,ds}{\psi(s)(s+t)^{\alpha}}
\ge \frac{1}{\psi(t)}\int_{t/2}^t\frac{s^{\nu-1}\,ds}{(s+t)^{\alpha}}
=\frac{1}{\psi(t)t^{\alpha-\nu}}
\int_{1/2}^1\frac{s^{\nu-1}\,ds}{(s+1)^{\alpha}}.
\]
So, there exists $D_{\nu,\alpha}>0$ such that
\begin{equation}\label{RRA}
 \int_1^\infty \frac{s^{\nu-1}\,ds}{\psi(s)(s+t)^{\alpha}}
\ge
\frac{D_{\nu,\alpha}}{\psi(t)(1+t)^{\alpha-\nu}},\qquad  t>0.
\end{equation}

So, using  (\ref{RRA}), \eqref{Bpsi} and Fubini's theorem,
we infer that
\begin{align*}
D_{\alpha_2,\alpha_1+\alpha_2} &\int_0^\infty \frac{\mu(dt)}{\psi(t)(1+t)^{\alpha_1}}\le
\int_0^\infty \int_1^\infty \frac{s^{\alpha_2-1}\,ds}{\psi(s)(s+t)^{\alpha_1+\alpha_2}}
\mu(dt)\\
=&\int_1^\infty \int_0^\infty \frac{\mu_1(dt)}{s\psi(s)(s+t)^{\alpha_1}}
\int_0^\infty \frac{s^{\alpha_2}\mu_2(dt)}{(s+t)^{\alpha_2}}ds\\
\le&\|\mu_2\|_0 \int_0^\infty \int_1^\infty \frac{ds}{s\psi(s)(1+t)^{\alpha_1}}
\mu_1(dt)\\
=& B_\psi\|\mu_1\|_{\alpha_1} \|\mu_2\|_0,
\end{align*}
hence (\ref{LLQ1DopA}) is true.

Let us now prove (ii).  Recall that
\begin{equation}\label{BB}
\int_0^\infty\frac{s^{\alpha_2-1}\,ds}{(s+t+1)^{\alpha_1+\alpha_2}}=
\frac{B(\alpha_1,\alpha_2)}{(t+1)^{\alpha_1}},\quad t>0,
\end{equation}
see e.g. \cite[p. 298, no.24]{Prudnikov1}.

If  (\ref{Con1}) holds, then
by  (\ref{BB}),
\eqref{log11} and Fubini's theorem
 we have
\begin{align*}
\label{Prev}
B(\alpha_1,\alpha_2)\|\mu\|_{\alpha_1}
=&\int_0^\infty \int_0^\infty \frac{s^{\alpha_2-1}\,ds}{(s+t+1)^{\alpha_1+\alpha_2}}\mu(dt)\\
\le& \int_0^1 s^{\alpha_2-1}\,ds\int_0^\infty \frac{\mu(dt)}{(t+1)^{\alpha_1+\alpha_2}}\\
+&\int_1^\infty s^{-1}\int_0^\infty \frac{\mu_1(dt)}{(s+t+1)^{\alpha_1}}
\int_0^\infty \frac{s^{\alpha_2}\mu_2(dt)}{(s+t+1)^{\alpha_2}}ds\\
\le& \alpha_2^{-1}\|\mu_1\|_{\alpha_1} \|\mu_2\|_{\alpha_2}
+\|\mu_2\|_0 \int_0^\infty F_{\alpha_1}(t)\mu_1(dt)\\
\le& (\alpha_2^{-1}+ \tilde c_{\alpha_1})\|\mu_1\|_{\alpha_1,{\rm log}} \|\mu_2\|_0,
\end{align*}
where  $F_{\alpha_1}$ is given by \eqref{element}, hence (\ref{LLQ1}) follows.
\end{proof}

Thus we arrive at the following immediate consequence of Theorem \ref{EL1L}.
Since only one of the parameters $\beta_1$ and $\beta_2$ is non-zero,
we discard the $(\beta_1,\beta_2)$-notation and use instead a parameter $\beta.$
\begin{corollary}\label{corr}
Let $\alpha_j >0, j=1,2.$ If $\mu_1 \in \mathcal M_{\alpha_1}(\R_+),$ $\mu_2 \in \mathcal M_0(\R_+),$
and $\mu=\mu_1\otimes_{\alpha_1,\alpha_2}\mu_2,$ then there exists
$C=C(\alpha_1,\alpha_2)>0$ such that
\begin{equation}\label{corr_mes}
\|\mu\|_{\beta}\le
\begin{cases} C \|\mu_1\|_{\alpha_1} \, \|\mu_2\|_0,& \quad \text{if} \,\, \beta >\alpha_1,\\
 C \|\mu_1\|_{\beta} \, \|\mu_2\|_0, & \quad \text{if} \,\, 0< \beta < \alpha_1\,\, {and} \,\, \|\mu_1\|_\beta <\infty.
\end{cases}
\end{equation}
\end{corollary}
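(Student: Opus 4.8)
The plan is to derive Corollary~\ref{corr} directly from Theorem~\ref{EL1L} by choosing suitable functions $\psi$. For the case $\beta>\alpha_1$, the idea is to apply part~(i) of Theorem~\ref{EL1L} with the specific choice $\psi(t)=(1+t)^{\beta-\alpha_1}$. Since $\beta-\alpha_1>0$, this $\psi$ is increasing, maps $[0,\infty)$ into $[1,\infty)$, and satisfies
\[
B_\psi=\int_1^\infty\frac{dt}{t(1+t)^{\beta-\alpha_1}}<\infty,
\]
because the integrand decays like $t^{-1-(\beta-\alpha_1)}$ at infinity. Then \eqref{LLQ1DopA} reads
\[
\|\mu\|_\beta=\int_0^\infty\frac{|\mu|(dt)}{(1+t)^\beta}=\int_0^\infty\frac{|\mu|(dt)}{\psi(t)(1+t)^{\alpha_1}}\le C\|\mu_1\|_{\alpha_1}\|\mu_2\|_0,
\]
which is precisely the first line of \eqref{corr_mes}.

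For the case $0<\beta<\alpha_1$ with $\|\mu_1\|_\beta<\infty$, I would invoke the associativity-type behaviour of the Stieltjes convolution together with part~(ii) of Theorem~\ref{EL1L}, but a cleaner route is to re-run the argument of Theorem~\ref{EL1L}(i) adapted to the lower order. Concretely, observe that when $\|\mu_1\|_\beta<\infty$ we may regard $\mu_1\in\mathcal M_\beta(\R_+)$ and apply Theorem~\ref{elprod} and Theorem~\ref{EL1L}(i) with $\alpha_1$ replaced by $\beta$: we still have $\mu=\mu_1\otimes_{\alpha_1,\alpha_2}\mu_2$ with $\mu_1\in\mathcal M_\beta(\R_+)$, $\mu_2\in\mathcal M_0(\R_+)$, and the formula \eqref{m0} is unchanged. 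Taking $\psi\equiv 1$ is not admissible since $B_\psi=\infty$; instead one takes $\psi(t)=\log(t+e)$, which is increasing, bounded below by $1$, and has $B_\psi=\int_1^\infty dt/(t\log(t+e))$ — but this also diverges. The correct choice is $\psi(t)=\log(t+e)\log(\log(t+e)+e)$, or more simply $\psi(t)=(\log(t+e))^{1+\varepsilon}$ for any fixed $\varepsilon>0$, which gives $B_\psi<\infty$; however this would introduce a logarithmic factor. To get the clean bound $\|\mu\|_\beta\le C\|\mu_1\|_\beta\|\mu_2\|_0$ stated in \eqref{corr_mes}, I would instead use part~(ii): since $\beta<\alpha_1$ we have $\|\mu_1\|_{\beta,\mathrm{log}}\le C'\|\mu_1\|_{\beta'}$ for any $\beta'\in(\beta,\alpha_1)$ (because $\log(t+e)\le C'(1+t)^{\beta'-\beta}$), and in particular $\|\mu_1\|_{\beta,\mathrm{log}}<\infty$ whenever $\|\mu_1\|_{\beta'}<\infty$ for some such $\beta'$. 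Then \eqref{LLQ1} applied with $\alpha_1$ replaced by $\beta$ yields
\[
\|\mu\|_\beta\le\frac{\alpha_2^{-1}+\tilde c_\beta}{B(\beta,\alpha_2)}\|\mu_1\|_{\beta,\mathrm{log}}\|\mu_2\|_0\le C\|\mu_1\|_{\beta'}\|\mu_2\|_0,
\]
and one absorbs the passage from $\|\mu_1\|_{\beta'}$ to $\|\mu_1\|_\beta$ by monotonicity in the order after replacing $\beta'$ by any value strictly between, noting $\|\mu_1\|_{\beta'}\le\|\mu_1\|_\beta$ when $\beta'>\beta$.

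The main obstacle I anticipate is reconciling the logarithmic weight $\|\cdot\|_{\beta,\mathrm{log}}$ appearing in Theorem~\ref{EL1L}(ii) with the plain weight $\|\cdot\|_\beta$ demanded in the statement of \eqref{corr_mes}: the hypothesis "$\|\mu_1\|_\beta<\infty$" alone does not give $\|\mu_1\|_{\beta,\mathrm{log}}<\infty$, so one genuinely needs the slack $\beta<\alpha_1$ to trade a logarithm for a small power of $(1+t)$. I would handle this by fixing an auxiliary exponent $\beta_0\in(\beta,\alpha_1)$, using $\log(t+e)\le C_{\beta,\beta_0}(1+t)^{\beta_0-\beta}$ to get $\|\mu_1\|_{\beta,\mathrm{log}}\le C_{\beta,\beta_0}\|\mu_1\|_{\beta_0}$, then applying Theorem~\ref{11S} or Corollary~\ref{11SS} (with the roles appropriately assigned, $\mu_2\in\mathcal M_0\subset\mathcal M_{\varepsilon}$ for small $\varepsilon$) to bound $\|\mu\|_{\beta}$ by $C\|\mu_1\|_{\beta_0}\|\mu_2\|_{0}$, and finally noting $\|\mu_1\|_{\beta_0}\le\|\mu_1\|_\beta$ since $\beta_0>\beta$. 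This gives the desired constant $C=C(\alpha_1,\alpha_2)$ depending only on $\alpha_1$ and $\alpha_2$ (with $\beta_0$ chosen, say, as $(\beta+\alpha_1)/2$, so that the dependence on $\beta$ is absorbed into the fixed data), completing the proof. The first case is routine once the right $\psi$ is identified; essentially all the work is in the bookkeeping for the second case.
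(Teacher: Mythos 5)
Your first case is fine and is precisely what the paper intends: with $\psi(t)=(1+t)^{\beta-\alpha_1}$ one has $B_\psi<\infty$ and \eqref{LLQ1DopA} is literally the first line of \eqref{corr_mes}. (The constant you obtain depends on $\beta$ through $B_\psi$, and necessarily so, since it must blow up as $\beta\downarrow\alpha_1$; the paper's ``$C=C(\alpha_1,\alpha_2)$'' is imprecise on this point, but that is not your doing.)

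The case $0<\beta<\alpha_1$ is broken, for three separate reasons. (1) You cannot ``apply Theorem \ref{EL1L} with $\alpha_1$ replaced by $\beta$'': the exponents $\alpha_1,\alpha_2$ enter the definition \eqref{m0} of $\mu_1\otimes_{\alpha_1,\alpha_2}\mu_2$, so that substitution yields a statement about the different measure $\mu_1\otimes_{\beta,\alpha_2}\mu_2$; the formula is not ``unchanged''. (2) The inequality $\|\mu_1\|_{\beta,\mathrm{log}}\le C\|\mu_1\|_{\beta_0}$ for $\beta_0>\beta$ is false: dividing $\log(t+e)\le C(1+t)^{\beta_0-\beta}$ by $(1+t)^{\beta}$ bounds $\log(t+e)(1+t)^{-\beta}$ by $C(1+t)^{-(2\beta-\beta_0)}$, i.e.\ by the \emph{stronger} norm $\|\mu_1\|_{2\beta-\beta_0}$ with $2\beta-\beta_0<\beta$, which the hypothesis does not control; indeed $\mu_1(dt)=(1+t)^{\beta-1}\log^{-2}(t+e)\,dt$ has $\|\mu_1\|_\beta<\infty$ but $\|\mu_1\|_{\beta,\mathrm{log}}=\infty$, so no detour through a log-weighted norm at level $\beta$ can succeed. (3) The fallback through Theorem \ref{11S} fails for the same monotonicity reason: reaching $\|\mu\|_\beta$ forces $\beta_1+\beta_2=\beta$ with $\beta_2>0$, hence $\beta_1<\beta$ and $\|\mu_1\|_{\beta_1}\ge\|\mu_1\|_\beta$, possibly infinite; choosing $\beta_0>\beta$ instead, as you propose, bounds $\|\mu\|_{\beta_0+\beta_2}$ rather than $\|\mu\|_\beta$. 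What actually works is to re-run the proof of Theorem \ref{EL1L}(ii) with \eqref{BB} replaced by $\int_0^\infty s^{\alpha_1+\alpha_2-\beta-1}(s+t+1)^{-(\alpha_1+\alpha_2)}\,ds=B(\beta,\alpha_1+\alpha_2-\beta)(1+t)^{-\beta}$: split the $s$-integral at $s=1$, treat $\int_0^1$ by \eqref{ineq}, and on $\int_1^\infty$ use \eqref{equal}, the bound $s^{\alpha_2}(s+t+1)^{-\alpha_2}\le 1$, and Fubini together with $\int_1^\infty s^{\alpha_1-\beta-1}(s+t+1)^{-\alpha_1}\,ds\le B(\alpha_1-\beta,\beta)(1+t)^{-\beta}$. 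The strict inequality $\beta<\alpha_1$ is exactly what keeps this last integral free of the logarithm that forces the $\|\cdot\|_{\alpha_1,\mathrm{log}}$ norm in the endpoint case.
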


It appears that \eqref{corr_mes} is close to be optimal with respect to the size
of $\mu_1,$ and one cannot in general replace $\|\mu_1\|_{\alpha_1, {\rm log}}$ by $\|\mu\|_{\alpha_1}.$
\begin{theorem}\label{L0}
Let
$\mu_j \in \mathcal M^+_{\alpha_j}(\R_+), \alpha_j>0, j=1,2,$
and $\mu=\mu_1 \otimes_{\alpha_1,\alpha_2}\mu_2.$
Then
\begin{equation}\label{Nau}
\|\mu_1\|_{\alpha_1,{\rm log}} \,  \|\mu_2\|_{\alpha_2}\le
\frac{2^{\alpha_1+\alpha_2}\|\mu\|_{\alpha_1}}{{c}_{\alpha_1} B(\alpha_1,\alpha_2)},
\end{equation}
where $c_{\alpha_1}$ is as in Lemma \ref{elem}.
\end{theorem}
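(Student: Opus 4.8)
The plan is to reverse the roles of the two measures compared to the proof of Theorem~\ref{EL1L}(ii): there we bounded $\|\mu\|_{\alpha_1}$ from above by $\|\mu_1\|_{\alpha_1,\mathrm{log}}\|\mu_2\|_0$, using the \emph{upper} bound for $F_{\alpha_1}$ in Lemma~\ref{elem}; now we want a \emph{lower} bound for $\|\mu\|_{\alpha_1}$, so we will use the \emph{lower} bound in \eqref{log11}. Since all measures are positive, $\mu = \mu_1\otimes_{\alpha_1,\alpha_2}\mu_2$ is positive, and by Theorem~\ref{elprod} (specifically \eqref{equal} with $z$ replaced by $z+1$, or directly the integral representation) one has, for every $s>0$,
\[
\int_0^\infty \frac{\mu(dt)}{(s+t)^{\alpha_1+\alpha_2}} = \int_0^\infty \frac{\mu_1(dt)}{(s+t)^{\alpha_1}}\int_0^\infty\frac{\mu_2(dt)}{(s+t)^{\alpha_2}}.
\]
First I would multiply this identity by $s^{\alpha_2-1}$ and integrate in $s$ over $[1,\infty)$; on the left, Fubini together with \eqref{BB} (or rather its analogue with $t+1$ replaced by a general shift) produces something comparable to $\|\mu\|_{\alpha_1}$, and on the right, after discarding nothing, the inner $s$-integral against $\mu_1$ is exactly $F_{\alpha_1}(t)$ from \eqref{element} while the $\mu_2$-factor is bounded below by observing $s^{\alpha_2}/(s+t)^{\alpha_2}\ge 1/(1+t)^{\alpha_2}\cdot$(something); more carefully one restricts $s$-integration so that the $\mu_2$-factor is bounded below by a constant multiple of $\|\mu_2\|_{\alpha_2}$.

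More concretely, the key computation I would carry out is the following. For fixed $s\ge 1$ one has, for $t\ge 0$, $s+t \le s(1+t)$, hence $\dfrac{s^{\alpha_2}}{(s+t)^{\alpha_2}}\ge \dfrac{1}{(1+t)^{\alpha_2}}$, so
\[
\int_0^\infty\frac{s^{\alpha_2}\,\mu_2(dt)}{(s+t)^{\alpha_2}} \ge \|\mu_2\|_{\alpha_2}.
\]
Therefore, after multiplying the product identity by $s^{-1}$ (i.e. writing $s^{\alpha_2-1}=s^{-1}\cdot s^{\alpha_2}$) and integrating over $s\in[1,\infty)$, Fubini gives
\[
\int_1^\infty s^{\alpha_2-1}\!\!\int_0^\infty\!\frac{\mu(dt)}{(s+t)^{\alpha_1+\alpha_2}}ds
= \int_1^\infty \frac{1}{s}\Bigl(\int_0^\infty\frac{\mu_1(dt)}{(s+t)^{\alpha_1}}\Bigr)\Bigl(\int_0^\infty\frac{s^{\alpha_2}\mu_2(dt)}{(s+t)^{\alpha_2}}\Bigr)ds
\ge \|\mu_2\|_{\alpha_2}\int_0^\infty F_{\alpha_1}(t)\,\mu_1(dt),
\]
where the last step uses Fubini once more to swap the $s$-integral inside the $\mu_1$-integral. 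By the lower bound in \eqref{log11} this is at least $c_{\alpha_1}\|\mu_2\|_{\alpha_2}\|\mu_1\|_{\alpha_1,\mathrm{log}}$. For the left-hand side I would bound it \emph{above} by $\|\mu\|_{\alpha_1}$ up to the stated constant: since $s+t+1 \le 2(s+t)$ is false in general but $s+t \le s+t+1$ and $s\ge 1$ forces $s+t+1\le 2(s+t)$ when $t\ge 0$ and $s\ge 1$ indeed holds (as $s\ge 1$ gives $1\le s\le s+t$), we get $(s+t)^{-(\alpha_1+\alpha_2)} \le 2^{\alpha_1+\alpha_2}(s+t+1)^{-(\alpha_1+\alpha_2)}$, and then extending the $s$-integral to $[0,\infty)$ and applying \eqref{BB} yields $\int_1^\infty s^{\alpha_2-1}\int_0^\infty(s+t)^{-(\alpha_1+\alpha_2)}\mu(dt)\,ds \le 2^{\alpha_1+\alpha_2} B(\alpha_1,\alpha_2)^{-1}\cdot B(\alpha_1,\alpha_2)\|\mu\|_{\alpha_1}$... wait, I should be careful: \eqref{BB} gives $\int_0^\infty s^{\alpha_2-1}(s+t+1)^{-(\alpha_1+\alpha_2)}ds = B(\alpha_1,\alpha_2)(t+1)^{-\alpha_1}$, so the left side is at most $2^{\alpha_1+\alpha_2}B(\alpha_1,\alpha_2)\|\mu\|_{\alpha_1}$. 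Combining the two bounds gives
\[
c_{\alpha_1}\|\mu_2\|_{\alpha_2}\|\mu_1\|_{\alpha_1,\mathrm{log}} \le 2^{\alpha_1+\alpha_2}B(\alpha_1,\alpha_2)\|\mu\|_{\alpha_1},
\]
which is exactly \eqref{Nau}.

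The main obstacle, I expect, is making the two Fubini applications rigorous when $\|\mu_1\|_{\alpha_1,\mathrm{log}}$ could a priori be infinite (in which case the inequality is trivially interpreted, but one still wants the chain of inequalities to make sense) and when $\|\mu\|_{\alpha_1}$ is the only quantity one knows is finite. The clean way around this is to work with the positive measures throughout (already reduced to via \eqref{positiv}), so that all integrands are nonnegative and Tonelli's theorem applies with no integrability hypothesis; every interchange of integration order is then automatically justified, and if the left-hand side of \eqref{Nau} is $+\infty$ the inequality forces $\|\mu\|_{\alpha_1}=\infty$ too (consistent with the statement). A secondary, purely bookkeeping point is to double-check the elementary inequality $s+t+1 \le 2(s+t)$ for $s\ge 1$, $t\ge 0$ — it holds because $1 \le s \le s+t$ — which is what produces the clean constant $2^{\alpha_1+\alpha_2}$; this is routine and I would not belabor it.
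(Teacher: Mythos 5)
Your proof is correct and follows essentially the same route as the paper's: integrate the product identity against $s^{\alpha_2-1}\,ds$ over $s\in[1,\infty)$, recognize the $\mu_1$-factor as $\int_0^\infty F_{\alpha_1}(t)\,\mu_1(dt)\ge c_{\alpha_1}\|\mu_1\|_{\alpha_1,\mathrm{log}}$ via Lemma \ref{elem}, bound the $\mu_2$-factor below by $\|\mu_2\|_{\alpha_2}$, and use \eqref{BB} to control the $\mu$-side. The only cosmetic difference is that you evaluate \eqref{equal} at $z=s$ rather than $z=s+1$, so the factor $2^{\alpha_1+\alpha_2}$ is incurred on the left-hand side (comparing $(s+t)$ with $(s+t+1)$) instead of on the right, and both routes land on exactly the constant in \eqref{Nau}.
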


\begin{proof}
By   (\ref{log11}),
noting that
\[
\frac{s}{s+t+1}\ge \frac{1}{2(t+1)},\qquad s\ge 1,\quad t>0,
\]
and using \eqref{BB}, we have
\begin{align*}
B(\alpha_1,\alpha_2)\|\mu\|_{\alpha_1}
\ge& \int_0^\infty \int_1^\infty \frac{s^{\alpha_2-1}\,ds}{(s+t+1)^{\alpha_1+\alpha_2}}\mu(dt)\\
\ge& \int_1^\infty s^{-1}\int_0^\infty \frac{\mu_1(dt)}{(s+t+1)^{\alpha_1}}
\int_0^\infty \frac{s^{\alpha_2}\mu_2(dt)}{(s+t+1)^{\alpha_2}}ds\\
\ge& \frac{\|\mu_2\|_{\alpha_2}}{2^{\alpha_1+\alpha_2}}\int_0^\infty F_{\alpha_1}(t)\,\mu_1(dt)
\ge \frac{{c}_{\alpha_1}}{2^{\alpha_1+\alpha_2}}\|\mu_1\|_{\alpha_1,{\rm log}} \, \|\mu_2\|_{\alpha_2},
\end{align*}
i.e. (\ref{Nau}) holds.
\end{proof}
The following corollary of Theorem \ref{L0} is straightforward.
\begin{corollary}
Under the assumptions of Theorem \ref{L0},
\begin{equation}\label{equiv}
\|\mu\|_{\alpha_1}<\infty\quad  \Longrightarrow \quad
\|\mu_1\|_{\alpha_1,{\rm log}}<\infty,
\end{equation}
and the implication in \eqref{equiv} becomes equivalence if  $\mu_2 \in \mathcal M^+_0(\R_+).$
\end{corollary}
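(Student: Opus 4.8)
The plan is to read off both directions of the equivalence directly from the matching upper and lower bounds already in hand, so that no new estimate is needed; everything analytic has been absorbed into Theorems~\ref{L0} and \ref{EL1L}(ii).

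For the forward implication \eqref{equiv}, I would simply invoke Theorem~\ref{L0}. Since $\mu_1,\mu_2$ are positive and satisfy the integrability hypotheses of that theorem, estimate \eqref{Nau} reads
\[
\|\mu_1\|_{\alpha_1,{\rm log}}\,\|\mu_2\|_{\alpha_2}\le \frac{2^{\alpha_1+\alpha_2}}{c_{\alpha_1}B(\alpha_1,\alpha_2)}\,\|\mu\|_{\alpha_1}.
\]
Assuming $\mu_2\not\equiv 0$ (the degenerate case $\mu_2=0$, in which $\mu=0$ and the implication is vacuous, should be tacitly excluded), positivity of $\mu_2$ together with $(1+t)^{-\alpha_2}>0$ gives $0<\|\mu_2\|_{\alpha_2}<\infty$; dividing through then shows that $\|\mu\|_{\alpha_1}<\infty$ forces $\|\mu_1\|_{\alpha_1,{\rm log}}<\infty$.

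For the converse under the extra hypothesis $\mu_2\in\mathcal M^+_0(\R_+)$, I would first note that this hypothesis is compatible with the standing assumption $\mu_2\in\mathcal M^+_{\alpha_2}(\R_+)$, because $(1+t)^{-\alpha_2}\le 1$ yields $\|\mu_2\|_{\alpha_2}\le\|\mu_2\|_0<\infty$. Then, assuming $\|\mu_1\|_{\alpha_1,{\rm log}}<\infty$, Theorem~\ref{EL1L}(ii) applies and \eqref{LLQ1} gives
\[
\|\mu\|_{\alpha_1}\le \frac{\alpha_2^{-1}+\tilde c_{\alpha_1}}{B(\alpha_1,\alpha_2)}\,\|\mu_1\|_{\alpha_1,{\rm log}}\,\|\mu_2\|_0<\infty,
\]
which is the reverse of \eqref{equiv}. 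Together with the forward direction this promotes the implication to an equivalence. I expect no genuine obstacle here: the corollary is pure bookkeeping, the lower bound for $\|\mu\|_{\alpha_1}$ coming from Theorem~\ref{L0} and the upper bound from Theorem~\ref{EL1L}(ii); the only point to watch is the trivial case $\mu_2=0$, which must be flagged so that the division by $\|\mu_2\|_{\alpha_2}$ in the forward step is legitimate.
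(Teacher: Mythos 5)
Your proof is correct and is exactly the argument the paper intends (the paper gives no proof, calling the corollary straightforward: the forward implication reads off from \eqref{Nau} after dividing by $\|\mu_2\|_{\alpha_2}>0$, and the converse from \eqref{LLQ1}). The only quibble is terminological: when $\mu_2=0$ the implication is not vacuous but actually false (since $\mu=0$ while $\|\mu_1\|_{\alpha_1,\mathrm{log}}$ may be infinite), so you are right to exclude that degenerate case, which the paper leaves implicit.
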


For a concrete example showing that \eqref{corr_mes}
may fail if $\beta=\alpha_1,$  consider, for instance,
\[
\mu_1(dt)=\frac{(1+t)^{\alpha_1-1}}{\log^2(t+e)}\, dt \qquad \text{and}\qquad  \mu_2(dt)=\delta_0(dt).
\]

\begin{remark}\label{gener}
In the context of Sections  \ref{st_conv} and \ref{st_conv_in},
let us consider the
products $f_1 f_2,$
where
$$
f_j=a_j+S_{\alpha_j}[\mu_j], \,\, a_j \in \mathbb C, \,\, \alpha_j >0, \quad \text{and} \quad
\mu_j\in \mathcal{M}_{\alpha_j}(\R_+), \,\, j=1,2.
$$
Note that if  $a_j \ge 0$ and $\mu_j \in \mathcal M^+_{\alpha_j}(\R_+),j=1,2,$ then  $f_1$ and $f_2$ are generalized Stieltjes.

Recall that  by e.g. \cite[Theorem 3]{Karp}
if $\beta >\alpha >0$ and  $\nu \in \mathcal M_\alpha^+(\R_+),$ then $S_\alpha [\nu]=S_\beta [\mu]$ for  $\mu \in\mathcal M^+_\beta(\R_+)$
defined by
\begin{equation}\label{up}
\mu(dt)=\left(
(B(\alpha, \beta-\alpha))^{-1} \int_{[0,t)} \frac{\nu(ds)}{(t-s)^{\alpha+1-\beta}}\right)\, dt.
\end{equation}
This fact generalizes to the case when $\nu \in \mathcal M_\alpha(\R_+)$ and  $\mu \in\mathcal M_\beta(\R_+)$ by decomposing $\nu$ and $\mu$  into linear combinations of four
summands from $\mathcal M^+_\alpha (\R_+)$ and $\mathcal M^+_\beta(\R_+),$ respectively.
So, employing  \eqref{up} in such a general setting,
we have
\begin{align}
B(\alpha_1,\alpha_2) &(f_1(z)f_2(z)-S_{\alpha_1}[\mu_1](z) S_{\alpha_2}[\mu_2](z))
=a_1 a_2 B(\alpha_1, \alpha_2)\label{general} \\
+&a_1
\int_{0}^\infty \left(\int_{[0,\tau)}(\tau-t)^{\alpha_1-1}\,\mu_2(dt)\right)\,\frac{d\tau}{(z+\tau)^{\alpha_1+\alpha_2}}\notag \\
+&a_2
\int_{0}^\infty \left(\int_{[0,\tau)}(\tau-t)^{\alpha_2-1}\,\mu_1(dt)\right)\,\frac{d\tau}{(z+\tau)^{\alpha_1+\alpha_2}}\notag \\
=&S_{\alpha_1+\alpha_2}[u](z), \qquad z >0,\notag
\end{align}
where $u$ is
given by
\begin{equation*}
B(\alpha_1,\alpha_2)u(\tau)= a_1\int_{[0,\tau)}(\tau-t)^{\alpha_1-1}\,\mu_2(dt) +a_2 \int_{[0,\tau)}(\tau-t)^{\alpha_2-1}\,\mu_1(dt),
\end{equation*}
and $u \in  L^1_{\alpha_1+\alpha_2}(\R_+)$ by Fubini's theorem.
By Theorem \ref{elprod},
$S_{\alpha_1}[\mu_1] S_{\alpha_2}[\mu_2]=S_{\alpha_1+\alpha_2}[\mu]$ with $\mu=\mu_1 \otimes_{\alpha_1,\alpha_2},$ so
\begin{equation}\label{sing_const}
f_1 f_2 =a_1 a_2 + S_{\alpha_1+\alpha_2}[u]+S_{\alpha_1+\alpha_2}[\mu]=a_1a_2 + S_{\alpha_1+\alpha_2} [u+\mu].
\end{equation}
The term $S_{\alpha_1+\alpha_2} [u]$ is much simpler than $S_{\alpha_1+\alpha_2} [\mu].$
So the analogues  of statements obtained in Sections  \ref{st_conv}  and \ref{st_conv_in}
can be formulated for $S_{\alpha_1+\alpha_2} [\mu + u]$
as well, covering, in particular, the situation when $f_1$ and $f_2$ are generalized Stieltjes functions.
However, to not obscure our presentation with technical details,
we have decided to consider only the case $a_1=a_2=0$ and to deal solely with the generalized Stieltjes transforms.
\end{remark}

\section{The Stieltjes convolution, Hlibert transform and Tricomi's identity}\label{st_conv1}

While our studies in Sections \ref{st_conv} and \ref{st_conv_in} concerned the Stieltjes convolution on $\mathcal M_{\alpha_1}(\R_+)\times  \mathcal M_{\alpha_2}(\R_+)$ for arbitrary $\alpha_1,\alpha_2>0,$
they have several curious applications in the setting of classical
 Stieltjes transforms, i.e. when  $\alpha_1=\alpha_2=1$ and $\mu_j(dt)=f_j(t)\, dt$ with $f_j \in L^1_{1}(\R_+),
j=1,2.$ In this section, we will investigate this situation  in some more details
and reveal its close relations to  Hilbert transforms and Tricomi's identity.
These considerations will be based on Theorem \ref{elprod} yielding the explicit expression  \eqref{m0} for the Stieltjes convolution of $\mu_1$ and $\mu_2$.
Note that there is an illuminating probabilistic interpretation of $\mu_1\otimes_{1,1}\mu_2,$  $\mu_1,\mu_2 \in \mathcal M^+_0(\R_+),$ as the distribution of a random variable uniformly distributed between two independent random variables, see \cite{Asch}. See also \cite{Johnson} for a similar interpretation and \cite{Soltani} for additional references.

Thus we deal with the classical Stieltjes transform $S_1$ defined on $L^1_1(\R_+)$ by
\[
S_1[g](z):=\int_0^\infty \frac{g(t)\, dt}{z+t}, \qquad g \in  L^1_1(\R_+), \quad z\in \C\setminus
 (-\infty,0].
\]
Given $g_1,g_2 \in  L^1_1(\R_+),$ it is of interest to know 
when $S_1[g_1]S_1[g_2]$ is again of the form $S_1[g]$ for some $g \in  L^1_1(\R_+).$
Using Theorem \ref{elprod} and the Stieltjes convolution  $g_1\otimes_{1,1} g_2$, we proceed with providing a partial answer to this question in terms of Hilbert transforms of $g_1$ and $g_2$.
To this aim,  note that if
$g_1, g_2 \in L^1_1(\R_+)$, then Theorem \ref{elprod} and Remark \ref{rema} imply that
$g_1 \otimes_{1,1} g_2 \in L^1_2(\R_+)$
is given by
\begin{equation}\label{deriv}
[g_1 \otimes_{1,1} g_2](\tau)= \int_{0}^\tau\int_{\tau}^\infty
\frac{g_2(s)\, ds}{s-t}\,g_1(t)\, dt
+\int_{0}^\tau \int_{\tau}^\infty  \frac{g_1(s)\, ds}{s-t} \, g_2(t)\, dt,
\end{equation}
for almost every $\tau \in \R_+,$ and
\[
\|g_1 \otimes_{1,1} g_2\|_{L^1_2(\R_+)}
 \le
\|g_1\|_{L^1_1(\R_+)}  \|g_2\|_{L^1_1(\R_+)}.
\]

The identity \eqref{deriv} suggests to employ Hilbert transforms. Recall that for  $g \in L^1_1(\R)$ its Hilbert transform $H[g]$ is defined by
\begin{equation}\label{hilbert}
H[g](\tau):= \text{p.v.}\, \frac{1}{\pi}\int_{\R} \frac{g(t)\, dt}{t-\tau}=\frac{1}{\pi}\lim_{\epsilon \to 0}\int_{|t-\tau|>\epsilon} \frac{g(t)\, dt}{t-\tau},
\end{equation}
where the limit exists for almost all $\tau >0$ by e.g. \cite[p. 348-349]{Havin}.
We will need a classical boundedness property of  the Hilbert transforms
on weighted $L^p$-spaces saying that for  $ p \in (1,\infty)$
and  $\alpha\in [0,1)$ there exists $c_{p,\alpha}>0$ such that
\begin{equation}\label{Hww}
\|H[g]\|_{L^p_\alpha(\R)}\le c_{p,\alpha}
\|g\|_{L^p_\alpha(\R)}, \qquad g \in L^p_\alpha(\R),
\end{equation}
see e.g. \cite[Example 9.1.7 and Theorem 9.4.6]{Grafakos1}.
(Alternatively, one may consult 
 \cite[Theorem 3]{Koizumi} for a direct argument.)

\begin{theorem}\label{Stiv}
Let
$
g_j\in L^{p_j}_{\beta_j}(\R_{+}),
$
with
\begin{equation}\label{Ca1}
p_j\in (1,\infty),\qquad \frac{1}{p_1}+\frac{1}{p_2}\le 1,\qquad \beta_j\in [0,1),\quad j=1,2.
\end{equation}
Then $g_1$, $g_2$, $g_2 H[g_1],$ and $g_1 H[g_2]$ belong to $ L^1_1(\R_+),$
and
\begin{equation}\label{m22}
S_1[g_1] S_1[g_2]=S_1\bigl[g_2 H[g_1]+g_1 H[g_2]\bigr].
\end{equation}
\end{theorem}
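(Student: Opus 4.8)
The plan is to treat the two assertions of Theorem~\ref{Stiv} separately: first the membership in $L^1_1(\R_+)$, which is a sequence of Hölder estimates tied to the constraints $\beta_j<1$ and $\frac1{p_1}+\frac1{p_2}\le 1$, and then the identity \eqref{m22}, which I would reduce, via Theorem~\ref{elprod} and uniqueness of Stieltjes representations, to a computation of $S_1$ of the right-hand side.

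\textbf{Membership.} To see $g_j\in L^1_1(\R_+)$, write $\frac{|g_j(t)|}{1+t}=\bigl(|g_j(t)|(1+t)^{-\beta_j/p_j}\bigr)(1+t)^{\beta_j/p_j-1}$ and apply Hölder with exponents $p_j$ and $p_j':=p_j/(p_j-1)$; the resulting integral $\int_0^\infty (1+t)^{(\beta_j/p_j-1)p_j'}\,dt$ converges precisely because $\beta_j<1$. Extending $g_j$ by zero to $\R$ and using the weighted Hilbert bound \eqref{Hww} with $p=p_j\in(1,\infty)$, $\alpha=\beta_j\in[0,1)$, we get $H[g_j]\in L^{p_j}_{\beta_j}(\R)$, hence $H[g_j]\in L^{p_j}_{\beta_j}(\R_+)$. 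Finally, for $g_2H[g_1]$ (and symmetrically $g_1H[g_2]$) I would apply Hölder with three exponents $p_2,p_1$ and $r$, where $\frac1r:=1-\frac1{p_1}-\frac1{p_2}\ge0$ (so $r=\infty$ when $\frac1{p_1}+\frac1{p_2}=1$), to the factorisation
\[
\frac{|g_2H[g_1]|}{1+t}=\bigl(|g_2|(1+t)^{-\beta_2/p_2}\bigr)\bigl(|H[g_1]|(1+t)^{-\beta_1/p_1}\bigr)(1+t)^{\beta_1/p_1+\beta_2/p_2-1};
\]
the last factor is integrable (resp.\ bounded, if $r=\infty$) because $\frac{\beta_1}{p_1}+\frac{\beta_2}{p_2}<\frac1{p_1}+\frac1{p_2}\le1$, again by $\beta_j<1$. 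This yields $g_1,g_2,g_2H[g_1],g_1H[g_2]\in L^1_1(\R_+)$.

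\textbf{The product formula.} Put $h:=g_2H[g_1]+g_1H[g_2]\in L^1_1(\R_+)$. By Theorem~\ref{elprod} (with $\alpha_1=\alpha_2=1$) and Remark~\ref{rema}, $S_1[g_1]S_1[g_2]=S_1[g_1\otimes_{1,1}g_2]$ with $g_1\otimes_{1,1}g_2$ given by \eqref{deriv}, so by the uniqueness of Stieltjes representations it is enough to prove $S_1[h](z)=S_1[g_1](z)\,S_1[g_2](z)$ for $z>0$, the identity on $\C\setminus(-\infty,0]$ then following by analytic continuation. To evaluate $S_1[h](z)$ I would use the antisymmetry $\int_\R H[f]\,\phi\,dt=-\int_\R f\,H[\phi]\,dt$ to transfer the Hilbert transform from $g_1$ onto $\phi=\tfrac{g_2}{z+\cdot}$, together with the partial-fraction identity
\[
\frac{1}{(z+\sigma)(\sigma-\tau)}=\frac{1}{z+\tau}\Bigl(\frac{1}{\sigma-\tau}-\frac{1}{z+\sigma}\Bigr),
\]
which expresses $H[\tfrac{g_2}{z+\cdot}]$ through $H[g_2]$ and $S_1[g_2]$; substituting this and the symmetric expression, the sum $S_1[g_2H[g_1]](z)+S_1[g_1H[g_2]](z)$ collapses to $S_1[g_1](z)S_1[g_2](z)$, which is \eqref{m22}. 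Equivalently, since $S_1[g](\cdot)$ and $H[g](-\cdot)$ agree up to a constant for $g$ supported on $\R_+$, the identity \eqref{m22} is a reformulation of Tricomi's identity $H\bigl[uH[v]+vH[u]\bigr]=H[u]H[v]-uv$ specialised to such $g$; one may also derive \eqref{m22} from the iterated principal-value integral for $S_1[h]$ by means of the Poincar\'e--Bertrand formula.

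\textbf{Main obstacle.} The delicate point is rigour in the second part: the antisymmetry relation and the evaluation of $H[\tfrac{g_2}{z+\cdot}]$ involve principal-value integrals whose integrands are not absolutely convergent near the diagonal $\sigma=\tau$, and $g_1,g_2$ lie only in the weighted spaces $L^{p_j}_{\beta_j}$ (and in $L^1_1$), not in $L^2$ or in unweighted $L^{p_j}$. I would handle this by first proving \eqref{m22} for $g_1,g_2\in C_c(\R_+)$, where every manipulation is classical, and then passing to the limit using the density of $C_c(\R_+)$ in $L^{p_j}_{\beta_j}(\R_+)$, the boundedness \eqref{Hww}, the continuity of $h\mapsto S_1[h](z)$ on $L^1_1(\R_+)$ for each fixed $z>0$, and the boundedness of $(g_1,g_2)\mapsto h$ from $L^{p_1}_{\beta_1}(\R_+)\times L^{p_2}_{\beta_2}(\R_+)$ into $L^1_1(\R_+)$ established above; alternatively, the Poincar\'e--Bertrand formula can be invoked to justify the interchange of the two principal-value integrations directly.
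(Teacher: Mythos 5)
Your membership argument coincides with the paper's (H\"older in the weighted spaces plus the weighted Hilbert bound \eqref{Hww}, with a third exponent $r$ determined by $1/p_1+1/p_2+1/r=1$), but your proof of the identity \eqref{m22} takes a genuinely different route. The paper stays inside the convolution framework: it starts from the explicit density \eqref{deriv} of $g_1\otimes_{1,1}g_2$, which represents the product as $S_2$ of that density (note: $S_2$, not $S_1$ --- your line ``$S_1[g_1]S_1[g_2]=S_1[g_1\otimes_{1,1}g_2]$'' has the wrong order, though the slip is not load-bearing for you); it then adds the Parseval identity applied to the truncations $\chi_{(0,\tau)}g_1,\chi_{(0,\tau)}g_2$, which turns the integral over $(0,\tau)\times(\tau,\infty)$ into a full principal-value integral and identifies the convolution density as the primitive $\int_0^\tau\bigl(g_2H[g_1]+g_1H[g_2]\bigr)$; finally it integrates by parts, using that the Ces\`aro mean of an $L^1_1$ function vanishes, to pass from $S_2$ back to $S_1$. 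You instead compute $S_1$ of the right-hand side directly, via the antisymmetry $\int H[f]\,\phi=-\int f\,H[\phi]$ with $\phi=g_2/(z+\cdot)$ and the partial-fraction identity, so the product $S_1[g_1]S_1[g_2]$ appears immediately; this bypasses \eqref{deriv} altogether, and your invocations of Theorem \ref{elprod} and of uniqueness of Stieltjes representations are in fact superfluous. Your route is more direct; the paper's keeps the Stieltjes convolution in the foreground and explains structurally why Hilbert transforms appear. Two caveats. First, your fallback of deducing \eqref{m22} from Tricomi's identity is circular within this paper: the weighted Tricomi identity in exactly this generality is Theorem \ref{TS1}, which the paper derives \emph{from} Theorem \ref{Stiv}, while the classical Tricomi--Love--Carton-Lebrun versions cover only unweighted $L^p$. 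Second, your regularization plan (prove everything for $C_c(\R_+)$ and pass to the limit using \eqref{Hww} and the boundedness of $(g_1,g_2)\mapsto g_2H[g_1]+g_1H[g_2]$) is the right way to justify the antisymmetry pairing for $f\in L^{p_1}_{\beta_1}$ and $\phi\in L^{p_1'}$, and the integrability checks do go through under \eqref{Ca1}; be aware, though, that with the normalization $H[g](\tau)=\pi^{-1}\,\mathrm{p.v.}\int g(t)(t-\tau)^{-1}\,dt$ of \eqref{hilbert} your computation (and, traced carefully, the paper's as well) produces a factor of $\pi$ on one side of \eqref{m22}, so do not mistake that constant for an error in your argument.
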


\begin{proof}
By
H\"older's inequality, one  has
$
g_j\in L^{1}_1(\R_{+}), j=1,2.
$
Moreover,  in view of \eqref{Hww}, we have $ g_1 H[g_2], g_2 H[g_1] \in  L^{1}_1(\R_{+})$ as well.
Indeed, let $\gamma  \in (0,\infty]$ be such that
\[
\frac{1}{p_1}+\frac{1}{p_2}+\frac{1}{{\gamma}}=1.
\]
Then, by H\"older's inequality and (\ref{Hww}), 
\[
\|g_1 H[g_2]\|_{L^1_1(\R_{+})}
\le
c_{p_2,\beta_2} C_{\gamma} \|g_1\|_{L^{p_1}_{\beta_1}(\R_{+})}
\|g_2\|_{L^{p_2}_{\beta_2}(\R_{+})},
\]
where
$C_\gamma=1$ if $\gamma=\infty$, and
\[
C_{\gamma}=\left(\int_0^\infty \frac{dt}{(1+t)^{(1-\beta_1/p_1-\beta_2/p_2){\gamma}}}\right)^{1/{\gamma}}<\infty \qquad \text{if}\, \, \gamma \in (0,\infty).
\]
The estimate for $\|g_1 H[g_2]\|_{L^1_1(\R_{+})}$ is completely analogous,
so that if 
\[g:=g_2 H[g_1]+g_1 H[g_2],\]
then
 $g \in L^1_1(\R_+).$

If $\tau >0,$ then taking into account \eqref{Hww} and applying
Parseval's identity for Hilbert transforms (see e.g. \cite[Eq. (4.176)]{King}) to
\[
f_1:=\chi_{(0,\tau)}g_1\in L^p(\R_{+}),\qquad p:=p_1,
\]
and
\[
f_2:=\chi_{(0,\tau)}g_2\in L^q(\R_{+}),\qquad q:=\frac{p_1}{p_1-1},
\]
we conclude that
\begin{align}\label{identity}
0=&\pi \int_0^\infty f_1(t) H[f_2](t)\,dt + \pi \int_0^\infty H[f_1](t)f_2(t)\,dt\\
=& \int_0^\tau \int_0^\tau \frac{g_2(s)\, ds}{s-t}\, g_1(t)\, dt+ \int_0^\tau \int_0^\tau \frac{g_1(s)\, ds}{s-t}\, g_2(t)\, dt,\notag
\end{align}
where the inner integrals are understood in the principal value sense.
Next, adding the right-hand sides of \eqref{deriv} and \eqref{identity}, we can write
\begin{equation}\label{denform}
(g_1\otimes_{1,1} g_2) (\tau)
=\int_0^\tau g(t)\, dt
\end{equation}
for almost every $\tau >0.$ Since $g \in L^1_1(\R_+),$ we have
\begin{equation}\label{cesaro}
\lim_{\tau \to \infty}\frac{1}{\tau} \int_0^\tau g(t)\, dt =0.
\end{equation}
Thus, employing \eqref{denform}, integrating by parts and taking into account \eqref{cesaro},
we infer that
\begin{equation*}
S_1[g_1](z) S_1 [g_2](z)=-\int_0^\infty (g_1\otimes_{1,1} g_2) (\tau)\,  d(z+\tau)^{-1}=S_1[g](z)
\end{equation*}
for all $z >0,$ i.e. \eqref{m22} holds.
\end{proof}

Observe that $g_2H[g_1]+g_1 H[g_2]$ may, in general, change its sign
even if $g_j\ge 0$, $j=1,2$.

\begin{remark}\label{St_term} It seems that the identity (\ref{m22})
appeared for the first time in \cite[$\S\,15$]{Yak1} (see also \cite{Yak}),
where it was proved
for a certain class of functions  associated with a Mellin transform.
Later the result was extended to $L^p$-spaces in \cite{SrivT},
where it was assumed that
\[
g_j\in L^{p_j}(\R_{+}),\quad p_j\in (1,\infty),\quad j=1,2,\quad
\frac{1}{p_1}+\frac{1}{p_2}<1.
\]
Theorem \ref{Stiv} is a  substantial generalization of this result
based on a direct use of the convolution formula \eqref{m1} and \eqref{m0}.
Note that
 the term "Stieltjes convolution"
was employed in the literature to denote $g_1H[g_2]$ + $g_2H[g_1]$ rather than $g_1\otimes_{1,1}g_2.$ In view of the material
presented in Sections \ref{st_conv} and \ref{st_conv_in}, we  believe that our terminology is more natural and revealing.
\end{remark}
\medskip

The appearance of Hilbert transforms in the right-hand side of \eqref{m22}
suggests  to look for a version of \eqref{m22} concerning  functions defined
on the whole real line. On this way, we show that  the convolution formula (\ref{m22})
for  $g_j\in L^{p_j}(\R_{+})$, $p_j\in (1,\infty)$, $j=1,2$, $1/p_1+1/p_2\le 1,$
is equivalent  to the next well-known version of Poincar\'e-Bertrand's formula:
\begin{equation}\label{Carton}
{H}[f_1](t) {H}[f_2](t)-f_1(t) f_2(t)={H}\bigl[f_1  {H}[f_2]+ f_2 {H}[f_1]\bigr](t)
\end{equation}
 for $f_1\in L^{p_1}(\R)$ and $f_2\in L^{p_2}(\R)$ and almost all  $t \in \mathbb R.$
See \cite[Chapters 2.13, 4.16 and 4.23]{King} and \cite{Okada} for a general discussion of \eqref{Carton} and related identities.
Apparently, the identity \eqref{Carton} was first proved by Tricomi  for the case $1/p_1+1/p_2< 1,$ see  \cite[Theorem IV]{Tricomi}.
It was then extended  in \cite[Theorem  \ref{elprod}]{Carton} and \cite[Theorem]{Love} to include a more general assumption  $1/p_1+1/p_2\le 1.$
The approach in the second paper was rather technical, while the first paper offered a comparatively simple proof and gave
several interesting consequences.
As a byproduct of our technique,  we prove a slightly more general version of \eqref{Carton}
valid under the assumptions of Theorem \ref{Stiv}.

To prove the equivalence of Theorem \ref{Stiv} and \eqref{Carton} for  $f_j\in L^{p_j}_{\beta_j}(\R)$, with $\beta_j$ and $p_j$ satisfying \eqref{Ca1}, $j=1,2,$
we first note that  one implication is rather direct.
 Indeed, assume that \eqref{Carton} holds for $f_j\in L^{p_j}_{\beta_j}(\R), j=1,2.$
By restricting (\ref{Carton}) to $t \in (-\infty,0),$ replacing in this identity $f_1$ with  $f_1 \chi_{\R_+}$
 and $f_2$ with $f_2 \chi_{\R_+},$ and setting $t=-s,$
we obtain  (\ref{m22}). So, in fact, the result in \cite{SrivT} is an easy corollary of a version of Tricomi's identity.

We proceed with deriving the other implication.
To this aim, recall the classical Sokhotski-Plemelj jump formula for Hilbert transforms:
\begin{equation}\label{Ss}
\lim_{\epsilon\to +0}\,
\frac{1}{\pi}S_1[f](-t\mp i\epsilon)=\pm if(t)+{H}[f](t),\qquad \text{a.e.} \quad t>0.
\end{equation}
for all $f \in L^p(\R_+), p \in (1,\infty),$ see e.g. \cite[p. 348-349]{Havin} or  \cite[Theorem 5.30]{Rosen}.
\begin{theorem}\label{TS1}
Let  $f_j \in L^{p_j}_{\beta_j}(\R), j=1,2,$ be such that \eqref{Ca1} holds.
Then $f_1$, $f_2$,  $f_2 H[f_1],$ and $f_1 H[f_2]$ belong to $L^1_1(\R),$
and $f_1$ and $f_2$ satisfy the Tricomi identity (\ref{Carton}).
\end{theorem}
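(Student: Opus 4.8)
The plan is to derive the Tricomi identity \eqref{Carton} from the half-line product formula \eqref{m22} of Theorem \ref{Stiv} by passing to boundary values with the help of the Sokhotski--Plemelj jump formula \eqref{Ss}. First I would record the integrability: combining H\"older's inequality with \eqref{Hww} exactly as in the proof of Theorem \ref{Stiv}, but over $\R$ in place of $\R_+,$ one obtains $f_1,f_2\in L^1_1(\R)$ and $f_1H[f_2],f_2H[f_1]\in L^1_1(\R);$ put $g:=f_1H[f_2]+f_2H[f_1]\in L^1_1(\R).$ Consequently the Cauchy integrals
\[
\Phi_j(z):=\int_\R\frac{f_j(u)\,du}{z+u}\quad(j=1,2),\qquad\Phi_g(z):=\int_\R\frac{g(u)\,du}{z+u}
\]
are well defined and analytic for $z\in\C\setminus\R.$

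The key step is the real-line counterpart of \eqref{m22},
\[
\Phi_1(z)\,\Phi_2(z)=\pi\,\Phi_g(z),\qquad z\in\C\setminus\R,
\]
which I would prove by repeating the argument behind Theorem \ref{elprod}: split $\iint_{\R^2}$ into $\{s<t\},$ $\{s>t\}$ and the null set $\{s=t\},$ use the partial-fraction identity $\tfrac{1}{(z+s)(z+t)}=\tfrac{1}{t-s}\bigl(\tfrac{1}{z+s}-\tfrac{1}{z+t}\bigr)$ (the case $\alpha_1=\alpha_2=1$ of \eqref{A2}) together with Fubini, and identify the resulting inner integrals as $\pm\pi$ times Hilbert transforms, the factor $\pi$ coming, as in the proof of Theorem \ref{Stiv}, from $\mathrm{p.v.}\int_\R\tfrac{h(s)\,ds}{s-\tau}=\pi H[h](\tau).$ Alternatively, and this is the route that visibly invokes Theorem \ref{Stiv}, one writes $f_j=g_j+h_j(-\,\cdot\,)$ with $g_j:=f_j\chi_{\R_+}$ and $h_j(u):=f_j(-u)\chi_{\R_+}(u),$ both in $L^{p_j}_{\beta_j}(\R_+),$ so that $\Phi_j(z)=S_1[g_j](z)-S_1[h_j](-z);$ expanding $\Phi_1\Phi_2$ into four pieces, the two ``like-signed'' products are evaluated by \eqref{m22} (the $h$--$h$ one after the substitution $z\mapsto-z,$ legitimate since $-z\in\C\setminus(-\infty,0]$ whenever $z\notin[0,\infty)$), the two ``mixed'' products $S_1[g_1](z)S_1[h_2](-z)$ and its symmetric companion are handled by Fubini and the elementary identity $\tfrac{1}{(z+s)(t-z)}=\tfrac{1}{s+t}\bigl(\tfrac{1}{z+s}+\tfrac{1}{t-z}\bigr),$ and after undoing the reflections and splitting each Hilbert transform into its contributions over $\R_+$ and $\R_-$ all the pieces reassemble into $\pi\Phi_g.$

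With this in hand I would finish by taking boundary values. The jump formula \eqref{Ss} is a local statement, so its whole-line form applies to $f_j\in L^{p_j}_{\beta_j}(\R)$ (these functions are locally in $L^{p_j},$ while the contribution to $\Phi_j$ from large $|u|$ is, since $f_j\in L^1_1(\R),$ smooth near the real axis), and a Privalov-type boundary result gives the analogous jump relation for $\Phi_g$ with $g\in L^1_1(\R).$ Hence for a.e.\ $t\in\R,$
\[
\lim_{\epsilon\to+0}\tfrac1\pi\Phi_j(-t\mp i\epsilon)=\pm if_j(t)+H[f_j](t),\qquad
\lim_{\epsilon\to+0}\tfrac1\pi\Phi_g(-t\mp i\epsilon)=\pm ig(t)+H[g](t).
\]
Dividing $\Phi_1\Phi_2=\pi\Phi_g$ by $\pi^2$ and letting $z\to-t$ first through the lower and then through the upper half-plane yields, for a.e.\ $t,$
\[
(if_1+H[f_1])(if_2+H[f_2])=ig+H[g],\qquad(-if_1+H[f_1])(-if_2+H[f_2])=-ig+H[g].
\]
Adding these two relations --- the $i$-linear terms cancel, so no complex conjugation enters and the $f_j$ may be complex-valued --- and dividing by $2$ gives $H[f_1]H[f_2]-f_1f_2=H[g]=H\bigl[f_1H[f_2]+f_2H[f_1]\bigr],$ which is precisely \eqref{Carton}.

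The main obstacle is the real-line product formula $\Phi_1\Phi_2=\pi\Phi_g,$ specifically the interchange of the order of integration in the presence of a principal-value singular integral --- the very difficulty encountered in the proof of Theorem \ref{Stiv}. I expect to resolve it in the same way: excise $\{|t-s|<\delta\},$ apply Fubini on the truncation, and pass to the limit $\delta\to0,$ controlling the truncated singular integrals by the maximal Hilbert transform, which is bounded on the weighted spaces $L^{p_j}_{\beta_j}.$ A secondary technical point is that $g$ lies only in $L^1_1(\R)$ and in no $L^p(\R)$ with $p>1,$ so the jump relation for $\Phi_g$ must be justified via a.e.-boundary-value theorems for Cauchy integrals of $L^1$ densities rather than through \eqref{Ss} in its stated $L^p$ form.
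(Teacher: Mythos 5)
Your endgame is exactly the paper's: apply the Sokhotski--Plemelj relation \eqref{Ss} to a product identity, let $z\to -t$ from the lower and from the upper half-plane, and add the two resulting relations so that the $i$-linear terms cancel. Where you genuinely diverge is in how the product identity is obtained. The paper never proves a whole-line product formula: it reduces by density to compactly supported $f_1,f_2$, translates them into $\R_+$ via $f_{j,a}(t)=f_j(t-a)$, applies the already-proved half-line formula \eqref{S0} to the shifted functions, and exploits translation invariance; no new singular-integral analysis is needed beyond Theorem \ref{Stiv} itself. You instead establish the genuine real-line identity $\Phi_1(z)\Phi_2(z)=\pi\Phi_g(z)$ on $\C\setminus\R$ (your factor $\pi$ is the normalization consistent with \eqref{S0} and with \eqref{Carton}; the displayed \eqref{m22} suppresses it). This is a correct and viable alternative, and it buys you two things the paper's argument does not have: you avoid the density/compact-support reduction entirely, and you obtain the whole-line product formula as a statement of independent interest. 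The price is technical. In route (a) the naive Fubini step is illegitimate --- the split integrals $\int_{t>s}f_2(t)(t-s)^{-1}\,dt$ diverge individually --- and your truncation fix, while it does work (the truncated double integrals converge absolutely under \eqref{Ca1}, by a three-exponent H\"older argument in the near-diagonal region), requires uniform weighted $L^{p_j}_{\beta_j}$ bounds for the truncated or maximal Hilbert transform (a Cotlar-type inequality for the $A_{p_j}$ weights $(1+|t|)^{-\beta_j}$), a strictly heavier tool than the paper's sole use of \eqref{Hww} and the Parseval identity. In route (b) the mixed terms must be handled with some care: the splitting via $\frac{1}{(z+s)(t-z)}=\frac{1}{s+t}\bigl(\frac{1}{z+s}+\frac{1}{t-z}\bigr)$ is legitimate only because each of the two resulting pieces retains one decaying factor (the quantity $\iint |g_1(s)||h_2(t)|(s+t)^{-1}\,ds\,dt$ \emph{without} such a factor can diverge under \eqref{Ca1}, e.g.\ for the functions of Example \ref{ExC} with $p_1=p_2=2$, $\beta_1=\beta_2=1/2$), and the final reassembly requires tracking which half-line each portion of $H[f_j]=H[f_j\chi_{\R_+}]+H[f_j\chi_{(-\infty,0)}]$ comes from. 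Finally, both your proof and the paper's need the jump relation for a Cauchy integral whose density is only locally $L^1$; the paper's compact-support reduction makes this step classical, whereas in your setting it must be supplied by the Privalov-type localization you indicate. None of these points is a fatal gap, but each needs to be written out for the argument to be complete.
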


\begin{proof}
As in the proof of Theorem \ref{Stiv}, the fact that $f_1, f_2$ and $f_2 H[f_1]+f_1 H[f_2] $ are in $ L^1_1(\R)$ is a direct corollary of \eqref{Hww} and H\"older's inequality.

Thus, the main issue is to transfer  Theorem \ref{Stiv} to the setting of the whole of $\R.$
This will be done by
 shifting  $f_1$ and $f_2$ backwards and observing that
via density arguments it suffices to prove  (\ref{Carton}) for $f_1$ and $f_2$ with compact support.
Let $a\in (0,\infty)$ be fixed and $\supp(f_j)\subset [-a,a], j=1,2$.
Define
\[
{f}_{j,a} (t):=f_j(t-a),\qquad \text{a.e.}\,\, t>0,\quad j=1,2.
\]
Then, by Theorem \ref{Stiv}, we have
\begin{equation}\label{S0}
\frac{1}{\pi}S_1[{f}_{1,a}](z) \frac{1}{\pi}S_1[{f}_{2,a}](z)
=\frac{1}{\pi}S_1 \bigl[{f}_{1,a}\, {H}[f_{2,a}]+ {f}_{2,a}\, {H}[{f}_{1,a})]\bigr](z)
\end{equation}
for all $z\in \C\setminus (-\infty,0].$ In particular, setting $z=t,$
$t>0,$ in (\ref{S0}), we obtain
\[
{H}[{f}_{1,a}](-t)  H [{f}_{2,a}](-t)
={H}\bigl[{f}_{1,a}\, {H} [{f}_{2,a}]+{f}_{2,a}\, {H}[ {f}_{1,a}]\bigr](-t).
\]
This means that (\ref{Carton}) holds for $f_1$ and $f_2$ and $t<-a$.

Next, setting $z=-t-i\epsilon$ with $t,\epsilon>0$ in (\ref{S0}),  letting $\epsilon\to 0$,
and taking into account (\ref{Ss}), we conclude that
\begin{align}
(i{f}_{1,a}(t)+&{H}[f_{1,a}](t))
(i{f}_{2,a}(t)+{H}[f_{2,a}](t)) \label{eq1} \\
=&i{f}_{1,a}(t) {H}[f_{2,a}](t)+
i{f}_{2,a}(t) {H}[f_{1,a}](t)\notag \\+&
{H}\bigl[{f}_{1,a}\,
{H}[{f}_{2,a}]+{f}_{2,a}\, {H}[f_{1,a}]\bigr](t)\notag
\end{align}
for almost all $t >0.$
Similarly, applying the same argument for $z=-t+i\epsilon,$ we have
\begin{align}
(-if_{1,a}(t)+&{H}[f_{1,a}](t))
(-if_{2,a}(t)+{H}[f_{2,a}](t))\label{eq2}\\
=&-if_{1,a}(t) {H}[f_{2,a}](t)-
if_{2,a}(t) {H}[f_{1,a}](t)\notag \\+&
{H}\bigl[f_{1,a}\,
{H}[f_{2,a}]+f_{2,a}\, {H}[f_{1,a}] \bigr](t),\notag
\end{align}
for almost all $t >0.$

So, summing up \eqref{eq1} and \eqref{eq2},
and returning to $f_1$ and $f_2,$ we infer that for almost every $t>-a:$
\begin{equation}\label{Gv}
{H}[f_1](t) {H}[f_2](t)
-f_1(t) f_2(t)
={H}\bigl [f_1
{H}[f_2]+f_2 {H}[f_1]\bigr ](t).
\end{equation}

Since the choice of $a$ was arbitrary, this implies the claim.
\end{proof}
Note that quite an interesting application of Tricomi's identity to the study of nonlinear PDE
was found recently in \cite{Elgindi}.

\section{Representability problem for generalized Stieltjes transforms}\label{repres}

As we discussed in the introduction, finding  a convenient criterion
for representability of a function as a generalized Stieltjes transform of order $\alpha>0, \alpha \neq 1,$
is a difficult and challenging problem.
In this section, among other things, we show
that if $f \in \mathcal S_\alpha$ and $f_\beta(z):=f(z^\beta), \beta  \in (0,1],$ then
$f_\beta \in \mathcal S_{\alpha \beta},$ so that $f_\beta \in \mathcal S_\beta$ if $f \in \mathcal S_1.$
 Since the class $\mathcal S_1$ is well-understood, our results lead
to a transparent description of a substantial subset of $\mathcal S_\alpha,$
and clarify the representability problem to some extent.

Our construction will be realized in two ways. One of them will be based on an abstract argument
and the other will rely on a series of explicit transformations, thus yielding a bit more eventually.
On the other hand, the ways are not independent, and the second one uses an intuition provided
by the first, soft approach.
In these studies, Theorem \ref{elprod} will play a substantial role.

Recall from \eqref{stilt_def} that $\mathcal S_1= \mathbb R_+ + S_1(\mathcal M_1^+(\R_+)),$
and that $\mathcal S_1$  can be neatly characterized
by Theorem \ref{geometric}.
It will useful to note that if $f \in \mathcal S_1, f \sim (a,\mu)_1,$ then by the monotone convergence theorem,
\begin{equation}\label{limit_s}
\exists \lim_{s \to \infty} s f(s) =\begin{cases} \mu(\R_+), &\,\, \text{if} \,\,  a=0\,\,  \text{and}\,\,  \mu\,\, \text{is finite},\\
\infty,& \, \,\, \text{otherwise.}
\end{cases}
\end{equation}
The class $\mathcal S_1$ is closely related to the  class of so-called complete Bernstein functions, denoted by  $\mathcal{CBF}$.
One may define $f \in \mathcal{CBF} $ as $f:  (0,\infty)\to [0,\infty)$ such that $s^{-1}f \in \mathcal S_1,$
so that
\begin{equation}\label{cbf}
\mathcal {CBF}:=\{as+ s S_1[\mu](s): a \ge 0, \mu \in \mathcal M_1^+(\R_+)\}.
\end{equation}
Recall that  $f \in \mathcal{CBF}, f \ne  0,$ if and only if $1/f \in \mathcal S_1,$ see \cite[Theorem 7.3]{Sh}.

If $f \in \mathcal{CBF}$ or $f \in \mathcal{S}_1,$ then $f$ extends analytically to
$\C \setminus (-\infty,0],$ and we then identify $f$ with its analytic extension.
Moreover, if $f \in \mathcal{CBF},$ then $f$ extends continuously to $\C \setminus (-\infty,0).$

Our arguments will rely on specific properties of the subclass of $\mathcal{CBF}$ consisting of Thorin-Bernstein functions $\mathcal {TBF}.$ Recall that $\mathcal{TBF}$ can be described by the right-hand side of \eqref{cbf}
with $\mu$, being in addition,   absolutely continuous on $(0,\infty)$ such that $\mu(dt)=t^{-1}w(t)dt,$ and $w$ is non-decreasing on $(0,\infty),$
see \cite[Theorem 8.2, (v)]{Sh}.
The next two statements on Thorin-Bernstein functions  can be found in \cite[Theorem 8.2, (iii)]{Sh} and \cite[Proposition 8.7]{Sh}.
For short-hand,
if
 $f:(0,\infty) \to (0,\infty)$ is differentiable, then we let
\begin{equation}\label{phi}
\Psi[f](s):=-(\log f)'(s)=-\frac{f'(s)}{f(s)}, \qquad  s>0.
\end{equation}
Note that if  $\Psi[f](s)\ge 0$ for all $ s>0,$ and $\Psi[f] \in C(0,\infty),$ then $f' \in C(0,\infty)$
and $f'(s)\le 0$ for every $s>0,$ so that there exists (possibly infinite) $f(0+)=\lim_{s \to 0}f(s).$
\begin{theorem}\label{thorin}
\begin{itemize}
\item [(i)] If $g: (0,\infty) \to (0,\infty),$ then $g \in \mathcal {TBF}$ if and only if
$g$ is of the form
\begin{equation}\label{A1}
g(s)=a + bs+\int_{(0,\infty)} \log\left(1+\frac{s}{t}\right)\,\nu(dt), \qquad s >0,
\end{equation}
where $a,b \ge 0$ and $\nu$ is  a unique  positive Borel measure on $(0,\infty)$ satisfying
\begin{equation}\label{B1}
\int_{(0,1)} |\log t|\,\nu(dt)+\int_1^\infty \frac{\nu(dt)}{t}<\infty.
\end{equation}
\item [(ii)] If  $f: (0,\infty) \to (0,\infty)$ is differentiable, then $f=e^{-g}$ with $g \in \mathcal{TBF}$ if and only
$\Psi[f] \in \mathcal S_1$ and $f(0+)\le 1.$ 
\end{itemize}
\end{theorem}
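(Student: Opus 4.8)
The plan is to give a direct argument (the cited references contain complete proofs) built around the elementary identity $\frac{d}{ds}\log(1+s/t)=(s+t)^{-1}$, $s,t>0$, which connects the representation \eqref{A1} to the Stieltjes class via differentiation — or, read backwards, via integration against $s$ — together with the description of $\mathcal{TBF}$ recalled just before the theorem; part (ii) is then deduced from part (i) by passing to $g=-\log f$.

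For part (i), I would first verify that if $g$ is given by \eqref{A1} with $\nu$ a positive Radon measure on $(0,\infty)$ satisfying \eqref{B1}, then $\nu\in\mathcal M^+_1(\R_+)$ (split $(0,\infty)$ at $t=1$: near $0$ one controls $\nu(dt)$ by $|\log t|\,\nu(dt)$ using local finiteness, near $\infty$ by $t^{-1}\nu(dt)$), and that differentiation under the integral sign yields $g'=b+S_1[\nu]\in\mathcal S_1$. For the converse, I would start from $g\in\mathcal{TBF}$ written as $g(s)=a_0+a_1s+sS_1[\mu](s)$ with $\mu(dt)=t^{-1}w(t)\,dt$ and $w$ non-decreasing, write $w(t)=\int_{(0,t)}\lambda(dr)$ for a positive Radon measure $\lambda$ (the value $w(0+)$ must vanish, since otherwise $g$ would be infinite near $0$), and use Fubini to turn $sS_1[\mu](s)$ into $\int_{(0,\infty)}\log(1+s/r)\,\lambda(dr)$, recovering \eqref{A1} with $\nu=\lambda$; the equivalence of \eqref{B1} with finiteness of $g$ on $(0,\infty)$ then follows by comparing $\log(1+s/t)$ with $s/t$ for $t$ large and with $|\log t|$ for $t$ small. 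Uniqueness of $\nu$ comes from uniqueness of the Stieltjes representation applied to $g'-b=S_1[\nu]$.

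For part (ii), I would set $g:=-\log f$. The remark recorded just before the theorem shows that $\Psi[f]\in\mathcal S_1$ already forces $f$ to be positive, non-increasing and $C^1$ on $(0,\infty)$, so $g$ is a genuine $C^1$ function with $g'=-(\log f)'=\Psi[f]$ and a limit $g(0+)\in[0,\infty]$. If $f=e^{-g}$ with $g\in\mathcal{TBF}$, then part (i) gives $\Psi[f]=g'\in\mathcal S_1$, while the constant $a$ in \eqref{A1} equals $g(0+)$, so $f(0+)=e^{-a}\le1$. Conversely, assuming $\Psi[f]\in\mathcal S_1$ and $f(0+)\le1$, write $g'=\Psi[f]=b+S_1[\nu]$; monotonicity and positivity of $f$ give $f(0+)\in(0,\infty]$, and $f(0+)\le1$ forces $g(0+)=-\log f(0+)\in[0,\infty)$. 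Integrating $g'$ from $0$, finiteness of $g(0+)$ forces $\nu(\{0\})=0$, and Fubini converts $\int_0^s S_1[\nu](\sigma)\,d\sigma$ into $\int_{(0,\infty)}\log(1+s/t)\,\nu(dt)$, exhibiting $g$ in the form \eqref{A1} with $a=g(0+)\ge0$; hence $g\in\mathcal{TBF}$ and $f=e^{-g}$.

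The step I expect to take the most care is matching the two descriptions of $\mathcal{TBF}$ in part (i) — keeping track of the endpoint behaviour of the representing measure, ruling out an atom at $t=0$, and checking that \eqref{B1} is precisely the condition making \eqref{A1} finite with $\nu\in\mathcal M^+_1(\R_+)$. Once (i) is settled, (ii) is a short formal consequence, the only delicate bookkeeping being the correspondence between $f(0+)$ and the additive constant $a=g(0+)$.
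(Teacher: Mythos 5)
The paper does not prove this theorem at all: it is quoted from \cite[Theorem 8.2 and Proposition 8.7]{Sh}, so there is no in-paper argument to match yours against. Your self-contained reconstruction is correct and follows the standard route of that reference. The engine of part (i) is the identity $\log(1+s/r)=s\int_r^\infty t^{-1}(s+t)^{-1}\,dt$ combined with Fubini, which converts $s\,S_1[t^{-1}w(t)\,dt](s)$, with $w$ non-decreasing and $w(0+)=0$, into $\int_{(0,\infty)}\log(1+s/r)\,\lambda(dr)$ where $w(t)=\lambda((0,t))$; you carry this out correctly in the direction $\mathcal{TBF}\Rightarrow\eqref{A1}$. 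Two points to tighten. First, for the direction $\eqref{A1}\Rightarrow g\in\mathcal{TBF}$ you only record that $g'=b+S_1[\nu]\in\mathcal S_1$; under the description of $\mathcal{TBF}$ recalled before the theorem (the form \eqref{cbf} with density $t^{-1}w(t)$, $w$ non-decreasing) this does not yet close the loop — you must run the same Fubini identity backwards with $w(t):=\nu((0,t))$, the constant $a$ being absorbed as an atom of the representing measure at $0$. You flag this ``matching of the two descriptions'' yourself and the identity you already use does the job, so this is presentational rather than a mathematical gap. Second, \eqref{B1} controls $\nu$ only near $0$ and near $\infty$ and says nothing about $\nu$ near $t=1$ (where $|\log t|$ vanishes), so finiteness of the integral in \eqref{A1} additionally requires $\nu$ to be finite on compact subsets of $(0,\infty)$; you correctly invoke local finiteness for this, and the $\nu$ produced in the converse direction is a Radon measure, so nothing is lost. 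Part (ii) as you present it — $a=g(0+)$, $f(0+)=e^{-a}$, the atom $\nu(\{0\})$ excluded by finiteness of $g(0+)$, and Fubini converting $\int_0^s S_1[\nu](\sigma)\,d\sigma$ into the logarithmic integral — is correct and complete.
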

See \cite[Chapter 8]{Sh} for more information on the properties of $\mathcal{TBF}$ and relevance of $\mathcal{TBF}$ in probability theory.

We start with  a general result describing a class of functions $f$ on $(0,\infty)$
such that for an appropriate $\gamma>0$ one has $f^\alpha \in S_{\gamma \alpha}$ for all $\alpha >0.$
 Its proof is based on an approximation trick involving the Krein-Milman theorem
and used already in the literature, see e.g. the proofs of \cite[Theorem 3.2]{Faraut} or
\cite[Theorem 2]{MacGr73}.
We will need  the following classical fact, separated for ease of reference.
\begin{lemma}\label{F}
Let $(f_n)_{n \ge 1}\subset \mathcal{S}_\alpha$, $\alpha>0$. If for every $s >0$
there exists $f(s):=\lim_{n\to\infty}\,f_n(s),$
then
$f\in \mathcal{S}_\alpha,$ and moreover $\lim_{n\to\infty}\,f^{(k)}_n(s)=f^{(k)}(s),$ $s>0,$
for every $k \in \mathbb N.$
\end{lemma}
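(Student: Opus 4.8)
The plan is to prove this as a standard normal-families / Vitali-type convergence result for the cone $\mathcal S_\alpha$, combined with the integral-representation structure. First I would fix $s_0>0$ and observe that, writing $f_n \sim (a_n,\mu_n)_\alpha$, the assumed pointwise convergence at a single point, say $s=1$, already gives a uniform bound $\sup_n (a_n + S_\alpha[\mu_n](1)) = \sup_n \bigl(a_n + \int_0^\infty (1+t)^{-\alpha}\,\mu_n(dt)\bigr) = \sup_n (a_n + \|\mu_n\|_\alpha) =: M < \infty$, because convergent sequences of reals are bounded. From this I would extract, via a Helly-type / weak-$*$ sequential compactness argument (recall from Section~2 that $\mathcal M_\alpha(\R_+)$ is the dual of the separable Banach space $C_0(\R_+;(1+t)^\alpha)$, and the $\mu_n$ lie in a norm-bounded subset of its positive cone), a subsequence $\mu_{n_k} \to \mu$ weak-$*$ with $\mu \in \mathcal M_\alpha^+(\R_+)$ and $\|\mu\|_\alpha \le M$, together with $a_{n_k}\to a$ for some $a\ge 0$.

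Next I would check that weak-$*$ convergence transfers to the transforms: for each fixed $s>0$ the function $t\mapsto (s+t)^{-\alpha}$ lies in $C_0(\R_+;(1+t)^\alpha)$ (it is continuous and decays like $t^{-\alpha}$, so $(1+t)^\alpha(s+t)^{-\alpha}\to 1$, hence is bounded but not vanishing — so strictly I would instead test against $(s+t)^{-\alpha}-c(1+t)^{-\alpha}$ for a suitable constant, or simply note that both $(s+t)^{-\alpha}$ and the constant-at-infinity obstruction are handled because $\mu_n(\R_+)(1+\cdot)^{-\alpha}$-mass is controlled; cleanly, $t\mapsto (s+t)^{-\alpha}$ is a bounded continuous function and boundedness of $\|\mu_{n_k}\|_\alpha$ plus tightness gives $\int (s+t)^{-\alpha}\,\mu_{n_k}(dt) \to \int (s+t)^{-\alpha}\,\mu(dt)$). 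Thus $f_{n_k}(s) = a_{n_k}+S_\alpha[\mu_{n_k}](s) \to a + S_\alpha[\mu](s)$ for every $s>0$; but by hypothesis $f_{n_k}(s)\to f(s)$, so $f(s)=a+S_\alpha[\mu](s)$, proving $f\in\mathcal S_\alpha$. The subsequence issue is harmless for this conclusion since the limit $f$ is prescribed.

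For the derivative statement I would pass to analyticity: each $f_n$ extends to an analytic function on $\C\setminus(-\infty,0]$ via the same formula $a_n + \int_0^\infty (z+t)^{-\alpha}\,\mu_n(dt)$, and on any compact subset $K$ of $\C\setminus(-\infty,0]$ one has $|z+t|\ge c_K(1+t)$ with $c_K>0$, whence $|f_n(z)|\le a_n + c_K^{-\alpha}\|\mu_n\|_\alpha \le \max(1,c_K^{-\alpha})M$ uniformly on $K$. So $(f_n)$ is locally uniformly bounded, hence a normal family by Montel's theorem; since it converges pointwise on $(0,\infty)$, which has a limit point in the domain, Vitali's theorem forces locally uniform convergence of $f_n$ to the analytic extension of $f$ on all of $\C\setminus(-\infty,0]$, and then Cauchy's integral formula for derivatives gives $f_n^{(k)}(s)\to f^{(k)}(s)$ for every $s>0$ and every $k\in\N$.

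I expect the main obstacle to be the clean handling of the test function $t\mapsto (s+t)^{-\alpha}$, which does not belong to $C_0(\R_+;(1+t)^\alpha)$ (it tends to $1$, not $0$, after multiplication by $(1+t)^\alpha$), so the passage from weak-$*$ convergence of measures to convergence of Stieltjes transforms needs the extra tightness input — equivalently, one must rule out escape of mass to infinity. This is where the single-point pointwise hypothesis is genuinely used: evaluating at two points $s_1\ne s_2$ and taking differences, or a standard $\e$-truncation argument comparing $\int_0^R$ and $\int_R^\infty$, pins down the missing mass and upgrades weak-$*$ convergence to the convergence $S_\alpha[\mu_{n_k}](s)\to S_\alpha[\mu](s)$ needed above. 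Everything else is routine once this point is settled.
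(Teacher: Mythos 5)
Your proof is correct and follows essentially the same route the paper takes, since the paper simply defers to the compactness argument behind \cite[Theorem 2.2, (iii)]{Sh}: a Helly/weak-$*$ selection on the norm-bounded positive measures $(1+t)^{-\alpha}\mu_n(dt)$ (compactified at infinity, so that escaping mass is absorbed into the constant term), followed by Montel--Vitali and Weierstrass for the convergence of derivatives. The one point worth making explicit is the issue you already flag: since $(1+t)^{\alpha}(s+t)^{-\alpha}\to 1$ as $t\to\infty$ uniformly for $s$ in compacts, the mass escaping to infinity contributes an $s$-independent constant $c\ge 0$, so the limit has representation $(a+c,\mu)_\alpha$ rather than $(a,\mu)_\alpha$ --- which still yields $f\in\mathcal S_\alpha$ exactly as you conclude.
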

The proof of $f \in \mathcal S_\alpha$ is analogous to the proof of
\cite[Theorem 2.2, (iii)]{Sh}, where $\alpha=1$,
and is therefore omitted. The last claim follows either from \cite[Corollary 1.7]{Sh}
or by adjusting the proof of \cite[Theorem 2.2, (iii)]{Sh} accordingly.
\begin{theorem}\label{1L}
Let $f:(0,\infty)\to (0,\infty)$ be differentiable.
Assume that
\begin{equation}\label{S}
\Psi[f] \in \mathcal S_1
\end{equation}
and
\begin{equation}\label{S1}
\gamma:=\lim_{s \to\infty}\,s\Psi[f](s)<\infty.
\end{equation}
Then for any $\alpha>0,$
\begin{equation}\label{beta}
f^\alpha\in \mathcal{S}_{\gamma \alpha}.
\end{equation}
Moreover, if $f^\alpha \sim (a, \mu)_{\gamma \alpha},$ then $\mu$ does not have singular continuous part.
\end{theorem}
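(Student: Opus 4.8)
The plan is to establish \eqref{beta} by approximating $f$ by explicit finite products of powers, for which membership in a generalized Stieltjes class follows directly from Theorem~\ref{elprod}, and then to pass to the limit via the closure of $\mathcal S_\alpha$ under pointwise convergence (Lemma~\ref{F}). First I would analyse the hypotheses. Since $\Psi[f]\in\mathcal S_1$ by \eqref{S}, write $\Psi[f]=b+S_1[\rho]$ with $b\ge 0$ and $\rho\in\mathcal M_1^+(\R_+)$. By \eqref{limit_s}, the finiteness of the limit in \eqref{S1} forces $b=0$ and $\rho(\R_+)<\infty$, and then $\gamma=\rho(\R_+)$; thus $\Psi[f]=S_1[\rho]$ with $\rho$ a finite positive measure of total mass $\gamma$. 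If $\gamma=0$ then $f$ is constant and the statement is trivial, so assume $\gamma>0$. As $\Psi[f]$ is continuous and nonnegative on $(0,\infty)$, we have $f\in C^1(0,\infty)$ and $f(s)=f(1)\exp(-\int_1^s\Psi[f](u)\,du)$ for $s>0$. Moreover $u\Psi[f](u)\to\gamma>0$ as $u\to\infty$, so $\int_1^\infty\Psi[f](u)\,du=\infty$ and hence $f(s)\to 0$ as $s\to\infty$; I record this for the last assertion.

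Next I would approximate $\rho$ weakly by finitely supported positive measures $\rho_n=\sum_{j=1}^{k_n}c_{n,j}\delta_{t_{n,j}}$ with $c_{n,j}>0$, $t_{n,j}\ge 0$ and $\rho_n(\R_+)=\gamma$ --- this is the Krein--Milman/discretization device already used in the literature, e.g. in the proofs of \cite[Theorem~3.2]{Faraut} and \cite[Theorem~2]{MacGr73} (concretely, one may discretize $\rho$ on $[0,n]$ along a mesh of width $n^{-1}$ and collapse the mass $\rho((n,\infty))$ onto the point $n$, which preserves the total mass). Since $t\mapsto(s+t)^{-1}$ is bounded and continuous on $\R_+$ for each fixed $s>0$, the functions $\Psi_n:=S_1[\rho_n]$ converge to $\Psi[f]$ pointwise on $(0,\infty)$, with $\Psi_n(u)\le\gamma/u$ for all $n$. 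Setting $f_n(s):=f(1)\exp(-\int_1^s\Psi_n(u)\,du)$, dominated convergence on compact subintervals of $(0,\infty)$ gives $f_n\to f$ pointwise; and computing the integral explicitly, $f_n(s)=d_n\prod_{j=1}^{k_n}(s+t_{n,j})^{-c_{n,j}}$ for some constant $d_n>0$.

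Now fix $\alpha>0$. Then $f_n^\alpha(s)=d_n^\alpha\prod_{j=1}^{k_n}(s+t_{n,j})^{-\alpha c_{n,j}}=d_n^\alpha\prod_{j=1}^{k_n}S_{\alpha c_{n,j}}[\delta_{t_{n,j}}](s)$, so iterating Theorem~\ref{elprod} and using $\sum_j c_{n,j}=\gamma$ we get $f_n^\alpha\in\mathcal S_{\alpha\gamma}$ for every $n$; since $f_n^\alpha\to f^\alpha$ pointwise, Lemma~\ref{F} yields $f^\alpha\in\mathcal S_{\gamma\alpha}$, which is \eqref{beta}. For the final assertion, apply what has just been proved with $\alpha/2$ in place of $\alpha$: $f^{\alpha/2}\sim(a',\nu)_{\gamma\alpha/2}$ with $\nu\in\mathcal M^+_{\gamma\alpha/2}(\R_+)$, and since $f^{\alpha/2}(s)\to 0$ as $s\to\infty$ while $S_{\gamma\alpha/2}[\nu](s)\to 0$, necessarily $a'=0$, i.e. $f^{\alpha/2}=S_{\gamma\alpha/2}[\nu]$. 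Hence, by Theorem~\ref{elprod}, $f^\alpha=(S_{\gamma\alpha/2}[\nu])^2=S_{\gamma\alpha}[\nu\otimes_{\gamma\alpha/2,\gamma\alpha/2}\nu]$, so by uniqueness of the Stieltjes representation $f^\alpha\sim(0,\mu)_{\gamma\alpha}$ with $\mu=\nu\otimes_{\gamma\alpha/2,\gamma\alpha/2}\nu$, which has no singular continuous part by Corollary~\ref{AbsC}(iii).

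The main obstacle is the reduction in the first step: it is the hypothesis \eqref{S1} that pins $\rho$ down as a \emph{finite} measure of total mass exactly $\gamma$, and this is precisely what guarantees that all the approximants $f_n^\alpha$ lie in the \emph{same} class $\mathcal S_{\alpha\gamma}$ --- without this, Lemma~\ref{F} could not be invoked. A secondary subtlety is that the absence of a singular continuous part cannot be obtained by letting $n\to\infty$ in the representing measures of the $f_n^\alpha$ (which are purely atomic plus absolutely continuous), since weak limits may create a singular continuous component; this is exactly why one argues instead through the self-product $f^\alpha=(f^{\alpha/2})^2$ and Corollary~\ref{AbsC}(iii).
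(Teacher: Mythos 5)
Your proof is correct, and it reaches the same two structural milestones as the paper's argument --- approximation of $f^\alpha$ by finite products $\prod_j(s+t_j)^{-\alpha c_j}$ with $\sum_j c_j=\gamma$, handled by Theorem \ref{elprod} and the closure Lemma \ref{F}, and then the self-product $f^\alpha=(f^{\alpha/2})^2$ together with Corollary \ref{AbsC}(iii) for the absence of a singular continuous part --- but your route to the first milestone is genuinely more elementary. The paper shifts to $f_\epsilon(s)=f(s+\epsilon)$, invokes the Thorin--Bernstein characterization (Theorem \ref{thorin}) to write $f_\epsilon=f_\epsilon(0)e^{-g}$ with $g\in\mathcal{TBF}$ given by the logarithmic representation \eqref{A1}, discretizes the Thorin measure on $[1/n,n]$ by Krein--Milman, and then removes the two auxiliary parameters $n$ and $\epsilon$ by monotone convergence and repeated appeals to Lemma \ref{F}. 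You bypass all of this: since $\Psi[f]=S_1[\rho]$ with $\rho$ finite of total mass $\gamma$ (forced by \eqref{S1} via \eqref{limit_s}), you integrate $-(\log f)'=\Psi[f]$ from the base point $1$ and discretize $\rho$ directly by a mass-preserving mesh; the bound $\Psi_n(u)\le\gamma/u$ supplies the domination needed for $f_n\to f$, and a single application of Lemma \ref{F} suffices. This trades the paper's two-parameter limit and the $\mathcal{TBF}$ input for a one-parameter limit and the fundamental theorem of calculus, at the mild cost of verifying the convergence of the discretization by hand (immediate here, since $t\mapsto(s+t)^{-1}$ is Lipschitz on $\R_+$ for fixed $s$). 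For the final claim, your observation that $f(s)\to 0$ as $s\to\infty$ (hence $a'=0$ in the representation of $f^{\alpha/2}$) lets you quote Corollary \ref{AbsC}(iii) directly, whereas the paper retains the constant and invokes the extended product formula \eqref{sing_const} and \eqref{general}; both are valid. Your explicit disposal of the degenerate case $\gamma=0$ is also slightly more careful than the paper's remark that $\gamma>0$.
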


\begin{proof}
We first prove \eqref{beta}. Let $\alpha>0$ and $\epsilon > 0$ be fixed, and
\[
f_\epsilon(s):=f(s+\epsilon), \qquad s >0.
\]
Then, by Theorem \ref{thorin},(ii),
there exists $g=g_\epsilon \in \mathcal{TBF}$ such that
\begin{equation}\label{T}
f_\epsilon(s)=f_\epsilon(0)e^{-g(s)},\qquad s \ge 0,
\end{equation}
and, in view of Theorem \ref{thorin},(i),
$g$ is of the form \eqref{A1} with
$a=a_\epsilon=0,$ $ b=b_\epsilon \ge 0,$ and $\nu=\nu_\epsilon$ being a positive Borel measure on $(0,\infty)$ satisfying \eqref{B1}.
So, $\Psi[f_\epsilon] \in \mathcal{S}_1$ and
\begin{equation}\label{D}
\Psi[f_\epsilon](s)=
g'(s)=b+\int_{(0,\infty)} \frac{\nu(dt)}{s+t},
\end{equation}
for every $s >0.$  Hence, by \eqref{limit_s}, $b=0$ and
\begin{equation}\label{Fatou}
 \int_{(0,\infty)} \nu(dt)= \gamma.
\end{equation}
We have
\begin{equation}\label{SM}
f_\epsilon(s)=
f_\epsilon (0) \exp\left(-\int_{(0,\infty)} \log\left(1+\frac{s}{t}\right)\,\nu(dt)\right),\qquad s>0.
\end{equation}
Fix
 $n\in \mathbb  N,$ let
\[
f_{\epsilon, n}(s):=\exp\left(-\int_{1/n}^n \log\left(1+\frac{s}{t}\right)\,\nu(dt)\right), \qquad s >0,
\]
and note that $\gamma >0$ by \eqref{limit_s}.
Using the Krein-Milman theorem (see e.g. \cite[Theorem 8.14]{Simon}) and \eqref{Fatou},
we can approximate the restriction of  $\gamma^{-1} \nu$ to $[1/n,n]$ by finite convex combinations of delta measures
in the weak$^*$ topology of $(C([1/n, n]))^*,$
so that
\[\int_{1/n}^n \log\left(1+\frac{s}{t}\right)\,\nu(dt)=\lim_{j \to \infty}\sum_{i=1}^{N_j} a_{ij} \log\left(1+\frac{s}{t_{ij}}\right), \qquad s >0,
\]
for some $t_{ij} \subset [1/n,n]$ and $a_{i,j} \ge 0$ with
$\sum_{i=1}^{N_j} a_{ij}=\gamma$ for all $j\in \mathbb N.$
Thus, for every $s >0,$
\[
(f_{\epsilon, n}(s))^\alpha= \lim_{j \to \infty}\prod_{i=1}^{N_j}  \left(1+\frac{s}{t_{ij}}\right)^{-\alpha a_{ij}}.
\]
Setting
\[
G_j(s)= \prod_{i=1}^{N_j} \left(1+\frac{s}{t_{ij}}\right)^{-\alpha a_{ij}}, \qquad j \in \N, \, s >0,
\]
and invoking  Theorem \ref{elprod} (or the less explicit result by Hirsmann-Widder \cite{widder}, mentioned in the introduction), we have $G_j \in  \mathcal{S}_{\gamma \alpha}$ for all $j \in \N.$ Hence  Lemma \ref{F} implies that $(f_{\epsilon, n})^\alpha \in \mathcal{S}_{\gamma \alpha}$ as well.

Now letting $n \to \infty$ and using the monotone convergence theorem, we infer that
\[
(f_{\epsilon}(s))^\alpha=\lim_{n\to\infty}\,(f_{\epsilon, n}(s))^\alpha, \qquad s >0,
\]
so by Lemma \ref{F} again,
\begin{equation}\label{FF}
(f_{\epsilon})^\alpha \in \mathcal{S}_{\gamma\alpha}.
\end{equation}
Finally, letting $\epsilon \to 0,$ we note that
\[
(f(s))^\alpha=\lim_{\epsilon\to 0}\,(f(s+\epsilon))^\alpha
=
\lim_{\epsilon\to 0}\, (f_{\epsilon}(s))^\alpha,
\]
for all $s >0.$ So   (\ref{FF}) and Lemma \ref{F} imply that
$
f^\alpha \in \mathcal{S}_{\gamma\alpha},
$
and the proof of \eqref{beta} is complete.

Let $f^\alpha \sim (a, \mu)_{\gamma \alpha}.$ To prove the last claim, note first that in view of  \eqref{beta} one clearly has
 $f^{\alpha/2}\in \mathcal{S}_{\alpha \gamma/2}.$
Applying the representation \eqref{sing_const}
to the product $f^\alpha$   of the generalized Stieltjes functions  $f^{\alpha/2}$ and $f^{\alpha/2}$
and using Corollary \ref{AbsC} and \eqref{general},  we conclude that  $\mu$  has no singular continuous part.
\end{proof}
\begin{remark}\label{log}
Observe that by Theorem \ref{1L}
if $f$ satisfies (\ref{S}) and
(\ref{S1}) with $\gamma=1,$ then
$f\in \mathcal{S}_1$. However, the opposite implication does not hold, i.e. neither \eqref{S} nor \eqref{S1} follow from $f\in \mathcal{S}_1$.
Indeed, considering first  \eqref{S},
 let
$
f(s):=(\log(s+1))^{-1}, s>0,
$
and observe that $f \in \mathcal{S}_1$ by, for example, Theorem \ref{geometric}
(or \cite[p.9]{Sh}).
Then
$$
\Psi[f](s)=((s+1)\log(s+1))^{-1},
$$
and $\Psi[f] \not \in \mathcal{S}_1$ by \eqref{limit_s}.

Turning now to \eqref{S1}, define
$$
f(s):=(s+1)^{-1}+(s+2)^{-1}, \qquad s>0.
$$
Then
$$
\Psi[f](s)=
(s+1)^{-1}+(s+2)^{-1}-2(2s+3)^{-1}
$$
and $\Psi[f]\not\in \mathcal{S}_1.$
One may just note  that $\Psi[f]=S_1[\mu]$ for a \emph{signed} measure  $\mu \in \mathcal M_1(\R_+),$ and refer to a uniqueness theorem for Stieltjes transforms.
\end{remark}

Thus, by Remark \ref{log}, the assumption $f \in \mathcal S_1$ is strictly weaker
than the assumptions in Theorem \ref{1L} with $\gamma=1$, and it is natural to ask whether
Theorem \ref{1L} holds if one only assumes that $f \in \mathcal S_1.$
More precisely, one can ask whether the following implication is true:
\begin{equation}\label{Q1}
f\in \mathcal{S}_1
\Longrightarrow
f^\alpha\in \mathcal{S}_\alpha,\quad \alpha\in (0,1).
\end{equation}
It is instructive to note
the  reformulation of \eqref{Q1}
via analytic extensions of functions from $\mathcal S_1$:
\begin{equation}\label{Q2}
g\in \mathcal{S}_1,\quad
g(\Sigma_\pi)\in \overline{\Sigma}_{\pi\alpha}
\Longrightarrow
g\in \mathcal{S}_\alpha,\quad \alpha\in (0,1),
\end{equation}
where ${\Sigma}_{\theta}=\{z\in \C: |{\rm arg}\, (z)|<\theta\}, \theta \in (0,\pi].$
To see that \eqref{Q1} and \eqref{Q2} are equivalent, let first \eqref{Q1} hold.  If $\alpha\in (0,1),$
$
g\in \mathcal{S}_1$ and
$
g(\Sigma_\pi)\in \overline{\Sigma}_{\pi\alpha},
$
then by Theorem \ref{geometric} we have
$g^{1/\alpha}\in\mathcal{S}_1,$
and in view of \eqref{Q1},
\[
g=(g^{1/\alpha})^\alpha\in \mathcal{S}_\alpha,
\]
so that \eqref{Q2} is true.
On the other hand, if \eqref{Q2} holds and
$f\in \mathcal{S}_1$, then using Theorem \ref{geometric} we infer that
\[
f^\alpha\in \mathcal{S}_1 \qquad \text{and} \qquad
f^\alpha (\Sigma_\pi)\in \overline{\Sigma}_{\pi\alpha},
\]
hence
$f^\alpha \in \mathcal{S}_\alpha$ by  (\ref{Q2}),
and \eqref{Q1} holds as well.

Using Theorem \ref{elprod}, we show that
\eqref{Q1}, and thus \eqref{Q2},
fail dramatically.
\begin{theorem}\label{SSS}
Let $f\in \mathcal{S}_1,$ $f\sim (a,\mu)_1,$
and let
$\mu$ satisfy one of the
following two conditions:
\begin{itemize}
\item [(i)] $\mu$ has
a nonzero continuous singular part;

\item [(ii)] $\mu$ is purely discrete and its set of atoms contains at least two points.
\end{itemize}
Then for each $n\ge 2$, $n\in \N,$ there exists
$\epsilon_n\in (0,1/n)$ such that
\[
f^{\alpha}\not\in \mathcal{S}_\alpha,\quad
|\alpha-1/n|<\epsilon_n.
\]
In particular,
$
f^{1/n}\not\in \mathcal{S}_{1/n}.
$
\end{theorem}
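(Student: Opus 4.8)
The idea is to exploit the explicit product formula from Theorem~\ref{elprod} together with the structural description of the Stieltjes convolution in Corollary~\ref{AbsC} to show that $f^{\alpha}$, for $\alpha$ of the form $1/n$, \emph{cannot} have a valid Stieltjes representation of order $\alpha$. The key point is a ``self-consistency'' trick: if $f^{1/n}\in\mathcal S_{1/n}$, then applying Hirschman--Widder/Theorem~\ref{elprod} $n$ times to the product $(f^{1/n})^{n}=f$ would express $f=S_1[\mu]$ (modulo constant) as an $n$-fold Stieltjes convolution of the representing measure of $f^{1/n}$, and by Corollary~\ref{AbsC}(iii) any such Stieltjes convolution has \emph{no singular continuous part}; moreover by Corollary~\ref{AbsC}(v) its discrete part is concentrated on a single point (an iterated Stieltjes convolution is purely discrete only if all factors are multiples of the \emph{same} $\delta_t$). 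Both conclusions contradict hypotheses (i) and (ii) respectively, after invoking uniqueness of the Stieltjes representation in $\mathcal S_1$. So the core of the argument is: \emph{$f^{1/n}\in\mathcal S_{1/n}$ forces $f=S_1[\mu]+a$ with $\mu$ having the structure of an $n$-fold Stieltjes convolution, which (i) excludes a singular continuous part and (ii) allows a discrete part supported at one point only.}

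\textbf{Carrying it out.} First I would record that $f^{1/n}\in\mathcal S_{1/n}$ means $f^{1/n}=b+S_{1/n}[\rho]$ for some $b\ge 0$ and $\rho\in\mathcal M^+_{1/n}(\R_+)$. To eliminate the constant $b$ cleanly, I would pass to the product $g:=f^{1/n}\cdot f^{1/n}=f^{2/n}$ and iterate: by Theorem~\ref{elprod} (or really by its general-constants version \eqref{sing_const} in Remark~\ref{gener}, which handles the $b\ne 0$ case), $f^{2/n}=c+S_{2/n}[\rho_2]$ where $\rho_2$ is (up to an additive absolutely continuous piece coming from the $a_1a_2$- and cross-terms in \eqref{general}) the Stieltjes convolution $\rho\otimes_{1/n,1/n}\rho$. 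Continuing, after $n-1$ further multiplications, $f^{n/n}=f=a'+S_1[\mu]$ where $\mu$ is a finite sum of iterated Stieltjes convolutions of $\rho$ with itself and of lower-order convolutions with the absolutely continuous correction terms. Crucially, every summand in this sum either is absolutely continuous (the correction terms, by the Fubini argument in Remark~\ref{gener}) or is an iterated Stieltjes convolution of positive measures; by repeated application of Corollary~\ref{AbsC}(iii) none of these has a singular continuous part, so $\mu$ has no singular continuous part. But by uniqueness of the Stieltjes representation in $\mathcal S_1$ (recorded after \eqref{limit_s}, and in Section~2), this $\mu$ must coincide with the original $\mu$ in $f\sim(a,\mu)_1$; that contradicts (i). For (ii): if $\mu$ is purely discrete with at least two atoms, then the same identity forces $\mu$ to equal the (purely discrete part of the) $n$-fold convolution, whose discrete part by Corollary~\ref{AbsC}(v) is supported on a single point unless all the iterated convolutions degenerate — tracing through Corollary~\ref{AbsC}(v) inductively, an $n$-fold Stieltjes convolution of positive measures can be purely discrete only if every factor is $c\,\delta_t$ for one fixed $t$, in which case $\mu=c'\delta_t$ has one atom. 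Again a contradiction. Hence $f^{1/n}\notin\mathcal S_{1/n}$.

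\textbf{The perturbation to a neighbourhood of $1/n$.} To get the stronger statement for all $\alpha$ with $|\alpha-1/n|<\epsilon_n$ rather than only $\alpha=1/n$, I would argue by a continuity/openness principle. Suppose for contradiction that there is a sequence $\alpha_k\to 1/n$ with $f^{\alpha_k}\in\mathcal S_{\alpha_k}$; I would like to pass to the limit using Lemma~\ref{F}-type convergence. The cleanest route: fix $n$ and for $\alpha$ slightly above $1/n$ note $n\alpha>1$, so $f^{n\alpha}=(f^\alpha)^{\cdots}$ via $n$-fold Theorem~\ref{elprod} would put $f^{n\alpha}\in\mathcal S_{n\alpha}$ with a representing measure having no singular continuous part and a one-point discrete part, by the same Corollary~\ref{AbsC} bookkeeping. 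Since $f\in\mathcal S_1\subset\mathcal S_{n\alpha}$ (using $\mathcal S_1\subset\mathcal S_\beta$ for $\beta>1$, cited in the introduction via \cite[Theorem~3]{Karp}) and since the lifting formula \eqref{up} transports the $\mathcal S_1$-representation $\mu$ of $f$ to its $\mathcal S_{n\alpha}$-representation by a fractional integration that \emph{preserves a nonzero singular continuous part} (in case (i)) or turns two atoms into something that is not concentrated at a single point (in case (ii)) — this is a short direct check on \eqref{up} — the two representations of $f$ in $\mathcal S_{n\alpha}$ disagree, contradicting uniqueness of the $\mathcal S_\beta$-representation. This simultaneously handles a right-neighbourhood; a symmetric remark (multiplying $f^\alpha$ by a suitable fixed power of a single $(s+t_0)^{-1}$ to raise the order past $1$) handles a left-neighbourhood, and intersecting the two gives $\epsilon_n$.

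\textbf{Main obstacle.} The genuinely delicate point is the passage from ``$f^{1/n}\notin\mathcal S_{1/n}$'' to the open condition $|\alpha-1/n|<\epsilon_n$: one must ensure the contradiction is \emph{stable}, i.e. that a nonzero singular continuous part (resp. a second atom) of the $\mathcal S_1$-measure $\mu$ really does survive both the fractional-integration lifting \eqref{up} to order $n\alpha$ and the structural constraints forced by the $n$-fold Stieltjes convolution. For the singular continuous case this is the observation that the fractional integral $\int_{[0,t)}(t-s)^{\beta-\alpha-1}\nu(ds)\,dt$ in \eqref{up} is absolutely continuous, hence the lifted measure's singular continuous part is $0$, whereas $\mu$'s is not — a clean dichotomy. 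For the two-atom case, one must verify via Corollary~\ref{AbsC}(v) that an iterated Stieltjes convolution of positive measures is purely discrete with its atoms contained in a one-point set unless it is trivial; this is an easy induction but needs to be stated carefully. I expect the write-up to be short once these two stability checks are isolated.
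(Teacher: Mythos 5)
Your first step, showing $f^{1/n}\notin\mathcal S_{1/n}$, is essentially the paper's argument: write $f=(f^{1/n})^n$, apply Theorem \ref{elprod} (with the constant handled via \eqref{general}), and use Corollary \ref{AbsC}(iii) and (v) together with uniqueness of the $\mathcal S_1$-representation to contradict (i) or (ii). That part is fine. The genuine gap is in your perturbation step. Your ``cleanest route'' compares two representations in $\mathcal S_{n\alpha}$: one of $f^{n\alpha}$ (obtained from the $n$-fold convolution of the representing measure of $f^{\alpha}$) and one of $f$ (obtained by lifting $\mu$ via \eqref{up}). But for $\alpha\neq 1/n$ these are representations of \emph{different functions}, $f^{n\alpha}\neq f$, so uniqueness of the Stieltjes representation gives no contradiction whatsoever. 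Moreover, even setting that aside, your own ``main obstacle'' paragraph correctly observes that the lift \eqref{up} produces an absolutely continuous measure; hence the lifted representation of $f$ in $\mathcal S_{n\alpha}$ has \emph{no} singular continuous part either, so the ``clean dichotomy'' you invoke earlier (that lifting preserves a nonzero singular continuous part) is false, and the two measures you want to distinguish both lack a singular continuous component. The left-neighbourhood trick (multiplying by a power of $(s+t_0)^{-1}$) inherits the same defect.

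What the paper actually does for the neighbourhood is different and is the missing idea: it invokes Sokal's real-variable characterization of $\mathcal S_\alpha$ (\cite[Theorem 2]{Sokal}), a family of pointwise derivative inequalities with coefficients depending continuously on $\alpha$. Combined with Lemma \ref{F} (which gives convergence of all derivatives), this shows that if $f_m\in\mathcal S_{\alpha_m}$, $\alpha_m\to\alpha$, and $f_m\to g$ pointwise, then $g\in\mathcal S_\alpha$; applied to $f_m=f^{\alpha_m}$ this says the set $\{\alpha\in[0,1]: f^\alpha\in\mathcal S_\alpha\}$ is closed, so its complement is open and contains a neighbourhood of each $1/n$. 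Your proposal gestures at ``Lemma \ref{F}-type convergence'' but then abandons it; note that Lemma \ref{F} alone is stated for a \emph{fixed} order and does not cover varying orders $\alpha_m\to\alpha$ — the extra input from Sokal's characterization (or some equivalent stability-in-$\alpha$ statement) is exactly what is needed and is absent from your argument. Until that is supplied, your proof establishes only the final sentence of Theorem \ref{SSS} ($f^{1/n}\notin\mathcal S_{1/n}$), not the open-interval statement.
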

\begin{proof}
Fix $n \in \mathbb N, n \ge 2,$
and, arguing as in the proof of Theorem \ref{1L},
write $ f=(f^{1/n})^n.$
If $f$ satisfies either (i) or (ii), then
using either Corollary \ref{AbsC}, (iii) or Corollary \ref{AbsC}, (v), respectively, along with \eqref{general},
we conclude that $f^{1/n} \not \in \mathcal S_{1/n}.$

Next recall that by \cite[Theorem 2]{Sokal}, one has $f \in \mathcal{S}_\alpha$ if and only if
\[
(-1)^n \sum_{j=0}^n \binom{k}{j} \frac{\Gamma(n+k+\alpha)}{\Gamma(n+j +\alpha)}s^j f^{(n+j)}(s) \ge 0, \qquad s >0,
\]
for all integers $n, k \ge 0.$
Hence, if
$f_m\in \mathcal{S}_{\alpha_m}$, $ m \ge 1,$ and there exist
\[
\alpha=\lim_{m\to\infty}\,\alpha_m \qquad \text{and}\qquad
g(s)=\lim_{m\to\infty}\,f_m(s),\quad s>0,
\]
then, taking into account Lemma \ref{F}, we infer that
$
g\in \mathcal{S}_\alpha.
$
In other words, if $f\in \mathcal{S}_1,$ then the set
$
\{\alpha\in [0,1]:\,f^\alpha\in \mathcal{S}_\alpha\}
$
is closed.
Since
for $n \in \mathbb N, n \ge 2,$ we have $f^{1/n} \not \in \mathcal S_1,$
it then follows that there exists
$\epsilon_n\in (0,1/n)$ such that
$
f^{\alpha}\not\in \mathcal{S}_\alpha
$
for all $\alpha$ with $|\alpha-1/n|<\epsilon_n,$
as required.
\end{proof}

Theorem \ref{1L} allows us to construct a variety of
functions representable as the generalized Stieltjes transforms.
In particular, the function constructed below will be crucial in the proof
of Theorem \ref{Comp}, one of the main results of this section.
\begin{example}\label{Ex1}
For
 fixed $t\ge 0$ and
 $\beta\in (0,1],$ define
\begin{equation}\label{kbeta}
K_{\beta, t}(s):=\frac{1}{s^\beta+t},\qquad s>0.
\end{equation}
We have
\[
\Psi \left[K_{\beta, t}\right](s)=\frac{\beta s^{\beta-1}}{s^\beta+t},
\]
and  (\ref{S1}) holds with $\gamma=\beta$.
Moreover,
\[
\frac{1}{\Psi \left[K_{\beta, t}\right](s)}=\beta^{-1}(s+t s^{1-\beta})\in \mathcal{CBF},
\]
hence  $\Psi[K_{\beta, t}]\in \mathcal{S}_1$.
So, by Theorem \ref{1L}, for every $\alpha>0,$
\[
(K_{\beta,t})^\alpha
\in \mathcal{S}_{\alpha \beta},
\]
and, in particular, $K_{\beta, t}\in \mathcal{S}_\beta.$ If $K_{\beta, t} \sim (a,\nu)_\beta,$
then using
\[
\lim_{s\to\infty}\,s^\beta K_{\beta, t}(s)=1,
\]
and the monotone convergence theorem, we conclude that  $a=0$ and $\nu \in \mathcal M^+_0(\R_+)$ with
$\|\nu\|_{0}=1.$
\end{example}

For $\beta \in (0,1]$ and $f : (0,\infty)\to [0,\infty)$ let
\begin{equation}\label{defbeta}
f_{(\beta)}(s):=f(s^\beta), \qquad s>0.
\end{equation}

In view of Theorem \ref{geometric} the next result can be considered as a geometric condition describing a substantial subset of
$\mathcal S_\alpha,$ and thus filling partially a gap in the theory of generalized Stieltjes functions indicated by Sokal in \cite[p. 184-185]{Sokal}.
\begin{theorem}\label{Comp}
Let $f\in \mathcal{S}_\alpha$, $\alpha>0,$ and $\beta \in (0,1].$
Then
\[
f_{(\beta)}
\in \mathcal{S}_{\alpha\beta}.
\]
In particular, $f_{(\beta)}\in \mathcal{S}_\beta$ if $f\in \mathcal{S}_1$.
\end{theorem}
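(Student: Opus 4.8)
The plan is to reduce the general statement $f \in \mathcal{S}_\alpha \Rightarrow f_{(\beta)} \in \mathcal{S}_{\alpha\beta}$ to the special case of the kernels $K_{\beta,t}$ treated in Example \ref{Ex1}, using a Krein--Milman / approximation argument of exactly the same flavour as in the proof of Theorem \ref{1L}. Write $f \sim (a,\mu)_\alpha$, so that
\[
f(z) = a + \int_0^\infty \frac{\mu(ds)}{(z+s)^\alpha}, \qquad z > 0.
\]
Then $f_{(\beta)}(s) = a + \int_0^\infty (s^\beta + t)^{-\alpha}\,\mu(dt)$, i.e. $f_{(\beta)}$ is a superposition (an integral over $t$ against $\mu$) of the functions $(K_{\beta,t})^\alpha$, each of which belongs to $\mathcal{S}_{\alpha\beta}$ by Example \ref{Ex1}. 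The constant $a$ contributes harmlessly since constants lie in every $\mathcal{S}_\gamma$. So the whole problem is to show that $\mathcal{S}_{\alpha\beta}$ is closed under the particular integral superposition at hand.

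First I would dispose of the integrability bookkeeping: since $(s^\beta+t)^{-\alpha} \le (1+t)^{-\alpha}\max(1,s^{-\alpha\beta})$ for $s$ in any compact subset of $(0,\infty)$, and $\mu \in \mathcal{M}^+_\alpha(\R_+)$, the defining integral for $f_{(\beta)}$ converges and $f_{(\beta)}$ is finite and (real-)analytic on $(0,\infty)$; one also checks $f_{(\beta)}$ extends analytically to $\C\setminus(-\infty,0]$ via $z\mapsto z^\beta$ mapping that cut plane into the sector $\Sigma_{\pi\beta}\subset\C\setminus(-\infty,0]$, which is where $f$ is analytic. Next, truncate: for $n\in\N$ set $\mu_n := \chi_{[1/n,n]}\mu$ (and handle the possible atom of $\mu$ at $0$ separately, noting $(K_{\beta,0})^\alpha(s) = s^{-\alpha\beta} = S_{\alpha\beta}[\nu]$ with $\nu$ Lebesgue measure, as recalled in the introduction). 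The finite total mass $c_n := \mu([1/n,n])$ lets us apply the Krein--Milman theorem to $c_n^{-1}\mu_n$ on the compact interval $[1/n,n]$, approximating it weak$^*$ in $(C[1/n,n])^*$ by finite convex combinations $\sum_i a_{ij}\delta_{t_{ij}}$ with $a_{ij}\ge 0$, $\sum_i a_{ij} = c_n$, $t_{ij}\in[1/n,n]$. For each such combination the corresponding partial superposition is $\sum_i a_{ij}(s^\beta+t_{ij})^{-\alpha}$, a \emph{finite sum} of members of $\mathcal{S}_{\alpha\beta}$, hence in $\mathcal{S}_{\alpha\beta}$ (the cone is closed under finite nonnegative linear combinations). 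Letting $j\to\infty$ and invoking Lemma \ref{F} gives that the truncated superposition $a + \int_{[1/n,n]}(s^\beta+t)^{-\alpha}\mu(dt)$ lies in $\mathcal{S}_{\alpha\beta}$ (adding back the $s^{-\alpha\beta}$ term if there is an atom at $0$); finally letting $n\to\infty$, using monotone convergence pointwise in $s>0$ and Lemma \ref{F} once more, yields $f_{(\beta)}\in\mathcal{S}_{\alpha\beta}$. The final assertion is the case $\alpha=1$.

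The main obstacle, as in Theorem \ref{1L}, is purely a soft-analysis matter of making the Krein--Milman approximation interact correctly with the weak$^*$ topology: one must be sure that weak$^*$ convergence of the normalized truncated measures against the continuous (in $t\in[1/n,n]$) integrand $t\mapsto(s^\beta+t)^{-\alpha}$ delivers pointwise-in-$s$ convergence of the superpositions — which it does, since for each fixed $s>0$ that integrand is a fixed element of $C[1/n,n]$. A secondary, genuinely trivial, point is the bookkeeping around an atom of $\mu$ at $0$ and the behaviour of $f_{(\beta)}$ near $s=0$; these do not affect membership in $\mathcal{S}_{\alpha\beta}$ because $\mathcal{S}_{\alpha\beta}$ already contains $s\mapsto s^{-\alpha\beta}$ and constants. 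No new estimates beyond those in Example \ref{Ex1} and Lemma \ref{F} are needed, so the argument is short; the only thing to be careful about is not to claim more regularity of $\mu$ than the general Stieltjes representation provides.
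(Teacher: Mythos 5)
Your proposal is correct and follows essentially the same route as the paper: decompose $f_{(\beta)}$ as a superposition of the kernels $(K_{\beta,t})^\alpha$ from Example \ref{Ex1}, approximate the truncated measure by finite convex combinations of point masses via the Krein--Milman theorem (as in the proof of Theorem \ref{1L}), and pass to the limit using Lemma \ref{F} and monotone convergence. The only cosmetic difference is that the paper truncates to $[0,n]$ rather than $[1/n,n]$ with a separate treatment of a possible atom at $0$.
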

\begin{remark}\label{ban_rem}
Note for the sequel that similarly to \eqref{limit_s}  if $f \in \mathcal S_\beta$ and $\lim_{s \to \infty}s^\beta f(s)$ is finite, then by the monotone convergence theorem  $f=S_\beta[\mu]$ with $\mu \in \mathcal M^+_0(\R_+).$ 
Thus, Theorem \ref{Comp} implies that if $f \in \mathcal S_1$ is such that $\lim_{s \to \infty} s f(s) <\infty,$ then $f_{(\beta)}=S_\beta[\nu]$  with 
$\nu \in \mathcal M^+_0(\R_+).$ 
\end{remark}
\begin{proof}
By assumption, $f \sim (a,\nu)_\alpha,$
so
\[
f_{(\beta)}(s)=a+\int_0^\infty \frac{\nu(dt)}{(s^{\beta}+t)^\alpha}=a+\int_0^\infty (K_{\beta, t}(s))^\alpha {\nu(dt)}, \qquad s >0,
\]
where $a \ge 0$ and $\nu$
is a positive Borel measure on $[0,\infty)$ satisfying
\begin{equation}\label{al}
\int_0^\infty \frac{\nu(dt)}{(1+t)^\alpha}<\infty.
\end{equation}
For $n \in \mathbb N$ let
\[
f_{(\beta,n)}(s):=a+\int_0^n (K_{\beta, t}(s))^\alpha \nu(dt).
\]
By Example \ref{Ex1},  for every fixed $t > 0$ we have
$
(K_{\beta, t})^\alpha\in \mathcal{S}_{\alpha\beta}.
$
So using (\ref{al}) and the Krein-Milman theorem and arguing as in the proof of Theorem  \ref{1L},
we infer that  $f_{(\beta,n)} \in \mathcal{S}_{\alpha\beta}.$
Since by
the monotone convergence theorem,
\[
f_{(\beta)}(s)=\lim_{n\to\infty}\,f_{(\beta,n)}(s), \qquad s >0,
\]
we conclude that $f_{(\beta)}\in  \mathcal{S}_{\alpha\beta}$, and the statement follows.
\end{proof}
The next corollary of Theorem \ref{Comp} concerning complex measures is direct.
\begin{corollary}\label{Ban}
Let $f = a+S_1[\mu],$ where $\mu \in \mathcal M_0(\R_+), a \in \mathbb C.$ Then there exists $\nu \in \mathcal M_{0}(\R_+)$ such that
$f_{(1/2)}=a+ S_{1/2}[\nu]$
\end{corollary}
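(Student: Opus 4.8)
The statement is an immediate consequence of Theorem \ref{Comp}, in the sharpened form recorded in Remark \ref{ban_rem}, combined with linearity, so the plan is essentially bookkeeping with the Jordan-type decomposition of complex Radon measures. First I would write $\mu = \mu_1 - \mu_2 + i(\mu_3 - \mu_4)$ with $\mu_j \in \mathcal M^+_0(\R_+)$, $1 \le j \le 4$, as recalled in the preliminaries. Each $\mu_j$ is then a finite positive measure, $S_1[\mu_j] \sim (0, \mu_j)_1$, and by \eqref{limit_s} one has $\lim_{s \to \infty} s\, S_1[\mu_j](s) = \mu_j(\R_+) = \|\mu_j\|_0 < \infty$.

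Next I would apply Remark \ref{ban_rem} with $\beta = 1/2$ to each of the four functions $S_1[\mu_j]$: since the limits above are finite, there exist $\nu_j \in \mathcal M^+_0(\R_+)$, $1 \le j \le 4$, with $(S_1[\mu_j])_{(1/2)} = S_{1/2}[\nu_j]$.

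Finally, setting $\nu := \nu_1 - \nu_2 + i(\nu_3 - \nu_4) \in \mathcal M_0(\R_+)$ and using that $g \mapsto g_{(1/2)}$ and $\rho \mapsto S_{1/2}[\rho]$ are linear while $s^{1/2} > 0$ for every $s > 0$, I would compute, for $s > 0$,
\[
f_{(1/2)}(s) = a + S_1[\mu](s^{1/2}) = a + \sum_{j=1}^{4} c_j\, (S_1[\mu_j])_{(1/2)}(s) = a + S_{1/2}[\nu](s),
\]
where $(c_1, c_2, c_3, c_4) = (1, -1, i, -i)$, which is the claimed representation. There is no genuine obstacle here: the only points to verify are that the four-term decomposition keeps all pieces in $\mathcal M^+_0(\R_+)$ and that every transform above is evaluated at a positive argument, both of which are routine; the mathematical content lies entirely in Theorem \ref{Comp} and Remark \ref{ban_rem}.
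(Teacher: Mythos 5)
Your proposal is correct and follows essentially the same route as the paper: decompose $\mu$ into four positive measures in $\mathcal M^+_0(\R_+)$, apply Theorem \ref{Comp} (via Remark \ref{ban_rem} to secure finiteness of each $\nu_j$), and conclude by linearity. The only difference is that you make explicit the appeal to Remark \ref{ban_rem} and the limit computation $\lim_{s\to\infty} s\,S_1[\mu_j](s)=\mu_j(\R_+)<\infty$, which the paper leaves implicit; this is a welcome clarification rather than a deviation.
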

To prove the statement it  suffices to write $\mu=\mu^+_1-\mu_1^{-}+i(\mu^+_2-\mu^{-}_2)$ with
 $\mu^{\pm}_{j} \in \mathcal M^+_0(\R_+), j=1,2,$
and to apply Theorem \ref{Comp} to each of the four transforms $S_1[\mu^{\pm}_{j}], j=1,2.$

Corollary \ref{Ban} (along with Remark \ref{ban_rem}) can be used to identify spectral multipliers $\Phi$
relevant for the main results in \cite{Banuelos}.
For more on that see Remark \ref{beta1} and Example \ref{Elog}.
\begin{remark}
Theorem \ref{Comp} is essentially sharp.
If e.g. $f(s)=s^{-1},s>0,$ then $f_{(\beta)}(s)=s^{-1/\beta} \in \mathcal S_{\beta},$ and $f_{(\beta)} \not \in \mathcal S_\alpha$ for any $\alpha <\beta.$
(For the latter claim, see e.g. \cite[Corollary 2, p. 63]{Karp}.)

Next,
consider
\begin{equation}\label{examf}
f(s)=2\int_1^2\frac{dt}{(t+s)^3}+\int_1^2\frac{dt}{(t+s)^3}, \qquad s >0,
\end{equation}
studied already in \cite[Remark 10]{Karp}. Since $f \in \mathcal S_3,$  Theorem \ref{Comp} implies that
$
f_{(1/2)}\in \mathcal{S}_{3/2}.
$
In fact, more is true.
By a direct computation (see again \cite[Example 10]{Karp}), the analytic extension of $f$ to $\mathbb C^+,$
denoted by the same symbol, satisfies
\[
{\rm Im}\,f(z)\in \mathbb C \setminus \mathbb C^+, \qquad z\in \mathbb C^+.
\]
From this and Theorem \ref{geometric},
we infer that
$
f_{(1/2)}\in \mathcal{S}_1\subset \mathcal S_{3/2}.
$
On the other hand,
\[
f(s)=\frac{1}{(s+1)^2}-\frac{1}{2(s+2)^2},
\]
so $f \not \in \mathcal S_2$ since the representing measure for $f$ is signed.

Thus, in general, for $\beta>0$ and $\alpha\in (0,1),$ the implication
\[
f\in \mathcal{S}_\beta,\;
f_{(\alpha)}\in \mathcal{S}_\gamma,\;\, \gamma<\beta\alpha\Longrightarrow
f\in \mathcal{S}_{\gamma/\alpha},
\]
is not valid. (For $f$ given by \eqref{examf} we have
$\beta=3$, $\alpha=1/2$, $\gamma=1,$
and $\gamma/\alpha=2$.)
\end{remark}

Let us illustrate Theorems \ref{1L} and \ref{Comp} by the following example.
\begin{example}\label{Ex2}
Let
\[
f(s):=\frac{\log s}{s-1}, \qquad s >0.
\]
Then
\[
\Psi[f](s)=\frac{1}{s-1}-\frac{1}{s\log s},
\]
for all $s>0,$ so the assumption  (\ref{S1}) holds for $\gamma=1$.
Moreover, we have
\[
\Psi[f](s)=\frac{q(1)-q(s)}{s-1},\qquad q(1)=1,
\]
with $q(s)=(sf(s))^{-1}=(s-1)(s\log s)^{-1}.$ Since $f \in \mathcal S_1$ by \cite[Example 2.9]{GHT}, we also have
$1/f \in \mathcal {CBF},$ and then $q \in \mathcal{S}_1.$
Hence  $\Psi[f]\in \mathcal{S}_1$  by \cite[Theorem 7.17]{Sh} (see also \cite[Theorem 2]{Nak}).
So,   Theorem \ref{1L} implies
 that for any $\alpha>0,$
\[
\left(\frac{\log s}{s-1}\right)^\alpha\in \mathcal{S}_\alpha.
\]
Moreover,
by Theorem \ref{Comp},  for  all $\beta\in (0,1]$
and $\alpha>0,$
\begin{equation}\label{examp}
\left(\frac{\log s}{s^\beta-1}\right)^\alpha\in \mathcal{S}_{\beta \alpha }.
\end{equation}

Observe that since
\[
\lim_{s\to\infty}\,f(s)=0
\qquad  \text{and} \qquad
\lim_{s\to\infty}\,s f(s)=\infty,
\]
one has $f\sim (0, \nu)_1,$ but
the measure $\nu$ is not finite on $[0,\infty)$ in contrast to the
situation considered in Example \ref{Ex1}.
\end{example}

Now we turn to our second approach to the Stieltjes representability problem leading to an explicit representation of $f_{(\beta)}$
and being alternative to Theorem \ref{Comp}.
However, this approach uses an intuition provided by Theorem \ref{Comp}, and it is independent of Theorem \ref{Comp} under a thin disguise.
 If $f \in \mathcal S_1, f \sim (a, \mu)_1,$ then by Theorem \ref{Comp} we have $f_{(\beta)} \in S_\beta, f_{(\beta)} \sim (a,\nu)_\beta,$ for every $\beta \in (0,1],$
and it is natural to try to determine $\nu$ in terms of $\mu$.
This will be the content of Theorem \ref{CorM} below.

We will need the following statement, which is a corollary of \cite[Theorem 3]{Karp},
see also \cite[Lemma 2.1]{Samko}.

\begin{lemma}\label{L1K}
Let $g \in \mathcal S_1$ be such that
\begin{equation}\label{2I}
g(s)=\int_0^\infty \frac{v(t)\,dt}{s+t},\qquad s>0,
\end{equation}
for some positive $v \in L^1_1(\R_+),$ absolutely continuous on $[0,a]$ for every $a>0.$
If  moreover $g\in \mathcal{S}_\beta, \beta \in (0,1),$ then $v_\beta$  given by
\begin{equation}\label{AI}
v_\beta(t)=\frac{d}{dt}\int_0^t\frac{v(r)\,dr}{(t-r)^{1-\beta}},\qquad \mbox{a.e}\;\;t>0,
\end{equation}
 is well-defined, $v_\beta \in L^1_\beta(\R_+), v_\beta \ge 0,$
and
\begin{equation}\label{E1}
g(s)=\int_{0}^\infty\frac{v_\beta(t)\,dt}{(s+t)^\beta},\qquad s>0.
\end{equation}
\end{lemma}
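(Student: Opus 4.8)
The plan is to reduce the statement to the inversion of an Abel integral equation. Since $g\in\mathcal S_\beta$, I would first write $g=a+S_\beta[\rho]$ with $a\ge 0$ and $\rho\in\mathcal M^+_\beta(\R_+)$; both $S_1[v](s)$ and $S_\beta[\rho](s)$ tend to $0$ as $s\to\infty$ by dominated convergence (with majorants $(1+t)^{-1}|v(t)|$ and $(1+t)^{-\beta}$, valid for $s\ge 1$), so $a=0$ and $g=S_\beta[\rho]$. Applying \eqref{up} with the pair of orders $\beta<1$ expresses $S_\beta[\rho]$ as $S_1[\widetilde v]$, where $\widetilde v\in L^1_1(\R_+)$ is given by
\[
\widetilde v(t)=\frac{1}{B(\beta,1-\beta)}\int_{[0,t)}\frac{\rho(ds)}{(t-s)^{\beta}} .
\]
Comparing this with \eqref{2I} and invoking the uniqueness of the Stieltjes representation of order $1$, I would conclude that $v=\widetilde v$ a.e., that is,
\[
v(t)=\frac{1}{B(\beta,1-\beta)}\int_{[0,t)}\frac{\rho(ds)}{(t-s)^{\beta}},\qquad\text{a.e. }t>0 .
\]

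The next step is to invert this Abel equation for $\rho$. Fixing $T>0$, I would multiply both sides by $(T-t)^{\beta-1}$, integrate in $t$ over $(0,T)$, and interchange the order of integration (legitimate by positivity of $v$ and $\rho$); using the beta integral $\int_s^T(T-t)^{\beta-1}(t-s)^{-\beta}\,dt=B(\beta,1-\beta)$, $0\le s<T$, this gives
\[
\int_0^T\frac{v(t)\,dt}{(T-t)^{1-\beta}}=\rho\bigl([0,T)\bigr)\qquad\text{for a.e. }T>0 .
\]
I would then observe that the left-hand side is absolutely continuous on each $[0,a]$: integrating by parts (this is where the absolute continuity of $v$ on $[0,a]$ enters) it equals $\beta^{-1}T^\beta v(0)+\beta^{-1}\int_0^T(T-t)^\beta v'(t)\,dt$, and the last term is a Riemann--Liouville integral of order $\beta+1>1$ of the locally integrable function $v'$, hence a primitive of a locally integrable function. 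Since a non-decreasing, left-continuous function that agrees a.e.\ with a continuous function must coincide with it everywhere, $\rho$ has no atoms and $T\mapsto\rho([0,T])$ equals that absolutely continuous function; therefore $\rho$ is absolutely continuous with $\rho(dt)=v_\beta(t)\,dt$, where
\[
v_\beta(t)=\frac{d}{dt}\int_0^t\frac{v(r)\,dr}{(t-r)^{1-\beta}}
\]
is well-defined for a.e.\ $t>0$, $v_\beta\ge 0$ a.e.\ because $\rho\ge 0$, and $v_\beta\in L^1_\beta(\R_+)$ because $\int_0^\infty(1+t)^{-\beta}v_\beta(t)\,dt=\|\rho\|_\beta<\infty$. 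Finally $g=S_\beta[\rho]=\int_0^\infty(s+t)^{-\beta}v_\beta(t)\,dt$, which is \eqref{E1}.

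The routine parts of this argument are the dominated-convergence estimate yielding $a=0$, the Fubini interchange, and the evaluation of the beta integral. The main obstacle, and the place where care is genuinely needed, is the Abel inversion: one must force the positive measure $\rho$, produced only abstractly from the hypothesis $g\in\mathcal S_\beta$, to be absolutely continuous with precisely the density \eqref{AI}, and one must check that the fractional primitive $t\mapsto\int_0^t v(r)(t-r)^{\beta-1}\,dr$ is differentiable a.e., so that \eqref{AI} is meaningful at all. As a shortcut, once the problem has been reduced to the Abel equation displayed above, one may simply cite \cite[Theorem 3]{Karp} or \cite[Lemma 2.1]{Samko} for its inversion.
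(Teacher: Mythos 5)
Your argument is correct. Note, however, that the paper does not prove Lemma \ref{L1K} at all: it is stated as a direct corollary of \cite[Theorem 3]{Karp} (see also \cite[Lemma 2.1]{Samko}), so there is no in-text proof to compare against. What you have written is essentially the derivation that the citation leaves implicit, carried out in full: you write $g=S_\beta[\rho]$ (the constant term vanishes because $S_1[v](s)\to 0$), lift $S_\beta[\rho]$ to order $1$ via \eqref{up}, identify the resulting density with $v$ by the uniqueness of the order-one Stieltjes representation, and then invert the Abel equation. The one step that genuinely requires care — forcing the abstractly given positive measure $\rho$ to be absolutely continuous with density exactly \eqref{AI} — is handled correctly: the identity $\int_0^T v(t)(T-t)^{\beta-1}\,dt=\rho([0,T))$ follows from Fubini and the beta integral, the left-hand side is locally absolutely continuous because $v$ is locally absolutely continuous (your integration by parts and the semigroup property of Riemann--Liouville integrals), and a non-decreasing left-continuous distribution function agreeing a.e.\ with a continuous function coincides with it everywhere, which kills all atoms (including a possible atom at $0$) and yields $\rho(dt)=v_\beta(t)\,dt$ with $v_\beta\ge 0$ and $\|v_\beta\|_{L^1_\beta}=\|\rho\|_\beta<\infty$. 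This gives \eqref{E1} and a self-contained proof where the paper relies on external references.
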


\begin{lemma}\label{Mbeta}
For $\beta \in (0,1)$ let $K_{\beta}:=K_{\beta, 1}$ be given by \eqref{kbeta}.
Then $K_{\beta} \in  \mathcal{S}_\beta,$
and
\begin{equation}\label{RRa}
K_\beta(s)=\int_0^\infty \frac{\varphi_\beta(t)\,dt}{(s+t)^\beta}, \qquad s >0,
\end{equation}
where a positive $\varphi_\beta \in L^1_\beta(\R_+)$ is defined for all $t>0$ by
\begin{equation}\label{RR}
\varphi_\beta(t)
:= \frac{2\beta\sin(\pi\beta)}{\pi}
\int_0^1\frac
{t^{2\beta-1}(1+\cos(\pi\beta)t^\beta r^\beta)r^\beta\,dr}{(t^{2\beta}r^{2\beta}+2\cos(\pi\beta)t^\beta r^\beta+1)^2
(1-s)^{1-\beta}}.
\end{equation}
\end{lemma}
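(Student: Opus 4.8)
The plan is to obtain $\varphi_\beta$ by applying Lemma~\ref{L1K} to $g=K_\beta$ and then to evaluate the fractional derivative \eqref{AI} in closed form. That $K_\beta\in\mathcal S_\beta$ is precisely the case $\alpha=1$, $t=1$ of Example~\ref{Ex1}; the only additional ingredient needed for Lemma~\ref{L1K} is the $\mathcal S_1$-representation of $K_\beta$. Since $1/K_\beta(s)=1+s^\beta$ is a complete Bernstein function (a sum of $s^\beta$ and a positive constant, both lying in $\mathcal{CBF}$), \cite[Theorem 7.3]{Sh} gives $K_\beta\in\mathcal S_1$. Its analytic extension to $\C\setminus(-\infty,0]$ equals $(1+z^\beta)^{-1}$, which is bounded near the origin and has no pole on $(-\infty,0)$ because $1+z^\beta\ne 0$ there (as $\arg z^\beta=\beta\arg z\in(-\pi,\pi)$ for $\beta\le 1$); hence, by the Stieltjes inversion formula --- or by a direct computation of the jump of $(1+z^\beta)^{-1}$ across $(-\infty,0)$, which equals $-2\pi i\, v$ --- one has $K_\beta\sim(0,v\,dt)_1$, where
\[
v(t):=\frac{\sin(\pi\beta)}{\pi}\cdot\frac{t^\beta}{t^{2\beta}+2t^\beta\cos(\pi\beta)+1},\qquad t>0 .
\]

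Next I would check the hypotheses of Lemma~\ref{L1K} for this $v$. The denominator $t^{2\beta}+2t^\beta\cos(\pi\beta)+1=(t^\beta+\cos\pi\beta)^2+\sin^2\pi\beta$ is bounded below by $\sin^2(\pi\beta)>0$, so $v$ is smooth and positive on $(0,\infty)$, with $v(t)=O(t^\beta)$ as $t\to0+$ and $v(t)=O(t^{-\beta})$ as $t\to\infty$; hence $v\in L^1_1(\R_+)$. Since moreover $v'(t)=O(t^{\beta-1})$ as $t\to0+$, the function $v$ is absolutely continuous on $[0,a]$ for every $a>0$. Therefore Lemma~\ref{L1K} applies and yields $K_\beta(s)=\int_0^\infty v_\beta(t)(s+t)^{-\beta}\,dt$ for $s>0$, with $v_\beta\ge 0$, $v_\beta\in L^1_\beta(\R_+)$, and $v_\beta$ given by \eqref{AI}; one then sets $\varphi_\beta:=v_\beta$, so \eqref{RRa} holds and the remaining assertions of the lemma reduce to the formula \eqref{RR}.

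To derive \eqref{RR} I would substitute $r=tw$ in $\int_0^t v(r)(t-r)^{\beta-1}\,dr$, obtaining $\frac{\sin(\pi\beta)}{\pi}\,t^{2\beta}\int_0^1 w^\beta D(t,w)^{-1}(1-w)^{\beta-1}\,dw$ with $D(t,w):=1+2\cos(\pi\beta)t^\beta w^\beta+t^{2\beta}w^{2\beta}$. Because $D(t,w)\ge\sin^2(\pi\beta)$, for $t$ in a compact subset of $(0,\infty)$ both the integrand and its $t$-derivative are dominated by a constant multiple of $(1-w)^{\beta-1}\in L^1(0,1)$, so one may differentiate under the integral sign. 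With $\partial_tD(t,w)=2\beta t^{\beta-1}w^\beta(t^\beta w^\beta+\cos\pi\beta)$, the numerator of $\partial_t\big(t^{2\beta}w^\beta/D\big)$ simplifies to $2\beta t^{2\beta-1}w^\beta\big(D-t^\beta w^\beta(t^\beta w^\beta+\cos\pi\beta)\big)=2\beta t^{2\beta-1}w^\beta(1+\cos(\pi\beta)t^\beta w^\beta)$, which gives exactly \eqref{RR} (with $r$ in the role of $w$). Note that the pointwise positivity of $\varphi_\beta$ asserted in the statement is not visible from this integrand --- the factor $1+\cos(\pi\beta)t^\beta r^\beta$ can be negative when $\beta>1/2$ --- but is contained in the conclusion of Lemma~\ref{L1K}.

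The step I expect to be the main obstacle is identifying the $\mathcal S_1$-representation of $K_\beta$: one must justify the boundary-value/inversion computation pinning down $v$, or, equivalently, verify $\int_0^\infty v(t)(s+t)^{-1}\,dt=(1+s^\beta)^{-1}$ directly --- e.g. by noting that both sides are analytic on $\C\setminus(-\infty,0]$, vanish at infinity, and have the same jump across $(-\infty,0)$, so their difference extends to an entire function and therefore vanishes by Liouville's theorem. The rest is a routine application of Lemma~\ref{L1K} and the elementary differentiation above.
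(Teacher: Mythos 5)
Your proposal is correct and follows essentially the same route as the paper: both deduce $K_\beta\in\mathcal S_\beta$ from Example \ref{Ex1}, feed the $\mathcal S_1$-density $\psi_\beta(r)=\frac{\sin(\pi\beta)r^\beta}{\pi(r^{2\beta}+2\cos(\pi\beta)r^\beta+1)}$ into Lemma \ref{L1K}, substitute $r\mapsto tr$, and differentiate under the integral to reach \eqref{RR}, with positivity of $\varphi_\beta$ coming from Lemma \ref{L1K} rather than from the integrand (exactly the caveat the paper records after the proof). The only difference is cosmetic: the paper quotes the $\mathcal S_1$-representation of $(1+s^\beta)^{-1}$ from the literature (Kato, Mart\'inez--Sanz), whereas you rederive it via $1+s^\beta\in\mathcal{CBF}$ and the Sokhotski/Stieltjes jump computation, which is a perfectly sound substitute.
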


\begin{proof}
Let $\beta \in (0,1)$ be fixed.
First, we infer by Example \ref{Ex1} that $K_{\beta} \in \mathcal S_\beta,$ so
$K_{\beta}$ admits the representation \eqref{RRa} with
a positive $\varphi_\beta \in L^1_\beta(\R_+),$ and
we have  only to determine $\varphi_\beta$ explicitly.
To this aim, recall that $K_{\beta} \in \mathcal S_1,$ and
\begin{equation}\label{Cl}
K_{\beta}(s)=\int_0^\infty \frac{\psi_\beta(r)\,dr}{s+r},\qquad s>0,
\end{equation}
where
\[
\psi_\beta(r):= 
\frac{\sin(\pi\beta) r^\beta}
{\pi(r^{2\beta}+2\cos(\pi\beta)r^\beta+1)},\qquad r>0,
\]
and  $\psi_\beta \in L^1_1(\R_+).$ See e.g. \cite[Theorem 2]{Kato}, where
\eqref{Cl} was deduced in a more general operator context, or \cite[Example 4.1.3]{Mart1}.
Using now Lemma \ref{L1K}, for every $t >0,$ we have
\begin{align*}
\varphi_\beta(t)=&
\frac{\sin(\pi\beta)}{\pi}
\frac{d}{dt}\int_0^t\frac
{r^\beta\,dr}{(r^{2\beta}+2\cos(\pi\beta)r^\beta+1)
(t-r)^{1-\beta}}\\
=&
\frac{\sin(\pi\beta)}{\pi}
\frac{d}{dt}\int_0^1\frac
{t^{2\beta}r^\beta\,dr}{(t^{2\beta}r^{2\beta}+2\cos(\pi\beta)t^\beta r^\beta+1)
(1-r)^{1-\beta}},
\end{align*}
and
\begin{align*}
(&t^{2\beta}r^{2\beta}+2\cos(\pi\beta)t^\beta r^\beta+1)^2
\frac{d}{dt}\left(\frac
{t^{2\beta}}{(t^{2\beta}r^{2\beta}+2\cos(\pi\beta)t^\beta r^\beta+1)}\right)\\
=&2\beta t^{2\beta-1}(1+\cos(\pi\beta)t^\beta r^\beta).
\end{align*}
Thus
\[
\varphi_\beta(t)=\frac{2\beta\sin(\pi\beta)}{\pi}
\int_0^1\frac
{t^{2\beta-1}(1+\cos(\pi\beta)t^\beta r^\beta)s^\beta\,dr}{(t^{2\beta}r^{2\beta}+2\cos(\pi\beta)t^\beta r^\beta+1)^2
(1-r)^{1-\beta}}, \quad t >0,
\]
so
(\ref{RRa}) and  (\ref{RR}) hold.
\end{proof}

It is crucial to note that to write down \eqref{RRa} we used the fact that $K_\beta \in \mathcal S_\beta$
proved in Example \ref{Ex1}. Thus, our argument is not independent of Example \ref{Ex1}.
However, once we have guessed \eqref{RRa}, we can verify \eqref{RRa} directly and do not need to invoke
any additional results.
At the same time, we are not able to directly check the \emph{positivity} of $\varphi_\beta$ for $\beta \in (1/2,1)$, and this part of Lemma \ref{L1K} depends on Example \ref{Ex1}, and so on Theorem \ref{1L}.
See also Remark \ref{complex} below.

While  the formulas \eqref{RRa} and \eqref{RR}  for $K_\beta$ are somewhat cumbersome, the case $\beta=1/2$ is exceptional as the next example shows.
\begin{example}\label{1/2A}
If $\beta=1/2,$ then  by (\ref{RRa}), \eqref{RR}
and \cite[p. 302, no.9]{Prudnikov1}, we have
\[
\varphi_{1/2}(t)
=\frac{1}{\pi t}\int_0^t
\frac{r^{1/2}\,dr}{(r+1)^2(t-r)^{1/2}}
=\frac{1}{2(t+1)^{3/2}}
\]
for all $t>0,$ hence
\[
\frac{1}{s^{1/2}+1}=\frac{1}{2}\int_0^\infty\frac{dt}{(s+t)^{1/2}(t+1)^{3/2}},\qquad s>0.
\]
\end{example}

Now, given $f \in \mathcal S_1,$ we provide an explicit Stieltjes representation for $f_{(\beta)}.$

\begin{theorem}\label{CorM}
Let $f \in \mathcal S_1, f \sim (a,\mu)_1,$
and let $f_{(\beta)}$ be defined  by \eqref{defbeta}.
Then $f_{(\beta)} \in \mathcal S_\beta$ for every $\beta\in (0,1),$
and
\begin{equation}\label{psi1}
f_{(\beta)}(s)=a+\mu(\{0\})s^{-\beta}+\int_0^\infty
\frac{\psi_\beta(r)\,dr}{(s+r)^\beta},\qquad s>0,
\end{equation}
where $\psi_\beta \in L^1_\beta(\R_+)$ is given by
\begin{equation}\label{psi}
\psi_\beta(r):=
\int_{(0,\infty)}\varphi_\beta \left(\frac{r}{\tau^{1/\beta}}\right)\,
\frac{\mu(d\tau)}{\tau^{1/\beta}},\qquad r>0,
\end{equation}
and $\varphi_\beta$ is defined
by   \eqref{RR}.
\end{theorem}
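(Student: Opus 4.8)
The plan is to reduce everything to the single kernel identity of Lemma \ref{Mbeta} by a scaling trick, and then to integrate against $\mu$ using Fubini's theorem; the argument is essentially computational once the right substitutions are in place.

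First I would split the representing measure as $\mu = \mu(\{0\})\,\delta_0 + \mu_0$, where $\mu_0$ denotes the restriction of $\mu$ to $(0,\infty)$. The atom at the origin is harmless: since $S_1[\delta_0](z) = z^{-1}$, one has $S_1[\delta_0](s^\beta) = s^{-\beta} = S_\beta[\delta_0](s)$, which produces exactly the term $\mu(\{0\})s^{-\beta}$ in \eqref{psi1}. It therefore remains to treat $\int_{(0,\infty)} (s^\beta+\tau)^{-1}\,\mu(d\tau)$. For this I would first record, for each fixed $\tau>0$, a scaled version of Lemma \ref{Mbeta}: replacing $s$ by $s\tau^{-1/\beta}$ in $K_\beta(s) = (s^\beta+1)^{-1} = \int_0^\infty \varphi_\beta(t)(s+t)^{-\beta}\,dt$ gives $\tau(s^\beta+\tau)^{-1} = \int_0^\infty \varphi_\beta(t)(s\tau^{-1/\beta}+t)^{-\beta}\,dt$; pulling the factor $\tau^{-1/\beta}$ out of the denominator, cancelling $\tau$, and then substituting $r = t\tau^{1/\beta}$ yields
\[
\frac{1}{s^\beta+\tau} = \int_0^\infty \frac{\tau^{-1/\beta}\varphi_\beta(r\tau^{-1/\beta})}{(s+r)^\beta}\,dr,\qquad s>0,\ \tau>0.
\]

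Next I would integrate this identity over $\tau\in(0,\infty)$ against $\mu_0$ and interchange the order of integration. The interchange is legitimate because the integrand is nonnegative (this is where the positivity of $\varphi_\beta$ established in Lemma \ref{Mbeta} enters) and, evaluating the above display at $s=1$, the inner integral equals $(1+\tau)^{-1}$, so the total mass of the double integral is $\int_{(0,\infty)}(1+\tau)^{-1}\,\mu(d\tau)<\infty$ because $\mu\in\mathcal M_1^+(\R_+)$. The same computation shows simultaneously that $\psi_\beta$, defined by \eqref{psi}, is measurable, nonnegative, and lies in $L^1_\beta(\R_+)$ with $\|\psi_\beta\|_{L^1_\beta(\R_+)} = \int_{(0,\infty)}(1+\tau)^{-1}\,\mu(d\tau)$. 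Assembling the three contributions gives $f_{(\beta)}(s) = a + \mu(\{0\})s^{-\beta} + \int_0^\infty \psi_\beta(r)(s+r)^{-\beta}\,dr$, which is \eqref{psi1}; since $a\ge 0$, $\mu(\{0\})\ge 0$ and $\psi_\beta\ge 0$, the right-hand side is a genuine element of $\mathcal S_\beta$ (in accordance with Theorem \ref{Comp}), and by the uniqueness of the Stieltjes representation this is its only one.

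I do not expect a serious obstacle. The one place demanding care is the bookkeeping in the change of variables $r=t\tau^{1/\beta}$ together with the substitution $s\mapsto s\tau^{-1/\beta}$, and the one nontrivial input is the Fubini step, which as noted above reduces to the finiteness of $\|\mu\|_1$; the restriction $\beta\in(0,1)$ is used only through Lemma \ref{Mbeta} (the case $\beta=1$ being trivial).
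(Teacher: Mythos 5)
Your argument is correct and is essentially the paper's own proof: the same scaled form of Lemma \ref{Mbeta} (the identity $\frac{1}{s^\beta+\tau}=\int_0^\infty \tau^{-1/\beta}\varphi_\beta(r\tau^{-1/\beta})(s+r)^{-\beta}\,dr$), followed by the same Fubini interchange justified by positivity of $\varphi_\beta$. Your explicit computation of $\|\psi_\beta\|_{L^1_\beta(\R_+)}$ by evaluating at $s=1$ is a small addition the paper leaves implicit, but the route is the same.
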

\begin{proof}
It suffices to consider only the case when $f\sim (0,\mu)_1$ and $\mu(\{0\})=0.$

By Lemma \ref{Mbeta}, for every  $\tau>0,$ we have
\begin{equation}\label{GS}
\frac{1}{s^\beta+\tau}=\frac{1}{\tau((s/\tau^{1/\beta})^\beta +1)}
=\int_0^\infty\frac{\varphi_\beta(t)\,dt}{(s+\tau^{1/\beta}t)^\beta},
\end{equation}
where $\varphi_\beta$ is given by \eqref{RRa}.
Then, taking into account the positivity of integrands and using Fubini's theorem,
\begin{align*}
f_{(\beta)}(s)=&\int_0^\infty \frac{\mu(d\tau)}{s^\beta+\tau}=
\int_0^\infty
\left(
\int_0^\infty\frac{\varphi_\beta(t)\,dt}{(s+\tau^{1/\beta}t)^\beta}\right)\,
\mu(d\tau)\\
=&\int_0^\infty
\left(
\int_0^\infty\frac{\varphi_\beta(r/\tau^{1/\beta})\,dr}{(s+r)^\beta}\right)\,
\frac{\mu(d\tau)}{\tau^{1/\beta}}\\
=&\int_0^\infty\left(
\int_0^\infty \varphi_\beta \left(\frac{r}{\tau^{1/\beta}}\right)\,
\frac{\mu(d\tau)}{\tau^{1/\beta}}\right)
\frac{dr}{(s+r)^\beta}, \qquad s >0,
\end{align*}
which is equivalent to (\ref{psi1}) and (\ref{psi}).
\end{proof}

Despite  a fully expanded formula \eqref{psi1} looks quite "heavy", it can be used successfully in particular
situations, and the following example illustrates this point.

\begin{remark}\label{beta1}
Under the assumptions of Theorem  \ref{CorM},
if  $\beta=1/2,$ then
\[
\varphi_{1/2}(t)=\frac{1}{2(t+1)^{3/2}}, \qquad t >0,
\]
so that the corresponding density $\psi_{1/2}$ in the Stieltjes representation of $f_{1/2}$ is given by
\begin{equation*}
\psi_{1/2}(r)=
\int_0^\infty\varphi_{1/2}\left(\frac{r}{\tau^2} \right)\,
\frac{\mu(d\tau)}{\tau^2}=\frac{1}{2}\int_0^\infty
\frac{\tau\,\mu(d\tau)}{(r+\tau^2)^{3/2}}, \qquad r>0,
\end{equation*}
cf. Example \ref{1/2A}.
Thus, if $\beta=1/2,$ then Theorem \ref{CorM} takes a transparent form
and, being combined with Remark \ref{ban_rem}, yields explicit formulas for multipliers $\Phi$ in \cite[Theorems 1.1-1.3]{Banuelos} given there in the form $S_{1/2}[\nu]$ with $\nu \in \mathcal M_0(\R_+)$
(or an equivalent form).
\end{remark}

The next example illustrates this a bit further.
\begin{example}\label{Elog}
Let
\[
f(s)=\int_0^1\frac{dt}{s+t}=\log(1+1/s), \qquad s >0.
\]
Then $f \in \mathcal S_1$, and by (\ref{psi}) we have
\[
f_{(1/2)}(s)=\log(1+1/\sqrt{s})=
\int_0^\infty\frac{\psi_{1/2}(r)\,dr}{(s+r)^{1/2}}, \qquad s >0,
\]
where
\begin{align*}
\psi_{1/2}(r)=&
\int_0^\infty\varphi_{1/2}\left(\frac{r}{\tau^2} \right)\,
\frac{\mu(d\tau)}{\tau^2}
=\frac{1}{2}\int_0^1
\frac{d\tau}{(1+r/\tau^2)^{3/2}\tau^2}\\
=&\frac{1}{4}\int_0^1
\frac{dt}{(t+r)^{3/2}}=\frac{1}{2}\left(\frac{1}{\sqrt{r}}-\frac{1}{\sqrt{1+r}}\right).
\end{align*}

So,
\[
\log(1+1/\sqrt{s})=
\frac{1}{2}\int_0^\infty
\left(\frac{1}{\sqrt{r}}-\frac{1}{\sqrt{1+r}}\right)
\frac{dr}{(s+r)^{1/2}}, \qquad s >0.
\]

Alternatively, using Theorem \ref{geometric}, it is easy to verify that $f \in \mathcal S_1.$
Then $f_{(1/2)} \in \mathcal S_{1/2}$ by Theorem \ref{CorM}, and its representing measure is finite
by Remark \ref{ban_rem}. This kind of arguments can be used to produce spectral multipliers $\Phi$ in
 \cite{Banuelos} starting from a given function from $\mathcal S_1$ rather than from its representation as a generalized  Stieltjes transform.
\end{example}

\begin{remark}\label{complex}
Once Theorem \ref{CorM} is obtained, one may replace the assumption $f \in \mathcal S_1, f \sim (0,\mu)_1,$
by the assumption that $f=S_1[\mu]$ for, in general,  a \emph{complex} Radon measure $\mu \in \mathcal M_1(\R_+).$
A direct verification using Fubini's theorem shows that the formulas \eqref{psi1} and \eqref{psi} remain valid.
However, in this case, the formulas appear as a black box, without a natural explanation on how they could be obtained. Moreover, Theorem \ref{CorM} relies on Theorem \ref{Comp} as far as positivity of the representing measures is concerned.
\end{remark}

\section{Product formulas for generalized Cauchy transforms and related matters}\label{cauchy_sec}
\subsection{General product formulas}

In this section, being motivated by the property \eqref{prod_cauchy_in}, we investigate product formulas in the setting of the generalized Cauchy transforms on $\R.$
Recall that for $\mu \in \mathcal M_\alpha(\R)$ its generalized Cauchy transform  of order $\alpha>0$ is defined as
\begin{equation}\label{ReprCC}
C_\alpha[\mu](z):=\int_{\R}\frac{\mu(dt)}{(z+t)^\alpha},\qquad z\in \C^{+},
\end{equation}
where $\C^+$ stands for the upper half-plane $\{z\in \C: {\rm Im}\, z >0\}.$

Given $\mu_j\in \mathcal{M}_{\alpha_j}(\R), \alpha_j >0, j=1,2,$ we use our ideas
from the preceding sections and obtain
product formulas of the form
\begin{equation}\label{cauchy_id}
C_{\alpha_1}[\mu_1](z) C_{\alpha_2}[\mu_2](z) =C_{\alpha_1+\alpha_2}[\mu](z), \qquad z \in \C^+,
\end{equation}
 with an emphasis on the case when
$\mu_j  \in  {\mathcal M}^+_{\alpha_j}(\R), j=1,2.$
It is essential to observe that given the right-hand side of \eqref{ReprCC} its  representation as a generalized Cauchy transform is not unique.
For example, we have
\[
\int_{\R}\frac{t^k\,dt}{(z+t)^{\beta}}\equiv 0,\qquad z\in \C^{+},\quad
k \in \mathbb N,\quad \beta>k+1.
\]
This fact to leads to non-uniqueness of $\mu$ in \eqref{cauchy_id}.
In Theorems \ref{elprodC} and \ref{compact}, we provide explicit, comparatively simple  formulas for $\mu$ resembling \eqref{m1} and \eqref{m0} and having similar useful features.
These formulas
are obtained  under additional assumptions on the size of $\mu_1$ and $\mu_2,$ which
include several situations of  interest.
(As far as $\mu$ is not unique, here we avoid using
the convolution notation from the preceding sections.)
At the same time, we
produce a $\mu \in \mathcal M_{\alpha_1+\alpha_2}(\R)$ satisfying
\eqref{ReprCC}  without any additional restrictions on $\mu_1 \in \mathcal M_{\alpha_1}(\mathbb R)$ and $\mu_2 \in \mathcal M_{\alpha_2} (\R).$
However, in this case, our formula for $\mu$ appears to be rather complicated.
In such a general framework, it is not always possible to choose a positive $\mu$ in \eqref{cauchy_id}
even if $\mu_j \in \mathcal M^+_{\alpha_j}(\R), j=1,2,$
and we explore this issue thoroughly.
For  $\alpha_1=\alpha_2=1$, we characterize the existence of such a $\mu$ by a simple integrability condition
on $\mu_1$ and $\mu_2.$ Moreover, we prove that the choice of positive $\mu$
is possible if  $\alpha_1+\alpha_2=1.$

To formulate our first result, we define the functional $J_{\alpha_1,\alpha_2}: \mathcal M_{\alpha_1}(\R)\times \mathcal M_{\alpha_2} (\R)\to \R_+ \cup \{\infty\}$
by
\[
J_{\alpha_1,\alpha_2}[\mu_1,\mu_2]:=\int_{\R}\int_{(-\infty,t)}
\int_{(s,t)}\frac{(\tau-s)^{\alpha_2-1}(t-\tau)^{\alpha_1-1}\,d\tau }{(1+|\tau|)^{\alpha_1+\alpha_2}}\,
\frac{|\mu_1| (ds)\, |\mu_2| dt)}{(t-s)^{\alpha_1+\alpha_2-1}}.
\]
\begin{theorem}\label{elprodC}
Let $\mu_j \in  {\mathcal M}_{\alpha_j} (\R), \alpha_j>0, j=1,2,$
and  suppose that
\begin{equation}\label{C1H}
J_{\alpha_1,\alpha_2}[\mu_1,\mu_2]<\infty \qquad \text{and} \qquad
J_{\alpha_2,\alpha_1}[\mu_2,\mu_1]<\infty.
\end{equation}
Then \eqref{cauchy_id} holds
with $\mu \in  {\mathcal M}_{\alpha_1+\alpha_2}(\R)$  given by
\begin{equation}\label{m1C}
\mu(d\tau):=u(\tau)d\tau+\mu_1(\{\tau\})\mu_2(d\tau)
\end{equation}
and
\begin{align}\label{m0C}
B(\alpha_1, \alpha_2) u(\tau):=&\int_{(\tau,\infty)}\int_{(-\infty,\tau)}
 \frac{(\tau-s)^{\alpha_2-1}(t-\tau)^{\alpha_1-1}}{(t-s)^{\alpha_1+\alpha_2-1}}
\mu_1(ds)\,\mu_2(dt)\\
+&\int_{(\tau,\infty)} \int_{(-\infty,\tau)}
\frac{(\tau-s)^{\alpha_1-1}(t-\tau)^{\alpha_2-1}}{(t-s)^{\alpha_1+\alpha_2-1}}
\mu_2(ds)\,\mu_1(dt)\notag
\end{align}
\end{theorem}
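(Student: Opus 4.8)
The plan is to follow the proof of Theorem~\ref{elprod} almost verbatim, with two adjustments forced by passing from $\R_+$ to $\R$: the base point $z=1$ is no longer at our disposal, and the finiteness of the triple integral that appears after applying \eqref{A2} is no longer automatic — securing it is precisely what hypothesis \eqref{C1H} provides. I would first reduce to positive measures. Writing $\mu_j=\mu_{j,1}-\mu_{j,2}+i(\mu_{j,3}-\mu_{j,4})$ with $\mu_{j,k}\in\mathcal M^+_{\alpha_j}(\R)$ and $\mu_{j,k}\le|\mu_j|$, one observes that both the left-hand side $C_{\alpha_1}[\mu_1]\,C_{\alpha_2}[\mu_2]$ and the measure $\mu$ produced by \eqref{m1C}--\eqref{m0C} are \emph{bilinear} in $(\mu_1,\mu_2)$, so \eqref{cauchy_id} reduces to the case $\mu_1,\mu_2\ge 0$; moreover $J_{\alpha_1,\alpha_2}[\mu_{1,k},\mu_{2,l}]\le J_{\alpha_1,\alpha_2}[\mu_1,\mu_2]<\infty$, so \eqref{C1H} is inherited by every pair. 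So assume $\mu_1,\mu_2$ positive and fix $z\in\C^{+}$. Since $z\notin\R$, one has $|z+t|\ge {\rm Im}\, z>0$ and $|z+t|\ge c_z(1+|t|)$ for a suitable $c_z>0$ and all $t\in\R$, so $C_{\alpha_j}[\mu_j](z)$ converges absolutely, and I may write
\[
C_{\alpha_1}[\mu_1](z)\,C_{\alpha_2}[\mu_2](z)=\int_\R\int_\R\frac{\mu_1(ds)\,\mu_2(dt)}{(z+s)^{\alpha_1}(z+t)^{\alpha_2}}=g_1(z)+g_2(z)+g_0(z),
\]
splitting $\R\times\R$ into $\{s<t\}$, $\{s>t\}$, and the diagonal $\{s=t\}$ exactly as in Theorem~\ref{elprod}; here $g_0(z)=\int_\R\mu_1(\{t\})\mu_2(dt)/(z+t)^{\alpha_1+\alpha_2}$, and $t\mapsto\mu_1(\{t\})$ is Borel since the atoms of $\mu_1$ form a countable set.

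Next I would apply \eqref{A2} to the integrand of $g_1(z)$ — which is legitimate because $z\notin(-\infty,-s]$ for every real $s$ — and interchange the three integrations. The interchange is justified by dominating $|z+\tau|^{-(\alpha_1+\alpha_2)}\le c_z^{-(\alpha_1+\alpha_2)}(1+|\tau|)^{-(\alpha_1+\alpha_2)}$, which shows the triple integral is controlled by a constant times $J_{\alpha_1,\alpha_2}[\mu_1,\mu_2]<\infty$; Fubini then yields
\[
B(\alpha_1,\alpha_2)\,g_1(z)=\int_\R\left(\int_{(\tau,\infty)}\int_{(-\infty,\tau)}\frac{(\tau-s)^{\alpha_2-1}(t-\tau)^{\alpha_1-1}}{(t-s)^{\alpha_1+\alpha_2-1}}\,\mu_1(ds)\,\mu_2(dt)\right)\frac{d\tau}{(z+\tau)^{\alpha_1+\alpha_2}},
\]
and symmetrically $B(\alpha_1,\alpha_2)\,g_2(z)$ equals the same expression with $(\mu_1,\alpha_1)$ and $(\mu_2,\alpha_2)$ interchanged, this time using $J_{\alpha_2,\alpha_1}[\mu_2,\mu_1]<\infty$. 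Adding the three contributions gives $C_{\alpha_1}[\mu_1](z)\,C_{\alpha_2}[\mu_2](z)=\int_\R\mu(d\tau)/(z+\tau)^{\alpha_1+\alpha_2}$ with $\mu$ given by \eqref{m1C}--\eqref{m0C}, and Fubini simultaneously guarantees that $u$ is measurable.

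It then remains to verify $\mu\in\mathcal M_{\alpha_1+\alpha_2}(\R)$. Repeating the Fubini computation with $(1+|\tau|)^{\alpha_1+\alpha_2}$ in place of $(z+\tau)^{\alpha_1+\alpha_2}$ identifies $\int_\R|u(\tau)|(1+|\tau|)^{-(\alpha_1+\alpha_2)}\,d\tau$ with $B(\alpha_1,\alpha_2)^{-1}\bigl(J_{\alpha_1,\alpha_2}[\mu_1,\mu_2]+J_{\alpha_2,\alpha_1}[\mu_2,\mu_1]\bigr)<\infty$ (having first replaced $\mu_j$ by $|\mu_j|$, which only enlarges the integrand, cf.\ \eqref{positiv}). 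For the discrete part, the continuous part of $\mu_2$ assigns zero mass to the countable set of atoms of $\mu_1$, so $\mu_1(\{\tau\})\mu_2(d\tau)$ is carried by the common atoms and $\sum_\tau|\mu_1(\{\tau\})\mu_2(\{\tau\})|(1+|\tau|)^{-(\alpha_1+\alpha_2)}\le\|\mu_1\|_{\alpha_1}\|\mu_2\|_{\alpha_2}<\infty$. Hence $\|\mu\|_{\alpha_1+\alpha_2}<\infty$. Since both sides of \eqref{cauchy_id} are analytic on the connected domain $\C^{+}$ and coincide there, the identity is proved for positive measures, and undoing the reduction by bilinearity delivers the general case.

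The main obstacle, relative to Theorem~\ref{elprod}, is exactly this bookkeeping: on $\R$ the weight $(1+|\tau|)^{-(\alpha_1+\alpha_2)}$ need not be comparable to $(1+|s|)^{-\alpha_1}(1+|t|)^{-\alpha_2}$ when $\tau$ lies between $s$ and $t$ of opposite sign, so — unlike on $\R_+$, where \eqref{CD1} made convergence and the norm bound automatic — one cannot obtain either for free. The hypothesis \eqref{C1H} must be threaded through each appeal to Fubini, and it is genuinely indispensable rather than a technical convenience.
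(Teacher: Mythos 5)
Your proposal is correct and follows essentially the same route as the paper's proof: split the double integral into the regions $\{s<t\}$, $\{s>t\}$ and the diagonal, apply \eqref{A2} to each off-diagonal piece, and justify the interchange of integrations by combining the bound $|z+\tau|\ge c_z(1+|\tau|)$ with the hypothesis \eqref{C1H}, which also yields $u\in L^1_{\alpha_1+\alpha_2}(\R)$ and the bound on the discrete part. The preliminary reduction to positive measures is harmless but not needed, since \eqref{C1H} is stated in terms of $|\mu_1|$ and $|\mu_2|$ and therefore licenses Fubini for complex measures directly, which is how the paper proceeds.
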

for almost all $\tau \in \mathbb R.$ Moreover, such a $\mu$ is posiive if $\mu_1$ and $\mu_2$ are positive.
\begin{proof}
The proof is similar to the proof
of Theorem \ref{elprod}.
Let  $z\in \C^+$ be fixed. Write
\[
C_{\alpha_1}[\mu_1](z) C_{\alpha_2}[\mu_2](z)=\int_{\R}\int_{\R}
\frac{\mu_1(ds)\, \mu_2(dt)}{(z+s)^{\alpha_1}
(z+t)^{\alpha_2}}:=g_1(z)+g_2(z)+g_0(z),
\]
where
\begin{align*}
g_1(z):=&\int_{\R}\int_{(-\infty,t)}
\frac{\mu_1(ds)\, \mu_2(dt)}{(z+s)^{\alpha_1}
(z+t)^{\alpha_2}},\\
g_2(z):=&\int_{\R}\int_{(t,\infty)}
\frac{\mu_1(ds)\, \mu_2(dt)}{(z+s)^{\alpha_1}
(z+t)^{\alpha_2}}=
\int_{\R}\int_{(-\infty,t)}
\frac{\mu_2(ds)\, \mu_1(dt)}{(z+s)^{\alpha_2}
(z+t)^{\alpha_1}},\\
g_0(z):=&\int_{\R}
\frac{\mu_1(\{t\})\, \mu_2(dt)}{(z+t)^{\alpha_1+\alpha_2}}=
\int_{\R}\frac{\mu_2(\{t\})\, \mu_1(dt)}{(z+t)^{\alpha_1+\alpha_2}}.
\end{align*}

By (\ref{A2}),
\[
B(\alpha_1,\alpha_2) g_1(z)=
\int_{\R}\int_{(-\infty,t)}
\int_{(s,t)}\frac{(\tau-s)^{\alpha_2-1}(t-\tau)^{\alpha_1-1}\,d\tau }{(z+\tau)^{\alpha_1+\alpha_2}}\,
\frac{\mu_1(ds)\, \mu_2(dt)}{(t-s)^{\alpha_1+\alpha_2-1}}.
\]
Then, using  (\ref{C1H}),
the elementary inequality
\begin{equation}\label{inneq}
|z+\tau|\ge c(1+|\tau|),
\qquad \tau\in \R,
\end{equation}
for some $c=c(z) >0,$
and Fubini's theorem,  we infer that
\begin{align*}
&B(\alpha_1, \alpha_2) g_1(z)\\
=&
\int_{\R}\int_{(-\infty,t)}
\int_{(-\infty,\tau)}\frac{(\tau-s)^{\alpha_2-1}(t-\tau)^{\alpha_1-1} }
{(t-s)^{{\alpha_1+\alpha_2}-1}(z+\tau)^{\alpha_1+\alpha_2}}\,
\mu_1(ds)\,d\tau \,\mu_2(dt)\\
=&
\int_{\R}\int_{(\tau,\infty)}
\int_{(-\infty,\tau)}\frac{(\tau-s)^{\alpha_2-1}(t-\tau)^{\alpha_1-1} }{(t-s)^{\alpha_1+\alpha_2-1}}\,
\frac{\mu_1(ds)\,\mu_2(dt)\,d\tau}{(z+\tau)^{\alpha_1+\alpha_2}}\\
=&\int_{\R}\frac{u_1(\tau)\, d\tau}{(z+\tau)^{\alpha_1+\alpha_2}},
\end{align*}
where $u_1 \in L^1_{\alpha_1+\alpha_2}(\R).$
Similarly,
\begin{align*}
B(\alpha_1, \alpha_2) g_2(z)
=&
\int_{\R}
\int_{(\tau,\infty)} \int_{(-\infty,\tau)}
\frac{(\tau-s)^{\alpha_1-1}(t-\tau)^{\alpha_2-1}}{(t-s)^{\alpha_1+\alpha_2-1}}
\frac{\mu_2(ds)\, \mu_1(dt)\,d\tau}{(z+\tau)^{\alpha_1+\alpha_2}}\\
=&\int_{\R}\frac{u_2(\tau)\, d\tau}{(z+\tau)^{\alpha_1+\alpha_2}},
\end{align*}
with  $u_2 \in L^1_{\alpha_1+\alpha_2}(\R).$
Hence \eqref{cauchy_id}  holds with $\mu$ given by  (\ref{m1C}) and (\ref{m0C}), where $u=u_1+u_2$
belongs to $L^1_{\alpha_1+\alpha_2}(\R).$ Moreover, it is direct to check that for $\mu_d(dt)=\mu_1(\{t\})\mu_2(dt)$
one has $\mu_d \in \mathcal M_{\alpha_1+\alpha_2}(\R)$ and $\|\mu_d\|_{\alpha_1+\alpha_2}\le \|\mu_1\|_{\alpha_1} \|\mu_2\|_{\alpha_2}.$
The claim on positivity of $\mu$ for $\mu_j \in \mathcal M^+_{\alpha_j}(\R), j=1,2,$
is straightforward.
\end{proof}
\begin{remark}\label{rem_good}
Note that, as in the situation of Corollary \ref{AbsC},  $\mu$ does not have singular continuous component.
In general, under the assumptions of Theorem \ref{elprodC}, one may formulate  an analogue of Corollary \ref{AbsC}, but we omit easy details.
\end{remark}

Now we consider several natural cases where one can get rid of the assumption \eqref{C1H}.
In particular, we include Theorem \ref{elprod} into a more general context of Cauchy transforms,
see Corollary \ref{CorC11}, b) below.
\begin{corollary}\label{CorC11}
Let $\mu_j \in {\mathcal M}_{\alpha_j}(\R),\alpha_j>0, j=1,2,$ be such that
one of the following conditions holds:
\begin{itemize}
\item [a)]  One has $\mu_j \in  {\mathcal M}_0(\mathbb R), j=1,2;$
\item [b)] There exists $a\in \R$ such that
$\supp\,\mu_j\subset [a,\infty), j=1,2;$
\item [c)] There exists $a\in \R$ such that
$\supp\,\mu_j\subset (-\infty,a], j=1,2;$
\item
[d)]\ One of the measures $\mu_1$ or $\mu_2$
has compact support.
\end{itemize}
If  $\mu$ is defined by  \eqref{m1C} and \eqref{m0C},
then \eqref{cauchy_id} holds.
\end{corollary}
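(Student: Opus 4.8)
The plan is to check, case by case, that each of a)--d) forces the integrability hypothesis \eqref{C1H} of Theorem \ref{elprodC}; once this is done, Theorem \ref{elprodC} yields \eqref{cauchy_id} with $\mu$ as in \eqref{m1C}--\eqref{m0C} (and with $\mu$ positive when $\mu_1,\mu_2$ are positive). Since $J_{\alpha_2,\alpha_1}[\mu_2,\mu_1]$ is obtained from $J_{\alpha_1,\alpha_2}[\mu_1,\mu_2]$ by interchanging $(\alpha_1,\mu_1)$ with $(\alpha_2,\mu_2)$, it suffices to bound $J_{\alpha_1,\alpha_2}[\mu_1,\mu_2]$. Abbreviating
\[
\Phi(s,t):=\frac{1}{(t-s)^{\alpha_1+\alpha_2-1}}\int_s^t\frac{(\tau-s)^{\alpha_2-1}(t-\tau)^{\alpha_1-1}\,d\tau}{(1+|\tau|)^{\alpha_1+\alpha_2}},\qquad s<t,
\]
we have $J_{\alpha_1,\alpha_2}[\mu_1,\mu_2]=\int_{\R}\int_{(-\infty,t)}\Phi(s,t)\,|\mu_1|(ds)\,|\mu_2|(dt)$. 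The only inputs needed are the identities \eqref{CD1} and \eqref{CD2} and the elementary two-sided estimate $(1+|a|)^{-1}(1+|x-a|)\le 1+|x|\le(1+|a|)(1+|x-a|)$, valid for all $x,a\in\R$ by the triangle inequality.

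In case a) the measures $|\mu_1|,|\mu_2|$ are finite, so estimating $(1+|\tau|)^{-\alpha_1-\alpha_2}\le 1$ and using \eqref{CD2} gives $\Phi(s,t)\le B(\alpha_1,\alpha_2)$, whence $J_{\alpha_1,\alpha_2}[\mu_1,\mu_2]\le B(\alpha_1,\alpha_2)\|\mu_1\|_0\|\mu_2\|_0<\infty$. In case b) one has $a\le s<\tau<t$ throughout, so the substitution $\sigma=\tau-a\ge0$ together with $(1+|\tau|)^{-\alpha_1-\alpha_2}\le(1+|a|)^{\alpha_1+\alpha_2}(1+\sigma)^{-\alpha_1-\alpha_2}$ and \eqref{CD1} applied on $[0,\infty)$ (with $s-a,t-a$ in place of $s,t$) yield, after converting $1+(s-a),1+(t-a)$ back to $1+|s|,1+|t|$,
\[
\Phi(s,t)\le (1+|a|)^{2(\alpha_1+\alpha_2)}\,B(\alpha_1,\alpha_2)\,\frac{1}{(1+|s|)^{\alpha_1}(1+|t|)^{\alpha_2}},
\]
and hence $J_{\alpha_1,\alpha_2}[\mu_1,\mu_2]\le(1+|a|)^{2(\alpha_1+\alpha_2)}B(\alpha_1,\alpha_2)\|\mu_1\|_{\alpha_1}\|\mu_2\|_{\alpha_2}<\infty$. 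Case c) follows from the same argument after the reflection $\tau\mapsto a-\tau$, which only interchanges $\alpha_1$ and $\alpha_2$ in \eqref{CD1}; since $B(\alpha_1,\alpha_2)=B(\alpha_2,\alpha_1)$, the resulting bound is identical in form.

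In case d), suppose $\supp\mu_1\subset[-R,R]$ (the case of $\mu_2$ compact is symmetric). In $J_{\alpha_1,\alpha_2}[\mu_1,\mu_2]$ the variable $s$ lies in $[-R,R]$, while $t>s\ge-R$; split the $t$-integration into $t\le2R$ and $t>2R$. On the first region $|\mu_2|$ restricted to the compact set $[-R,2R]$ is finite and $\Phi\le B(\alpha_1,\alpha_2)$ by \eqref{CD2}, so this contributes at most $B(\alpha_1,\alpha_2)\,|\mu_1|(\R)\,|\mu_2|([-R,2R])<\infty$. On the second region $\tau\in(s,t)\subset(-R,\infty)$, and the substitution $\sigma=\tau+R\ge0$ with \eqref{CD1}, exactly as in case b), gives $\Phi(s,t)\le C(1+|t|)^{-\alpha_2}$, so this part is bounded by $C\,|\mu_1|(\R)\,\|\mu_2\|_{\alpha_2}<\infty$. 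For $J_{\alpha_2,\alpha_1}[\mu_2,\mu_1]$ the compactly supported factor now sits at the upper endpoint, so one splits the $s$-integration at $s=-2R$ and repeats the reasoning, this time using $\tau\in(s,t)\subset(-\infty,R)$ and $\sigma=R-\tau$ in place of $\sigma=\tau+R$. This verifies \eqref{C1H}, and Theorem \ref{elprodC} completes the proof.

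The only genuinely delicate point is case d): after peeling off the ``compact part'' one must check that the two remaining one-sided tails actually carry, respectively, the decay $(1+|t|)^{-\alpha_2}$ and $(1+|s|)^{-\alpha_2}$ needed to integrate against $\mu_2\in\mathcal M_{\alpha_2}(\R)$, and that the reflected form of \eqref{CD1} is applied with $\alpha_1$ and $\alpha_2$ placed correctly. Everything else is routine bookkeeping of the constants relating $1+|x|$ and $1+|x-a|$.
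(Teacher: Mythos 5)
Your proposal is correct and follows the paper's strategy exactly: verify the integrability hypothesis \eqref{C1H} case by case using the identities \eqref{CD1} and \eqref{CD2}, then invoke Theorem \ref{elprodC}. Cases a)--c) coincide with the paper's treatment (you are in fact slightly more explicit than the paper about why the shift to $a=0$ is harmless, tracking the $(1+|a|)^{\alpha_1+\alpha_2}$ constants coming from $1+|\tau|\asymp 1+|\tau-a|$, whereas the paper simply says ``by considering appropriate shifts''). The only real divergence is case d): the paper decomposes the measure with unbounded support as $\mu_2=\mu_{2,1}+\mu_{2,2}$ with $\supp\mu_{2,1}\subset(-\infty,a]$ and $\supp\mu_{2,2}\subset[-a,\infty)$ and then reduces immediately to cases b) and c) via subadditivity of $J_{\alpha_1,\alpha_2}$, while you split the integration region at $t=2R$ (resp. $s=-2R$) and redo the near/far estimates by hand. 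Both arguments are valid; the paper's reduction is shorter and reuses b) and c) verbatim, whereas your version makes the required decay rates $(1+|t|)^{-\alpha_2}$ and $(1+|s|)^{-\alpha_2}$ on the tails explicit. No gap.
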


\begin{proof}
To prove the assertion,  we  verify the assumption (\ref{C1H}) of Theorem \ref{elprodC} in each of the cases a)-d).

a) Using  (\ref{CD2}), we obtain
\begin{align*}
&B(\alpha_1,\alpha_2) J_{\alpha_1,\alpha_2}[\mu_1,\mu_2]\\
\le& B(\alpha_1,\alpha_2)\int_{\R}
\int_{(-\infty,t)}
\int_{(s,t)}(\tau-s)^{\alpha_2-1}(t-\tau)^{\alpha_1-1}\,d\tau \,
\frac{|\mu_1|(ds)|\mu_2|(dt)}{(t-s)^{{\alpha_1+\alpha_2}-1}}\\
=&
 \int_{\R}\int_{(-\infty,t)}|\mu_1|(ds)|\mu_2|(dt)\le
\|\mu_1\|_{ 0} \|\mu_2\|_{0}.
\end{align*}
Similarly,
\[
B(\alpha_1, \alpha_2) J_{\alpha_2, \alpha_1}[\mu_2,\mu_1]\le
\|\mu_1\|_{ 0} \|\mu_2\|_{0}.
\]

b) By considering appropriate shifts of measures $\mu_1$ and $\mu_2,$ we may assume that $a=0.$
Using (\ref{A2}), we infer that
\begin{align*}
&B(\alpha_1,\alpha_2) J_{\alpha_1,\alpha_2}[\mu_1,\mu_2]\\
\le& B(\alpha_1,\alpha_2)
\int_{0}^\infty\int_{[0,t)}
\int_{(s,t)}\frac{(\tau-s)^{\alpha_2-1}(t-\tau)^{\alpha_1-1}\,d\tau }{(1+\tau)^{\alpha_1+\alpha_2}}\,
\frac{|\mu_1|(ds)|\mu_2|(dt)}{(t-s)^{\alpha_1+\alpha_2-1}}\\
=&
\int_{0}^\infty\int_{[0,t)}
\,\frac{|\mu_1|(ds)|\mu_2|(dt)}{(1+s)^{\alpha_1}
(1+t)^{\alpha_2}}
\le
\|\mu_1\|_{\alpha_1}\|\mu_2\|_{\alpha_2}<\infty.
\end{align*}
By symmetry, we also have
\[
B(\alpha_1,\alpha_2) J_{\alpha_2,\alpha_1}[\mu_2,\mu_1]\le
\|\mu_1\|_{\alpha_1}
\|\mu_2\|_{\alpha_2}<\infty.
\]

c) We proceed along the lines of the proof of b). Assuming $a=0,$
from (\ref{A2}) it follows that
\begin{align*}
&\frac{1}{(t-s)^{\alpha_1+\alpha_2-1}}
\int_s^t\frac{(\tau-s)^{\alpha_2-1}(t-\tau)^{\alpha_1-1}\,d\tau}
{(1-\tau)^{\alpha_1+\alpha_2}}\\
=&\frac{1}{(t-s)^{\alpha_1+\alpha_2-1}}
\int_{-t}^{-s}\frac{(-s-\tau)^{\alpha_2-1}(\tau+t)^{\alpha_1-1}\,d\tau}
{(1+\tau)^{\alpha_1+\alpha_2}}\\
=&\frac{B(\alpha_1,\alpha_2)}{(1-s)^{\alpha_1} (1-t)^{\alpha_2}},\qquad
s<t<0.
\end{align*}
Then
\begin{align*}
&B(\alpha_1,\alpha_2)J_{\alpha_1,\alpha_2}[\mu_1,\mu_2]\\
&\le
B(\alpha_1,\alpha_2)\int_{(-\infty,0]}\int_{(-\infty,t)}
\int_{(s,t)}\frac{(\tau-s)^{\alpha_2-1}(t-\tau)^{\alpha_1}\,d\tau }{(1-\tau)^{\alpha_1+\alpha_2}}\,
\frac{|\mu_1|(ds)|\mu_2|(dt)}{(t-s)^{\alpha_1+\alpha_2-1}}\\
=&
\int_{(-\infty,0]}\int_{(-\infty,t)}
\frac{|\mu_1|(ds)|\mu_2|(dt)}{(1-s)^{\alpha_1} (1-t)^{\alpha_2}}
\le
\|\mu_1\|_{\alpha_1}
\|\mu_2\|_{\alpha_2}<\infty.
\end{align*}
By symmetry, as above,
\[
B(\alpha_1,\alpha_2) J_{\alpha_2,\alpha_1}[\mu_2,\mu_1]
\le
\|\mu_1\|_{\alpha_1}
\|\mu_2\|_{\alpha_2}<\infty.
\]

d)  Assume that $\supp\,\mu_1\subset [-a,a]$ for some $a>0$.
Let
$\mu_2=\mu_{2,1}+\mu_{2,2}$, where
\[
\supp \mu_{2,1}\in (-\infty,0]\subset (-\infty,a]\quad \text{and} \quad
\supp \mu_{2,2}\in [0,\infty)\subset [-a,\infty).
\]
Then, by the arguments in b) and c),
\[
J_{\alpha_1,\alpha_2}[\mu_1,\mu_2]
\le J_{\alpha_1,\alpha_2}[\mu_1,\mu_{2,1}]+
J_{\alpha_1,\alpha_2}[\mu_1,\mu_{2,2}]<\infty,
\]
and
\[
J_{\alpha_2,\alpha_1}[\mu_2,\mu_1]
\le J_{\alpha_2,\alpha_1}[\mu_{2,1},\mu_1]+
J_{\alpha_2,\alpha_1}[\mu_{2,2},\mu_1]<\infty.
\]

If $\supp\,\mu_2\subset [-a,a]$, $a>0,$ then the considerations are completely analogous.
\end{proof}

The next result is similar in spirit to Corollary \ref{CorC11}. On the other hand, to make the relation \eqref{cauchy_id} valid for $\mu$ defined by a concrete formula, it offers  a compromise
between our assumptions on the supports on $\mu_1$ and $\mu_2$ and on their size at zero and at infinity.
\begin{theorem}\label{compact}
Let $\mu_j \in  {\mathcal M}_{\alpha_j}(\mathbb R), \alpha_j \ge 1, j=1,2,$
satisfy $\supp\,\mu_1\subset (-\infty,a)$ and $ \supp\,\mu_2\subset (a,\infty)$
for some $a \in \mathbb R.$
Then
\[
C_{\alpha_1}[\mu_1](z) C_{\alpha_2}[\mu_2](z)=C_{\alpha_1+\alpha_2}[u](z), \qquad z \in \C^+,
\]
where $u \in L^1_{\alpha_1+\alpha_2}(\R)$
is defined  by
\begin{equation}\label{Com0C}
B(\alpha_1,\alpha_2)u(\tau):=\begin{cases} -\int_{(a,\infty)}\int_{(\tau,a)}
 \frac{(\tau-s)^{\alpha_2-1}(t-\tau)^{\alpha_1-1}}
{(t-s)^{\alpha_1+\alpha_2-1}}
\,\mu_1(ds)\,\mu_2(dt),& \quad \tau<a,\\
 -\int_{(a,\tau)}\int_{(-\infty,a)}
 \frac{(\tau-s)^{\alpha_2-1}(t-\tau)^{\alpha_1-1}}
{(t-s)^{\alpha_1+\alpha_2-1}}
\,\mu_1(ds)\,\mu_2(dt),& \quad \tau>a,
\end{cases}
\end{equation}
with
\begin{align*}
(\tau-s)^{\alpha_2-1}=e^{i\pi(\alpha_2-1)}(s-\tau)^{\alpha_2-1}, \qquad \tau<s,\\
(t-\tau)^{\alpha_1-1}=e^{i\pi(\alpha_1-1)}(\tau-t)^{\alpha_1-1}, \qquad \tau>t.
\end{align*}
\end{theorem}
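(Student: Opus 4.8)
The plan is to imitate the proof of Theorem~\ref{elprodC}, replacing the Beta-integral identity \eqref{A2} by a \emph{one-sided} variant dictated by the fact that here $\supp\mu_1$ and $\supp\mu_2$ lie on opposite sides of $a$. After a translation we may assume $a=0$, so $\supp\mu_1\subset(-\infty,0)$ and $\supp\mu_2\subset(0,\infty)$; since a support is closed, in fact $\sup\supp\mu_1<0<\inf\supp\mu_2$, so there is a genuine gap $\eta:=\inf\supp\mu_2-\sup\supp\mu_1>0$. Consequently, in the double integral
\[
C_{\alpha_1}[\mu_1](z)C_{\alpha_2}[\mu_2](z)=\int_{\R}\int_{\R}\frac{\mu_1(ds)\,\mu_2(dt)}{(z+s)^{\alpha_1}(z+t)^{\alpha_2}}
\]
one always has $s<0<t$ with $t-s\ge\eta$, so $(t-s)^{\alpha_1+\alpha_2-1}$ is bounded away from $0$ and the atomic term $\mu_1(\{t\})\mu_2(dt)$ of \eqref{m1C} is absent. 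Note that neither Theorem~\ref{elprodC} nor Corollary~\ref{CorC11} covers this situation: the integrability condition \eqref{C1H} may genuinely fail when $\mu_1$ carries much mass near $-\infty$ and $\mu_2$ much mass near $+\infty$.

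The heart of the argument is a one-sided analogue of \eqref{A2}: for $\alpha_1,\alpha_2\ge1$, $s<t$ and $z\in\C^+$,
\[
\frac{B(\alpha_1,\alpha_2)}{(z+s)^{\alpha_1}(z+t)^{\alpha_2}}=-\frac{1}{(t-s)^{\alpha_1+\alpha_2-1}}\int_{\R\setminus(s,t)}\frac{(\tau-s)^{\alpha_2-1}(t-\tau)^{\alpha_1-1}}{(z+\tau)^{\alpha_1+\alpha_2}}\,d\tau,
\]
the fractional powers on $(-\infty,s)$ and $(t,\infty)$ being taken as in the statement. I would prove this by contour deformation. Put $h(\zeta):=(\zeta-s)^{\alpha_2-1}(t-\zeta)^{\alpha_1-1}(z+\zeta)^{-\alpha_1-\alpha_2}$ with principal branches; since $z\in\C^+$, the cut of $(z+\zeta)^{-\alpha_1-\alpha_2}$ lies in the open lower half-plane, and the only other cuts are the rays $(-\infty,s]$ and $[t,\infty)$, so $h$ is holomorphic in the open upper half-plane, and since $\alpha_j\ge1$ it extends continuously across the real points $s$ and $t$. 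Integrating $h$ over the boundary of the half-disc $\{|\zeta|<R\}\cap\C^+$ gives $0$ by Cauchy's theorem; the semicircular part is $O(R^{-1})$ because $h(\zeta)=O(|\zeta|^{-2})$ at infinity, so letting $R\to\infty$ yields $\int_{\R}h(\tau+i0)\,d\tau=0$. Splitting this at $s$ and $t$, reading off the boundary values of the fractional powers on the three subintervals — these are the branches singled out in \eqref{Com0C} — and identifying $\int_s^t h(\tau)\,d\tau=B(\alpha_1,\alpha_2)(t-s)^{\alpha_1+\alpha_2-1}(z+s)^{-\alpha_1}(z+t)^{-\alpha_2}$ via \eqref{A2} gives the displayed identity. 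This is the step I expect to be the main obstacle: the Cauchy-theorem manipulation is routine, but the careful tracking of the arguments of $(\tau-s)^{\alpha_2-1}$ and $(t-\tau)^{\alpha_1-1}$ as the real axis is approached from $\C^+$ requires attention, and $\alpha_j\ge1$ is used precisely to avoid indentations at $s$ and $t$.

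It remains to assemble $u$. For compactly supported $\mu_1,\mu_2$ (hence finite measures) the triple integral obtained by substituting the one-sided identity into the double integral converges absolutely — here one uses \eqref{inneq} together with $t-s\ge\eta$ — so Fubini's theorem lets one move the $\tau$-integration outside; collecting, for each fixed $\tau$, the pairs $(s,t)$ with $s<0<t$ and $\tau\notin(s,t)$ (i.e.\ $s\in(\tau,0)$ and any $t>0$ when $\tau<0$; any $s<0$ and $t\in(0,\tau)$ when $\tau>0$) reproduces exactly \eqref{Com0C}, and the same bound gives $u\in L^1_{\alpha_1+\alpha_2}(\R)$. For general $\mu_1,\mu_2$ one reduces, by splitting into real and imaginary, positive and negative parts, to $\mu_j\in\mathcal M^+_{\alpha_j}(\R)$, and then truncates $\mu_j$ to $\mu_j^{(R)}:=\mu_j|_{[-R,R]}$. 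Then the density $u^{(R)}$ of \eqref{Com0C} for $\mu_j^{(R)}$ converges to $u$ pointwise, with $|u^{(R)}|\le|u|$ (the measures increase and the integrand in \eqref{Com0C} has constant argument); a somewhat careful, multi-region estimate using $t-s\ge\eta$, $\alpha_j\ge1$ and $\|\mu_j\|_{\alpha_j}<\infty$ shows $\sup_R\|u^{(R)}\|_{L^1_{\alpha_1+\alpha_2}(\R)}<\infty$, whence $u\in L^1_{\alpha_1+\alpha_2}(\R)$ by Fatou, and dominated convergence (with dominating function $|u|$) upgrades the identity $C_{\alpha_1}[\mu_1^{(R)}]C_{\alpha_2}[\mu_2^{(R)}]=C_{\alpha_1+\alpha_2}[u^{(R)}]$ to \eqref{cauchy_id}. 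Undoing the translation completes the proof.
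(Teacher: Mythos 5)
Your proposal follows essentially the same route as the paper: after translating to $a=0$, the paper also derives the one-sided identity by observing that $\int_{\R}(\tau-s)^{\alpha_2-1}(t-\tau)^{\alpha_1-1}(z+\tau)^{-\alpha_1-\alpha_2}\,d\tau=0$ (via $\lim_{\epsilon\to0^+}\int_{\R+i\epsilon}=0$, i.e.\ your contour argument) and then subtracting \eqref{A2}, after which Fubini is justified by an explicit absolute-convergence estimate that reduces to $\int\int|s|^{-\alpha_1}t^{-\alpha_2}\,|\mu_1|(ds)\,|\mu_2|(dt)<\infty$ using the gap between the supports. The only difference is that you defer that estimate (routing it through truncation and Fatou) whereas the paper carries it out directly; the ingredients you list for it are the correct ones.
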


\begin{proof}
Let $z\in \C^{+}$ be fixed. Without loss of generality, we may assume that $a=0.$
We have
\[
C_{\alpha_1}[\mu_1](z)C_{\alpha_2}[\mu_2](z)=
\int_{(0,\infty)}\int_{(-\infty,0)}
\frac{\mu_1(ds)\,\mu_2(dt)}{(z+s)^{\alpha_1}(z+t)^{\alpha_2}}.
\]
Note that
\[
0=\lim_{\epsilon\to 0+}\,\int_{\R+i\epsilon}\frac{(\tau-s)^{\alpha_2-1}(t-\tau)^{\alpha_1-1}\,
d\tau }{(z+\tau)^{\alpha_1+\alpha_2}}
=\int_{\R}\frac{(\tau-s)^{\alpha_2-1}(t-\tau)^{\alpha_1-1}\,d\tau }{(z+\tau)^{\alpha_1+\alpha_2}}.
\]

Then, by (\ref{A2}),
for any
$s<t,$
\[
\frac{B(\alpha_1,\alpha_2)}{(z+s)^{\alpha_1} (z+t)^{\alpha_2}}
=-\frac{1}{(t-s)^{\alpha_1+\alpha_2-1}}
\left\{\int_{-\infty}^s+\int_t^\infty\right\} \frac{(\tau-s)^{\alpha_2-1}(t-\tau)^{\alpha_1-1}\,d\tau }{(z+\tau)^{\alpha_1+\alpha_2}}.
\]
So, we can formally write
\begin{equation}\label{CinA}
B(\alpha_1,\alpha_2)C_{\alpha_1}[\mu_1](z) C_{\alpha_2}[\mu_2](z):=I_1(z)+I_2(z),
\end{equation}
where
\begin{align*}
I_1(z):=-\int_{(0,\infty)}\int_{(-\infty,0)}
\int_{-\infty}^s \frac{(\tau-s)^{\alpha_2-1}(t-\tau)^{\alpha_1-1}\,d\tau }
{(t-s)^{\alpha_1+\alpha_2-1}(z+\tau)^{\alpha_1+\alpha_2}}
\,\mu_1(ds)\,\mu_2(dt),\\
I_2(z):=-\int_{(0,\infty)}\int_{(-\infty,0)}
\int_t^\infty \frac{(\tau-s)^{\alpha_2-1}(t-\tau)^{\alpha_1-1}\,d\tau }
{(t-s)^{\alpha_1+\alpha_2-1}(z+\tau)^{\alpha_1+\alpha_2}}
\,\mu_1(ds)\,\mu_2(dt).
\end{align*}
To prove the theorem it suffices to show that the (triple) integrals $I_1$ and $I_2$
in (\ref{CinA}) are absolutely convergent.
Taking into account \eqref{inneq},
we have
\begin{align}\label{iii}
&\int_{(0,\infty)}\int_{(-\infty,0)}
\int_{-\infty}^s \frac{|\tau-s|^{\alpha_2-1}|t-\tau|^{\alpha_1-1}\,d\tau }{|t-s|^{\alpha_1+\alpha_2-1}|z+\tau|^{\alpha_1+\alpha_2}}
\, |\mu_1|(ds)\, |\mu_2|(dt)\\
\le& \frac{1}{c^{\alpha_1+\alpha_2}} \int_{(0,\infty)}\int_{(-\infty,0)}
\int_{-\infty}^s
\frac{|\tau-s|^{\alpha_2-1}|t-\tau|^{\alpha_1-1}\,d\tau}
{|t-s|^{\alpha_1+\alpha_2-1}|\tau|^{\alpha_1+\alpha_2}}\,
|\mu_1|(ds) \, |\mu_2|(dt)\notag \\
\le& \frac{1}{c^{\alpha_1+\alpha_2}} \int_0^\infty\int_{-\infty}^0
\int_{|s|}^\infty
\frac{(\tau-|s|)^{\alpha_2-1}(t+\tau)^{\alpha_1-1}d\tau}
{\tau^{\alpha_1+\alpha_2}}
\frac{|\mu_1|(ds)\, |\mu_2|(dt)}{(t+|s|)^{\alpha_1+\alpha_2-1}}.\notag
\end{align}
To estimate the latter integral, observe that
\begin{align*}
&2^{1-\alpha_1}\int_{|s|}^\infty
\frac{(\tau-|s|)^{\alpha_2-1}(t+\tau)^{\alpha_1-1}d\tau}
{\tau^{\alpha_1+\alpha_2}}\\
\le& t^{\alpha_1-1}\int_{|s|}^\infty
\frac{(\tau-|s|)^{\alpha_2-1}d\tau}
{\tau^{\alpha_1+\alpha_2}}
+\int_{|s|}^\infty
\frac{(\tau-|s|)^{\alpha_2-1}d\tau}
{\tau^{\alpha_2+1}}\\
=& t^{\alpha_1-1}\int_0^\infty
\frac{\tau^{\alpha_2-1}d\tau}
{(\tau+|s|)^{\alpha_1+\alpha_2}}
+\int_0^\infty
\frac{\tau^{\alpha_2-1}d\tau}
{(\tau+|s|)^{\alpha_2+1}}\\
=& \frac{t^{\alpha_1-1}}{|s|^{\alpha_1}}\int_0^\infty
\frac{\tau^{\alpha_2-1}d\tau}
{(\tau+1)^{\alpha_1+\alpha_2}}
+\frac{1}{|s|}\int_0^\infty
\frac{\tau^{\alpha_2-1}d\tau}
{(\tau+1)^{\alpha_2+1}}\\
\le& \left(\frac{t^{\alpha_1-1}}{|s|^{\alpha_1}}+\frac{1}{|s|}\right)
\int_0^\infty
\frac{\tau^{\alpha_2-1}d\tau}
{(\tau+1)^{\alpha_2+1}}\le \frac{t^{\alpha_1-1}}{|s|^{\alpha_1}}+\frac{1}{|s|},
\end{align*}
where we used that
\[
\int_0^\infty
\frac{\tau^{\alpha_2-1}d\tau}
{(\tau+1)^{\alpha_2+1}}=\frac{1}{\alpha_2}\le 1
\]
(see e.g. \cite[p. 300, no.19]{Prudnikov1}).
Therefore, from \eqref{iii} it follows that
\begin{align*}
&2^{1-\alpha_1}\int_{(0,\infty)}\int_{(-\infty,0)}
\int_{-\infty}^s
\frac{|\tau-s|^{\alpha_2-1}|t-\tau|^{\alpha_1-1}\,d\tau}
{|t-s|^{\alpha_1+\alpha_2-1}|\tau|^{\alpha_1+\alpha_2}}\,
|\mu_1|(ds)\, |\mu_2|(dt) \\
&\le\frac{1}{c^{\alpha_1+\alpha_2}} \int_{(0,\infty)}\int_{(-\infty,0)}\left(
\frac{t^{\alpha_1-1}}{|s|^{\alpha_1}}
+\frac{1}{|s|}
\right)
\frac{
|\mu_1|(ds)\, |\mu_2|(dt)}{(t+|s|)^{\alpha_1+\alpha_2-1}}\\
&\le \frac{2}{c^{\alpha_1+\alpha_2}}  \int_{(0,\infty)}\int_{(-\infty,0)}
\frac{
|\mu_1|(ds)\, |\mu_2|(dt)}{|s|^{\alpha_1}t^{\alpha_2}}<\infty.
\end{align*}

Similarly,
\begin{align*}
&2^{1-\alpha_2}\int_{(0,\infty)}\int_{(-\infty,0)}
\int_t^\infty \frac{|\tau-s|^{\alpha_2-1}|t-\tau|^{\alpha_1-1}\,d\tau }{|t-s|^{{\alpha_1+\alpha_2}-1}|z+\tau|^{\alpha_1+\alpha_2}}
\,|\mu_1|(ds)\,|\mu_2|(dt)\\
\le& \frac{2}{c^{\alpha_1+\alpha_2}}\int_{(0,\infty)}\int_{(-\infty,0)}
\,\frac{|\mu_1|(ds)\,|\mu_2|(dt)}{|s|^{\alpha_1}t^{\alpha_2}}<\infty.
\end{align*}
Thus,
 by Fubini's theorem,
\begin{align*}
I_1(z)=&
-\int_{-\infty}^0 \int_{(0,\infty)}\int_{(\tau,0)}
 \frac{(\tau-s)^{\alpha_2-1}(t-\tau)^{\alpha_1-1}}
{(t-s)^{\alpha_1+\alpha_2-1}}
\, \frac{\mu_1(ds)\, \mu_2(dt)d\tau}{(z+\tau)^{\alpha_1+\alpha_2}}\\
=&\int_{\R}\frac{u_1(\tau)\, d\tau}{(z+\tau)^{\alpha_1+\alpha_2}},
\end{align*}
where $u_1 \in L^1_{\alpha_1+\alpha_2}(\R),$
and similarly
\begin{align*}
I_2(z)=&\int_0^\infty  \int_{(0,\tau)}\int_{(-\infty,0)}
 \frac{(\tau-s)^{\alpha_2-1}(t-\tau)^{\alpha_1-1}}
{(t-s)^{\alpha_1+\alpha_2-1}}
\,\frac{\mu_1(ds)\, \mu_2(dt) d\tau}{(z+\tau)^{\alpha_1+\alpha_2}}\\
=&\int_{\R}\frac{u_2(\tau)\, d\tau}{(z+\tau)^{\alpha_1+\alpha_2}},
\end{align*}
with $u_2 \in L^1_{\alpha_1+\alpha_2}(\R).$
Setting $u=u_1+u_2,$  the statement follows from (\ref{CinA}).

\end{proof}
Remark that even  when the assumptions of  Theorems \ref{elprodC} and \ref{compact} are satisfied simultaneously these results  yield, in general, different  measures
$\mu,$ and the one originating from Theorem \ref{compact} may not be positive for positive $\mu_1$ and $\mu_2.$
In general, if $\mu_1$ and $\mu_2$ are positive, then under the assumptions of Theorem \ref{compact}, one cannot choose a positive $\mu$ satisfying \eqref{cauchy_id}. See Theorem \ref{G1} and Example \ref{ExC} below.

Finally, we prove a version of \eqref{cauchy_id}
without
any additional assumptions on $\mu_j \in \mathcal M_{\alpha_j}(\R), j=1,2$.
 We will rely on an idea from \cite{MacGr73} used to show \eqref{prod_cauchy_in}.
While \eqref{A2} was convenient to deal with the products of the generalized Stieltjes transforms,
a direct application of \eqref{A2} to
 the products of the generalized Cauchy transforms leads to convergence problems.
So we recast \eqref{A2} into a form convenient for our current purposes.
Such an approach
seems to be more transparent and revealing than transforming \eqref{prod_cauchy_in}
to \eqref{cauchy_id} via an appropriate conformal mapping.
On the other hand, we are not able to avoid the use
of conformal mappings completely.

We  proceed with several preparations.
Setting $z=1$ and
$
\tau=(s-t)r+rt, \, r\in (0,1),
$
in \eqref{A2},
we conclude that for all  $\alpha_1,\alpha_2>0$, $a>0$ and $t>s>-1,$
\begin{align}\label{RRRG}
\frac{1}{(a(1+s))^{\alpha_1} (a(1+t))^{\alpha_2}}
=&(B(\alpha_1,\alpha_2))^{-1}\int_0^1\frac{r^{\alpha_1-1}(1-r)^{\alpha_2-1}\,dr}
{(a(1+c(r;s,t))^{\alpha_1+\alpha_2}}\\
=&\int_{0}^{1}\frac{\nu(dr)}
{(a(1+c(r;s,t))^{\alpha_1+\alpha_2}},\notag
\end{align}
where
$$c(r;s,t)=rs+(1-r)t, \qquad r \in (0,1),$$
denotes the convex combination of $s$ and $t,$
and the probability measure
$\nu=\nu_{\alpha_1,\alpha_2}$ on $[0,1]$ is given by $$\nu(dr):=(B(\alpha_1,\alpha_2))^{-1} r^{{\alpha_1}-1}(1-r)^{{\alpha_2}-1}\,dr.$$
Note that, moreover,  by analytic continuation,
(\ref{RRRG}) holds for
$
a\in \C_+$ and $s,t$ from the unit disc $\mathbb D.$

We will also need the Cayley transform $\omega(z):=\frac{z-i}{z+i}$
mapping $\overline{\C^+}$ onto $\overline{{\mathbb D}}\setminus \{1\}$
homeomorphically,
 so that $\omega(\R) = \mathbb T \setminus \{1\}$,
with  $\omega^{-1}: \overline{{\mathbb D}}\setminus\{1\}\mapsto \overline{\mathbb C^+},$
$\omega^{-1}(z)=i\frac{1+z}{1-z}.$
  Recall that we deal with the principal branch of  $z \to z^\alpha$ with the cut along
$(-\infty,0].$

\begin{lemma}\label{GrNew}
Let $\alpha_1,\alpha_2>0.$ If $s,t \in \R, s \neq t,$ and $z\in \C^{+},$ then
\begin{equation}\label{PmainN}
\frac{(i+s)^{\alpha_1}(i+t)^{\alpha_2}}{(z+s)^{\alpha_1}(z+t)^{\alpha_2}}
=
\int_{\R} \frac{G(\tau;s,t)
\,d\tau}{(z+\tau)^{\alpha_1+\alpha_2}},
\end{equation}
where
\begin{equation}\label{AL}
G(\tau;s,t):=
\frac{(i+\tau)^{\alpha_1+\alpha_2}}{\pi}
\int_0^1 {\rm Im}\left(\frac{1}{\tau-\omega^{-1}(c(r;\omega(s),\omega(t)))}\right)d\nu(r),
\end{equation}
for all $\tau \in \mathbb R,$ and $G(\cdot; s,t) \in L^1_{\alpha_1+\alpha_2}(\R)$ with $\|G(\cdot; s,t)\|_{\alpha_1+\alpha_2}\le 1.$
\end{lemma}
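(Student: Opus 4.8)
The plan is to transport the product $\tfrac{(i+s)^{\alpha_1}(i+t)^{\alpha_2}}{(z+s)^{\alpha_1}(z+t)^{\alpha_2}}$ to the unit disc via the Cayley transform $\omega$, to split it there with the help of \eqref{RRRG}, to carry the result back to $\C^+$, and finally to recognise each resulting term as a genuine generalized Cauchy transform by means of the Poisson representation of bounded analytic functions on $\C^+$. Concretely, fix $s\neq t$ in $\R$ and put $\sigma=\omega(s)$, $\tau_0=\omega(t)\in\mathbb T$ and, for $z\in\C^+$, $w=\omega(z)\in\mathbb D$. A direct computation with $\omega^{-1}(\zeta)=i(1+\zeta)/(1-\zeta)$ gives the elementary identities $i+s=2i/(1-\sigma)$ and $z+s=2i(1-w\sigma)/((1-w)(1-\sigma))$, and likewise for $t$, $\tau_0$; since $i+s,i+t,z+s,z+t$ all lie in $\C^+$ while $1-w,1-w\sigma,1-w\tau_0$ all lie in the right half-plane (where the principal logarithm is additive), one obtains
\[
\frac{(i+s)^{\alpha_1}(i+t)^{\alpha_2}}{(z+s)^{\alpha_1}(z+t)^{\alpha_2}}=\frac{(1-w)^{\alpha_1+\alpha_2}}{(1-w\sigma)^{\alpha_1}(1-w\tau_0)^{\alpha_2}},\qquad z\in\C^+.
\]

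Next I would apply \eqref{RRRG} with the parameter $a=1\in\C_+$ and the disc points $-w\sigma,-w\tau_0\in\mathbb D$; since $c(r;-w\sigma,-w\tau_0)=-w\,c(r;\sigma,\tau_0)$, writing $\zeta_r:=c(r;\omega(s),\omega(t))$ — which for $\nu$-a.e.\ $r$ lies in $\mathbb D$ because $\sigma\neq\tau_0$ — this yields
\[
\frac{(1-w)^{\alpha_1+\alpha_2}}{(1-w\sigma)^{\alpha_1}(1-w\tau_0)^{\alpha_2}}=\int_0^1\frac{(1-w)^{\alpha_1+\alpha_2}}{(1-w\zeta_r)^{\alpha_1+\alpha_2}}\,\nu(dr).
\]
Setting $\rho_r:=\omega^{-1}(\zeta_r)\in\C^+$ and repeating the elementary Cayley identities of the first step with the complex point $\rho_r$ in place of the real point $s$ gives $(1-w)^{\alpha_1+\alpha_2}/(1-w\zeta_r)^{\alpha_1+\alpha_2}=(i+\rho_r)^{\alpha_1+\alpha_2}/(z+\rho_r)^{\alpha_1+\alpha_2}$, so that altogether
\[
\frac{(i+s)^{\alpha_1}(i+t)^{\alpha_2}}{(z+s)^{\alpha_1}(z+t)^{\alpha_2}}=\int_0^1\frac{(i+\rho_r)^{\alpha_1+\alpha_2}}{(z+\rho_r)^{\alpha_1+\alpha_2}}\,\nu(dr),\qquad z\in\C^+.
\]

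It remains to write $(i+\rho)^{\beta}/(z+\rho)^{\beta}$, $\beta:=\alpha_1+\alpha_2$, as a generalized Cauchy transform. For fixed $z\in\C^+$ the map $\rho\mapsto(i+\rho)/(z+\rho)$ carries $\overline{\C^+}$ into a compact subset of $\C\setminus(-\infty,0]$ (it has a finite nonzero limit at $\infty$ and neither a zero nor a pole in $\overline{\C^+}$), hence $\psi(\rho):=(i+\rho)^{\beta}/(z+\rho)^{\beta}$ is bounded and analytic on $\C^+$, continuous up to $\R$, with boundary values $(i+\tau)^{\beta}/(z+\tau)^{\beta}$. Since a bounded harmonic function on $\C^+$ is the Poisson integral of its boundary trace and ${\rm Im}(1/(\tau-\rho))={\rm Im}\,\rho/|\tau-\rho|^2$ is $\pi$ times the Poisson kernel, this gives
\[
\frac{(i+\rho)^{\beta}}{(z+\rho)^{\beta}}=\frac{1}{\pi}\int_{\R}\frac{(i+\tau)^{\beta}}{(z+\tau)^{\beta}}\,{\rm Im}\!\left(\frac{1}{\tau-\rho}\right)d\tau,\qquad \rho,z\in\C^+.
\]
Inserting this with $\rho=\rho_r$ into the last display and exchanging the two integrations — legitimate because ${\rm Im}(1/(\tau-\rho_r))>0$ with $\int_{\R}{\rm Im}(1/(\tau-\rho_r))\,d\tau=\pi$, $\nu$ is a probability measure, and $|i+\tau|^{\beta}/|z+\tau|^{\beta}$ is bounded in $\tau$ — produces exactly \eqref{PmainN} with $G(\cdot;s,t)$ as in \eqref{AL}. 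The bound $\|G(\cdot;s,t)\|_{\alpha_1+\alpha_2}\le1$ (in particular $G(\cdot;s,t)\in L^1_{\alpha_1+\alpha_2}(\R)$) then follows by the same Tonelli computation together with $|i+\tau|^{\beta}=(1+\tau^2)^{\beta/2}\le(1+|\tau|)^{\beta}$, since $\int_{\R}(1+|\tau|)^{-\beta}|G(\tau;s,t)|\,d\tau\le\frac{1}{\pi}\int_0^1\big(\int_{\R}{\rm Im}(1/(\tau-\rho_r))\,d\tau\big)\,\nu(dr)=\nu([0,1])=1$.

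The genuinely delicate points are, first, the branch bookkeeping in the two applications of the Cayley identities: the algebraic identities are trivial, but one must verify that the principal powers combine as asserted, equivalently that the corresponding additive identity for principal logarithms holds, all relevant quantities staying in half-planes where the logarithm is additive; and second, the appeal in the last step to the fact that a bounded analytic (harmonic) function on $\C^+$, continuous up to the boundary, equals the Poisson integral of its boundary trace — this is the single non-elementary ingredient, and it is the point at which, as was remarked just before the lemma, conformal mapping cannot be dispensed with entirely.
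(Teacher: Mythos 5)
Your proposal is correct and follows essentially the same route as the paper's proof: transport to the disc via the Cayley transform (the paper's identity \eqref{ElemG}, your explicit computation of $i+s$ and $z+s$), apply the beta-integral identity \eqref{RRRG} to the points $-\omega(z)\omega(s)$, $-\omega(z)\omega(t)$, return to $\C^+$, and represent $\bigl((i+\rho)/(z+\rho)\bigr)^{\alpha_1+\alpha_2}$ by the Poisson integral of its boundary values, finishing with the same Tonelli computation for the norm bound. The only differences are cosmetic (you take $a=1$ in \eqref{RRRG} and carry $(1-w)^{\alpha_1+\alpha_2}$ separately, and you spell out the branch verifications and the Fubini justification more explicitly than the paper does).
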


\begin{proof}
Let $s,t \in \R, s \neq t,$ and $z\in \C^{+}$ be fixed.
Note that by a simple calculation, for all
$\lambda \in \overline{\C^+},$
\begin{equation}\label{ElemG}
\frac{i+\lambda}{z+\lambda}=\frac{1}{a(z)(1-\omega(z)\omega(\lambda))},
\end{equation}
where for short-hand $a(z):=(2i)^{-1}(i+z)$, so that $a(z) \in \mathbb C_+.$

Therefore,
\[
\frac{(i+s)^{\alpha_1}}{(z+s)^{\alpha_1}} \cdot \frac{(i+t)^{\alpha_2}}{(z+t)^{\alpha_2}}
=\frac{1}{(a(z)(1-\omega(z)\omega(s)))^{\alpha_1}} \cdot \frac{1}{
(a(z)(1-\omega(z)\omega(t)))^{\alpha_2}},
\]
and in view of (\ref{RRRG}),
\begin{equation}\label{PL1}
\frac{(i+s)^{\alpha_1}(i+t)^{\alpha_2}}{(z+s)^{\alpha_1}(z+t)^{\alpha_2}}
=\int_0^1\frac{\nu(dr)}
{(a(z)(1-\omega(z) c(r;\omega(s),\omega(t)))^{\alpha_1+\alpha_2}}.
\end{equation}
On the other hand, by (\ref{ElemG}) again,
\[
\frac{1}{a(z)(1-\omega(z) c(r;\omega(s),\omega(t)))}=\frac{\omega^{-1}(c(r;\omega(s),\omega(t)))+i}
{\omega^{-1}(c(r;\omega(s),\omega(t)))+z}.
\]
Hence, taking into account that $s\not=t$, $s,t\in \R,$ and using Poisson's integral formula for $\C^+,$
we infer that
\begin{align*}\label{MgG}
(a(z)(1-\omega(z)c(r;\omega(s),& \omega(t))))^{-(\alpha_1+\alpha_2))}\\
=&\frac{1}{\pi}\int_{\R}{\rm Im}\left(\frac{1}{\tau-\omega^{-1}(c(r;\omega(s),\omega(t)))}\right)
\frac{(i+\tau)^{\alpha_1+\alpha_2}}{(z+\tau)^{\alpha_1+\alpha_2}}\,d\tau.
\end{align*}
By combining  (\ref{PL1}) and the above formula,
we obtain \eqref{PmainN}.

Moreover,
\begin{equation*}\label{PEst}
\int_{\R} \frac{|G(\tau;s,t)|\,d\tau}{(1+|\tau|)^{\alpha_1+\alpha_2}}
\le \frac{1}{\pi}
\int_0^1 \int_{\R}{\rm Im}\left(\frac{1}{\tau-\omega^{-1}(c(r;\omega(s),\omega(t)))}\right)d\tau \, \nu(dr)
=1,
\end{equation*}
which finishes the proof.
\end{proof}

Now we can state our product formula for the generalized Cauchy transforms of arbitrary $\mu_j \in \mathcal M_{\alpha_j}(\R), \alpha_j >0, j=1,2.$
\begin{theorem}\label{StMc}
Let $\mu_j\in\mathcal{M}_{\alpha_j}(\R)$, $\alpha_j>0$, $j=1,2$.
Then \eqref{cauchy_id} holds
with $\mu \in \mathcal M_{\alpha_1+\alpha_2}(\R)$
defined by
\begin{equation}\label{Pw}
\mu(d\tau)=u(\tau)\,d\tau+\mu_1(\{\tau\})\mu_2(d\tau)
\end{equation}
where
\[
u(\tau)
:=\int_{\R}\int_{\R,\,s\not=t}
G(\tau;t,s)
\frac{\mu_1(ds)\mu_2(dt)}{(i+s)^{\alpha_1}(i+t)^{\alpha_2}},
\]
and $G$ is given by Lemma \ref{GrNew}.
\end{theorem}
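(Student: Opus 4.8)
The plan is to mimic the proof of Theorem~\ref{elprod}, using the kernel identity of Lemma~\ref{GrNew} as the substitute for \eqref{A2}. Fix $z\in\C^+$. First I would write
\[
C_{\alpha_1}[\mu_1](z)\,C_{\alpha_2}[\mu_2](z)
=\int_\R\int_\R\frac{\mu_1(ds)\,\mu_2(dt)}{(z+s)^{\alpha_1}(z+t)^{\alpha_2}}
=h_1(z)+h_0(z),
\]
where $h_1$ collects the off-diagonal part $\{s\ne t\}$ and $h_0$ the diagonal part $\{s=t\}$. The diagonal term is handled exactly as in Theorem~\ref{elprod}: since $t\mapsto\mu_1(\{t\})$ is Borel with countable support, $h_0(z)=\int_\R\mu_1(\{t\})\mu_2(dt)/(z+t)^{\alpha_1+\alpha_2}$, and one checks $\mu_1(\{\cdot\})\mu_2\in\mathcal M_{\alpha_1+\alpha_2}(\R)$ with $\|\mu_1(\{\cdot\})\mu_2\|_{\alpha_1+\alpha_2}\le\|\mu_1\|_{\alpha_1}\|\mu_2\|_{\alpha_2}$, just as in the proof of Theorem~\ref{elprodC}.

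For the off-diagonal term, on the set $s\ne t$ I would insert Lemma~\ref{GrNew}: multiplying and dividing by $(i+s)^{\alpha_1}(i+t)^{\alpha_2}$,
\[
\frac{1}{(z+s)^{\alpha_1}(z+t)^{\alpha_2}}
=\frac{1}{(i+s)^{\alpha_1}(i+t)^{\alpha_2}}\int_\R\frac{G(\tau;s,t)\,d\tau}{(z+\tau)^{\alpha_1+\alpha_2}}.
\]
Substituting and interchanging the order of integration (Fubini) then gives
\[
h_1(z)=\int_\R\frac{u(\tau)\,d\tau}{(z+\tau)^{\alpha_1+\alpha_2}},\qquad
u(\tau)=\int_\R\int_{\R,\,s\ne t}G(\tau;s,t)\,\frac{\mu_1(ds)\,\mu_2(dt)}{(i+s)^{\alpha_1}(i+t)^{\alpha_2}}.
\]
(Here one should note that since $G(\tau;s,t)$ is symmetric under the substitution $(s,t)\leftrightarrow(t,s)$ together with the swap $\alpha_1\leftrightarrow\alpha_2$ built into Lemma~\ref{GrNew}, the roles of $\mu_1$ and $\mu_2$ can be arranged as in the statement; alternatively just define $u$ by the displayed integral verbatim.) Combining, \eqref{cauchy_id} holds with $\mu$ given by \eqref{Pw}, and $\mu\in\mathcal M_{\alpha_1+\alpha_2}(\R)$ follows from the bound on $u$ and the bound on the discrete part.

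The main obstacle is \emph{justifying the application of Fubini's theorem} for $u$, i.e. the absolute integrability of $G(\tau;s,t)\,(1+|\tau|)^{-(\alpha_1+\alpha_2)}$ against $|\mu_1|(ds)\,|\mu_2|(dt)\,d\tau$. This is exactly what the estimate $\|G(\cdot;s,t)\|_{\alpha_1+\alpha_2}\le1$ from Lemma~\ref{GrNew} delivers: integrating first in $\tau$ produces a factor $\le 1$ uniformly in $s,t$, after which
\[
\int_\R\int_\R\frac{|\mu_1|(ds)\,|\mu_2|(dt)}{|i+s|^{\alpha_1}|i+t|^{\alpha_2}}
=\int_\R\frac{|\mu_1|(ds)}{(1+s^2)^{\alpha_1/2}}\int_\R\frac{|\mu_2|(dt)}{(1+t^2)^{\alpha_2/2}}
\le\|\mu_1\|_{\alpha_1}\|\mu_2\|_{\alpha_2}<\infty,
\]
using $|i+s|=(1+s^2)^{1/2}\ge c(1+|s|)$. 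This simultaneously shows $u\in L^1_{\alpha_1+\alpha_2}(\R)$ with $\|u\|_{\alpha_1+\alpha_2}\le\|\mu_1\|_{\alpha_1}\|\mu_2\|_{\alpha_2}$, completes the Fubini step, and makes all interchanges of integration legitimate. A minor additional point is to verify measurability of $u$ and of the diagonal density, which is automatic from Fubini once absolute convergence is in place; the identity \eqref{cauchy_id} for general $z\in\C^+$ then needs no analytic continuation since the computation was carried out for arbitrary fixed $z\in\C^+$ from the start.
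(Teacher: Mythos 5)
Your proposal is correct and follows essentially the same route as the paper: the same diagonal/off-diagonal split, the same insertion of the kernel identity from Lemma \ref{GrNew}, and the same justification of Fubini via $\|G(\cdot;s,t)\|_{\alpha_1+\alpha_2}\le 1$ together with the bound $|i+s|\ge c(1+|s|)$. The only slip is the claimed constant $1$ in $\|u\|_{\alpha_1+\alpha_2}\le\|\mu_1\|_{\alpha_1}\|\mu_2\|_{\alpha_2}$: since $(1+s^2)^{1/2}\le 1+|s|$, the inequality goes the other way and the correct constant is $(\sqrt{2})^{\alpha_1+\alpha_2}$ as in the paper, which does not affect the finiteness you need.
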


\begin{proof}
First note that by \eqref{AL}, \eqref{inneq} and Fubini's theorem,
\begin{align*}
\int_{\R}\frac{|u(\tau)|\,d\tau}{(1+|\tau|)^{\alpha_1+\alpha_2}}
\le& (\sqrt{2})^{\alpha_1+\alpha_2}
\int_{\R}\int_{\R}\int_{\R,\,t\not=s}
\frac{|G(\tau;s,t)|
|\mu_1|(ds)\, |\mu_2| (dt)\, d\tau}{(1+|s|)^{\alpha_1}(1+|t|)^{\alpha_2}(1+|\tau|)^{\alpha_1+\alpha_2}}\\
\le& (\sqrt{2})^{\alpha_1+\alpha_2}\|\mu_1\|_{\alpha_1}\|\mu_2\|_{\alpha_2},
\end{align*}
so that $u \in L^1_{\alpha_1+\alpha_2}(\R).$
Let $z \in \C^+$ be fixed.
Write formally
\[
C_{\alpha_1}[\mu_1](z)C_{\alpha_2}[\mu_2](z)
=\int_{\R}\int_{\R,\,s\not=t}
\frac{\mu_1(ds)\mu_2(dt)}{(z+s)^{\alpha_1}(z+t)^{\alpha_2}}+\int_{\R}
\frac{\mu_1(\{\tau\})\, \mu_2(d\tau)}{(z+\tau)^{\alpha_1+\alpha_2}}.
\]
By Lemma \ref{GrNew} and Fubini's theorem again,
\begin{align*}
&
\int_{\R}\int_{\R,\,t\not=s}
\frac{\mu_1(ds)\, \mu_2(dt)}{(z+t)^{\alpha_1}(z+s)^{\alpha_2}}\\
=&\int_{\R}\int_{\R}\int_{\R,\,t\not=s}
G(\tau;s,t)
\frac{\mu_1(ds)\mu_2(dt)}{(s+i)^{\alpha_1}(i+t)^{\alpha_2}}\frac{d\tau}{(z+\tau)^{\alpha_1+\alpha_2}}\\
=&\int_{\R}\frac{u(\tau)\, d\tau}{(z+\tau)^{\alpha_1+\alpha_2}}.
\end{align*}

Moreover, for $\mu_d(d\tau)=\mu_1(\{\tau\})\mu_2(d\tau)$  it is easy to check that $\mu_d \in \mathcal M_{\alpha_1+\alpha_2}(\R),$ and $\|\mu_d\|_{\alpha_1+\alpha_2}\le \|\mu_1\|_{\alpha_1}\|\mu_2\|_{\alpha_2}.$
\end{proof}

Note that as in Theorems \ref{elprodC} and \ref{compact},
$\mu$ does not have singular continuous component.
However,
in contrast to Theorem \ref{elprod} dealing with Stieltjes transforms,
the nature of $\mu$ in \eqref{Pw}
 is rather implicit.
Moreover, as we show below, the positivity of $\mu_1$ and $\mu_2$ is not, in general, inherited by $\mu.$
As far as  Theorem \ref{elprodC} yields positive $\mu$ for positive $\mu_1$ and $\mu_2,$
this shows that the assumptions \eqref{C1H} are, in general, necessary for its conclusion,
and it particular for the validity of the formula for $\mu$ given by \eqref{m1C} and \eqref{m0C}.

\subsection{Positivity of representing measures for products of Cauchy transforms, and related matters}

We proceed with deriving a criterion for $\mu$ in \eqref{cauchy_id} to be positive
if $\alpha_1=\alpha_2=1$ and $\mu_j \in \mathcal M^+_1(\R), j=1,2.$ The statement will help us
us to produce various examples of $\mu_1$ and $\mu_2$ from $\mathcal M^+_1(\R)$ not allowing for positive $\mu$ in \eqref{cauchy_id}.
First, we need to separate a simple lemma.
\begin{lemma}\label{J}
Let $\mu\in \mathcal{M}_2^{+}(\R)$.
If $C_2[\mu](z)=0$ for all $z\in \C^{+},$
then
$
\mu(dt)=c\,dt
$
for some constant $c\ge 0$.
\end{lemma}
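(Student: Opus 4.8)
The plan is to turn the vanishing condition $C_2[\mu]\equiv 0$ on $\C^+$ into an identity for the distributional second derivative of $\mu,$ and then argue that $\mu$ must be a multiple of Lebesgue measure. First I would observe that
\[
C_2[\mu](z)=\int_{\R}\frac{\mu(dt)}{(z+t)^2}=-\frac{d}{dz}\int_{\R}\frac{\mu(dt)}{z+t}
\]
for $z\in \C^+,$ so that if we set $F(z):=\int_{\R}\frac{\mu(dt)}{z+t}$ (which converges since $\mu\in\mathcal M_2^+(\R)$ after one integration by parts, or directly as a principal-value-type object), then $F'(z)=-C_2[\mu](z)=0$ on the connected set $\C^+.$ Hence $F$ is constant on $\C^+;$ call the constant $c_0.$ Actually it is cleaner to work with the two-integrations picture: define $u(z):=\int_{\R}\frac{t\,\mu(dt)}{z+t}$-type antiderivatives carefully, but the essential point is that $C_2[\mu]=0$ forces the first-order Cauchy transform of $\mu$ to be an affine function of $z,$ i.e. $\int_{\R}\frac{\mu(dt)}{z+t}=\alpha$ for some constant $\alpha\in\C$ (the linear-in-$z$ term being absent because the integral tends to $0$ in a suitable averaged sense as $\mathrm{Im}\,z\to\infty$).

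Next I would take imaginary parts. Writing $z=x+iy$ with $y>0,$ a direct computation gives
\[
\mathrm{Im}\,\frac{1}{z+t}=\frac{-y}{(x+t)^2+y^2},
\]
so $\mathrm{Im}\,F(x+iy)=-\int_{\R}\frac{y\,\mu(dt)}{(x+t)^2+y^2}=-\pi\,(P_y*\tilde\mu)(x),$ where $P_y$ is the Poisson kernel for the half-plane and $\tilde\mu$ is the reflection $t\mapsto -t$ of $\mu.$ Since $F$ is constant, $\mathrm{Im}\,F$ is constant in $x$ for each fixed $y,$ which says that the Poisson integral $P_y*\tilde\mu$ is independent of $x.$ Now I would let $y\to 0^+$ and invoke the standard fact that the Poisson integral of a (locally finite, tempered-growth) positive measure recovers the measure: $P_y*\tilde\mu \to \tilde\mu$ weakly$^*$ as $y\to0^+.$ A weak$^*$ limit of functions that are all constant in $x$ (indeed the same constant, independent of $y,$ since $\mathrm{Im}\,F$ is a single constant) is a translation-invariant positive Radon measure on $\R,$ hence a nonnegative multiple of Lebesgue measure. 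This yields $\tilde\mu(dt)=c\,dt,$ and therefore $\mu(dt)=c\,dt$ with $c\ge 0.$

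The main obstacle is the justification that $C_2[\mu]\equiv0$ genuinely forces the first Cauchy transform to be merely \emph{constant} (no linear term), and that the Poisson integral converges weakly$^*$ to $\mu$ in this Radon-measure setting where $\mu$ need not be finite but only lies in $\mathcal M_2^+(\R).$ For the first point, one integrates $C_2[\mu]=0$ along a vertical ray and controls the behaviour as $\mathrm{Im}\,z\to\infty$ using $(1+|t|)^{-2}\in L^1(\mu)$; the growth bound $|\mu|([-R,R])=O(R^2)$ that follows from $\mu\in\mathcal M_2^+(\R)$ is exactly what is needed to kill any polynomial ambiguity beyond a constant, since a linear term $\beta z$ in $\int \mu(dt)/(z+t)$ would force $\mathrm{Im}\,F$ to grow like $y,$ contradicting boundedness. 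For the second point, one may either truncate $\mu$ to $\mu_R:=\mu|_{[-R,R]}$, apply the classical Poisson-kernel identity theorem for finite measures, and pass to the limit using the uniform tail estimate, or simply test against $\varphi\in C_c(\R)$ and use Fubini together with the concrete form of $P_y.$ Once these two technical points are in place, the translation-invariance argument closing the proof is immediate.
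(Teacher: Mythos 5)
Your argument is correct and follows essentially the same route as the paper: both integrate $C_2[\mu]=0$ once to conclude that the first-order Herglotz/Cauchy transform of $\mu$ is \emph{constant} on $\C^{+}$, and then recover $\mu(dt)=c\,dt$ by Stieltjes inversion (the paper cites the inversion formula for Nevanlinna--Pick functions, while you carry it out by hand via weak$^*$ convergence of the Poisson integral and translation invariance). The one point to tighten is the definition of $F$: for $\mu\in\mathcal{M}_2^{+}(\R)$ the integral $\int_{\R}\mu(dt)/(z+t)$ need not converge, so one should use the standard regularization $F(z)=\int_{\R}\bigl(\tfrac{1}{t-z}-\tfrac{t}{1+t^2}\bigr)\mu(dt)$ exactly as the paper does; since the added term is independent of $z$, one then gets $F'=0$ and hence $F$ constant outright, so your worry about ruling out a linear term is unnecessary.
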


\begin{proof}
By assumption,
\begin{equation*}
\int_{\R} \frac{\mu(dt)}{(t+z)^2}=0,\qquad z\in \C^{+}.
\end{equation*}
Passing to conjugates and using the positivity of $\mu,$
we have
\begin{equation}\label{yy}
\int_{\R} \frac{\mu(dt)}{(t-z)^2}=0, \qquad z \in \C^+.
\end{equation}
Let
\[
F(z):=\int_{\R}\left(\frac{1}{t-z}-\frac{t}{1+t^2}\right)\,\mu(dt)=
\int_{\R}\frac{(1+tz)\,\mu(dt)}{(t-z)(1+t^2)}, \qquad z \in \C^+,
\]
be the Nevanlinna-Pick function associated to $\mu.$
(See e.g. \cite[p. 84]{Rosen} for the notion
of Nevanlinna-Pick functions.)
Then $F$ is analytic on $\C^+,$ and ${\rm Im}\, F(z)\ge 0$ for all $z \in \C^+.$
Differentiating $F$ and using \eqref{yy}, we infer that
$F'(z)=0$ for all $z\in \C^{+}.$
Hence there exists $a \in \C$ with ${\rm Im}\, a \ge 0$ such that
$
F(z)=a.
$
 Then, by the inversion formula for Nevanlinna-Pick functions, see e.g. \cite[Theorem 5.4]{Rosen},
$\mu(dt)=({\rm Im} \, a/\pi)\, dt=c\,dt.$
\end{proof}

\begin{theorem}\label{G1}
Let $\mu_j\in \mathcal{M}_1^{+}(\R)$, $j=1,2,$ satisfy
\[
\supp\,\mu_1\subset (-\infty,a) \qquad \text{and} \qquad
\supp\,\mu_2\subset (a,\infty)
\]
for some $a \in \R.$
Then there exists
$\mu\in \mathcal{M}_2^{+}(\R)$ such that \eqref{cauchy_id} holds
if and only if
\begin{equation}\label{gv1}
\int_{(a,\infty)}\int_{(-\infty,a)}
\frac{\mu_1(ds)\mu_2(dt)}{t-s}<\infty.
\end{equation}
\end{theorem}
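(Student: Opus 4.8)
The plan is to compare the would‑be positive measure $\mu$ with the explicit, \emph{absolutely continuous} density produced by Theorem \ref{compact}. The key structural observation is that, for $\alpha_1=\alpha_2=1$, the density from Theorem \ref{compact} comes out \emph{non‑positive}, so it differs from any admissible positive $\mu$ by a positive measure annihilated by $C_2$, and Lemma \ref{J} pins that difference down to a multiple of Lebesgue measure. Throughout we may take $a=0$ after a translation.

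First I would apply Theorem \ref{compact} with $\alpha_1=\alpha_2=1$; its hypotheses ($\alpha_j\ge 1$, $\supp\mu_1\subset(-\infty,a)$, $\supp\mu_2\subset(a,\infty)$) are exactly those of Theorem \ref{G1}. Since all the factors $(\tau-s)^{\alpha_2-1}$, $(t-\tau)^{\alpha_1-1}$, $(t-s)^{\alpha_1+\alpha_2-1}$ together with the phase corrections in \eqref{Com0C} reduce to $(t-s)$ and to $1$ when $\alpha_1=\alpha_2=1$, the formula \eqref{Com0C} becomes
\[
u(\tau)=
\begin{cases}
-\displaystyle\int_{(0,\infty)}\int_{(\tau,0)}\frac{\mu_1(ds)\,\mu_2(dt)}{t-s},& \tau<0,\\
-\displaystyle\int_{(0,\tau)}\int_{(-\infty,0)}\frac{\mu_1(ds)\,\mu_2(dt)}{t-s},& \tau>0,
\end{cases}
\]
with $u\in L^1_2(\R)$ and $C_1[\mu_1]C_1[\mu_2]=C_2[u]$. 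On the supports one has $s<0<t$, hence $t-s>0$, so the representative $u$ above satisfies $u(\tau)\le 0$; moreover each integration region lies inside $(-\infty,0)\times(0,\infty)$, so $|u(\tau)|\le J$ for every $\tau\ne 0$, where $J$ denotes the left‑hand side of \eqref{gv1} with $a=0$. Finally $\tau\mapsto -u(\tau)$ is monotone (non‑increasing on $(-\infty,0)$), and by the monotone convergence theorem $-u(\tau)\uparrow J$ as $\tau\to-\infty$.

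For the \emph{sufficiency} direction, assume $J<\infty$. Then $|u(\tau)|\le J$ everywhere, so $\mu(d\tau):=(u(\tau)+J)\,d\tau$ is a positive measure, and $\mu\in\mathcal M^+_2(\R)$ because $u\in L^1_2(\R)$ and $\int_\R(1+|\tau|)^{-2}\,d\tau<\infty$. Using $\int_\R(z+t)^{-2}\,dt=0$ for $z\in\C^{+}$ we get $C_2[\mu]=C_2[u]+J\cdot 0=C_1[\mu_1]C_1[\mu_2]$, which is \eqref{cauchy_id}. For the \emph{necessity} direction, suppose $\mu\in\mathcal M^+_2(\R)$ satisfies \eqref{cauchy_id}; combining with $C_2[u]=C_1[\mu_1]C_1[\mu_2]$ gives $C_2[\mu-u]=0$. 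Since $\mu\ge 0$ and $u\le 0$, the measure $\mu-u$ is positive and belongs to $\mathcal M^+_2(\R)$, so Lemma \ref{J} yields $\mu-u=c\,dt$ for some $c\ge 0$; hence $-u(\tau)\le c$ for a.e.\ $\tau<0$. Letting $\tau\to-\infty$ and invoking the monotone limit $-u(\tau)\uparrow J$ from the previous paragraph, we conclude $J\le c<\infty$, i.e.\ \eqref{gv1}.

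The one genuinely delicate point is extracting the honest limit $-u(\tau)\to J$ in the necessity step, since $u$ is only an $L^1_2$‑density: this is handled by the pointwise monotonicity of $-u(\tau)=\int_{(0,\infty)}\int_{(\tau,0)}\mu_1(ds)\mu_2(dt)/(t-s)$ on $(-\infty,0)$ together with monotone convergence — concretely, for each $n$ one picks $\tau<-n$ in the full‑measure set where $-u(\tau)\le c$ and compares integration regions to get $\int_{(0,\infty)}\int_{(-n,0)}\mu_1(ds)\mu_2(dt)/(t-s)\le c$, then lets $n\to\infty$. Everything else is routine bookkeeping with Theorem \ref{compact}, Lemma \ref{J}, and the vanishing of $C_2$ on Lebesgue measure.
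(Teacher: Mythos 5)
Your proposal is correct and follows essentially the same route as the paper: reduce to $a=0$, invoke Theorem \ref{compact} to get the non-positive density $u$, use Lemma \ref{J} to force $\mu-u$ to be a multiple of Lebesgue measure, and recover \eqref{gv1} from the monotone limit of $-u$; for sufficiency, add the constant $J$ and use $\int_{\R}(z+\tau)^{-2}\,d\tau=0$. The only cosmetic difference is that you extract the bound from the $\tau<0$ branch of $u$ via monotone convergence whereas the paper uses the $\tau>0$ branch with Fatou's lemma.
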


\begin{proof}
Without loss of generality we may assume that $a=0.$
By Theorem \ref{compact},
we have
\[
C_1[\mu_1](z)C_1[\mu_2](z)=C_2[u](z),\qquad z\in \C^{+},
\]
where the \emph{negative} function $u\in L^1_2(\R)$ is
given by
\begin{equation}\label{AA}
u(\tau)=\begin{cases}-\int_{(0,\infty)}\int_{(\tau,0)}
\frac{\mu_1(ds)\mu_2(dt)}{t-s},& \quad \tau<0,\\
-\int_{(0,\tau)}\int_{(-\infty,0)}
\frac{\mu_1(ds)\mu_2(dt)}{t-s},& \quad \tau>0.
\end{cases}
\end{equation}

Suppose that there exists
$\mu\in \mathcal{M}_2^{+}(\R)$ such that \eqref{cauchy_id} holds.
Letting then
\[
\nu(d\tau):=|u(\tau)|\,d\tau+\mu(d\tau),
\]
we conclude that $\nu \in \mathcal{M}_2^{+}(\R)$ and $C_2[\nu](z)=0, z \in \C_+.$ 
Hence by Lemma  \ref{J} there exists
 $c\ge 0$ such that
$
\nu(d\tau)=c\, d\tau.
$
Since $\mu$ is positive, from the definition of $\nu$ it follows  
that
\[
|u(\tau)|=\lim_{s \to 0}\frac{1}{s}\int_{\tau}^{\tau+s} |u(r)|\, dr
\le \limsup_{s \to 0}
\frac{1}{s}\int_{\tau}^{\tau+s} \nu (dr) = c,
\]
for almost all $\tau \in \R,$
and in view of \eqref{AA},
\[
\int_{(0,\tau)}\int_{(-\infty,0)}
\frac{\mu_1(ds)\mu_2(dt)}{t-s}\le c \qquad \text{for a.e.}\, \tau>0.
\]
Thus, by applying Fatou's Lemma, we infer that (\ref{gv1}) (with $a=0$) is true.

Conversely, let now (\ref{gv1}) hold. Denote by $R(\mu_1, \mu_2)$ the left-hand side of \eqref{gv1} and define
\[
\mu(d\tau):=[u(\tau)+R(\mu_1,\mu_2)]\,d\tau,
\]
where $u$ is given  by (\ref{AA}).
Then, taking into account
the elementary equality
\[
\int_{\R}\frac{d\tau}{(z+\tau)^2}=0,\qquad z\in \C^{+},
\]
we conclude
that $\mu\in \mathcal{M}_2^{+}(\R),$ and (\ref{cauchy_id}) holds
as well. (Alternatively, to arrive at \eqref{cauchy_id}, one may note that \eqref{gv1} implies \eqref{C1H}
and  refer to Theorem \ref{elprodC}.)
\end{proof}

We illustrate Theorem \ref{G1} with a simple concrete example, though other more general examples can be constructed easily.
\begin{example}\label{ExC}
Let $\mu_1$ and $\mu_2$ from $\mathcal M^+_1(\R)$ be defined by
\[\mu_1(dt)= \chi_{(-\infty, -1]}(t)\frac{dt}{|t|^{1/2}} \qquad \text{and}\qquad \mu_2(dt)=\chi_{[1,\infty)}(t)\frac{dt}{t^{1/2}}.\]
Then we have
\[
\int_{(0,\infty)}\int_{(-\infty,0)}
\frac{\mu_1(ds)\mu_2(dt)}{t-s}=\int_1^\infty\int_1^\infty
\frac{ds\,dt}{t^{1/2}s^{1/2}(t+s)}=\infty,
\]
and therefore Theorem \ref{G1} with $a=0$ implies that there is no $\mu\in \mathcal{M}_2^{+}(\R)$
satisfying \eqref{cauchy_id}.
\end{example}

On the other hand,
if $\alpha_1+\alpha_2=1, \alpha_j >0,$ and $ \mu_ j \in \mathcal M^+_{\alpha_j}(\R), j=1,2,$
then
under mild additional assumptions on $\mu_1$ and $\mu_2,$
one can always  find a $\mu \in \mathcal M^+_1(\R)$ satisfying
$C_{\alpha_1}[\mu_1]C_{\alpha_2}[\mu_2]=C_{1}[\mu].$
To show this, recall that by the well-known Kac (Katz) criterion
 (see e.g. \cite{Kac}, \cite{Krein}, or \cite[Section 3]{Aron}),
a function $f$ holomorphic on $\C^{+}$
admits the representation
\[
f(z)=C_1[\mu](z), \qquad z\in \C^{+},
\]
with $\mu \in \mathcal M_1^+(\R)$ if and only if
\begin{equation}\label{kkr1}
{\rm Im}\,f(z)\le 0,\qquad z\in \C^{+},
\end{equation}
and
\begin{equation}\label{kkr2}
\int_1^\infty \frac{|{\rm Im}\,f(iy)|\,dy}{y}<\infty.
\end{equation}
Such a $\mu$ is necessarily unique.

For $\alpha\in (0,1),$ define
\begin{equation}\label{QQR}
Q_{\alpha}(x,y):=\int_0^1\frac{d\tau}{(x\tau+1)^\alpha(y\tau+1)^{1-\alpha}},\qquad x,y\ge 0.
\end{equation}

\begin{theorem}\label{G11}
For $\alpha\in (0,1),$ let
$
\mu_1\in \mathcal{M}_{\alpha}^{+}(\R)$ and
$
 \mu_2\in \mathcal{M}_{1-\alpha}^{+}(\R).
$
Then there exists
$\mu\in \mathcal{M}_1^{+}(\R)$ such that
\begin{equation}\label{C100}
C_{\alpha}[\mu_1](z)C_{1-\alpha}[\mu_2](z)=C_1[\mu](z),\quad z\in \C^{+},
\end{equation}
if and only if there exists $a \ge 0$ satisfying
\begin{equation}\label{QQQ1}
\int_a^\infty \int_{-\infty}^{-a} Q_{\alpha}(|s|,t)\mu_1(ds)\mu_2(dt)<\infty,
\end{equation}
and
\begin{equation}\label{QQQ2}
\int_{-\infty}^{-a} \int_a^{\infty} Q_{\alpha}(s,|t|)\mu_1(ds)\mu_2(dt)<\infty.
\end{equation}
\end{theorem}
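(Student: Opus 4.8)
The plan is to apply the Kac (Katz) criterion to the function $f(z):=C_\alpha[\mu_1](z)C_{1-\alpha}[\mu_2](z)$, $z\in\C^+$. Since $\mu_1$ and $\mu_2$ are positive, each factor $C_\beta[\mu_j]$ maps $\C^+$ into a half-plane of the form $\{{\rm Im}\,w\le 0\}$ (up to an argument bounded by $\pi\beta/2$); more precisely, ${\rm arg}((z+t)^{-\beta})\in(-\pi\beta/2,0)$ for $z\in\C^+$, $t\in\R$, so $-{\rm arg}\,C_\alpha[\mu_1](z)\in(0,\pi\alpha/2)$ and $-{\rm arg}\,C_{1-\alpha}[\mu_2](z)\in(0,\pi(1-\alpha)/2)$, hence $-{\rm arg}\,f(z)\in(0,\pi/2)$, which gives both ${\rm Im}\,f(z)\le 0$, i.e.\ \eqref{kkr1}, and in fact ${\rm Re}\,f(z)\ge 0$ on $\C^+$. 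So the only substantive point is the integrability condition \eqref{kkr2}, and the task reduces to showing that \eqref{kkr2} holds \emph{if and only if} \eqref{QQQ1} and \eqref{QQQ2} hold for some $a\ge 0$.

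The main work is thus to compute, or at least to estimate from above and below, the quantity $\int_1^\infty |{\rm Im}\,f(iy)|\,y^{-1}\,dy$ in terms of $\mu_1$ and $\mu_2$. First I would write, using Fubini (legitimate since integrands have a definite sign once we fix the branch),
\begin{equation*}
-{\rm Im}\,f(iy)=-{\rm Im}\int_\R\int_\R\frac{\mu_1(ds)\,\mu_2(dt)}{(iy+s)^\alpha(iy+t)^{1-\alpha}},
\end{equation*}
and then integrate in $y$ against $dy/y$. The inner quantity $-{\rm Im}\,(iy+s)^{-\alpha}(iy+t)^{-(1-\alpha)}$ is comparable (with constants depending only on $\alpha$) to its absolute value $|iy+s|^{-\alpha}|iy+t|^{-(1-\alpha)}$ \emph{provided} $s$ and $t$ have the same sign, but it is sign-indefinite when $s<0<t$ (the two factors rotate in opposite directions as $y$ runs over $(0,\infty)$, and there is cancellation). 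The pairs with $st\ge 0$ contribute a finite amount controlled by $\|\mu_1\|_\alpha\|\mu_2\|_{1-\alpha}$ after an elementary change of variables and the estimate $\int_1^\infty |iy+s|^{-\alpha}|iy+t|^{-(1-\alpha)}\,y^{-1}\,dy\lesssim (1+|s|)^{-\alpha}(1+|t|)^{-(1-\alpha)}$, so they are always harmless; the crux is the mixed-sign region $\{s<-a,\ t>a\}\cup\{s>a,\ t<-a\}$. Here the key identity is a scaling/Cayley-type reduction: for $s<0<t$ one has, after substituting and using the homogeneity of $(iy+s)^{-\alpha}(iy+t)^{-(1-\alpha)}$ along the ray, an exact evaluation
\begin{equation*}
\int_0^\infty\Big|{\rm Im}\,\frac{1}{(iy+s)^\alpha(iy+t)^{1-\alpha}}\Big|\frac{dy}{y}=\frac{c_\alpha}{t-s}\cdot\frac{1}{?}\ \text{-type expression},
\end{equation*}
and more relevantly the \emph{truncated} integral $\int_1^\infty(\cdots)\,y^{-1}\,dy$ is comparable to $Q_\alpha(|s|,t)$ (resp.\ $Q_\alpha(s,|t|)$) defined in \eqref{QQR}: indeed the substitution $y=|s|\,t\,\tau/(\cdots)$ or a direct manipulation turns $\int |iy+s|^{-\alpha}|iy+t|^{-(1-\alpha)}\,y^{-1}\,dy$ restricted appropriately into $\int_0^1 (x\tau+1)^{-\alpha}(y\tau+1)^{-(1-\alpha)}\,d\tau$ with $x=|s|$, $y=t$, up to dimensional factors that cancel against $dy/y$. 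This matching between the $y$-integral of the Poisson-type kernel and $Q_\alpha$ is precisely why the functional $Q_\alpha$ was introduced, and verifying it is the technical heart of the argument.

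Granting this comparison, the finiteness of $\int_1^\infty|{\rm Im}\,f(iy)|\,y^{-1}\,dy$ becomes equivalent to the finiteness of $\int_{s<-a}\int_{t>a}Q_\alpha(|s|,t)\,\mu_1(ds)\,\mu_2(dt)$ plus the symmetric term with $\mu_1$ on the positive side — which is exactly \eqref{QQQ1} and \eqref{QQQ2} — once the near-diagonal part $|s|,|t|\le a$ is absorbed into $\|\mu_1\|_\alpha\|\mu_2\|_{1-\alpha}<\infty$; the choice of $a$ is immaterial because enlarging $a$ only moves a finite mass between the ``bulk'' and ``tail'' contributions. Then \eqref{kkr1} and \eqref{kkr2} together let the Kac criterion produce the (unique) positive $\mu\in\mathcal M_1^+(\R)$ with $C_1[\mu]=f$, establishing \eqref{C100}; conversely, if such a $\mu$ exists then $C_1[\mu]=f$ has the Herglotz boundary regularity \eqref{kkr2}, and running the comparison backwards forces \eqref{QQQ1}--\eqref{QQQ2}. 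I expect the main obstacle to be the bookkeeping in the mixed-sign region: controlling the cancellation in ${\rm Im}\,(iy+s)^{-\alpha}(iy+t)^{-(1-\alpha)}$ so that its $dy/y$-integral is genuinely two-sided comparable to $Q_\alpha(|s|,t)$ (an upper bound is easy; the matching lower bound, needed for the converse, requires a careful lower estimate of $|{\rm Im}(\cdots)|$ on a suitable $y$-range away from the zero of the imaginary part).
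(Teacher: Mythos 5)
Your overall framework---reducing \eqref{C100} to Kac's criterion applied to $f=C_\alpha[\mu_1]C_{1-\alpha}[\mu_2]$ and identifying $\int_1^\infty y^{-1}|{\rm Im}\,f(iy)|\,dy$ with the $Q_\alpha$-integrals---is the paper's, and the computation you anticipate, namely that
$\int_1^\infty \frac{dy}{y\,|iy+s|^{\alpha}|iy+t|^{1-\alpha}}=\int_0^1\frac{d\tau}{(s^2\tau^2+1)^{\alpha/2}(t^2\tau^2+1)^{(1-\alpha)/2}}$
is comparable to $Q_\alpha(|s|,|t|)$ within a factor $\sqrt2$, is exactly the identity the paper uses. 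But your sign analysis is inverted, and this breaks both halves of the argument. For $s<0<t$ and $y>0$ one has $\arg(iy+s)\in(\pi/2,\pi)$ and $\arg(iy+t)\in(0,\pi/2)$, so $\arg\bigl((iy+s)^{\alpha}(iy+t)^{1-\alpha}\bigr)$ lies in the compact interval $[\pi\alpha/2,\,\pi/2+\pi\alpha/2]\subset(0,\pi)$; hence $-{\rm Im}\bigl((iy+s)^{-\alpha}(iy+t)^{-(1-\alpha)}\bigr)$ is bounded \emph{below} by $\sin\varphi$ times the modulus, with $\varphi=\min(\pi\alpha/2,\pi/2-\pi\alpha/2)$, uniformly in $y$. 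There is no cancellation and no zero of the imaginary part in the mixed-sign region: the two-sided comparison you single out as the main obstacle is immediate there. Conversely, when $s$ and $t$ have the same sign the total argument can approach $0$ or $\pi$, the lower bound fails, and $-{\rm Im}$ is \emph{not} comparable to the modulus---the opposite of what you assert.

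This matters because your treatment of the same-sign region rests on the estimate $\int_1^\infty|iy+s|^{-\alpha}|iy+t|^{-(1-\alpha)}y^{-1}\,dy\lesssim(1+|s|)^{-\alpha}(1+|t|)^{-(1-\alpha)}$, which is false: by the identity above this integral is comparable to $Q_\alpha(|s|,|t|)$, and (cf.\ Lemma \ref{Dost}) $Q_\alpha$ carries an extra logarithm for large arguments (e.g.\ $s=t=N$ gives $\sim N^{-1}\log N$), so the modulus integrated against $\mu_1(ds)\mu_2(dt)$ is not controlled by $\|\mu_1\|_{\alpha}\|\mu_2\|_{1-\alpha}$. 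The same-sign contribution is indeed harmless, but for a different reason: one must use the smallness of ${\rm Im}$ there rather than bound it by the modulus, which the paper does by invoking Corollary \ref{CorC11} b) and c) (the product of Cauchy transforms of positive measures supported on a common half-line is an exact $C_1$-transform of a positive measure, via the beta integral \eqref{A2}), and only then applies Kac's criterion to the remaining mixed-sign part, where the $\sin\varphi$ comparison makes \eqref{kkr2} equivalent to \eqref{QQQ1}--\eqref{QQQ2}. A minor further point: $\arg((z+t)^{-\beta})$ ranges over $(-\pi\beta,0)$, not $(-\pi\beta/2,0)$, so ${\rm Re}\,f\ge0$ does not follow; \eqref{kkr1} still holds because the orders sum to $1$. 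With the two regions interchanged and the same-sign part routed through Corollary \ref{CorC11}, your plan becomes the paper's proof.
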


\begin{proof}

Setting $f=C_{\alpha}[\mu_1]C_{1-\alpha}[\mu_2],$ we will show that $f$ satisfies the conditions
\eqref{kkr1} and \eqref{kkr2} of Kac's criterion if and only if \eqref{QQQ1} and \eqref{QQQ2} hold.

Note  that since for all $s,t\in \R$ and $z\in \C^{+}:$
\[
{\rm Im}\,\frac{1}{(z+s)^{\alpha}(z+t)^{1-\alpha}}\le 0,
\]
we have
\begin{equation}\label{stiltj}
{\rm Im}\, f(z)= {\rm Im}\, \int_{\R}\int_{\R}  \frac{\mu_1(ds)\mu_2(dt)}{(z+s)^{\alpha}(z+t)^{1-\alpha}} \le 0, \qquad z \in \C^+,
\end{equation}
so that $f$ satisfies \eqref{kkr1}, and it is necessary to check that \eqref{kkr2} is then equivalent to
\eqref{QQQ1} and \eqref{QQQ2}.
To prove this we proceed with two reductions.
First, since
$
Q_\alpha(x,y)\le 1
$
 for all $x,y\ge 0,$
we may assume without loss of generality that $a=0.$
Second, note that it suffices to consider the case when
\begin{equation}\label{WarA}
\supp \mu_1 \subset (-\infty,0] \qquad \text{and} \qquad \supp \mu_2 \subset [0,\infty).
\end{equation}
   Indeed, for $j=1,2,$ and  $k=1,2,$
define  $\mu_{j,k} \in {\mathcal M}^+_{\alpha_j}(\R)$
as follows:
\begin{align*}
\mu_{1,k}:=&\chi_{(-\infty,0)} \mu_1 + \chi_{[0, \infty)} \mu_1, \\
\mu_{2,k}:=&\chi_{(-\infty, 0)} \mu_2 + \chi_{[0, \infty)} \mu_2.
\end{align*}
If $f_{1,k}=C_{\alpha}[\mu_{1,k}]$ and $f_{2, k}=C_{1-\alpha}[\mu_{2,k}], k=1,2,$ then
 write
\begin{equation*}\label{ExCCC}
f=(f_{1,1}+f_{1,2}) (f_{2,1}+f_{2,2})=F_1+F_2,
\end{equation*}
where
\begin{align*}
f_1=f_{1,1} \cdot f_{2,1} + f_{1,2}\cdot f_{2,2} \qquad \text{and} \qquad
f_2=f_{1,1}\cdot  f_{2,2}+f_{1,2}\cdot f_{2,1},
\end{align*}
and  $F_1$ and $F_2$ satisfy \eqref{kkr1}
 in view of \eqref{stiltj} applied to each of the four summands above.
Furthermore,  $f_1 \in C_{1}(\mathcal M^+_{1}(\R))$
by Corollary \ref{CorC11}, b) and c). Hence,
using Kac's criterion, we infer that $f \in C_1(\mathcal M^+_1(\R))$
if and only if $f_2 \in C_1(\mathcal M^+_1(\R)),$
and so may restrict our attention to $f_2.$
Taking into account that $Q_{1-\alpha}(x,y)=Q_\alpha (y,x), x,y \ge 0,$ it remains to note that \eqref{WarA} holds for $\mu_{1,1}$ and $\mu_{2,2},$
as well as
for $\mu_{1,2}$ and $\mu_{2,1},$ up to relabeling.

Thus,
it is sufficient to consider the case when $\mu_1=\mu_{1,1},$ $\mu_2=\mu_{2,2},$
\[ f=f_2=f_{1,1} \cdot f_{2,2}= C_\alpha [\mu_{1}] C_{1-\alpha}[\mu_{2}],
\]
and to show that \eqref{QQQ1} is then equivalent
 to \eqref{kkr2}.
To this aim, observe that for all $y>0$ and
$t\le 0$, $s \ge 0,$
\[
\arg\, \left((iy+s)^{\alpha}\right)\in [\pi\alpha/2,\pi\alpha],\quad \arg\,\left((iy+t)^{1-\alpha}\right)\in [0,\pi(1-\alpha)/2],
\]
so that
\[
-\arg\,\left(\frac{1}{(iy+s)^{\alpha}}\right)-
\arg\,\left(\frac{1}{(iy+t)^{1-\alpha}}\right) \in [\pi\alpha/2,\pi/2+\pi\alpha/2].
\]
Hence, if
$
\varphi:=\min (\pi\alpha/2,\pi/2-\pi\alpha/2),$ then $\varphi \in (0,\pi/4],$ and
\begin{align*}
\frac{\sin\varphi}{|iy+s|^{\alpha}|iy+t|^{1-\alpha}}\le&
-{\rm Im}\,\frac{1}{(iy+s)^{\alpha}(iy+t)^{1-\alpha}}\\
\le& \frac{1}{|iy+s|^{\alpha}|iy+t|^{1-\alpha}},\quad y>0.
\end{align*}
Therefore, if $y>0,$ then
\begin{align}\label{RR11}
\sin\varphi\int_0^\infty \int_{-\infty}^0\frac{\mu_1(ds)\mu_2(dt)}{|iy+s|^{\alpha}|iy+t|^{1-\alpha}}
\le& -{\rm Im}\,f(iy)\\
\le&
\int_0^\infty \int_{-\infty}^0\frac{\mu_1(ds)\mu_2(dt)}{|iy+s|^{\alpha}|iy+t|^{1-\alpha}}.\notag
\end{align}
Using (\ref{RR11}) and Fubini's theorem, we infer that
\eqref{kkr2} for $f$ is equivalent to
\begin{equation}\label{CCC}
\int_0^\infty \int_{-\infty}^0
\int_1^\infty\frac{dy}{y|iy+s|^{\alpha}|iy+t|^{1-\alpha}}
\mu_1(ds)\mu_2(dt)<\infty.
\end{equation}

In turn, since for all $s,t\in \R,$
\[
\int_1^\infty\frac{dy}{y|iy+s|^{\alpha}|iy+t|^{1-\alpha}}=
\int_0^1\frac{d\tau}{(s^2\tau^2+1)^{\alpha/2}(t^2\tau^2+1)^{(1-\alpha)/2}},
\]
and so
\[
Q_{\alpha}(|s|,t)
\le \int_1^\infty\frac{dy}{y|iy+s|^{\alpha}|iy+t|^{1-\alpha}}
\le \sqrt{2} Q_{\alpha}(|s|,t),
\]
the property (\ref{CCC}) is equivalent to
(\ref{QQQ1}), and the statement follows.
\end{proof}

It is useful to note that \eqref{QQQ1} and \eqref{QQQ2} can be replaced with a stronger but more transparent
condition on the
size of $\mu_1 \in \mathcal M^+_\alpha(\R)$ and $\mu_2\in \mathcal M^+_{1-\alpha}(\R).$
On this way, the next simple estimate of $Q_\alpha$ will be relevant.
\begin{lemma}\label{Dost}
If $Q_{\alpha}$ is given by
(\ref{QQR}), then
for every $\beta\in [0,1],$
\begin{equation}\label{ApBeta}
Q_\alpha(x,y)\le 2\frac{\log^\beta x\log^{1-\beta} y}{x^{\alpha}y^{1-\alpha}}, \qquad x,y \ge \exp{(\pi/\sin(\pi\alpha))}.
\end{equation}
\end{lemma}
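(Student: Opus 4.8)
The plan is to bound $Q_\alpha(x,y)$ separately by $2\log x/(x^\alpha y^{1-\alpha})$ and by $2\log y/(x^\alpha y^{1-\alpha})$, and then to interpolate geometrically in $\beta$. Throughout, $x,y\ge\exp(\pi/\sin(\pi\alpha))$, so that $\log x,\log y\ge\pi/\sin(\pi\alpha)$; since $\sin(\pi\alpha)\le\pi\alpha$ and $\sin(\pi\alpha)=\sin(\pi(1-\alpha))\le\pi(1-\alpha)$, this gives $\log x,\log y\ge\max(\alpha^{-1},(1-\alpha)^{-1})$, a fact used to absorb an additive constant.

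First I would prove that
\[
Q_\alpha(x,y)\le\frac{\log x+\alpha^{-1}}{x^\alpha y^{1-\alpha}}.
\]
Since $x>1$, split the integral in \eqref{QQR} at $\tau=1/x$, writing $Q_\alpha(x,y)=I_1+I_2$ with $I_1=\int_0^{1/x}$ and $I_2=\int_{1/x}^1$. On $(1/x,1)$ one has $(x\tau+1)^\alpha\ge(x\tau)^\alpha$ and $(y\tau+1)^{1-\alpha}\ge(y\tau)^{1-\alpha}$, hence $I_2\le(x^\alpha y^{1-\alpha})^{-1}\int_{1/x}^1\tau^{-1}\,d\tau=\log x/(x^\alpha y^{1-\alpha})$. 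On $(0,1/x)$ one uses only $(x\tau+1)^\alpha\ge1$; then the substitution $s=y\tau$ together with the subadditivity $(1+t)^\alpha\le1+t^\alpha$ $(t\ge0)$ gives
\[
I_1\le\int_0^{1/x}\frac{d\tau}{(y\tau+1)^{1-\alpha}}=\frac{(1+y/x)^\alpha-1}{\alpha y}\le\frac{(y/x)^\alpha}{\alpha y}=\frac{1}{\alpha\,x^\alpha y^{1-\alpha}}.
\]
Adding these two estimates proves the displayed bound, and since $\log x\ge\alpha^{-1}$ it yields $Q_\alpha(x,y)\le2\log x/(x^\alpha y^{1-\alpha})$.

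Next, from the definition \eqref{QQR} one has $Q_{1-\alpha}(y,x)=Q_\alpha(x,y)$; applying the previous step with $(\alpha,x,y)$ replaced by $(1-\alpha,y,x)$ --- legitimate because $\exp(\pi/\sin(\pi(1-\alpha)))=\exp(\pi/\sin(\pi\alpha))$ --- gives $Q_\alpha(x,y)\le2\log y/(x^\alpha y^{1-\alpha})$. Finally, writing $Q_\alpha(x,y)=Q_\alpha(x,y)^\beta Q_\alpha(x,y)^{1-\beta}$ and inserting the two bounds yields
\[
Q_\alpha(x,y)\le\Bigl(\frac{2\log x}{x^\alpha y^{1-\alpha}}\Bigr)^\beta\Bigl(\frac{2\log y}{x^\alpha y^{1-\alpha}}\Bigr)^{1-\beta}=\frac{2\log^\beta x\,\log^{1-\beta}y}{x^\alpha y^{1-\alpha}},
\]
which is \eqref{ApBeta}.

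There is no serious obstacle here; the one point to get right is the choice of split point $\tau=1/x$ (rather than something symmetric such as $1/\sqrt{xy}$), so that the integral over $(1/x,1)$ produces exactly $\log x$ while the crude bound $(x\tau+1)^\alpha\ge1$ on the short interval $(0,1/x)$ costs only the additive term $\alpha^{-1}$ --- which the hypothesis $x\ge\exp(\pi/\sin(\pi\alpha))$ is precisely calibrated to absorb --- and the analogous split at $\tau=1/y$ (equivalently, the symmetry $Q_{1-\alpha}(y,x)=Q_\alpha(x,y)$) handles $\log y$.
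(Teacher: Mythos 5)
Your proof is correct and follows essentially the same route as the paper: both establish the two one-sided bounds $Q_\alpha(x,y)\le 2\log x/(x^\alpha y^{1-\alpha})$ and $Q_\alpha(x,y)\le 2\log y/(x^\alpha y^{1-\alpha})$ and then write $Q_\alpha=Q_\alpha^\beta Q_\alpha^{1-\beta}$. The only cosmetic difference is in the one-sided estimate, where the paper rescales $\tau\mapsto\tau/x$ and splits at $1$ (picking up the additive constant $\pi/\sin(\pi\alpha)$) while you split the original integral at $1/x$ (picking up $\alpha^{-1}$); both constants are absorbed by the hypothesis $\log x\ge\pi/\sin(\pi\alpha)\ge\alpha^{-1}$.
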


\begin{proof}
Note that for every $x\ge \exp{(\pi/\sin(\pi\alpha))},$
\begin{align*}
&Q_{\alpha}(x,y)
\le \frac{1}{y^{1-\alpha}}
\int_0^1 \frac{d\tau}{\tau^{1-\alpha}(x\tau+1)^{\alpha}}
=\frac{1}{x^{\alpha}y^{1-\alpha}}
\int_0^x\frac{d\tau}{\tau^{1-\alpha}(\tau+1)^{\alpha}}\\
\le&  \frac{1}{x^{\alpha}y^{1-\alpha}}\left(
\int_0^1\frac{d\tau}{\tau^{1-\alpha}(\tau+1)^{\alpha}}+
\int_1^{x}\frac{d\tau}{\tau}\right)
=\frac{1}{x^{\alpha}y^{1-\alpha}}\left(\frac{\pi}{\sin(\pi\alpha)}+\log x\right),
\end{align*}
hence
\begin{equation}\label{ApBeta1}
 Q_{\alpha}(x,y)\le
2\frac{\log x}{x^{\alpha}y^{1-\alpha}},\qquad x\ge \exp{(\pi/\sin(\pi\alpha))}.
\end{equation}
Similarly,
\begin{equation}\label{ApBeta2}
Q_{\alpha}(x,y)\le
2\frac{\log y}{x^{\alpha}y^{1-\alpha}},\qquad  y\ge  \exp{(\pi/\sin(\pi\alpha))}.
\end{equation}
Now writing $Q_\alpha=Q^\beta_\alpha Q^{1-\beta}_\alpha$  and estimating $Q^\beta_\alpha$  by (\ref{ApBeta1}) and $Q^{1-\beta}_\alpha$ via (\ref{ApBeta2}),
we obtain \eqref{ApBeta}.
\end{proof}

\begin{corollary}\label{C11}
Let $\alpha\in (0,1),$ and assume that
$\mu_1\in \mathcal{M}_{\alpha}^{+}(\R)$ and
 $\mu_2\in \mathcal{M}_{1-\alpha}^{+}(\R).$
If
$
\beta_j\ge 0, j=1,2$,
$\beta_1+\beta_2= 1,$
and
\[
A:=\int_{|t|\ge a_0} \frac{\log^{\beta_1}|t|\,\mu_1(dt)}{|t|^{\alpha}}+
\int_{|t|\ge a_0} \frac{\log^{\beta_2}|t|\,\mu_2(dt)}{|t|^{1-\alpha}}<\infty,
\]
for some $a_0 \ge 1,$ then there exists $\mu\in \mathcal{M}_1^{+}(\R)$ satisfying (\ref{C100}).
\end{corollary}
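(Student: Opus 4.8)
The plan is to derive the corollary as a direct consequence of the criterion in Theorem~\ref{G11} combined with the pointwise estimate for $Q_\alpha$ in Lemma~\ref{Dost}. First I would fix
\[
a:=\max\bigl(a_0,\,\exp(\pi/\sin(\pi\alpha))\bigr),
\]
so that $a\ge a_0\ge 1$ and, moreover, whenever $|s|,t\ge a$ the pair $(|s|,t)$ lies in the range where the bound \eqref{ApBeta} is available. By Theorem~\ref{G11} it then suffices to check that \eqref{QQQ1} and \eqref{QQQ2} hold for this particular value of $a$.

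For \eqref{QQQ1}, on the relevant domain one has $|s|,t\ge a\ge\exp(\pi/\sin(\pi\alpha))$, so Lemma~\ref{Dost} applied with $\beta=\beta_1$ (whence $1-\beta=\beta_2$) gives
\[
Q_\alpha(|s|,t)\le 2\,\frac{\log^{\beta_1}|s|\,\log^{\beta_2}t}{|s|^\alpha\,t^{1-\alpha}}.
\]
Since every integrand in sight is nonnegative, Tonelli's theorem lets me split the double integral, and I obtain
\[
\int_a^\infty\!\int_{-\infty}^{-a}Q_\alpha(|s|,t)\,\mu_1(ds)\,\mu_2(dt)
\le 2\Bigl(\int_{|s|\ge a}\frac{\log^{\beta_1}|s|\,\mu_1(ds)}{|s|^\alpha}\Bigr)
\Bigl(\int_{t\ge a}\frac{\log^{\beta_2}t\,\mu_2(dt)}{t^{1-\alpha}}\Bigr).
\]
Because $a\ge a_0$ and each integrand is nonnegative, the two factors are bounded respectively by the first and second summand of $A$, so the right-hand side is at most $2A^2<\infty$; hence \eqref{QQQ1} holds. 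The inequality \eqref{QQQ2} is treated in exactly the same manner, now bounding $Q_\alpha(s,|t|)\le 2\,\log^{\beta_1}s\,\log^{\beta_2}|t|\,s^{-\alpha}|t|^{-(1-\alpha)}$ on $\{s>a\}\times\{t<-a\}$ and factoring the double integral as before, dominated again by $2A^2$.

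With \eqref{QQQ1} and \eqref{QQQ2} verified, Theorem~\ref{G11} supplies a $\mu\in\mathcal M_1^+(\R)$ satisfying \eqref{C100}, which is the claim. I do not expect any genuine obstacle here: the analytic weight of the statement already rests in Theorem~\ref{G11} (via Kac's criterion) and in Lemma~\ref{Dost}, and what remains is only the bookkeeping of choosing $a$ large enough for \eqref{ApBeta} to apply while keeping $a\ge a_0$ so that the truncated logarithmic moments stay dominated by $A$, together with a routine use of Tonelli to separate the two variables.
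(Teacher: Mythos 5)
Your proposal is correct and follows essentially the same route as the paper: choose $a=\max(a_0,\exp(\pi/\sin(\pi\alpha)))$, bound $Q_\alpha$ via Lemma \ref{Dost} with $\beta=\beta_1$, separate the variables by Tonelli, and conclude via Theorem \ref{G11}. The paper's proof is just a terser version of exactly this argument (with the same $2A^2$ bound), so there is nothing to add.
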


\begin{proof}
Setting
\[
a=\max (a_0, \exp{(\pi/\sin(\pi\alpha))})
\]
and employing Lemma \ref{Dost},
we conclude that the left-hand side of either \eqref{QQQ1} or \eqref{QQQ2} is bounded by $2A^2.$
The  statement follows now from
Theorem \ref{G11}.
\end{proof}

Corollary \ref{C11} is nearly optimal, and  the following example illustrates this point.
\begin{example}\label{Exa1}
Let us illustrate the sharpness of
Corollary \ref{C11}.
Let $\alpha\in (0,1)$ be fixed and for $\delta_j>0$, $j=1,2,$ define $\mu_1\in \mathcal{M}_\alpha^{+}(\R)$ and $\mu_2\in \mathcal{M}_{1-\alpha}^{+}(\R)$ by
\begin{equation}\label{MeasL}
\mu_1(dt):=\frac{\chi_{(-\infty,-e)}(t)\,  dt}{|t|^{1-\alpha}\log^{1+\delta_1} |t|} \quad \text{and} \quad
\mu_2(dt):=\frac{\chi_{(e,\infty)}(t)\, dt}{t^{\alpha}\log^{1+\delta_2} t}.
\end{equation}
Then by Fubini's theorem
\begin{align*}
&\int_0^1\int_{s\ge e}\int_{t\le -e}
\frac{\mu_1(dt)\mu_2(ds)}{(|t|\tau+1)^\alpha(s\tau+1)^{1-\alpha}}
\,d\tau\\
\ge&\int_{0}^{1/e}\left(\int_{1/\tau}^\infty \frac{dt}{(\tau t+1)^{1-\alpha}t^{\alpha}\log^{1+\delta_1}t} 
 \int_{1/\tau}^\infty \frac{ds}{(\tau s+1)^{\alpha}s^{1-\alpha}\log^{1+\delta_2}s}
\right)\, d\tau \\
\ge&\int_0^{1/e}\left(\frac{1}{2^{1-\alpha} \tau^{1-\alpha}} 
\int_{1/\tau}^\infty \frac{dt}{t\log^{1+\delta_1}t} \cdot  \frac{1}{2^\alpha \tau^{\alpha}}
\int_{1/\tau}^\infty \frac{ds}{s\log^{1+\delta_2}s}\right)\, d\tau\\
 \ge& \frac{1}{2\delta_1\delta_2}\int_0^{1/e}\frac{d\tau}{\tau|\log\tau|^{\delta_1+\delta_2}}.
\end{align*}
If $\delta_1+\delta_2\le 1$,
then the latter inequality implies that
for $\mu_1$ and $\mu_2$ given by \eqref{MeasL}
the property (\ref{cauchy_id})
does not hold. In fact, taking into account Corollary \ref{C11},
(\ref{cauchy_id}) is satisfied for $\mu_1$ and $\mu_2$ from \eqref{MeasL}
if and only if $\delta_1+\delta_2>1$.
\end{example}

\subsection{Liftings of Cauchy transforms via positive measures}
Recall from Remark \ref{gener} that
if $\beta >\alpha >0,$   $\nu \in \mathcal M_\alpha^+(\R_+),$ and  $\mu \in\mathcal M^+_\beta(\R_+)$
 is given by \eqref{up}, then
$S_\alpha [\nu]=S_\beta [\mu],$
so that $S_\alpha [\nu]$ can be "lifted" to $\mathcal S_\beta.$
In particular,  this fact  leads sometimes to useful alternative versions of \eqref{elprod},
where for $\mu_1 \in \mathcal M^+_{\alpha_1}(\R)$ and $\mu_2 \in \mathcal M^+_{\alpha_2} (\R)$ a measure $\mu$ belongs to the class $\mathcal M^+_\beta(\R_+), \beta >\alpha_1+\alpha_2,$ wider than $\mathcal M^+_{\alpha_1+\alpha_2}(\R),$
and at the same time $\mu$ has better properties than the choice provided by \eqref{elprod}.
(For instance, one may pass from a singular $\mu$ to a locally absolutely continuous one.)

 The considerations in the preceding subsections raise a natural question whether a similar
property holds for the generalized Cauchy transforms, so that
$C_\alpha (\mathcal M^+_\alpha(\R)) \subset C_\beta (\mathcal M^+_\beta(\R))$.
Though Theorem \ref{StMc} may suggest a positive answer, the answer is in general
negative, see Example \ref{Gfun} below. At the same time,  we identify several natural situations, when such a choice is indeed possible. Our arguments will depend on the next auxiliary statement.

\begin{proposition}\label{Lkarp}
Let
$
\nu_1\in \mathcal{M}_\alpha^{+}(\R), \alpha >0,$ be such that $\supp\,\nu_1\in (-\infty,0].$
Then for every $\beta>0,$
\begin{equation}\label{BetaK}
C_\alpha[\nu_1](z)=e^{i\pi\beta} C_{\alpha+\beta}[\nu_2](z),\qquad z\in \C^{+},
\end{equation}
where the locally absolutely continuous  $\nu_2 \in \mathcal{M}_{\alpha+\beta}^{+}(\R)$ is given by
\begin{equation}\label{nu2}
\nu_2(d\tau):=\frac{1}{B(\alpha,\beta)}\left(\int_{(\tau,0]}(t-\tau)^{\beta-1}\,\nu_1(dt)\right)\,d\tau.
\end{equation}
\end{proposition}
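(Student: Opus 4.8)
The plan is to reduce the statement to a single pointwise identity and then integrate, in complete parallel with the proof of Theorem \ref{elprod}. The crux is that for every $z\in\C^{+}$ and every $t\in\R$,
\[
\frac{1}{(z+t)^{\alpha}}=\frac{e^{i\pi\beta}}{B(\alpha,\beta)}\int_{-\infty}^{t}\frac{(t-\tau)^{\beta-1}\,d\tau}{(z+\tau)^{\alpha+\beta}},
\]
the integral converging absolutely because $(t-\tau)^{\beta-1}$ is integrable near $\tau=t$ (as $\beta>0$) and decays like $|\tau|^{-\alpha-1}$ as $\tau\to-\infty$ (as $\alpha>0$). I would prove this by the substitution $\tau=t-v$, which turns the right-hand side into $\int_{0}^{\infty}v^{\beta-1}(w-v)^{-\alpha-\beta}\,dv$ with $w:=z+t\in\C^{+}$, followed by the classical beta integral $\int_{0}^{\infty}v^{\beta-1}(v+a)^{-\alpha-\beta}\,dv=B(\alpha,\beta)\,a^{-\alpha}$, which holds for every $a\in\C\setminus(-\infty,0]$ by analytic continuation from its familiar real form. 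The only genuinely delicate point is the bookkeeping of the principal branch: since $v-w$ lies in the open lower half-plane one has $w-v=e^{i\pi}(v-w)$, whence $(w-v)^{-\alpha-\beta}=e^{-i\pi(\alpha+\beta)}(v-w)^{-\alpha-\beta}$; and since $w$ lies in the open upper half-plane one has $(-w)^{-\alpha}=e^{i\pi\alpha}w^{-\alpha}$; the beta integral with $a=-w$ then produces the overall factor $e^{-i\pi(\alpha+\beta)}e^{i\pi\alpha}=e^{-i\pi\beta}$, which is exactly the displayed identity. (Alternatively, the same formula follows by multiplying \eqref{A2} by $(z+s)^{\beta}$, taking orders $\alpha_{1}=\beta$, $\alpha_{2}=\alpha$, and letting $s\to-\infty$; the direct computation seems cleaner.)

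Granting the pointwise identity, since $\supp\nu_{1}\subset(-\infty,0]$ I would integrate it against $\nu_{1}(dt)$, obtaining
\[
C_{\alpha}[\nu_{1}](z)=\frac{e^{i\pi\beta}}{B(\alpha,\beta)}\int_{(-\infty,0]}\left(\int_{-\infty}^{t}\frac{(t-\tau)^{\beta-1}\,d\tau}{(z+\tau)^{\alpha+\beta}}\right)\nu_{1}(dt),
\]
and then justify Fubini's theorem. For this I would use the elementary bound $|z+\tau|\ge c(1+|\tau|)$, $\tau\in\R$, for some $c=c(z)>0$ (as in \eqref{inneq}), together with the real-variable instance $\int_{-\infty}^{t}(t-\tau)^{\beta-1}(1-\tau)^{-\alpha-\beta}\,d\tau=B(\alpha,\beta)(1-t)^{-\alpha}$ of the same beta integral, valid for $t\le0$, where $1-\tau=1+|\tau|$ throughout the range of integration. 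These give
\[
\int_{(-\infty,0]}\int_{-\infty}^{t}\frac{(t-\tau)^{\beta-1}\,d\tau}{|z+\tau|^{\alpha+\beta}}\,|\nu_{1}|(dt)\le c^{-\alpha-\beta}B(\alpha,\beta)\,\|\nu_{1}\|_{\alpha}<\infty,
\]
which legitimises the interchange; the identical computation with exact weight $(1+|\tau|)^{-\alpha-\beta}$ also shows $\|\nu_{2}\|_{\alpha+\beta}=\|\nu_{1}\|_{\alpha}$, so $\nu_{2}\in\mathcal{M}_{\alpha+\beta}^{+}(\R)$.

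Finally I would interchange the order of integration over the region $\{(\tau,t):\tau<t\le0\}$, performing the $t$-integration first over $(\tau,0]$; since the inner integral vanishes for $\tau\ge0$, this yields
\[
C_{\alpha}[\nu_{1}](z)=\frac{e^{i\pi\beta}}{B(\alpha,\beta)}\int_{-\infty}^{0}\frac{1}{(z+\tau)^{\alpha+\beta}}\left(\int_{(\tau,0]}(t-\tau)^{\beta-1}\,\nu_{1}(dt)\right)d\tau=e^{i\pi\beta}\,C_{\alpha+\beta}[\nu_{2}](z),
\]
i.e.\ \eqref{BetaK}. The measure $\nu_{2}$ given by \eqref{nu2} is positive because $\nu_{1}\ge0$ and $(t-\tau)^{\beta-1}>0$, it is locally absolutely continuous by construction, and $\supp\nu_{2}\subset(-\infty,0]$ because its density vanishes for $\tau\ge0$. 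I expect the branch bookkeeping in the first step to be the main — essentially the only — obstacle; everything after it is Fubini's theorem and a standard beta evaluation, as in Theorem \ref{elprod}.
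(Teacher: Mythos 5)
Your proof is correct and follows essentially the same route as the paper: both reduce the claim to the single identity $\int_{-\infty}^{t}(t-\tau)^{\beta-1}(z+\tau)^{-\alpha-\beta}\,d\tau=e^{-i\pi\beta}B(\alpha,\beta)(z+t)^{-\alpha}$ for $z\in\C^{+}$, $t\le 0$, and then interchange integrals using the bound \eqref{inneq}; the only difference is that you verify the branch constant directly on $\C^{+}$ via the substitution $\tau=t-v$ and analytic continuation of the beta integral in its parameter, whereas the paper computes on the upper edge of the cut and invokes boundary uniqueness of bounded analytic functions. Note that your sign $e^{-i\pi\beta}$ for the inner integral is the one consistent with the asserted formula \eqref{BetaK} — the exponent displayed in the paper's \eqref{AnCon} is opposite and appears to be a typo, as one checks for instance with $\nu_{1}=\delta_{0}$ and $\alpha=\beta=1/2$, where $\int_{0}^{\infty}s^{-1/2}(z-s)^{-1}\,ds=-i\pi z^{-1/2}=e^{-i\pi/2}B(1/2,1/2)\,z^{-1/2}$.
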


\begin{proof}
By Fubini's theorem, for all $z \in \mathbb C^+,$
\begin{align}\label{Beta11}
B(\alpha,\beta)C_{\alpha+\beta}[\nu_2](z)=&
\int_{-\infty}^0\left(\int_{-\infty}^t\frac{(t-\tau)^{\beta-1}\,d\tau}{(z+\tau)^{\alpha+\beta}}\right)\,\nu_1(dt)\\
=&\int_{-\infty}^0\left(\int_0^\infty \frac{s^{\beta-1}\,ds}{(z+t-s)^{\alpha+\beta}}\right)\,\nu_1(dt).\notag
\end{align}

Next, let $t\le 0$ be fixed. Extending the function $z \to z^\beta$ by continuity to the upper side of the cut along $(-\infty,0],$ we have
\begin{align*}
\int_0^\infty& \frac{s^{\beta-1}\,ds}{(z+t-s)^{\alpha+\beta}}
=e^{-i(\alpha+\beta)\pi}\int_0^\infty\frac{s^{\beta-1}\,ds}{(|z+t|+s)^{\alpha+\beta}}\\
=&e^{i(\alpha+\beta)\pi}{|z+t|^{\alpha}}\int_0^\infty\frac{s^{\beta-1}\,ds}{(1+s)^{\alpha+\beta}}
=e^{i\pi\beta}\frac{B(\alpha,\beta)}{(z+t)^{\alpha}}
\end{align*}
for all $z<0.$ Then, by the boundary uniqueness for functions bounded and analytic in $\C^+,$
\begin{equation}\label{AnCon}
\int_0^\infty \frac{s^{\beta-1}\,ds}{(z+t-s)^{\alpha+\beta}}
=e^{i\pi \beta}\frac{B(\alpha,\beta)}{(z+t)^{\alpha}},\qquad z\in \C^{+}.
\end{equation}

Now (\ref{BetaK})
follows from (\ref{Beta11}) and (\ref{AnCon}).
\end{proof}

The following corollary from Proposition \ref{Lkarp} showing that
$C_\alpha (\mathcal M^+_\alpha(\R)) \subset C_{\alpha+2}(\mathcal M^+_\beta(\R))$
is straightforward.
\begin{corollary}\label{Incl}
Let $\nu\in \mathcal{M}_\alpha^{+}(\R)$, $\alpha>0$. Then
there exists $\mu\in \mathcal{M}_{\alpha+2}^{+}(\R)$ such that
\[
C_\alpha[\nu](z)=C_{\alpha+2}[\mu](z),\qquad z\in \C^{+}.
\]
\end{corollary}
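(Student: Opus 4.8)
The plan is to derive Corollary~\ref{Incl} from Proposition~\ref{Lkarp} by splitting an arbitrary $\nu\in\mathcal M^+_\alpha(\R)$ into pieces supported on half-lines and applying the proposition (or its obvious mirror image) to each piece. Write $\nu=\nu_-+\nu_+$, where $\nu_-:=\chi_{(-\infty,0]}\nu$ and $\nu_+:=\chi_{(0,\infty)}\nu$; both belong to $\mathcal M^+_\alpha(\R)$ since $\nu$ does, and $\supp\,\nu_-\subset(-\infty,0]$.

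First I would apply Proposition~\ref{Lkarp} to $\nu_-$ with $\beta=2$: this yields a locally absolutely continuous $\mu_-\in\mathcal M^+_{\alpha+2}(\R)$ with
\[
C_\alpha[\nu_-](z)=e^{2\pi i}C_{\alpha+2}[\mu_-](z)=C_{\alpha+2}[\mu_-](z),\qquad z\in\C^+,
\]
the phase factor $e^{i\pi\beta}=e^{2\pi i}=1$ disappearing precisely because $\beta=2$. Next I would treat $\nu_+$, whose support lies in $[0,\infty)$, by the reflected version of Proposition~\ref{Lkarp}: substituting $t\mapsto -t$, $\tau\mapsto-\tau$ in the proof (equivalently, applying the proposition to the pushforward of $\nu_+$ under $t\mapsto -t$, which is supported in $(-\infty,0]$, and then pushing back), one obtains a locally absolutely continuous $\mu_+\in\mathcal M^+_{\alpha+2}(\R)$ with $\supp\,\mu_+\subset[0,\infty)$ and again $C_\alpha[\nu_+](z)=C_{\alpha+2}[\mu_+](z)$ for $z\in\C^+$, the computation being identical up to the reflection with $\beta=2$. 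Setting $\mu:=\mu_-+\mu_+\in\mathcal M^+_{\alpha+2}(\R)$ and adding the two identities gives $C_\alpha[\nu](z)=C_{\alpha+2}[\mu](z)$, as claimed.

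The only genuinely nontrivial point is the reflection step for $\nu_+$: one must check that Proposition~\ref{Lkarp} remains valid when ``$(-\infty,0]$'' is replaced by ``$[0,\infty)$'' throughout, with the same phase $e^{i\pi\beta}$ and with $\nu_2(d\tau)=B(\alpha,\beta)^{-1}\bigl(\int_{[0,\tau)}(\tau-t)^{\beta-1}\,\nu_1(dt)\bigr)\,d\tau$. This is routine: the proof of Proposition~\ref{Lkarp} rests on the boundary identity \eqref{AnCon}, namely $\int_0^\infty s^{\beta-1}(z+t-s)^{-\alpha-\beta}\,ds=e^{i\pi\beta}B(\alpha,\beta)(z+t)^{-\alpha}$, whose analogue for $t\ge 0$ is $\int_0^\infty s^{\beta-1}(z+t+s)^{-\alpha-\beta}\,ds=B(\alpha,\beta)(z+t)^{-\alpha}$ — in fact simpler, with no phase at all, so the reflected statement actually carries \emph{no} factor $e^{i\pi\beta}$. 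Either way, with $\beta=2$ the factor equals $1$, so the argument goes through; alternatively, one may avoid the reflection entirely by noting that $C_\alpha[\nu_+](z)=C_\alpha[\tilde\nu](-\bar z)\ {}^{-}$ is not quite clean, so the cleanest route is simply to record the mirror-image of Proposition~\ref{Lkarp} and cite it. I do not anticipate any real obstacle here; the whole corollary is a two-line consequence once the half-line decomposition and the choice $\beta=2$ (making the branch factor trivial) are in place.
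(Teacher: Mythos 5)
Your proof is correct and follows essentially the same route as the paper: split $\nu$ by the sign of the support, apply Proposition \ref{Lkarp} with $\beta=2$ (so that the phase $e^{i\pi\beta}=1$) to the piece on $(-\infty,0]$, and treat the piece on $[0,\infty)$ by the phase-free mirror identity, which is precisely the Stieltjes lifting \eqref{up}. The one caveat is that the literal push-forward under $t\mapsto -t$ does \emph{not} commute with $C_\alpha$ (it turns $(z+t)^{-\alpha}$ into $(z-t)^{-\alpha}$), so the parenthetical ``equivalently'' should be dropped in favour of the direct mirror computation you in fact supply.
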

To deduce the result from Proposition \ref{Lkarp}, it suffices to write $\nu=\chi_{(-\infty,0)} \nu +\chi_{[0,\infty)}\nu$ and to push-forward $\chi_{[0,\infty)} \nu$ to $(-\infty,0].$

However, the statement similar to Corollary \ref{Incl} may not hold if  $\mathcal{M}_{\alpha+2}^{+}(\R)$ replaced by the smaller class
$\mathcal{M}_{\alpha+1}^{+}(\R).$
We proceed with a criterion addressing the problem when $\alpha=1$
and relying on
Proposition \ref{Lkarp}.
\begin{theorem}\label{Inc1}
Let
$\nu\in \mathcal{M}^{+}_1(\R).$
Then there exists $\mu\in \mathcal{M}^+_2(\R)$ satisfying
\begin{equation}\label{T120}
C_1[\nu](z)=C_2[\mu](z),\qquad z\in \C^{+},
\end{equation}
if and only if
\begin{equation}\label{GvRe}
\int_{-\infty}^0\nu(dt)<\infty.
\end{equation}
\end{theorem}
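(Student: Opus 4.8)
The plan is to prove both implications with the same elementary tool: the identities
\[
\frac{1}{z+t}=\int_t^\infty\frac{d\tau}{(z+\tau)^2}=-\int_{-\infty}^t\frac{d\tau}{(z+\tau)^2},\qquad z\in\C^{+},\ t\in\R,
\]
which are immediate by integration, combined with \eqref{inneq} to control $\int_t^\infty|z+\tau|^{-2}\,d\tau$ and $\int_{-\infty}^t|z+\tau|^{-2}\,d\tau$ (each bounded by $c(z)^{-2}(1+|t|)^{-1}$ on the appropriate half-line) and thus to justify Fubini's theorem. Splitting $\nu=\nu_++\nu_-$ with $\nu_+:=\chi_{[0,\infty)}\nu$, $\nu_-:=\chi_{(-\infty,0)}\nu$ and applying the first identity to $\nu_+$ and the second to $\nu_-$, one obtains
\[
C_1[\nu_+](z)=C_2[\mu_+](z),\qquad C_1[\nu_-](z)=-C_2[\tilde\nu](z),\qquad z\in\C^{+},
\]
where $\mu_+(d\tau):=\nu\bigl([0,\tau)\bigr)\chi_{(0,\infty)}(\tau)\,d\tau$ is supported in $[0,\infty)$ and $\tilde\nu(d\tau):=\nu\bigl((\tau,0)\bigr)\chi_{(-\infty,0)}(\tau)\,d\tau$ is supported in $(-\infty,0]$; another use of Fubini's theorem gives $\|\mu_+\|_2,\|\tilde\nu\|_2\le\|\nu\|_1<\infty$, so $\mu_+,\tilde\nu\in\mathcal M_2^+(\R)$. (The second identity above is also Proposition \ref{Lkarp} with $\alpha=\beta=1$.) In particular $C_1[\nu]=C_2[\mu_+-\tilde\nu]$ on $\C^{+}$.

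For the sufficiency I would argue as follows. Assume $\int_{-\infty}^0\nu(dt)<\infty$, equivalently $c:=\nu\bigl((-\infty,0)\bigr)<\infty$, and put $\mu:=\mu_+-\tilde\nu+c\,dt$. Since $\int_\R(z+\tau)^{-2}\,d\tau=0$ for $z\in\C^{+}$, we have $C_2[\mu]=C_2[\mu_+-\tilde\nu]=C_1[\nu]$. Moreover $\mu$ is a positive Radon measure: on $(0,\infty)$ its density equals $\nu([0,\tau))+c\ge0$, and on $(-\infty,0)$ it equals $c-\nu((\tau,0))\ge0$ because $\nu((\tau,0))\le\nu((-\infty,0))=c$. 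Finally $\mu\in\mathcal M_2(\R)$, so $\mu\in\mathcal M_2^+(\R)$ and \eqref{T120} holds.

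For the necessity, suppose $\mu\in\mathcal M_2^+(\R)$ satisfies \eqref{T120}. Then $C_2[\mu+\tilde\nu]=C_2[\mu_+]$, so the real Radon measure $\sigma:=\mu+\tilde\nu-\mu_+\in\mathcal M_2(\R)$ satisfies $C_2[\sigma](z)=0$ for all $z\in\C^{+}$. Granting the analogue of Lemma \ref{J} for real measures in $\mathcal M_2(\R)$ (discussed next), this forces $\sigma=c\,dt$ for some $c\in\R$, i.e. $\mu=\mu_++c\,dt-\tilde\nu$. Restricting this identity of measures to the Borel set $(-\infty,0)$, on which $\mu_+$ vanishes, positivity of $\mu$ gives $c-\nu((\tau,0))\ge0$ for a.e. $\tau<0$; letting $\tau\to-\infty$ and using $\nu((\tau,0))\uparrow\nu((-\infty,0))$ yields $\nu((-\infty,0))\le c<\infty$, which is \eqref{GvRe}.

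The one point requiring a small addition to the paper is the version of Lemma \ref{J} used above, and I expect this to be the main (though routine) obstacle: if $\sigma\in\mathcal M_2(\R)$ is real and $C_2[\sigma]\equiv0$ on $\C^{+}$, then $\sigma=c\,dt$. As in the proof of Lemma \ref{J}, passing to conjugates shows $C_2[\sigma]\equiv0$ on all of $\C\setminus\R$; hence the function $z\mapsto\int_\R\bigl(\tfrac1{t-z}-\tfrac{t}{1+t^2}\bigr)\sigma(dt)$, which is well defined since its integrand is $\mathrm{O}(|t|^{-2})$, has vanishing derivative off $\R$ and is therefore constant on $\C^{+}$. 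Its imaginary part at $x+iy$ equals $\pi$ times the Poisson extension of $\sigma$, so that extension is a constant $c$, and recovering $\sigma$ as the vague limit as $y\to0+$ of the absolutely continuous measures built from the Poisson extension gives $\sigma=c\,dt$; alternatively one decomposes $\sigma$ into its positive and negative parts and applies the inversion formula for Nevanlinna--Pick functions exactly as in Lemma \ref{J}.
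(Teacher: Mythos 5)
Your proof is correct. It follows the same basic strategy as the paper: your identity $\frac1{z+t}=-\int_{-\infty}^t(z+\tau)^{-2}\,d\tau$ is exactly Proposition \ref{Lkarp} at $\alpha=\beta=1$, your explicit sufficiency measure is essentially the one the paper constructs (both have density $\nu((-\infty,\tau))$ a.e.), and the necessity hinges on Lemma \ref{J} in both arguments. The genuine difference is in how the two halves of $\nu$ are handled in the necessity direction. The paper first reduces to $\supp\nu\subset(-\infty,0]$, so that the vanishing Cauchy transform is that of the \emph{positive} measure $\mu+\nu_2$ and Lemma \ref{J} applies verbatim; the general two-sided case is then dispatched in one terse sentence. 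You instead keep both halves from the start and arrive at $C_2[\mu+\tilde\nu-\mu_+]\equiv 0$ for a genuinely signed measure, which forces you to extend Lemma \ref{J} to real measures in $\mathcal M_2(\R)$. You correctly flag this as the one point needing an addition, and your sketch of the extension is sound: conjugation gives $F'\equiv 0$ on $\C^{+}$ for the associated Nevanlinna-type function, and then either the Stieltjes inversion of the (constant) Poisson extension or the Jordan decomposition combined with the Nevanlinna--Pick inversion formula applied to $F^{+}=F^{-}+a$ yields $\sigma=c\,dt$. This is a reasonable trade: your bookkeeping is slightly heavier, but the signed version of Lemma \ref{J} is in fact also what is needed to make the paper's own one-line treatment of the two-sided necessity fully rigorous, so your write-up is arguably the more complete of the two.
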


\begin{proof}
Let us first assume that, in addition,  $\supp\,\nu\in(-\infty,0].$
Then the proof is similar to the proof of the
Theorem \ref{G1}. Suppose that (\ref{T120}) holds, and let
$\nu_2$ be given \eqref{nu2} with $\nu_1=\nu.$
Then, by Proposition \ref{Lkarp},  we obtain that
\[
C_2[\mu](z)+C_2[\nu_2](z)=0,\qquad z\in \C^{+}.
\]
So, in view of Lemma \ref{J}, there exists $c\ge 0$ such that
\[
\mu(dt)+\nu_2(dt)=c\,dt,
\]
and then
\[
\int_{(\tau,0]}\nu(dt)\le c,\qquad \tau\le  0,
\]
which is equivalent to (\ref{GvRe}).

Conversely, if (\ref{GvRe}) holds,
then
\[
C_1[\nu](z)=C_2[\mu](z),\qquad z\in \C^{+},
\]
with the locally absolutely continuous measure $\mu\in \mathcal{M}_2^{+}(\R)$ defined by
\begin{equation*}
\mu(dt):=\begin{cases} \left(\int_{(-\infty,t]}\nu(d\tau)\right)\,dt,& \qquad t\le 0,\\
\left(\int_{(-\infty,0]}\nu(d\tau)\right)\,dt,& \qquad t > 0.
\end{cases}
\end{equation*}

In the general case, write $\nu=\nu_1 +\nu_2$ with $\nu_1,\nu_2 \in \mathcal M^+_1(\R)$ such that $\supp \nu_1\subset (-\infty,0]$
and $\supp \nu_2 \in [0,\infty).$
By applying the preceding case to $\nu_1$ and
\eqref{up} to $\nu_2$, we get the sufficiency.
For the proof of necessity one may simply restrict $\nu$ to $(-\infty,0]$
and refer to the first part of the proof.
\end{proof}
\begin{example}\label{Gfun}
Let $\alpha\in (0,1)$ and
\[
\nu(dt)=\chi_{(-\infty,-1]}(t)\frac{dt}{|t|^\alpha},
\]
so that $\nu \in \mathcal{M}_1^{+}(\R)$ but (\ref{GvRe}) does not hold. Hence, there is no $\mu\in \mathcal{M}_2^{+}(\R)$ satisfying
(\ref{T120}). Note that, by a direct computation, for every $z \in \C^+,$
\[
C_1[\nu](z)=\int_{-\infty}^{-1}\frac{dt}{(z+t)|t|^\alpha}
=-\int_0^1\frac{\tau^{\alpha-1}\,d\tau}{1-z\tau}=-\alpha \,_2F_1(1,\alpha,\alpha+1;z),
\]
where $_2F_1$ is a hypergeometric function, see \cite[p. 65]{An}.
\end{example}

Invoking geometrical arguments and
Kac's theorem formulated above,  we finish this section with a lifting criterion from $\mathcal M^+_\alpha(\R), \alpha  \in (0,1),$
to $\mathcal M^+_1(\R).$ It looks similar to Theorem \ref{Inc1}, although
its proof is based on a different idea.
\begin{theorem}\label{C1dos}
Let $\nu\in \mathcal{M}_\alpha^{+}(\R),$ $\alpha\in (0,1)$. Then
there exists  $\mu\in \mathcal{M}_1^{+}(\R)$ such that
\begin{equation}\label{Kac11}
C_\alpha[\nu](z)=C_1[\mu](z),\qquad z\in \C^{+},
\end{equation}
if and only if
\[
\int_{-\infty}^0\frac{\log(|t|+1)\,\nu(dt)}{(1+|t|)^\alpha}<\infty.
\]
\end{theorem}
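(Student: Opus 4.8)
The plan is to apply the Kac (Katz) criterion stated just before Theorem~\ref{G11} to the function $f:=C_\alpha[\nu]$. Since $\nu\in\mathcal M^+_\alpha(\R)$, the inequality \eqref{inneq} gives $|z+t|^{-\alpha}\le c(z)^{-\alpha}(1+|t|)^{-\alpha}$, so $f$ is well defined and holomorphic on $\C^+$. For $z\in\C^+$ and $t\in\R$ the point $z+t$ lies in $\C^+$, whence $\arg(z+t)\in(0,\pi)$ and $\arg\bigl((z+t)^{-\alpha}\bigr)=-\alpha\arg(z+t)\in(-\pi\alpha,0)\subset(-\pi,0)$; thus ${\rm Im}\,(z+t)^{-\alpha}\le 0$, and positivity of $\nu$ yields ${\rm Im}\,f(z)\le 0$ for all $z\in\C^+$. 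Hence condition \eqref{kkr1} holds automatically, and by the criterion (together with the uniqueness of the representing measure noted there) the existence of $\mu\in\mathcal M^+_1(\R)$ with \eqref{Kac11} is equivalent to the single condition \eqref{kkr2} for $f$. So the whole proof reduces to showing that \eqref{kkr2} is equivalent to the stated integrability of $\nu$ on $(-\infty,0]$.

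To analyse \eqref{kkr2}, I would write $\theta(t,y):=\arg(t+iy)\in(0,\pi)$ for $y>0$, so that $(t+iy)^{-\alpha}=(t^2+y^2)^{-\alpha/2}e^{-i\alpha\theta(t,y)}$ and
\[
-{\rm Im}\,f(iy)=\int_\R \frac{\sin\bigl(\alpha\theta(t,y)\bigr)}{(t^2+y^2)^{\alpha/2}}\,\nu(dt)\ge 0,
\]
the integrand being nonnegative because $\alpha\theta(t,y)\in(0,\pi)$. As all terms are nonnegative, Tonelli's theorem lets me interchange the $y$- and $t$-integrations and reduces \eqref{kkr2} to the finiteness of $\int_\R I(t)\,\nu(dt)$, where
\[
I(t):=\int_1^\infty \frac{\sin\bigl(\alpha\theta(t,y)\bigr)}{y\,(t^2+y^2)^{\alpha/2}}\,dy.
\]
Substituting $y=|t|s$ for $t\ne 0$ turns this into $I(t)=|t|^{-\alpha}\int_{1/|t|}^\infty g_{\pm}(s)\,\frac{ds}{s}$, where $g_+(s)=(1+s^2)^{-\alpha/2}\sin(\alpha\arctan s)$ for $t>0$ and $g_-(s)=(1+s^2)^{-\alpha/2}\sin(\alpha\pi-\alpha\arctan s)$ for $t<0$ (using $\arg(t+iy)=\pi-\arctan(y/|t|)$ when $t<0$). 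Both $g_\pm$ are positive on $(0,\infty)$, decay like $s^{-\alpha}$ at $\infty$, and are integrable against $ds/s$ near $\infty$.

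The crux, and what I expect to be the main obstacle, is the opposite behaviour of $g_+$ and $g_-$ near $s=0$, which is precisely the source of the asymmetry in the theorem. For $t>0$ one has $g_+(s)\sim \alpha s$ as $s\to0$, so $g_+(s)/s$ is bounded near $0$ and $\int_{1/|t|}^\infty g_+(s)\,ds/s$ stays bounded as $|t|\to\infty$; hence $I(t)\asymp(1+t)^{-\alpha}$ with \emph{no} logarithmic factor, and in particular $\int_0^\infty I(t)\,\nu(dt)\le C\|\nu\|_\alpha<\infty$ for every $\nu\in\mathcal M^+_\alpha(\R)$. For $t<0$, by contrast, $g_-(0)=\sin(\alpha\pi)>0$, so $g_-(s)/s\sim\sin(\alpha\pi)/s$ is non-integrable at $0$, and splitting $\int_{1/|t|}^\infty=\int_{1/|t|}^1+\int_1^\infty$ gives $\int_{1/|t|}^1 g_-(s)\,ds/s=\sin(\alpha\pi)\log|t|+O(1)$; thus $I(t)\asymp\log(|t|+2)\,(1+|t|)^{-\alpha}$ for $t\le-1$, while $I$ is continuous and bounded on $[-1,0]$.

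Finally I would assemble the pieces. Because $\int_0^\infty I(t)\,\nu(dt)$ is always finite and the contribution of $[-1,0]$ to $\int_{-\infty}^0 I(t)\,\nu(dt)$ is finite ($\nu$ being locally finite and $I$ bounded there), the finiteness of $\int_\R I\,d\nu$ is equivalent to that of $\int_{-\infty}^{-1} I(t)\,\nu(dt)$. On $(-\infty,-1]$ the two-sided bound $I(t)\asymp\log(|t|+2)(1+|t|)^{-\alpha}\asymp\log(|t|+1)(1+|t|)^{-\alpha}$ shows this is equivalent to $\int_{-\infty}^0\log(|t|+1)(1+|t|)^{-\alpha}\,\nu(dt)<\infty$, the asserted condition. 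Tracking the upper estimates gives sufficiency (\eqref{kkr2} holds), while the lower estimate for $t<0$ gives necessity, completing the equivalence. The only genuine work lies in the one-sided estimates for $I(t)$ near $s=0$: the single difference between linear vanishing of $g_+$ and the nonzero limit $\sin(\alpha\pi)$ of $g_-$ is exactly what makes the logarithm appear solely on the negative half-line.
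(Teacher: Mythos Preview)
Your proof is correct and follows essentially the same route as the paper: both apply the Kac criterion to $f=C_\alpha[\nu]$, check \eqref{kkr1} by the elementary sign argument, and reduce everything to the finiteness of $\int_1^\infty y^{-1}|{\rm Im}\,f(iy)|\,dy$, which is then shown to be equivalent to the stated logarithmic integrability condition.

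The only presentational differences are these. The paper first reduces to the case $\supp\nu\subset(-\infty,0]$ (the positive part of $\nu$ always lifts, by the Stieltjes formula \eqref{up}); on that half-line $\arg(t+iy)\in[\pi/2,\pi)$, so $\sin(\alpha\theta)$ is bounded between two positive constants, which lets one replace $|{\rm Im}\,f(iy)|$ by $\int_{-\infty}^0|iy+t|^{-\alpha}\,\nu(dt)$ and then invoke Lemma~\ref{elem} directly for the logarithmic asymptotic. You instead keep the exact factor $\sin(\alpha\theta(t,y))$ and treat both signs of $t$ by hand, deriving the $g_\pm$ dichotomy; this is a little longer but has the merit of explaining transparently why the logarithm appears only on the negative half-line (the nonzero limit $g_-(0)=\sin(\alpha\pi)$ versus the linear vanishing of $g_+$), and it avoids appealing to Lemma~\ref{elem}. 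Either way the analysis is the same in substance.
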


\begin{proof}
As in the proof of  Theorem \ref{Inc1},  we may assume without loss of generality that
$
\supp\,\nu\subset (-\infty,0].
$
First, observe that
\[
{\rm Im}\,C_\alpha[\nu](z)\le 0,\quad z\in \C^{+}.
\]
Since
\[
\arg \left((t+iy)^\alpha\right)\in [\pi\alpha/2,\pi\alpha],\qquad y>0,\quad t<0,
\]
then, similarly to the proof of Theorem \ref{G11},
\[
c_1 \int_{-\infty}^0 \frac{\nu(dt)}{|iy+t|^\alpha} \le {|\rm Im}\,C_\alpha[\nu](iy)|\le c_2 \int_{-\infty}^0\frac{\nu(dt)}{|iy+t|^\alpha},\qquad y>0,
\]
for some $c_1,c_2 >0.$
Thus,
\begin{align*}
c_1 & \int_{\R}\int_1^\infty \frac{dy}{y|iy+t|^\alpha}\,\nu(dt) \le \int_1^\infty \frac{|{\rm Im}\, C_\alpha(\nu)(iy)|}{y}\,dy\\
\le& c_2 \int_{\R}\int_1^\infty \frac{dy}{y|iy+t|^\alpha}\,\nu(dt).
\end{align*}
So, using Lemma \ref{elem}, we infer that 
\[
 \int_{-\infty}^0\int_1^\infty \frac{dy}{y|iy+t|^\alpha}\,\nu(dt)<\infty \quad \Longleftrightarrow \quad
\int_{-\infty}^0 \frac{\log(|t|+1)\,\nu(dt)}{(1+|t|)^\alpha}<\infty,
\]
and obtain the assertion by the Kac criterion \eqref{kkr1},\eqref{kkr2}.
\end{proof}

\section{Final remarks}\label{final}

We finish the paper with two remarks addressing the scope of the paper
and some relevant problems left open.

\subsection{} The emphasis in this paper is put on the study of the generalized Stieltjes and Cauchy transforms
of measures from $\mathcal M^+_{\alpha}$-spaces
or of functions from $L^1_\alpha$-spaces. As far as our considerations fit into a more general framework of complex Radon measures, which
help us to define the resulting product measures properly and to not distinguish between functions and measures, we have chosen it to simplify
and unify our presentation.
However, our choice looks more like a matter of convenience
than a substantial technical advance,
and we are not aware of any of its applications of a separate interest.

\subsection{} Unfortunately, we were not able to answer the next two questions, which we believe might deserve further research. First, in the framework of Theorem \ref{StMc}, given $\mu_1 \in \mathcal M^+_{\alpha_1}(\R)$
and $\mu_2 \in \mathcal M^+_{\alpha_2}(\R), \alpha_1, \alpha_2 >0,$ it is not, in general, clear whether there always exists a $\mu \in \mathcal M^+_{\alpha_1+\alpha_2}(\R)$  such that $C_{\alpha_1}[\mu_1]C_{\alpha_2}[\mu_2]=C_{\alpha_1+\alpha_2}[\mu].$
Theorems \ref{G1} and  \ref{G11} do not clarify this issue, and Theorem \ref{elprodC} produces such a $\mu$ under somewhat stringent additional assumptions.  Second, given $\alpha_2 >\alpha_1>0,$
    it is not quite clear how to characterize $\mu_1 \in \mathcal M_{\alpha_1}^+(\R),$
 satisfying $C_{\alpha_1}[\mu_1]=C_{\alpha_2}[\mu_2]$
for some $\mu_2 \in\mathcal M^+_{\alpha_2}(\R),$ and thus liftable to  $C_{\alpha_2}(\mathcal M^+_{\alpha_2}(\R)).$
(Recall that by Corollary \ref{Incl}, $\mu_1$ is liftable to $C_{\alpha_1+2}(\mathcal M^+_{\alpha_1+2}(\R)).$)
While we showed that such a lifting is not always possible and described in Theorems \ref{Inc1} and \ref{C1dos} the two particular
situations allowing for a complete answer, the question is, in general,  widely open.

\section{Acknowledgments}

We would like to thank the referee for careful reading
the manuscript and useful remarks and suggestions.

\end{document}